\title{Local Newton nondegenerate Weil divisors in toric varieties}
\author[A. N\'emethi]{Andr\'as N\'emethi}
\address{Alfr\'ed R\'enyi Institute of Mathematics, ELKH\newline \hspace*{4mm}
Re\'altanoda utca 13-15, H-1053, Budapest, Hungary \newline \hspace*{4mm}
ELTE - University of Budapest, Dept. of Geometry, Budapest, Hungary \newline
\hspace*{4mm}
BCAM - Basque Center for Applied Math.,
Mazarredo, 14 E48009 Bilbao, Basque Country, Spain}
\email{nemethi.andras@renyi.hu}
\thanks{
The first author was partially  supported by NKFIH Grant ``\'Elvonal
(Frontier)'' KKP 126683.
}
\author[B. Sigur\dh sson]{Baldur Sigur\dh sson}
\address{
Institute of Mathematics,\newline \hspace{4mm}
{{\fontencoding{T5}\selectfont
18 Đường Hoàng Quốc Việt,
Quận Cầu Giấy,
10307,
Hanoi,
Vietnam}}}
\email{baldursigurds@gmail.com}
\date{\today}
\newtheorem{thm}{Theorem}[section]
\crefname{thm}{theorem}{theorems}
\Crefname{thm}{Theorem}{Theorems}
\newaliascnt{lemma}{thm}
\newtheorem{lemma}[lemma]{Lemma}
\newaliascnt{prop}{thm}
\newtheorem{prop}[prop]{Proposition}
\crefname{prop}{proposition}{propositions}
\Crefname{prop}{Proposition}{Propositions}
\newaliascnt{cor}{thm}
\newtheorem{cor}[cor]{Corollary}
\crefname{cor}{corollary}{corollaries}
\Crefname{cor}{Corollary}{Corollaries}
\theoremstyle{definition}
\newaliascnt{warning}{thm}
\crefname{warning}{warning}{warningollaries}
\Crefname{warning}{Warning}{Corollaries}
\newaliascnt{rem}{thm}
\newtheorem{rem}[rem]{Remark}
\crefname{rem}{remark}{remarks}
\Crefname{rem}{Remark}{Remarks}
\newaliascnt{example}{thm}
\newtheorem{example}[example]{Example}
\newaliascnt{definition}{thm}
\newtheorem{definition}[definition]{Definition}
\newaliascnt{war}{thm}
\crefname{war}{warning}{warnings}
\Crefname{war}{Warning}{Warnings}
\newaliascnt{block}{thm}
\newtheorem{block}[block]{}
\crefname{block}{}{}
\Crefname{block}{}{}
\newaliascnt{conv}{thm}
\crefname{conv}{convention}{conventions}
\Crefname{conv}{Convention}{Conventions}
\numberwithin{equation}{section}
\crefname{subsection}{subsection}{subsections}
\Crefname{subsection}{Subsection}{Subsections}
\newcommand{\C}{\mathbb{C}}
\newcommand{\CP}{\mathbb{CP}}
\newcommand{\N}{\mathbb{N}}
\newcommand{\Z}{\mathbb{Z}}
\newcommand{\R}{\mathbb{R}}
\newcommand{\Q}{\mathbb{Q}}
\newcommand{\V}{\mathcal{V}}
\newcommand{\UU}{\mathcal{U}}
\newcommand{\Nd}{\mathcal{N}}
\newcommand{\Dc}{D_{\mathrm{c}}}
\newcommand{\X}{\tilde X}
\renewcommand{\O}{\mathcal{O}}
\newcommand{\Lb}{\mathcal{L}}
\newcommand{\Qb}{\mathcal{Q}}
\newcommand{\T}{\mathbb{T}}
\newcommand{\Yloc}{Y^{\rm loc}}
\newcommand{\Ytloc}{\tilde Y^{\rm loc}}
\newcommand{\Dloc}{D^{\rm loc}}
\newcommand{\Yt}{\tilde Y}
\newcommand{\rk}{\mathop{\mathrm{rk}}}
\newcommand{\length}{\mathop{\mathrm{length}}}
\newcommand{\Vol}{\mathop{\rm Vol}\nolimits}
\newcommand{\convx}{\mathop{\rm conv}\nolimits}
\newcommand{\Spec}{\mathop{\rm Spec}\nolimits}
\newcommand{\Pic}{\mathop{\rm Pic}\nolimits}
\newcommand{\supp}{\mathop{\rm supp}\nolimits}
\newcommand{\Hom}{\mathop{\rm Hom}\nolimits}
\newcommand{\wt}{\mathop{\rm wt}\nolimits}
\renewcommand{\div}{\mathop{\rm div}\nolimits}
\renewcommand{\mod}{\mathop{\rm mod}\nolimits}
\newcommand{\Div}{\mathop{\rm Div}\nolimits}
\newcommand{\Star}{\mathop{\rm Star}\nolimits}
\newcommand{\mult}{\mathop{\rm mult}\nolimits}
\renewcommand{\tilde}{\widetilde}
\newcommand{\set}[2]{\left\{ #1 \,\middle\vert\, #2 \right\}}
\newcommand{\gen}[2]{\left\langle #1 \, \middle| \, #2 \right\rangle}
\renewcommand{\epsilon}{\varepsilon}
\newcounter{dummy}
\renewcommand{\thedummy}{\roman{dummy}}
\newcommand{\fa}[2]{\forall #1 :\, #2}
\newenvironment{blist}
{
  \begin{list}{(\thedummy)}
  {
	\setlength\labelsep{4pt}
	\setlength\itemindent{4pt}
	\setlength\leftmargin{0pt}
	\setlength\labelwidth{0pt}
	\setlength\parsep{0pt}
    \usecounter{dummy}
  }
}
{
  \end{list}
}
\newcommand{\mynd}[4]
{
    \begin{figure}[#2]\begin{center}
        \ifpdf
            \input{#1.pdf_t}
        \else
            \input{#1.pstex_t}
        \fi
		\ifthenelse{\equal{#3}{}}
        {
		}
		{
            \caption{#3\label{#4}}
        }
    \end{center}
    \end{figure}
}
\begin{document}

\maketitle

\begin{abstract}
We introduce and develop the theory of Newton nondegenerate local Weil
divisors $(X,0)$ in toric affine varieties.
We characterize in terms of the toric combinatorics of the Newton diagram
different properties of such singular germs: normality, Gorenstein property,
or being an
Cartier divisor in the ambient space. We discuss certain properties of their
(canonical) resolution $\widetilde{X}\to X $ and the corresponding canonical
divisor.
We provide combinatorial formulae for the delta--invariant $\delta(X,0)$ and
for the
cohomology groups $H^i(\X,\O_{\X})$ for $i>0$.
In the case
$\dim(X,0)=2$, we provide the (canonical) resolution graph from the Newton
diagram
and we also prove that  if such a Weil divisor is normal and Gorenstein,
and the link is a rational homology sphere, then the geometric genus
is given by the minimal path cohomology, a topological invariant of the link.
\end{abstract}

\tableofcontents

\section{Introduction}

\begin{block}
Hypersurface (or complete intersection) germs with nondegenerate Newton
principal part constitute
a very important family of singularities. They provide a bridge
between toric
geometry and
the combinatorics of polytopes.
The computation of their analytic and
topological invariants
serve as guiding models for the general cases, and also as testing ground  for
different
general conjectures and ideas.

On the other hand, from the point of view of the general classification
theorems
in algebraic/analytic  geometry and singularity theory, these hypersurface
germs are rather restrictive.
In particular, it is highly desired to extend such germs to a more general
setting.
Besides the algebraic/analytic motivations there are also several topological
ones too: one has to create
a flexible family, which is able to follow at analytic level different
inductive (cutting and pasting
procedures) of the topology. For example, the link of a surface singularity is
an
oriented  plumbed 3--manifold associated with a graph. In inductive proofs and
constructions it is very
efficient  to consider their splice or JSJ decomposition. This would
correspond to cutting the Newton diagram
by linear planes though their 1--faces, in this way creating non-regular cones
as well, as completely general
toric 3--folds  as ambient spaces for our germs.

The first goal of the present work is to introduce and develop the theory of
Weil divisors
in general affine toric varieties with additional Newton nondegeneracy
condition.
By such extensions we wish to cover non--Gorenstein singularities as well, or
germs which
are not necessarily Cartier divisors in their canonical ambient toric spaces.
In the toric presentation two combinatorial/geometrical  packages  are
needed:
the fan and geometry of the ambient toric variety, and the `dual fan' (as a
subdivision of the previous one)
together with the Newton polytope associated with the equations of the Weil
divisor.

In fact, we will focus on three level of invariants.

The first level is the analytic geometry of the abstract or embedded singular
germs, e.g. normality,
or being Gorenstein or isolated singularity, or being Cartier (or
${\mathbb Q}$--Cartier)
in the ambient toric variety. Furthermore, at this level we wish to
understand/determine
 several numerical sheaf--cohomological invarints as well.

The second level is the toric combinatorics. In terms of this we wish to
characterize
the above analytic properties and provide formulae for the numerical
invariants.

The third level appears explicitly in the case of curve and surface
singularities.
In the case of surfaces we construct the resolution graph (as the plumbing
graph of the link, hence as a complete topological invariant).
It is always a very interesting and difficult task to decide whether
the numerical analytic invariants can be recovered from the resolution graph.
(This is much harder than the formulae via the toric combinatorics:
the Newton polytope preserves considerably more information from the
structure of the equations than
the resolution graph.) In the last part we prove that the geometric genus of
the resolution
can be recovered from the graph. This is a new substantial step in a
project which aims
to provide topological interpretations for sheaf--cohomological invariants,
see \cite{Nem_Nico_SW,NS-hyper,Nemethi_OzsSzInv,Nemethi_ICM}
\end{block}

\begin{block}
Next we provide some additional concrete comments  and
the  detailed presentation of the sections.

After recalling some notation and results from toric geometry,
we generalize the notion of a Newton nondegenerate hypersurface in
$\C^r$ to an arbitrary Weil divisor in an affine toric variety
in \cref{s:divisors}.
These Newton nondegenerate Weil divisors can be resolved using toric
geometry similarly as in the classical case \cite{Oka}, or in
a different generalization \cite{AMGS}.
In \cref{s:curves} we consider Newton nondegenerate curves.
In \cref{s:isol} we provide conditions for Newton nondegenerate
surface singularities to be isolated, and
in \cref{s:res} we generalize Oka's algorithm \cite{Oka} to
construct a resolution of a Newton nondegenerate Weil divisor,
along with an explicit description of its resolution graph.

In \cref{s:geom_genus}, we give a formula for the
$\delta$-invariant and dimensions of cohomologies of the structure
sheaf on a resolution of a Newton nondegenerate germ
in terms of the Newton polyhedron, see \cref{thm:geom_genus}, whose
statement should have independent interest.
In particular, this yields a formula for the geometric genus.
In the classical case, this formula was given by Merle and Teissier
in \cite[Th\'eor\`eme 2.1.1]{Merle_Teissier}.

In \cref{s:can},
we give a formula for a canonical divisor on a resolution
of a Newton nondegenerate Weil divisor, as well as the canonical cycle
in the surface case, in terms of the Newton diagram, see \cref{s:can}.
This formula generalizes results of Oka \cite[\S9]{Oka}.
In the surface case,
we also prove in \cref{s:Gor} that the Gorenstein property
is identified by the
Newton polyhedron, \cref{thm:Gor}.
A similar, but weaker, condition implies that the
singularity is $\Q$-Gorenstein, but is not sufficient, as shown by
an example in \cref{rem:Gor}.

Using the above results, and a technical result verfied in
\cref{s:rem_face},
we generalize a previous result \cite{NS-hyper}
for the classical
case of Newton nondegenerate hypersurface singularities in $\C^3$, namely
that the geometric genus is determined by a computation sequence, and
is therefore topologically determined:
\end{block}

\begin{thm} \label{thm:thm}
Let $(X,0) \subset (Y,0)$ be a two-dimensional
Newton nondegenerate Weil divisor in
the affine toric ambients space $Y$. Assume that $(X,0)$ is normal
and Gorenstein, and that its link is a rational homology sphere.
Then the geometric genus $p_g(X,0)$ equals the
minimal path lattice cohomology associated with the link of $(X,0)$.
In particular, the geometric genus is determined by the topology
of $(X,0)$.
\end{thm}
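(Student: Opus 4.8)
The plan is to reduce the theorem to a combinatorial identity on the Newton diagram and then to recognise that identity as the combinatorial description of minimal path lattice cohomology; the argument follows and generalises the one of \cite{NS-hyper} for Newton nondegenerate hypersurfaces in $\C^3$, the new ingredients being the toric resolution of \cref{s:res} and the cohomology and canonical-divisor formulae of the preceding sections. First I would assemble three combinatorial models attached to the diagram. By \cref{s:res} the resolution $\widetilde X\to X$, hence the resolution graph $\Gamma$ and---since the link is a rational homology sphere---the negative definite lattice $L$ with its intersection form, are determined by the Newton diagram, all exceptional curves being rational and $\Gamma$ a tree. By \cref{thm:Gor} the Gorenstein hypothesis becomes a condition on the diagram which, with the canonical-divisor formula of \cref{s:can}, shows that the canonical cycle $Z_K$ lies in $L$ and is realised by a distinguished (``central'') monomial of the diagram. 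Finally \cref{thm:geom_genus}, the analogue of the Merle--Teissier formula \cite[Th\'eor\`eme 2.1.1]{Merle_Teissier}, expresses $p_g(X,0)=\dim H^1(\widetilde X,\O_{\widetilde X})$ as an explicit count of lattice points lying under the Newton polyhedron. The theorem then amounts to matching this count with the minimal path lattice cohomology of $\Gamma$.

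One inequality is soft. For any computation sequence $\gamma\colon 0=x_0,x_1,\dots,x_N$ in $L$, each $x_{i+1}-x_i$ a base vector, which reaches a sufficiently large cycle, the exact sequences $0\to\O_{E_{v_i}}(-x_i)\to\O_{x_{i+1}}\to\O_{x_i}\to 0$ and Riemann--Roch give $h^1(\O_{E_{v_i}}(-x_i))=\max\{0,\chi(x_i)-\chi(x_{i+1})\}$ (with $E_{v_i}$ rational) and $h^1(\O_{x_{i+1}})-h^1(\O_{x_i})=h^1(\O_{E_{v_i}}(-x_i))-\mathrm{rk}\,\delta_i$ for a coboundary $\delta_i$. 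Summing along $\gamma$ yields $p_g(X,0)=\sum_i h^1(\O_{E_{v_i}}(-x_i))-\sum_i\mathrm{rk}\,\delta_i\le\eu(\mathbb{H}^0_\gamma)$, the total drop of $\chi$ along $\gamma$; hence $p_g(X,0)$ is at most the minimal path lattice cohomology.

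The substance is therefore to exhibit a single path $\gamma_0$ with $\eu(\mathbb{H}^0_{\gamma_0})=p_g(X,0)$---which, by the soft inequality, is then automatically a minimiser. Here I would use the face-removal result of \cref{s:rem_face}. The Newton multi-filtration of $\O_{Y,0}$ by dilates of the Newton polyhedron induces a divisorial filtration on $\widetilde X$ and singles out a distinguished, monotone computation sequence $\gamma_0$ running from $0$ through $Z_K$ to a large cycle, its elementary steps being organised by successive removal of faces of the diagram. Using the toric model of $\widetilde X$ and \cref{s:rem_face} one shows that along $\gamma_0$ every coboundary $\delta_i$ vanishes: each step contributes either nothing or exactly the drop of $\chi$, with no cancellation. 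Then $p_g(X,0)=\sum_i h^1(\O_{E_{v_i}}(-x_i))=\eu(\mathbb{H}^0_{\gamma_0})$, which with the soft inequality gives $p_g(X,0)=$ minimal path lattice cohomology; topological invariance is immediate, as the right-hand side depends only on $\Gamma$, i.e. only on the link.

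I expect the main obstacle to be precisely this vanishing of the coboundaries $\delta_i$ along $\gamma_0$, equivalently the term-by-term identification of the Merle--Teissier-type lattice-point count of \cref{thm:geom_genus} with the sum of $\chi$-drops along $\gamma_0$; this requires the full strength of \cref{s:rem_face} and careful bookkeeping of the toric and Newton combinatorics. The Gorenstein hypothesis enters exactly so that $Z_K$ is integral and carries the central monomial around which the bookkeeping is organised, while the rational-homology-sphere hypothesis is what makes minimal path lattice cohomology well defined and a graph-theoretic---hence topological---invariant of the link.
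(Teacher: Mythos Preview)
Your high-level architecture matches the paper: prove the soft inequality $p_g \leq$ (minimal path cohomology) by a generic computation-sequence argument (\cref{cor:pg_bound}), then exhibit one specific sequence along which equality holds. But you have misidentified the role of \cref{s:rem_face} and left the construction of the distinguished sequence too vague to count as a proof.

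In the paper, \cref{s:rem_face} is a \emph{preliminary reduction}, not the engine of the equality. Its sole purpose is to justify the standing assumption of \cref{block:diagonal_newton} that $m_n(Z_K)\geq 1$ for every node $n$: whenever this fails, the diagram has a removable $B_1$-facet whose removal preserves the link, $p_g$, and the Gorenstein-pointed property (\cref{prop:removable}, \cref{prop:ZK-E_geq_0}). The computation sequence is \emph{not} ``organised by successive removal of faces of the diagram'', and \cref{s:rem_face} plays no part in showing the coboundaries vanish.

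The actual sequence---the \emph{diagonal computation sequence} of \cref{def:comp_seq}---is built from two ingredients you do not mention. First, the generalised Laufer operator $x(\cdot)$ of \cref{block:Laufer_op} fixes bamboo multiplicities from node multiplicities; Laufer steps have $d_i<0$ and contribute nothing to the sum. Second, at each node step one increments the node $n$ minimising the ratio $m_n(\bar Z_i)/m_n(Z_K-E)$, so that the sequence sweeps the region between the Gorenstein point $p$ and $\Gamma(f)$ by parallel slices $H_i$ of the cones $C_n=\convx(F_n\cup\{p\})$. The term-by-term identification you flag as the main obstacle is \cref{lem:polygonal_point_count}: for a lattice polygon with no interior lattice points (guaranteed by the rational-homology-sphere hypothesis, \cref{rem:qhs3_oka}), the number of lattice points in a subunit dilate equals $\max\{0,a+1\}$ for an explicit affine sum $a$, which is then matched with $(-\bar Z_i,E_{\bar v(i)})$ via \cref{lem:m_nbr} and the $\Q$-Gorenstein-pointed identity \cref{prop:QGor}\cref{it:Gor_ellG_Q}. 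Equality then follows from the sandwich $p_g=\sum_i|F_i^-\cap M|\geq\sum_i\max\{0,(-\bar Z_i,E_{\bar v(i)})+1\}\geq p_g$; surjectivity of the restriction maps is a \emph{consequence}, not something one proves directly. Your ``dilates of the Newton polyhedron'' intuition points in the right direction, but the ratio-minimising rule and the empty-polygon lattice-point lemma are the concrete mechanisms you are missing.
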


\section{Toric preliminaries} \label{s:toric}

In this section, we will recall some definitions and statements from
toric geometry. For an introduction, see e.g.
\cite{Fulton_toric} and \cite{Danilov_toric}.

\begin{block}
Let $N$ be a free Abelian group of rank $r \in \N$ and set
$M = N^\vee = \Hom(N,\Z)$, as well as $M_\R = M \otimes \R$ and
$N_\R = N \otimes \R$. If $\sigma \subset N_\R$ is a
cone, the \emph{dual cone} is defined as
\[
  \sigma^\vee = \set{u\in M_\R}{\fa{v\in\sigma}{\langle u,v \rangle \geq 0}}.
\]
We also set
\[
  \sigma^\perp = \set{u\in M_\R}{\fa{v\in\sigma}{\langle u,v \rangle = 0}}.
\]
We will always assume cones to be finitely generated and rational.
To a cone $\sigma\subset N_\R$ we associate
the semigroup $S_\sigma$, the algebra $A_\sigma$ and the affine
variety $U_\sigma$ by setting
\[
  S_\sigma = \sigma^\vee\cap M,  \quad
  A_\sigma = \C[S_\sigma],       \quad
  U_\sigma = \Spec(A_\sigma).
\]
A variety of the form $U_\sigma$ is called an \emph{affine toric variety}. It
has a canonical action of the $r$-torus $\T^r = (\C^*)^r$.
\end{block}

\begin{block}
A \emph{fan} $\triangle$ in $N$ is a collection of cones in $N_\R$
satisfying the following two conditions.
\begin{enumerate}
\item
Any face of a cone in $\triangle$ is in $\triangle$.
\item
The intersection of two cones in $\triangle$ is a face of each of them.
\end{enumerate}
The \emph{support} of a fan $\triangle$ is defined as
$|\triangle| = \cup_{\sigma\in\triangle} \sigma$.
If $\tau,\sigma\in\triangle$ and $\tau \subset \sigma$, then we
get a morphism $U_\tau \to U_\sigma$. These morphisms form a direct system,
whose limit is denoted by $Y_\triangle$ and called the associated
\emph{toric variety}.
The actions of $\T^r$ on the affine varieties $U_\sigma$ for
$\sigma\in\triangle$ glue together to form an action on $Y_\triangle$.
Note that the canonical maps $U_\sigma \to Y_\triangle$ are open inclusions
(note also that the notation $Y_\triangle$ differs from \cite{Fulton_toric}).

Let $\tilde\triangle$ be another fan in a lattice $\tilde N$
and let $\phi:\tilde N \to N$ be a linear map.
Assume that for any
$\tilde\sigma\in \tilde\triangle$ there is a $\sigma \in \triangle$ so that
$\phi(\tilde\sigma)\subset\sigma$.
This induces maps $U_{\tilde\sigma} \to U_\sigma \to Y_\triangle$,
which glue together to form a map
$Y_{\tilde \triangle} \to Y_{\triangle\vphantom{\tilde \triangle}}$.
\end{block}

\begin{lemma}[Proposition, p. 39, \cite{Fulton_toric}] \label{lem:proper}
Let $\tilde\triangle$ and $\triangle$ be fans as above. The
induced map
$Y_{\tilde\triangle} \to Y_{\triangle\vphantom{\tilde \triangle}}$
is proper if and only
if $\phi^{-1}(|\triangle|) = |\tilde\triangle|$.
\end{lemma}

\begin{block}
For any $p\in M$, there is an associated rational function on $U_\sigma$.
These glue together to form a rational function $x^p$ on $Y_\triangle$. We
refer to these functions as \emph{monomials}. A monomial
$x^p$ is a regular function on $Y_\triangle$ if
$p\in |\triangle|^\vee = \cap_{\sigma\in\triangle} \sigma^\vee$.
A map $\phi:\tilde N \to N$ as above induces $\phi^*:M \to \tilde M$.
The monomial $x^p$ on $Y_\triangle$ then pulls back to
$x^{\phi^*(p)}$.
\end{block}

\begin{block} \label{block:transverse}
For $\sigma \in \triangle$, let $O_\sigma$ be the closed subset
of $U_\sigma$ defined by the ideal generated by monomials $x^p$
where $p\in(\sigma^\vee \setminus \sigma^\perp)\cap M$. We identify this
set with its image in $Y_\triangle$. The closure of $O_\sigma$ in
$Y_\triangle$ is denoted by $V(\sigma)$.
In the case when $\sigma$ is a ray, $V(\sigma)$ is a Weil divisor and we
write $D_\sigma = V(\sigma)$.
The orbits of the $\T^r$ action on $Y_\triangle$ are precisely the sets
$O_\sigma$ for $\sigma\in \triangle$. Furthermore, we have (as sets)
\[
  U_\sigma  = \coprod_{\tau \subset \sigma} O_\tau,\quad
  V(\sigma) = \coprod_{\sigma \subset \tau} O_\tau,\quad
  O_\sigma  = V(\sigma) \setminus \bigcup_{\sigma\subsetneq\tau} V(\tau).
\]
Let $N_\sigma$ be the subgroup of $N$ generated by $\sigma\cap N$ and
define
\[
  N(\sigma) = N/N_\sigma,\quad
  M(\sigma) = \sigma^\perp \cap M,\quad
  M_\sigma  = M/M(\sigma).
\]
Note that this way we have $M_\sigma^{\phantom{\vee}} \cong N_\sigma^\vee$ and
$M(\sigma) \cong N(\sigma)^\vee$.
Let $\pi_\sigma:N_\R \to N_\R(\sigma)$ be the canonical projection and set
\[
  \Star(\sigma) = \set{\pi_\sigma(\tau)}{\sigma \subset \tau \in \triangle}.
\]
This set is a fan in $N(\sigma)$, whose associated toric variety
is identified canonically with the orbit closure $V(\sigma)$.
Similarly, let $\varpi_\sigma:M \to M_\sigma$ be the canonical
projection. Assuming $\sigma \in \triangle$ has dimension
$s$, we have
$(U_\sigma, O_\sigma)
\cong
(Y_\sigma\times(\C^*)^{r-s}, (\{0\}\times(\C^*)^{r-s})$. In particular,
$O_\sigma \subset Y_\triangle$ has $Y_\sigma$ as a transverse type.
\end{block}

\begin{definition}
\begin{blist}

\item
For a cone $\Sigma\subset N_\R$, let $\triangle_\Sigma$ denote the fanfan
consisting of all the faces of $\Sigma$. We also write
$Y_\Sigma$ instead of $Y_{\triangle_\Sigma}$.

\item
If $\triangle$ is a fan and $i\in\N$, define
\[
  \triangle^{(i)} = \set{\sigma\in\triangle}{\dim \sigma = i}.
\]

\item
A \emph{regular cone} (resp. \emph{simplicial cone}) is a cone generated
by a subset of an integral (resp. rational) basis of $N$.

\item
A \emph{subdivision} of a fan $\triangle$ is a fan $\tilde\triangle$
so that $|\tilde\triangle| = |\triangle|$ and
each cone in $\triangle$ is a union of cones in $\tilde\triangle$.
A \emph{regular subdivision} is a subdivision consisting of regular cones.

\item
If $\Sigma\subset N_\R$ is a cone and $\triangle$ is a subdivision
of $\triangle_\Sigma$, denote by $\triangle^*$ the fan consisting of
$\sigma\in\triangle$ for which $\sigma\subset\partial\Sigma$.
Here we see $\partial \Sigma$ as the union of the proper faces of $\Sigma$.
As a result, $\triangle^*$ is a subdivision of the fan
$\triangle_\Sigma \setminus \{\Sigma\}$.

\item
Let $\triangle_1,\triangle_2$ be subdivisions of a fan $\triangle$. We say
that $\triangle_2$ \emph{refines} $\triangle_1$ if $\triangle_2$ is
a subdivision of $\triangle_1$, or that $\triangle_2$ is a
\emph{refinement} of $\triangle_1$.

\item
Let $\triangle$ be a fan with a subdivision $\triangle_1$ and let
$\sigma \in \triangle$. The \emph{restriction} of $\triangle_1$ to $\sigma$
is defined as
\[
  \triangle_1|_\sigma = \set{\tau \in \triangle_1}{\tau \subset \sigma}.
\]

\end{blist}
\end{definition}

\section{Analytic Weil divisors in affine toric varieties} \label{s:divisors}

\begin{block}
Throughout this section, as well as the following sections, we will assume
that $N$ has rank $r$ and
that $\Sigma$ is an $r$-dimensional, rational, finitely generated, strictly
convex cone in $N_\R$. This means that $\Sigma \subset N_\R$
is generated over $\R_{\geq 0}$ by a finite set of elements from
$N$, which generate $N$ as a vectorspace, and that $\Sigma^\perp = \{0\}$.
In particular, the orbit $O_\Sigma$ consists of a single point, which we denote
by $0$, and refer to as the \emph{origin}.
Let $Y_\Sigma$ be the affine toric variety
associated with $\Sigma$.

Any subdivision $\triangle$ of $\triangle_\Sigma$ induces a
modification $\pi_\triangle:Y_\triangle \to Y_\Sigma$.

In the sequel we denote by $(Y_\Sigma,0)$ the analytic germ of $Y_\Sigma$ at 0,
and usually we will denote by $Y$ a (small Stein) representative of $(Y_\Sigma,0)$.
(Hence $(Y,0)=(Y_\Sigma,0)$.) If $\pi_\Delta $ is a toric modification, in the
discussions regarding the local analytic germ $(Y,0)$, we will use the same notation
$Y_\Delta $ for
 $\pi_\Delta^{-1}(Y)$  and $D_\sigma$ for $D_\sigma\cap \pi_\Delta^{-1}(Y)$.
Similarly, $O_\sigma$  might stay for $O_\sigma\cap Y \subset Y$ as well.
If in some argument we really wish to stress the differences, we write $Y^{loc}_\Delta$,
$D^{loc}_\sigma$, $O^{loc}_{\sigma}$ for the local objects.

Assume that   $f\in \O_{Y,0}$ is the germ of  a  holomorphic function at the origin,
which has an expansion
\begin{equation} \label{eq:expansion}
  f(x) = \sum_{p\in S_\Sigma} a_p x^p,\quad
  a_p \in \C.
\end{equation}
Then $(\{f=0\},0)\subset (Y,0)$  is the germ of an analytic space.
We set $\supp(f) = \set{p\in S_\Sigma}{a_p \neq 0}$  too.
\end{block}

\begin{definition}
The \emph{Newton polyhedron} of $f$ with respect to $\Sigma$ is the polyhedron
\[
  \Gamma_+(f) = \convx(\supp(f) + \Sigma^\vee),
\]
where $\convx$ denotes the convex closure in $M_\R$. The union of compact
faces of $\Gamma_+(f)$ is denoted by $\Gamma(f)$ and is called the
\emph{Newton diagram} of $f$ with respect to $\Sigma$.
\end{definition}

\begin{block} \label{block:fan_comb}
{\bf The fan $\triangle_f$ and some combinatorial properties.}
It follows from definition
that $\Sigma$ is precisely the set of those linear functions on $M_\R$ having
a minimal value on $\Gamma_+(f)$. Denote by $F(\ell)$ the minimal set
of $\ell\in \Sigma$ on $\Gamma_+(f)$.
For $\ell_1, \ell_2 \in \Sigma$, say that $\ell_1 \sim \ell_2$ if
and only if $F(\ell_1) = F(\ell_2)$. Then $\sim$ is an equivalence
relation on $\Sigma$ having finitely many equivalence classes, each of whose
closure is a finitely generated rational strictly convex cone.
These cones form a fan, which we will denote by $\triangle_f$.
We refer to $\triangle_f$ as the \emph{dual fan} associated with $f$
and $\Sigma$.
Note that $\triangle_f$ refines $\triangle_\Sigma$.

For any $\sigma \in \triangle_f$, the face $F(\ell)$ is independent of
the choice of $\ell \in \sigma^\circ$, where $\sigma^\circ \subset \sigma$
is the relative interior, that is, the topological interior of $\sigma$
as a subset of $N_{\sigma,\R}$.
For $\sigma \in \triangle_f^{(1)}$, the set $\sigma\cap N$ is
a semigroup generated by a unique element, which we denote by $\ell_\sigma$.
For a series
\[
  g\in\O_{Y,0}[x^M] = \set{x^p h}{p\in M,\,h\in\O_{Y,0}},
\]
the support $\supp(g)$ is defined similarly as above,
and for $\sigma \in \triangle_f^{(1)}$ we set
\[
  \wt_\sigma(g) = \min\set{\ell_\sigma(p)}{p\in\supp(g)}.
\]
One verifies that for any such $g$
\begin{equation} \label{eq:van_order}
\mbox{the vanishing order of $g$  along $D_\sigma \subset Y_{\triangle_f}$
is exactly  $\wt_\sigma(g)$}.
\end{equation}
\end{block}

\begin{definition} \label{def:Fflm}
Let $\sigma \in \triangle_f$ and $\ell\in \sigma^\circ$.
Define
\[
  F_\sigma = F(\ell),\quad
  f_\sigma = \sum_{p\in F_\sigma} a_p x^p.
\]
\end{definition}
If $\sigma' \subset N_\R$ is a cone, and $\sigma'^\circ \subset \sigma^\circ$
(for example, if $\sigma'$ is an element of a refinement of $\triangle_f$),
then we set $F_{\sigma'} = F_\sigma$.

If $\sigma \subset \Sigma$ is one dimensional,
set $m_\sigma = \wt_\sigma(f)$.
Thus, $\ell_\sigma|_{F_\sigma} \equiv m_\sigma$.
Note that we have
\[
  \Gamma_+(f) = \set{ u\in M_\R }
                    {\fa{\sigma \in \triangle_f^{(1)}}
                        {\ell_\sigma(u) \geq m_\sigma}}.
\]
This can be compared with the following set.
\begin{definition} \label{def:Gamma_star}
Let
\[
  \Gamma_+^*(f) = \set{ u\in M_\R }
                      {\fa{\sigma \in \triangle_f^{*(1)}}
                          {\ell_\sigma(u) \geq m_\sigma}},
\]
where $\triangle_f^{*(1)}$ denotes the set of rays in $\triangle_f$
contained in the boundary of $\Sigma$.
\end{definition}

\begin{definition}
Denote by $(X,0) \subset (Y,0)$ the union of those local primary components of the germ
defined by $f$ (with their non-reduced structure),  which are not invariant by the torus action.
If $f$ is  reduced along the non-invariant components, this means the following.  Let
$U\subset Y$ be a neighbourhood of the origin on which $f$ converges and
let $X' \subset U$ be defined by $f = 0$. Then $X$ is the closure of
$X' \setminus \cup\set{D_\sigma}{\sigma\in\triangle_\Sigma^{(1)}}$ in $U$.
\end{definition}

\begin{rem} (i)
For any $p\in M$, the function $x^p f$ defines the same germ $(X,0)$.
Thus, we may allow
$f \in \O_{Y,0}[x^M] = \set{x^p g}{p\in M,\,g\in\O_{Y,0}}$ as well.

(ii)
 Since the divisors $\{D_\sigma\,:\, \sigma\in \triangle_\Sigma^{(1)}\}$ are torus-invariant,
 the divisor of $f$ in $Y_{\Sigma}$ is $X+\sum_\sigma m_\sigma D_\sigma$.
\end{rem}

\begin{prop} \label{lem:ideal_gen}
\begin{blist}
\item \label{it:contains_orig_orig}
We have $\Gamma_+(f) = p + \Sigma^\vee$
for some $p\in M$
if and only if $\triangle_f = \triangle_\Sigma$
if and only if the germ
$X$ at $0$ is the empty germ.

\item \label{it:contains_orig_orbit}
For a $\sigma \in \triangle_\Sigma$,
we have $O_\sigma \subset X$ if and only if the normal fan $\triangle_f$
subdivides $\sigma$ into smaller cones, i.e.
$\triangle_f|_\sigma \neq \triangle_\sigma$.

\item \label{it:ideal_orbit}
The ideal $I_X \subset \O_{Y,0}$ which defines $(X,0)$ in $(Y,0)$,
is generated
by the functions $x^p f$ for $p\in M$ satisfying
$\ell_\sigma(p) + m_\sigma \geq 0$ for all
$\sigma\in \triangle^{(1)}_{\Sigma}$.
\end{blist}
\end{prop}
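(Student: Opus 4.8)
The three statements are linked, and the plan is to establish \cref{it:ideal_orbit} first, then use it to obtain \cref{it:contains_orig_orbit}, and finally deduce \cref{it:contains_orig_orig}.

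\emph{Statement \cref{it:ideal_orbit}.} Since $Y_\Sigma=\Spec\C[S_\Sigma]$ is normal, $\O_{Y,0}$ is a normal domain, so the principal ideal $(f)$ is unmixed: its primary decomposition $(f)=\bigcap_iQ_i$ has no embedded component, $\sqrt{Q_i}=\mathfrak p_i$ is of height one, and $Q_i=\set{h}{\ord_{\mathfrak p_i}(h)\geq n_i}$ with $n_i=\ord_{\mathfrak p_i}(f)$, because $\O_{Y,0}$ localized at $\mathfrak p_i$ is a discrete valuation ring. The torus invariant $\mathfrak p_i$ are precisely the ideals of the divisors $D_\sigma$, $\sigma\in\triangle_\Sigma^{(1)}$, along which $f$ vanishes to order $m_\sigma$ by \eqref{eq:van_order}; hence, as in Remark (ii), $\div(f)=X+\sum_\sigma m_\sigma D_\sigma$ as Weil divisors and $I_X=\set{h\in\O_{Y,0}}{\div(h)\geq X}$. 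For $h\in\O_{Y,0}$ the condition $\div(h)\geq X$ is equivalent to $\div(h/f)+\sum_\sigma m_\sigma D_\sigma\geq 0$, i.e.\ to $h/f$ being a section over $Y$ of the divisorial sheaf $\O_{Y}(\sum_\sigma m_\sigma D_\sigma)$. On an affine toric variety this space of sections is the convergent span of the monomials $x^p$ with $\ell_\sigma(p)+m_\sigma\geq0$ for all $\sigma\in\triangle_\Sigma^{(1)}$; writing $P$ for the set of such $p$, Gordan's lemma shows $P$ is finitely generated over $S_\Sigma$, and regrouping a convergent series along generators of $P$ identifies this space with $\sum_{p\in P}x^p\O_{Y,0}$. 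Therefore $I_X=f\cdot\sum_{p\in P}x^p\O_{Y,0}=(x^pf\mid p\in P)$. It remains to note that $x^pf=\sum_{q\in\supp(f)}a_q x^{p+q}$ lies in $\O_{Y,0}$ exactly when $p+q\in S_\Sigma$ for every $q\in\supp(f)$, which, since $\min_{q\in\supp(f)}\ell_\sigma(q)=m_\sigma$, is exactly the condition $p\in P$.

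\emph{Statement \cref{it:contains_orig_orbit}.} Fix $\sigma\in\triangle_\Sigma$. As $X$ is closed, $O_\sigma\subset X$ iff $V(\sigma)\subseteq X$ iff $I_X\subseteq I_{V(\sigma)}$, where $I_{V(\sigma)}$ is the prime ideal generated by the monomials $x^p$ with $p\in S_\Sigma\setminus\sigma^\perp$ (\cref{block:transverse}). By \cref{it:ideal_orbit} this holds iff each generator $x^pf$ ($p\in P$) vanishes on $O_\sigma$; since $x^pf|_{O_\sigma}$ is the sum of those $a_qx^{p+q}$ with $p+q\in\sigma^\perp$ and distinct such monomials restrict to linearly independent characters of the torus $O_\sigma$, we obtain
\[
  O_\sigma\subset X \iff (\supp(f)+p)\cap\sigma^\perp=\emptyset \ \text{ for all }\ p\in P.
\]
The negation asserts the existence of $q\in\supp(f)$ and $p\in P$ with $q+p\in\sigma^\perp$. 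If such $p,q$ exist, then for every ray $\tau\subseteq\sigma$ we have $\ell_\tau(q)+\ell_\tau(p)=0$, and $\ell_\tau(q)\geq m_\tau$, $\ell_\tau(p)\geq-m_\tau$ force $\ell_\tau(q)=m_\tau$. Conversely, if $q\in\supp(f)$ satisfies $\ell_\tau(q)=m_\tau$ for all rays $\tau\subseteq\sigma$, then the polyhedron $\set{p\in\sigma^\perp-q}{\ell_\tau(p)\geq-m_\tau\ \forall\,\tau\in\triangle_\Sigma^{(1)}}$ has recession cone containing the face $\Sigma^\vee\cap\sigma^\perp$ of $\Sigma^\vee$, which is full-dimensional in $\sigma^\perp$, so the polyhedron is nonempty and contains a lattice point $p\in P$ with $q+p\in\sigma^\perp$. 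Hence $O_\sigma\not\subset X$ iff $\supp(f)$ meets $\bigcap_{\tau\subseteq\sigma}\set{u}{\ell_\tau(u)=m_\tau}$. Finally, this last condition is equivalent to $\sigma\in\triangle_f$, i.e.\ to $\triangle_f|_\sigma=\triangle_\sigma$: if $\sigma\in\triangle_f$ then for each ray $\tau\subseteq\sigma$ one has $F_\sigma\subseteq F(\ell_\tau)=\Gamma_+(f)\cap\set{u}{\ell_\tau(u)=m_\tau}$, and the face $F_\sigma$ has an extreme point in $\supp(f)$ (as $\Sigma^\vee$ is strictly convex), giving the desired $q$; conversely, such a $q$ shows $G:=\Gamma_+(f)\cap\bigcap_{\tau\subseteq\sigma}\set{u}{\ell_\tau(u)=m_\tau}$ is a nonempty face, that $F(\ell)=G$ for every $\ell\in\sigma^\circ$, and hence $\sigma$ is contained in, so a face of, the cone of $\triangle_f$ associated with $G$.

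\emph{Statement \cref{it:contains_orig_orig}.} The polyhedron $\Gamma_+(f)$ equals $p+\Sigma^\vee$ for some $p\in M$ precisely when it has a single vertex; this vertex necessarily lies in $\supp(f)\subset M$, and since any $\ell$ in the relative interior of $\Sigma$ attains its minimum on $\Gamma_+(f)$ at a vertex, this occurs precisely when $F(\ell)$ is that same vertex for all such $\ell$, i.e.\ precisely when $\triangle_f=\triangle_\Sigma$. If $\triangle_f=\triangle_\Sigma$, then $f=x^pu$ with $u$ a unit, so $\{f=0\}$ is a union of torus invariant divisors and $X$ is the empty germ. Conversely, if $X$ is empty, then by \cref{it:contains_orig_orbit} no $\sigma\in\triangle_\Sigma$ is subdivided in $\triangle_f$; applying this to $\sigma=\Sigma$ gives $\triangle_f=\triangle_\Sigma$.

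\emph{Main obstacle.} The crux is \cref{it:ideal_orbit}: identifying the scheme-theoretic ideal $I_X$, defined through the primary decomposition of $(f)$, with the divisorial ideal $\set{h}{\div(h)\geq X}$, and then matching the latter with the explicit generators $x^pf$. This rests on the normality of $Y_\Sigma$ (to rule out embedded primes and to pass to discrete valuation rings at the height one primes) together with the toric computation of the global sections of a divisorial sheaf on an affine toric variety; the remaining input is the finite generation of $P$ over $S_\Sigma$ and the compatibility of this with convergence. Once $I_X$ is pinned down, \cref{it:contains_orig_orbit} and \cref{it:contains_orig_orig} are bookkeeping with the orbit--cone correspondence and the combinatorics of the Newton polyhedron.
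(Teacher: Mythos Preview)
Your proof is correct and takes a genuinely different route from the paper. The paper proves the three parts in the order \cref{it:contains_orig_orig}, \cref{it:contains_orig_orbit}, \cref{it:ideal_orbit}: for \cref{it:contains_orig_orig} it simply observes that $\Gamma_+(f)=p+\Sigma^\vee$ iff $f$ is a monomial times a unit; for \cref{it:contains_orig_orbit} it reduces to \cref{it:contains_orig_orig} by passing to a toric transverse slice (so that the question becomes whether the projected Newton polygon is trivial); and for \cref{it:ideal_orbit} it localizes, writing $I_X = f\cdot\O_{Y,0}[x^M]\cap\O_{Y,0}$ and decomposing an arbitrary $g=x^rhf$ into pieces with disjoint support. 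You instead establish \cref{it:ideal_orbit} first via the divisorial description $I_X=\{h:\div(h)\ge X\}$ together with the toric computation of sections of $\O_Y(\sum m_\sigma D_\sigma)$, then deduce \cref{it:contains_orig_orbit} by testing the generators $x^pf$ against the orbit ideal and translating the resulting lattice condition into the fan condition, and finally read off \cref{it:contains_orig_orig}. Your approach is more uniformly combinatorial and avoids the transverse-slice geometry, at the cost of a longer verification for \cref{it:contains_orig_orbit}; the paper's approach for \cref{it:ideal_orbit} is slightly more elementary since it avoids the divisorial-sheaf machinery.

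Two minor points of tightening. In \cref{it:contains_orig_orbit}, the inference ``the polyhedron has full-dimensional recession cone, so it is nonempty'' is not valid in general; you should instead note that $-q$ lies in the affine subspace $\sigma^\perp-q$ and that adding a sufficiently large integral multiple of an interior lattice point of $\Sigma^\vee\cap\sigma^\perp$ produces a lattice point satisfying all the inequalities (for rays $\tau\subset\sigma$ the constraint is automatically an equality, and for the remaining rays $\ell_\tau$ is strictly positive on that relative interior). Similarly, the step ``$\sigma$ is contained in, so a face of, the cone $\sigma_G\in\triangle_f$ associated with $G$'' deserves one more sentence: since $\triangle_f$ refines $\triangle_\Sigma$, there is $\sigma'\in\triangle_\Sigma$ with $\sigma\subset\sigma_G\subset\sigma'$, and a supporting functional cutting out the face $\sigma$ of $\sigma'$ restricts to one cutting out $\sigma$ as a face of $\sigma_G$.
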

\begin{proof}
Statement \cref{it:contains_orig_orig}
is clear, since $\Gamma_+(f)$ is of the form $p+\Sigma^\vee$
if and only if $f$ is a product of a monomial and a unit in $\O_{Y,0}$.

Statement \cref{it:contains_orig_orbit} follows from
\cref{it:contains_orig_orig}, and the fact that the intersection
of $X$ and a generic transverse space $Y_\sigma$
to $O_\sigma$
has Newton polygon $\varpi_\sigma(\Gamma(f))$, cf. \cref{block:transverse}.

\cref{it:ideal_orbit}
Assuming the given conditions on $p$, the function $x^p f$ is
meromorphic and has no poles. Since $Y$ is normal, $x^p f$ is
analytic and vanishes on $X$. As a result, $x^p f \in I_X$.

To show that these generate $I_X$, take $g\in I_X$.
We must show that $g = hf$, with $h\in \O_{Y,0}[x^M]$ and
$\ell_\sigma(p) + m_\sigma \geq 0$ for $p\in \supp(h)$.

Let $I_{X,M}$ be the localization of $I_X$ along the invariant divisors,
that is, the ideal of meromorphic function germs on $(Y,0)$,
regular on the open torus and vanishing on $X$.
It follows that $I_{X,M} = f\cdot\O_{Y,0}[x^M]$
and $I_X = I_{X,M} \cap \O_{Y,0}$.

Thus, $g =x^r hf$ for some $h\in\O_{Y,0}$ and $r\in M$.
Then, there exist finite families $(h_i)_i$ of units in $\O_{Y,0}$ and
exponents $(p_i)_i$ in $M$ so that $x^r h = \sum_i x^{p_i} h_i$
and the support of $x^r h$ is the disjoint union of the supports of
$x^{p_i} h_i$.
Let us take any $\sigma\in \triangle ^{(1)}_\Sigma$.
The condition on disjointness of supports gives
\[
  \min_i \wt_\sigma x^{p_i} h_i f
  = \wt_\sigma x^r h f
  = \wt_\sigma g
  \geq 0.
\]
As a result, we have $\ell_\sigma(p_i) + m_\sigma \geq 0$ for all $i$.
The result follows.
\end{proof}

\begin{definition} \label{def:pointed}
Let $f$ and $\triangle_f$ be as above.
We say that $\Gamma_+(f)$, or $f$, is ($\Q$-)\emph{pointed} if
there exists a $p\in M$ ($p\in M_\Q)$ such that
$\ell_\sigma(p) = m_\sigma$ for all $\sigma \in \triangle_\Sigma^{(1)}$.
\end{definition}

\begin{prop} \label{prop:pg_Cartier}
\begin{blist}

\item
If $\Sigma$ is regular (resp. simplicial),
then any Newton polyhedron (w.r.t. $\Sigma$) is pointed
at some $p\in M$
(resp. $p \in M_\Q$).

\item \label{it:pg_Cartier}
$f$ is pointed at $p\in M$ if and only if
 $(X,0)$ in  $(Y,0)$ is defined by
a single equation $x^{-p}f$ (cf.  \cref{lem:ideal_gen}). In other words,  $f$ is pointed  if and only if
$(X,0)$ is a Cartier divisor in $(Y,0)$.

\item $f$ is pointed at $p\in M_{\Q}$ if and only if
$(X,0)$ is a $\Q$-Cartier divisor in $(Y,0)$.

\end{blist}
\end{prop}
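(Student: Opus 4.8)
The plan is to handle each of the three items largely in parallel, since (i) is a special case of the pointedness analysis and (ii)–(iii) are two sides of the same divisorial computation, with the only difference being whether we work over $\Z$ or over $\Q$.

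First I would prove (i). If $\Sigma$ is simplicial, its rays $\sigma_1,\dots,\sigma_r \in \triangle_\Sigma^{(1)}$ have primitive generators $\ell_{\sigma_1},\dots,\ell_{\sigma_r}$ that form a $\Q$-basis of $N_\Q$; hence the pairing map $M_\Q \to \Q^r$, $p \mapsto (\ell_{\sigma_1}(p),\dots,\ell_{\sigma_r}(p))$, is an isomorphism. So the system $\ell_{\sigma_i}(p) = m_{\sigma_i}$ has a (unique) solution $p \in M_\Q$, which proves the simplicial case. If moreover $\Sigma$ is regular, the $\ell_{\sigma_i}$ form a $\Z$-basis of $N$, so that map is an isomorphism $M \xrightarrow{\sim} \Z^r$ and the solution lies in $M$. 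This gives (i).

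Next, (ii). The forward direction: if $f$ is pointed at $p \in M$, then by \cref{lem:ideal_gen}\cref{it:ideal_orbit} the element $x^{-p}f$ satisfies $\ell_\sigma(-p) + m_\sigma = 0 \geq 0$ for every $\sigma \in \triangle_\Sigma^{(1)}$, so $x^{-p} f \in I_X$. Conversely, any generator $x^q f$ of $I_X$ has $\ell_\sigma(q) + m_\sigma \geq 0$, i.e. $\ell_\sigma(q) \geq \ell_\sigma(-p)$ for all $\sigma$; since $\Sigma = \sum_\sigma \R_{\geq 0}\ell_\sigma$, this means $q - (-p) \in \Sigma^\vee \cap M$, so $x^q f = x^{q+p} \cdot (x^{-p} f)$ with $x^{q+p}$ regular on $(Y,0)$. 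Hence $x^{-p}f$ generates $I_X$, so $I_X$ is principal and $(X,0)$ is Cartier. For the reverse implication I would argue that if $(X,0)$ is Cartier, then — using \cref{it:van_order} (\cref{eq:van_order}) applied on a suitable toric modification, or more elementarily the fact that a single local equation $h$ for $X$ differs from $f$ by a meromorphic unit on the torus — the principal generator $h$ of $I_X$ must be of the form $x^{-p}f$ (up to a unit in $\O_{Y,0}$) for some $p \in M$, since $I_{X,M} = f \cdot \O_{Y,0}[x^M]$ forces $h \in x^M f$ and regularity of $h$ together with minimality (no proper monomial factor, as $h$ generates) forces the weights $\ell_\sigma(-p) + m_\sigma$ to vanish; that is exactly pointedness at $p$.

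Finally, (iii) is the $\Q$-analogue: $(X,0)$ is $\Q$-Cartier iff some multiple $kX$ ($k \in \N$) is Cartier, which by (ii) applied to $f^k$ (whose Newton data is $k m_\sigma$) happens iff $f^k$ is pointed at some $p \in M$, i.e. $\ell_\sigma(p) = k m_\sigma$ for all $\sigma$; dividing by $k$, this is exactly pointedness of $f$ at $p/k \in M_\Q$. I expect the main obstacle to be the reverse direction of (ii) (and hence of (iii)): one must go from the abstract Cartier hypothesis — a principal ideal $I_X = (h)$ — to the concrete assertion that $h$ can be taken as a monomial times $f$, which requires knowing that $I_{X,M}$, the localization along the invariant divisors, is exactly $f\cdot\O_{Y,0}[x^M]$ (established in the proof of \cref{lem:ideal_gen}) and then an argument that a principal ideal inside this localized module, when intersected back with $\O_{Y,0}$, has a generator with no superfluous monomial factor, so that all the weights are forced to their minimal value $-m_\sigma$. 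I would isolate this as the one genuinely nontrivial step; everything else is linear algebra over $\Z$ and $\Q$ together with bookkeeping of vanishing orders.
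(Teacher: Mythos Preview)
Your treatment of (i), the forward direction of (ii), and (iii) is correct and matches the paper. The gap is in the reverse direction of (ii).

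You write that $I_{X,M} = f\cdot\O_{Y,0}[x^M]$ ``forces $h \in x^M f$''. It does not: it only gives $h = gf$ for some $g \in \O_{Y,0}[x^M]$, and nothing so far forces $g$ to be a \emph{single monomial} times a unit. What you actually need is: if $g \in \O_{Y,0}[x^M]$ has divisor $-\sum_\sigma m_\sigma D_\sigma$ supported on the invariant divisors, then $g = x^{-p}\cdot(\text{unit})$ for some $p \in M$. Equivalently: any torus-invariant Weil divisor that is principal in the \emph{local analytic} germ $(Y,0)$ already lies in the image of $M \to \Div_{\T}(Y)$. This is exactly the statement to be proved, and your ``minimality (no proper monomial factor)'' argument does not establish it---a generator of a principal ideal can perfectly well fail to be a monomial multiple of $f$ without contradicting minimality, since a non-monomial $g$ need not have a monomial factor.

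The paper closes this gap by comparing the algebraic and analytic class groups. One resolves $Y$ by a smooth subdivision $\tilde\triangle_f$; rationality of the toric germ gives $H^{\geq 1}(\Yt,\O_{\Yt}) = H^{\geq 1}(\Ytloc,\O_{\Ytloc}) = 0$, hence $\Pic = H^2(\,\cdot\,,\Z)$ for both, and weighted homogeneity of $Y$ identifies these. Since $\Yt$ is smooth, $A_{r-1}(\Yt)\cong\Pic(\Yt)$, and modding out exceptional divisors yields $A_{r-1}(Y)\cong A_{r-1}(\Yloc)$. Now the Cartier hypothesis makes $\sum m_\sigma D^{\rm loc}_\sigma$ trivial in $A_{r-1}(\Yloc)$, hence $\sum m_\sigma D_\sigma$ is trivial in $A_{r-1}(Y)$, and the standard exact sequence $0\to M\to\Div_{\T}(Y)\to A_{r-1}(Y)\to 0$ produces $p\in M$ with $\ell_\sigma(p)=m_\sigma$. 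The genuinely nontrivial input is the passage from local analytic principality to the algebraic class group, which your sketch does not supply.
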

\begin{proof} (i) Use the fact that $\{\ell_\sigma\,:\, \sigma\in\triangle_\Sigma^{(1)}\}$
is an integral (resp. rational) basis.

(ii)
If $f$ is pointed at $p\in M$ then by  \cref{lem:ideal_gen}, $x^{-p}f\in I_X$.
Moreover, if  $x^{-q}f\in I_X$ for some $q\in M$, then $\ell_\sigma(p-q)\geq 0$ for any $\sigma\in
\triangle_\Sigma^{(1)}$, hence $p-q\in\Sigma\cap M$ and $x^{p-q}\in\O_{Y,0}$.

Conversely, assume that $(X,0)\subset (Y,0)$ is an (analytic) Cartier divisor.
Let $\tilde\triangle_f$ be a smooth subdivision of $\triangle_f$, and
set $\Yt = Y_{\tilde\triangle_f}$.
This is a smooth variety, and the map
$\pi: \Yt \to Y_\Sigma $ is a resolution of $Y$.
Take a small Stein representative $\Yloc \subset Y$, and
set $\Ytloc=\pi^{-1}(\Yloc)$.
Then we have the vanishing
$H^{\geq 1}(\Yt, \O_{\Yt})=0$ 
(see e.g. \cite[Corrollary, p. 74]{Fulton_toric}
or \cite[\S8.5]{Danilov_toric}), and also
 its local analogue
$H^{\geq 1}(\Ytloc, \O_{\Ytloc})=0$
(since the local analytic germ $(Y,0)$ is rational too).
Thus, from the exponential exact sequence,
$\Pic(\Yt)=H^2(\Yt,\Z)$ and
$\Pic(\Ytloc)=H^2(\Ytloc,\Z)$.
On the other hand, $Y$ is weighted homogeneous (as any affine toric variety),
hence $H^2(\Yt,\Z)=H^2(\Ytloc,\Z)$.
In particular,
$\Pic(\Yt) \cong \Pic(\Ytloc)$.
Here the first group is the Picard group of the algebraic
variety, while the second one is the
Picard group of the analytic manifold.

Next, consider the Chow group $A_{r-1}(Y)$
of codimension one, i.e. the group freely generated
by Weil divisors, modulo linear equivalence.
Note that since $\Yt$ is smooth,
we have
$A_{r-1}(\Yt) \cong \Pic(\Yt)$
and
$A_{r-1}(\Ytloc) \cong \Pic(\Ytloc)$.
If we factor these isomorphic groups by the subgroups generated by the
exceptional divisors, we find that the restriction induces
an isomorphism
$A_{r-1}(Y) \cong A_{r-1}(\Yloc)$.

Denote by $\Dloc_\sigma$ the restriction image of $D_\sigma$ under the
above isomorphism.
Since $(X,0)\subset (Y,0)$ is local analytic Cartier,
and the local divisor of $f$ in $Y$ is $X+\Dloc_f$, where
$\Dloc_f = \sum_{ \sigma\in \triangle_\Sigma^{(1)}}m_\sigma \Dloc_\sigma$,
we find that the class of  $\Dloc_f$
is zero in $A_{r-1}(\Yloc)$.
But then, by the above isomorphisms, the class of
$D_f = \sum_{ \sigma\in \triangle_\Sigma^{(1)}}m_\sigma D_\sigma$
is zero in $A_{r-1}(Y)$.

Finally note that $A_{r-1}(Y)$ 
can be computed as follows \cite[3.4]{Fulton_toric}.
Consider the group
$\Div_{\T}(Y) = \Z\gen{ D_\sigma}{\sigma \in \triangle_\Sigma}$
of invariant divisors
and the inclusion $M \hookrightarrow \Div_{\T}(Y)$ sending
$p\in M$ to $\sum_\sigma \ell_\sigma(p) D_\sigma$. Along with the
map $\Div_{\T} \to A_{r-1}(Y)$, this gives a short exact
sequence
\[
  0
  \to
  M
  \to
  \Div_{\T}(Y)
  \to
  A_{r-1}(Y)
  \to
  0.
\]
Since $D_f\in A_{r-1}(Y)$ maps to zero in $A_{r-1}(\Yloc)$
under the above isomorphism, and $D_f \in \Div_{\T}(Y)$, we find
that $D_f$ is in the image of $M$.
But this means exactly that there exists $p\in M$ such that
$\ell_\sigma(p)=m_\sigma$ for all
$\sigma\in \triangle^{(1)}_\Sigma$.

(iii) Use part (ii) for a certain power of $f$.
\end{proof}

\begin{definition}
We say that $f$ has \emph{Newton nondegenerate
principal part} with respect to $\Sigma$
(or simply that $f$ or $(X,0)$ is \emph{Newton nondegenerate})
if for every $\sigma \in \triangle_f$
with  $F_\sigma$ compact, the variety
$
  \Spec(\C[M]/(f_\sigma))$  (that is,  $ \set{x\in \T^r}{f_\sigma(x) = 0 }$ with its non-reduced structure)
is smooth.
Note that $f_\sigma$ is a polynomial since $F_\sigma$ is compact.
\end{definition}

\begin{lemma} \label{lem:contains_orig}
Assume that $(X,0) \subset (Y,0)$ is Newton nondegenerate and
let $\sigma \in \triangle_\Sigma$.
If $O_\sigma \subset X$,
then the generic transverse type of $X$ to
$O_\sigma$ is a Newton nondegenerate singularity with Newton polyhedron
$\varpi_\sigma(\Gamma_+(f)) \subset M_\sigma$.
\end{lemma}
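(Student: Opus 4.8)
The plan is to reduce the statement to the description of a transverse type given in \cref{block:transverse} together with \cref{lem:ideal_gen}\cref{it:contains_orig_orbit}, and then check that Newton nondegeneracy is inherited by the transverse slice. Concretely, fix $\sigma \in \triangle_\Sigma$ with $O_\sigma \subset X$; by \cref{block:transverse} we have a local identification $(U_\sigma, O_\sigma) \cong (Y_\sigma \times (\C^*)^{r-s}, \{0\} \times (\C^*)^{r-s})$, where $s = \dim\sigma$ and $Y_\sigma$ is the transverse toric germ. Under the splitting $M_\R = M_{\sigma,\R} \oplus M(\sigma)_\R$, the projection $\varpi_\sigma : M \to M_\sigma$ corresponds to forgetting the torus directions, and I would first verify, as already asserted in the proof of \cref{lem:ideal_gen}\cref{it:contains_orig_orbit}, that the intersection of $X$ with a generic transverse $Y_\sigma$ has Newton polyhedron $\varpi_\sigma(\Gamma_+(f))$ inside $M_{\sigma,\R} \cong M_\sigma \otimes \R$. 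This identifies the underlying polyhedron; what remains is the nondegeneracy.

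The key step is the following: write $f_\sigma^{\mathrm{tr}}$ for the defining function of the transverse slice, obtained from $f$ by substituting generic values $t \in (\C^*)^{r-s}$ for the torus coordinates and discarding an overall monomial. I would show that for every face $\Theta$ of $\Gamma_+(\varpi_\sigma(\Gamma_+(f)))$ that is compact, there is a face $F$ of $\Gamma_+(f)$ with $\varpi_\sigma(F) \supseteq \Theta$ and such that the face polynomial $(f_\sigma^{\mathrm{tr}})_\Theta$ is, up to a monomial factor and the generic substitution $t$, equal to the restriction of $f_F$ to the transverse torus. More precisely, a supporting linear functional $\bar\ell$ for $\Theta$ on $M_{\sigma,\R}$ lifts (via $\varpi_\sigma^\vee$, i.e.\ the inclusion $N_\sigma \hookrightarrow N$ dual to $\varpi_\sigma$) to a functional $\ell \in \sigma^\circ \subset N_\R$ that supports a face $F = F_\ell$ of $\Gamma_+(f)$, and $\varpi_\sigma$ carries $F \cap \supp(f)$ onto $\Theta \cap \supp(f_\sigma^{\mathrm{tr}})$. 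Then $(f_\sigma^{\mathrm{tr}})_\Theta(y) = x^{-q}\, f_F(y,t)$ for the appropriate monomial shift $q$, viewing $f_F$ as a function of the transverse variables $y \in \T^s$ and the parameters $t \in (\C^*)^{r-s}$.

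Granting this, smoothness of $\{(f_\sigma^{\mathrm{tr}})_\Theta = 0\}$ in $\T^s$ follows from smoothness of $\{f_F = 0\}$ in $\T^r$: the latter is smooth by the Newton nondegeneracy hypothesis on $f$ (note $F$ is compact because $\ell$ lies in the interior of the strictly convex cone $\sigma$, so $F = F_\ell$ is a compact face of $\Gamma_+(f)$), and since $f_F$ is weighted-homogeneous with respect to the torus grading separating the $y$ and $t$ directions, the smooth hypersurface $\{f_F = 0\} \subset \T^r$ is preserved set-theoretically under the $(\C^*)^{r-s}$-scaling, hence its slice $\{f_F(\,\cdot\,,t) = 0\} \subset \T^s$ is smooth for every $t \in (\C^*)^{r-s}$, in particular for the generic $t$ defining the transverse type. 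Dividing by the unit monomial $x^{-q}$ does not affect smoothness on the torus. This shows every compact face polynomial of $f_\sigma^{\mathrm{tr}}$ defines a smooth hypersurface in $\T^s$, i.e.\ the transverse type is Newton nondegenerate with Newton polyhedron $\varpi_\sigma(\Gamma_+(f))$, as claimed.

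The main obstacle I anticipate is the bookkeeping in the second step: matching faces of $\varpi_\sigma(\Gamma_+(f))$ with faces of $\Gamma_+(f)$ and tracking the monomial shift $q$ correctly, and in particular checking that $\varpi_\sigma$ maps the relevant face $F$ \emph{onto} $\Theta$ (not just into it) at the level of lattice points of the support, so that no monomials of the face polynomial are lost or collapsed. This is where one genuinely uses that $\ell$ can be chosen in the relative interior of $\sigma$, together with the fact that the generic substitution $t$ keeps all coefficients of $f_\sigma^{\mathrm{tr}}$ nonzero. Once the face correspondence is pinned down, the smoothness transfer is the routine observation above about slicing a torus-invariant smooth hypersurface.
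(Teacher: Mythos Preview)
The paper's own proof is a single sentence deferring to the product description in \cref{block:transverse}; it does not spell out the nondegeneracy check. Your attempt to supply those details has the right shape but breaks at the crucial point.

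You claim that the lifted functional $\ell$, lying in the relative interior $\sigma^\circ$, supports a \emph{compact} face $F = F_\ell$ of $\Gamma_+(f)$, so that the nondegeneracy hypothesis applies to $f_F$. This is false whenever $\sigma$ is a proper face of $\Sigma$ (the only interesting case, since for $\sigma=\Sigma$ the transverse type is $(X,0)$ itself): since $\sigma \in \triangle_\Sigma \setminus \{\Sigma\}$ we have $\sigma^\circ \subset \partial\Sigma$, whereas a face $F_\ell$ of $\Gamma_+(f)$ is compact exactly when $\ell \in \Sigma^\circ$. Concretely, the recession cone of $F_\ell$ contains the nonzero face $\sigma^\perp \cap \Sigma^\vee$ of $\Sigma^\vee$. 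The paper's nondegeneracy condition is imposed only for $\tau \in \triangle_f$ with $F_\tau$ compact, so it gives no direct information about $f_{F_\ell}$, which need not even be a polynomial.

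Your second claim, that $\{f_F = 0\} \subset \T^r$ is preserved under the $(\C^*)^{r-s}$-scaling (so that every slice is smooth), is also false. In a splitting $M \cong M_\sigma \oplus M(\sigma)$ one has
\[
  f_{F_\ell}(y,z) = \sum_{\bar p \in \Theta}\sum_q a_{\bar p,q}\, y^{\bar p} z^q,
\]
and the $M(\sigma)$-exponents $q$ genuinely vary: the fibre of $\varpi_\sigma$ over a point of $\Theta$ typically meets $\supp(f)$ in many points. So $f_{F_\ell}$ is not an eigenfunction for the $z$-torus and its zero set is not $(\C^*)^{r-s}$-invariant. The only homogeneity $f_{F_\ell}$ enjoys is for the one-parameter subgroup determined by $\ell$, and $\ell \in N_\sigma$ lies in the $y$-directions, not the parameter directions.

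A correct detailed argument must use genericity of $t$ in an essential way: for each compact face $\Theta$ one shows that the degeneracy locus in $t$ is a proper analytic subset, rather than attempting to deduce smoothness for every $t$ from a torus symmetry that does not exist.
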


\begin{proof}
The  statement
follows  by restricting $f$ to a toric subspace transverse to $O_\sigma$,
see \cref{block:transverse}.
\end{proof}

\begin{block} \label{block:resolution} {\bf The fan $\tilde{\triangle}_f$ and the associated resolution.}
Assume that $f$ is Newton nondegenerate.
Let $\tilde\triangle_f$ be a regular subdivision of $\triangle_f$.
Then $\tilde Y =  Y_{\tilde{\triangle}_f}$ is a smooth variety, and we have
a modification $\pi:\tilde Y \to Y$.
As a result of the nondegeneracy of $f$, the strict transform $\X$
of $X$ in $\tilde Y$ intersects all orbits in $\tilde Y$ smoothly.
In particular, $\tilde X$ is smooth, and $\pi$ is an embedded resolution
of $(X,0) \subset (Y,0)$.
\end{block}

\begin{lemma} \label{lem:smooth_open}
Assume $(X,0) \subset (Y,0)$ is a Newton nondegenerate Weil divisor.
Then, the singular locus of the germ $(X,0)$
is contained in the union of codimension $\geq 2$ orbits in $(Y,0)$.
\end{lemma}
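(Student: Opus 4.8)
The plan is to reduce the statement to the already-established fact (from \cref{block:resolution}) that, for a Newton nondegenerate $f$, the strict transform $\X$ of $X$ in a regular subdivision $\Yt = Y_{\tilde\triangle_f}$ meets every torus orbit of $\Yt$ transversally, and in particular is smooth. First I would fix such a regular subdivision $\tilde\triangle_f$ of $\triangle_f$ and let $\pi:\Yt\to Y$ be the associated resolution. Since $\pi$ is an isomorphism over the big torus $\T^r \subset Y$, and more generally over every orbit $O_\sigma$ with $\sigma\in\triangle_\Sigma$ that is \emph{not} subdivided by $\triangle_f$, the only place where $\pi$ can fail to be a local isomorphism near $X$ is over the orbits $O_\sigma$ with $\sigma$ of codimension $\geq 1$.

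The key step is then a dichotomy for a point $x\in X$ lying in an orbit $O_\sigma\subset Y$ of codimension $c = \dim\sigma$. If $c = 0$, i.e.\ $x$ is in the big torus, then $\pi$ is an isomorphism over a neighbourhood of $x$, so $X$ is smooth at $x$ exactly when $\X$ is smooth at $\pi^{-1}(x)$, which holds by \cref{block:resolution}. If $c = 1$, i.e.\ $x\in O_\sigma$ with $\sigma\in\triangle_\Sigma^{(1)}$ a ray, then by the definition of $(X,0)$ (the union of the primary components of $\{f=0\}$ not invariant under the torus) and by \cref{it:contains_orig_orbit}, the orbit $O_\sigma$ is contained in $X$ only if $\triangle_f$ subdivides $\sigma$ — but a ray has no proper subdivision, so $O_\sigma\not\subset X$. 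Hence $X\cap O_\sigma$ is a proper closed subset of the one-dimensional orbit $O_\sigma$, i.e.\ a finite set of points; moreover, near such a point $x$ the ambient germ $(Y,x)$ is smooth (a codimension-one orbit of a normal toric variety is contained in the smooth locus, since $D_\sigma$ is Cartier there), so $\pi$ is an isomorphism near $x$ and again smoothness of $X$ at $x$ follows from smoothness of $\X$. Therefore $\Xsing$ can only meet the orbits $O_\sigma$ with $\dim\sigma\geq 2$, which is precisely the assertion.

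The main obstacle — really the only subtle point — is handling the codimension-one orbits $O_\sigma$, where a priori $\pi$ might not be an isomorphism and $X$ might a priori be singular; the resolution is to observe that $X$ simply does not contain $O_\sigma$ (a ray cannot be subdivided), so its intersection with that orbit is $0$-dimensional, combined with the standard fact that $Y_\Sigma$ is smooth along each codimension-one orbit. Everything else is bookkeeping with the orbit stratification of \cref{block:transverse} and the transversality statement of \cref{block:resolution}; no new estimates or computations are needed.
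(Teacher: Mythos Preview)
Your approach is the same as the paper's: show that $\pi:\Yt\to Y$ is an isomorphism over the complement of the codimension-$\geq 2$ orbits, then invoke smoothness of $\X$ from \cref{block:resolution}. The paper does this in one sentence, simply asserting that $\pi^{-1}(Y\setminus Y^{(\leq r-2)}) \to Y\setminus Y^{(\leq r-2)}$ is an isomorphism.

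Your execution, however, has two problems. First, the detour through ``$O_\sigma\not\subset X$, hence $X\cap O_\sigma$ is a finite set of points'' is both wrong and unnecessary: for a ray $\sigma\in\triangle_\Sigma^{(1)}$ the orbit $O_\sigma$ has dimension $r-1$, not $1$, so $X\cap O_\sigma$ is a hypersurface in an $(r-1)$-dimensional torus. Fortunately nothing you do afterwards actually uses this.

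Second, and this is the real gap, the inference ``$(Y,x)$ is smooth, so $\pi$ is an isomorphism near $x$'' is invalid: a toric modification can blow up smooth points (any regular cone of dimension $\geq 2$ can be subdivided). The correct reason $\pi$ is an isomorphism over $Y\setminus Y^{(\leq r-2)}$ is the one you already noted and then dropped: a ray cannot be subdivided, so every $\sigma\in\triangle_\Sigma^{(1)}$ lies in $\tilde\triangle_f$, and the affine chart $U_\sigma=\Spec(\C[\sigma^\vee\cap M])$ is literally the same open set in $Y$ and in $\Yt$, with $\pi$ the identity there. The union of these charts is exactly $Y\setminus Y^{(\leq r-2)}$. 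Once this is said, the entire discussion of whether $O_\sigma\subset X$ becomes irrelevant and can be deleted.
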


\begin{proof}
Let $Y^{(\leq r-2)}$ be the union of orbits of dimension $\leq r-2$, that is,
codimension $\geq 2$, in $Y$.
Let $\pi:\tilde Y\to Y$ be as in \cref{block:resolution}.
The restriction
$\pi^{-1}(Y\setminus Y^{(\leq r-2)}) \to Y\setminus Y^{(\leq r-2)}$
is an isomorphism,
and $\X$ is smooth. Therefore, $X \setminus Y^{(\leq r-2)}$ is smooth.
\end{proof}

\section{Newton nondegenerate curve singularities} \label{s:curves}

In this section, we will assume that $\rk N = 2$  and that
$\Sigma\subset N_\R$ is a two dimensional finitely generated
strictly convex rational cone.
Nondegenerate rank 2 singularities appear naturally in the $r=3$  case
as transversal types of certain orbits.

We will introduce the {\it canonical subdivision} and we establish
criterions for irreducibility and smoothness.
They will be used in the context of  rank $r=3$ cones in the
definition of their canonical subdivision and in the characterization of
Newton nondegenerate
isolated surface singularities.

\begin{block} \label{block:two_can}
{\bf Canonical primitive sequence.}
Assume first that $\Sigma$ is nonregular.
Then there exists a sequence of vectors
$\ell_0,\ldots, \ell_{s+1} \in \Sigma\cap N$, called the
\emph{canonical primitive sequence} \cite{Oka} and integers
$b_1,\ldots, b_s \geq 2$, called the associated
\emph{selfintersection numbers}, so that:
\begin{enumerate}

\item \label{it:two_can_bas}
If $0\leq j \leq s$, then $\ell_j, \ell_{j+1}$ form an integral basis for $N$.

\item \label{it:two_can_int}
If $0 < j \leq s$, then $b_j \ell_j = \ell_{j-1} + \ell_{j+1}$.

\item \label{it:two_can_gen}
The set $\{\ell_0, \ldots, \ell_{s+1}\}$ is a minimal set of generators
for the semigroup $\Sigma\cap N$.

\end{enumerate}
This data is uniquely determined up to reversing the order
of $(\ell_j)_j$ and $(b_j)_j$.
It can, in fact, be determined as follows. Let $\alpha$ be the absolute
value of the determinant
of the $2\times 2$ matrix whose columns $\ell, \ell'$
are the primitive generators of the
one dimensional faces of $\Sigma$, given in any integral basis.
Then, there exists a unique integer $0\leq \beta < \alpha$ so that
$\beta \ell + \ell' \in \alpha N$.
The selfintersection numbers are determined as the
\emph{negative continued fraction expansion}
\[
  \frac{\alpha}{\beta} = b_1 - \frac{1}{ b_2 - \ddots - \frac{1}{b_s}}.
\]
We use the notation $[b_1, \ldots, b_s]$ for the right hand side above.
We have
\[
  \ell_0 = \ell,\quad
  \ell_1 = \frac{\beta \ell + \ell'}{\alpha}.
\]
Along with condition \ref{it:two_can_int}, this determines the
canonical primitive sequence recursively and we have $\ell_{s+1} = \ell'$.

\begin{figure}[ht]
\begin{center}
\input{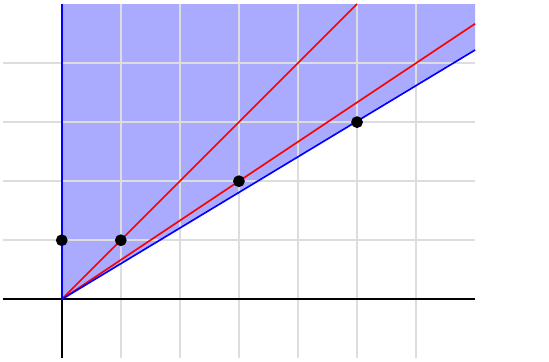_t}
\caption{In this example, $\Sigma$ is generated by $(0,1)$ and $(5,3)$.
The canonical primitive sequence consists of four elements, including the
generators of the cone.}
\label{fig:can_cone}
\end{center}
\end{figure}

Alternatively, the vectors $\ell_0,\ell_1,\ldots,\ell_{s+1}$ are the
integral points lying on compact faces of the convex closure of the
set $\Sigma\cap N \setminus \{0\}$. For a detailed discussion of this
construction, see \cite[1.6]{Oda_book}.

If $\Sigma$ is regular, then we prefer to modify the minimality of
the resolution considered
above, and set $s = 1$, $\ell_1 = \ell$ and $\ell_2 = \ell'$
and $\ell_1 = \ell_0 + \ell_2$.
Accordingly, in \cref{it:two_can_int}, we will have $-b_1 = -1$.
In particular, the set $\{\ell_0, \ell_1, \ell_2\}$ is
not a minimal set of generators of the semigroup $\Sigma\cap N$.
We make this choice here mostly for technical reasons
(directed by properties of the induced reslution), which will
appear in \cref{s:comp_seq}. The same choice is made in
\cite{Oka}, Definition (3.5).
\end{block}

\begin{definition} \label{def:canon_subdiv}
Let $\Sigma$ be a two dimensional rational strictly convex cone with
a canonical primitive sequence $\ell_0, \ell_1, \ldots, \ell_{s+1}$.
The \emph{canonical subdivision} of $\triangle_\Sigma$ is the unique
subdivision $\tilde\triangle_\Sigma$ for which
\[
  \tilde\triangle_\Sigma^{(1)}
    = \set{\R_{\geq 0}\langle\ell_i \rangle}{0\leq i \leq s+1}.
\]
For each $i=1,\ldots, s$, there is a unique number $-b_i \in \Z_{\leq -1}$
satisfying $\ell_{i-1} - b_i\ell_i + \ell_{i+1} = 0$. We define
$\alpha(\ell_0, \ell_{s+1})$ and
$\beta(\ell_0, \ell_{s+1})$ as the numerator and denominator, respectively,
of the negative continued fraction
\[
  [b_1, \ldots, b_s] = b_1 - \frac{1}{b_2 - \frac{1}{\cdots}},
\]
(we require $\gcd(\alpha(\ell_0, \ell_{s+1}),\beta(\ell_0, \ell_{s+1})) = 1$,
and $\beta(\ell_0, \ell_{s+1}) \geq 0$, so that these numbers
are well defined).
The number $\alpha(\ell_0, \ell_{s+1})$ is referred to as the
\emph{determinant} of $\Sigma$.
\end{definition}

\begin{rem} \label{rem:alpha}
Let $\ell_1, \ell_2 \in N$ be two linearly independent elements.
Then we have $\alpha(\ell_1, \ell_2) = 1$ if and only if $\ell_1, \ell_2$
form part of an integral basis of $N$.
In general, $\alpha = \alpha(\ell_1, \ell_2)$ can be computed as the content of
the restriction of $\ell_2$ to the kernel of $\ell_1$. 
In other words, let $K \subset N$ be the kernel of $\ell_1$. Then
$\ell_2|_K$ is divisible by $\alpha$, and $(\ell_2|_K) / \alpha$ is primitive.
\end{rem}

\begin{lemma} \label{lem:surf_res}
If $\Sigma$ is not a regular cone, then $Y_\Sigma$ has a cyclic quotient
singularity at the origin and the map
$Y_{\tilde\triangle_\Sigma} \to Y_\Sigma$ induced by the identity map
on $N$ is the minimal resolution.
\end{lemma}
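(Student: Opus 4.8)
The plan is to reduce everything to the standard local computation for a two-dimensional affine toric variety and then identify the subdivision $\tilde\triangle_\Sigma$ with the fan that produces the minimal resolution. First I would recall that since $\Sigma$ is two-dimensional, strictly convex and rational but not regular, a suitable choice of integral basis of $N$ brings its primitive ray generators to the form $\ell_0 = e_2$ and $\ell_{s+1} = \alpha e_1 - \beta e_2$ with $0 \le \beta < \alpha$ and $\gcd(\alpha,\beta)=1$ (up to the usual normalization; this is exactly the data extracted in \cref{block:two_can}). Then $A_\Sigma = \C[S_\Sigma]$ is the ring of invariants of $\C[u,v]$ under the action of the cyclic group $\mu_\alpha$ acting by $u \mapsto \zeta u$, $v \mapsto \zeta^\beta v$, so $Y_\Sigma$ has a cyclic quotient singularity $\tfrac1\alpha(1,\beta)$ (equivalently $\tfrac1\alpha(1,\alpha-\beta)$) at the origin, which is singular precisely because $\alpha > 1$, i.e. because $\Sigma$ is not regular.

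Next I would show $\pi:Y_{\tilde\triangle_\Sigma} \to Y_\Sigma$ is a resolution. By construction $\tilde\triangle_\Sigma$ has rays spanned by the canonical primitive sequence $\ell_0,\dots,\ell_{s+1}$, and \cref{block:two_can}\ref{it:two_can_bas} says consecutive pairs $\ell_j,\ell_{j+1}$ form an integral basis of $N$; hence every two-dimensional cone of $\tilde\triangle_\Sigma$ is regular, so $Y_{\tilde\triangle_\Sigma}$ is smooth. Properness of $\pi$ is immediate from \cref{lem:proper} since $|\tilde\triangle_\Sigma| = \Sigma = |\triangle_\Sigma|$, and $\pi$ is an isomorphism over the torus and over the two rays of $\Sigma$ (where the fans already agree), hence over $Y_\Sigma \setminus \{0\}$, which is smooth; therefore $\pi$ is a resolution. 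The exceptional set is $\bigcup_{i=1}^{s} D_{\sigma_i}$, where $\sigma_i = \R_{\ge 0}\langle \ell_i\rangle$, each $D_{\sigma_i} \cong \mathbb{P}^1$ by the orbit–closure description in \cref{block:transverse} applied to $\Star(\sigma_i)$, and the relation $b_i\ell_i = \ell_{i-1}+\ell_{i+1}$ of \cref{block:two_can}\ref{it:two_can_int} is the standard toric computation giving $D_{\sigma_i}^2 = -b_i$, so the exceptional divisor is a chain of rational curves with self-intersections $-b_1,\dots,-b_s$.

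Finally I would argue minimality: since $\Sigma$ is nonregular, each $b_i \ge 2$ by \cref{block:two_can}, so no exceptional curve is a $(-1)$-curve, and a resolution whose exceptional divisor contains no $(-1)$-curve is the minimal resolution (Castelnuovo). The main obstacle — really the only non-formal point — is the self-intersection computation $D_{\sigma_i}^2 = -b_i$: one must pass to the smooth surface, compute the local equation of $D_{\sigma_i}$ in each of the two adjacent charts $U_{\langle\ell_{i-1},\ell_i\rangle}$ and $U_{\langle\ell_i,\ell_{i+1}\rangle}$ using the dual bases, and read off the transition, which is where the integer $b_i$ defined by $\ell_{i-1}-b_i\ell_i+\ell_{i+1}=0$ enters; this is the classical toric surface computation (see \cite[1.6]{Oda_book} or \cite{Fulton_toric}) and I would simply cite it. Everything else is bookkeeping with the canonical primitive sequence already set up in \cref{block:two_can} and \cref{def:canon_subdiv}.
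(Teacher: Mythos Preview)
Your proof is correct and self-contained; the paper, by contrast, gives no argument at all and simply defers to Propositions~1.19 and~1.24 of \cite{Oda_book}. What you have sketched is essentially the content of those propositions (standard form of a nonregular $2$-cone, identification of $A_\Sigma$ as a cyclic invariant ring, regularity of the canonical subdivision, the self-intersection computation $D_{\sigma_i}^2=-b_i$ from the relation $\ell_{i-1}-b_i\ell_i+\ell_{i+1}=0$, and minimality from $b_i\ge 2$), so your approach is not a different route but rather an unpacking of the cited reference.
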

\begin{proof}
See  Proposition 1.19 and Proposition 1.24 of \cite{Oda_book}.
\end{proof}

\begin{prop} \label{lem:curves}
Assume that $\rk N = 2$, and that $f$ is Newton nondegenerate
with respect to $\Sigma \subset N_\R$ defining a germ $(X,0)$.
\begin{enumerate}

\item \label{it:curves_irred}
The germ $(X,0)$ is irreducible if and only if $\Gamma(f)$ is a single
interval with no integral interior points. In fact, in general,
the number of
components in $(X,0)$ is precisely the combinatorial length of $\Gamma(f)$.

\item \label{it:curves_smooth}
Assume that $(X,0)$ is irreducible and let $\sigma\in\triangle_f^{(1)}$ so
that $\Gamma(f) = F_\sigma$.
Then $(X,0)$ is smooth if and only if $\ell_\sigma$ lies on the boundary
of the convex hull of the set $\Sigma^\circ \cap N$.
In other words,
let $\ell_0, \ldots, \ell_{s+1}$ be the canonical primitive sequence of
$\Sigma$.
Then
either $\ell_\sigma$ is one of $\ell_1, \ldots, \ell_s$,
or there is an $a\in\Z_{>0}$ such that either
\[
  \ell_\sigma = a\ell_0 + \ell_1 \quad or \quad
  \ell_\sigma = a\ell_{s+1} + \ell_s.
\]

\item \label{it:curves_third}
The curve $(X,0)$ is smooth if and only if the following condition
holds: If $p\in M$ and $\ell_\sigma(p) > m_\sigma$ for all
$\sigma\in\triangle_\Sigma^{(1)}$, then $\ell_\sigma(p) > m_\sigma$
for all $\sigma\in\triangle_f^{(1)}$.

\end{enumerate}
\end{prop}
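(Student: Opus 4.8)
The plan is to treat the three parts in order, since \ref{it:curves_smooth} and \ref{it:curves_third} both build on the structural picture established in \ref{it:curves_irred}. For \ref{it:curves_irred}, the key observation is that the irreducible components of $(X,0)$ correspond bijectively to the branches of the strict transform $\X$ of $X$ in a resolution $\Yt = Y_{\tilde\triangle_f}$ obtained from a regular subdivision, intersected with the exceptional divisors $D_\sigma$ for $\sigma\in\triangle_f^{*(1)}$ (the rays interior to $\Sigma$ being irrelevant here, as they meet no compact face). By \cref{block:resolution} the strict transform is smooth and meets every orbit transversally; on a ray $\sigma$ with $F_\sigma$ a vertex of $\Gamma(f)$ the intersection $\X\cap D_\sigma$ is empty or a union of points, while on a ray $\sigma$ with $F_\sigma$ a $1$-face the number of intersection points of $\X$ with the one-dimensional orbit $O_\sigma \cong \C^*$ is governed by the polynomial $f_\sigma$ restricted to $O_\sigma$, whose number of roots (counted without multiplicity, by nondegeneracy) is exactly the lattice length of $F_\sigma$. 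Summing over the $1$-faces of $\Gamma(f)$ and tracking which branches of $\X$ are glued together as one passes from one ray to an adjacent one in $\tilde\triangle_f$, one gets that the number of components of $(X,0)$ equals the total combinatorial (lattice) length of $\Gamma(f)$. In particular irreducibility is equivalent to $\Gamma(f)$ being a single primitive segment, i.e. one interval with no interior lattice point.

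For \ref{it:curves_smooth}, assume $(X,0)$ irreducible with $\Gamma(f) = F_\sigma$ for the unique relevant ray $\sigma\in\triangle_f^{(1)}$. I would refine $\tilde\triangle_f$ so that it contains $\sigma$ together with the rays of the canonical subdivision $\tilde\triangle_\Sigma$ of $\Sigma$, and then compute the embedded resolution locally near the orbit $O_\sigma$. The point $\X\cap O_\sigma$ is a smooth point of $\X$; smoothness of the germ $(X,0)$ downstairs is then equivalent to the strict transform meeting $D_\sigma$ with multiplicity one after contracting the exceptional set, which in turn is a statement about the self-intersection of $D_\sigma$ in $\Yt$ and the way $\sigma$ sits among the canonical rays $\ell_0,\dots,\ell_{s+1}$. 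Concretely: if $\ell_\sigma$ is one of $\ell_1,\dots,\ell_s$, then $D_\sigma$ is already one of the exceptional curves of the minimal resolution of $(Y,0)$ (\cref{lem:surf_res}) and $\X$ is a smooth transverse slice, so $(X,0)$ is smooth; otherwise $\ell_\sigma$ lies strictly inside $\Sigma$ between two consecutive canonical rays, and the relevant continued-fraction/determinant computation shows that $(X,0)$ is smooth precisely when $\ell_\sigma$ is of the form $a\ell_0+\ell_1$ or $a\ell_{s+1}+\ell_s$, i.e. $\ell_\sigma$ lies on an edge of $\convx(\Sigma^\circ\cap N)$ adjacent to a generator of $\Sigma$. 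The geometric content is \cref{rem:alpha}: the failure of $\ell_\sigma$ and a neighbouring canonical ray to span a sublattice of index one is exactly the "extra" quotient singularity the strict transform picks up.

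Finally, \ref{it:curves_third} should follow by translating the boundary condition of \ref{it:curves_smooth} into the language of the Newton polyhedron. Given \ref{it:curves_irred} we may assume $\Gamma(f)=F_\sigma$ is a single primitive segment; then $p\in M$ satisfies $\ell_\tau(p) > m_\tau$ for all $\tau\in\triangle_\Sigma^{(1)}$ iff $p$ lies strictly above $\Gamma_+(f)$ along the two boundary rays of $\Sigma$, while $\ell_\sigma(p) > m_\sigma$ says $p$ lies strictly above the supporting hyperplane of the edge $F_\sigma$ itself. The equivalence with the geometric criterion of \ref{it:curves_smooth} amounts to the elementary lattice-geometry fact that $\ell_\sigma\in\partial\,\convx(\Sigma^\circ\cap N)$ adjacent to a generator of $\Sigma$ if and only if there is no lattice point wedged strictly between the edge $F_\sigma$ and the corner of $\Gamma_+(f)$ cut out by the two boundary rays — which is the "no hidden lattice point forces the strict inequality" statement. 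I expect the main obstacle to be the bookkeeping in part \ref{it:curves_smooth}: carefully identifying the self-intersection numbers and gluing data of the exceptional configuration refining both $\triangle_f$ and $\tilde\triangle_\Sigma$, and extracting from Oka's continued-fraction description exactly the arithmetic condition $\ell_\sigma = a\ell_0+\ell_1$ or $a\ell_{s+1}+\ell_s$; the other two parts are comparatively formal once the resolution picture is in place.
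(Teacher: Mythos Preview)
Your outline for \cref{it:curves_irred} matches the paper's argument, modulo a notational slip: the components of $(X,0)$ are detected on the \emph{exceptional} divisors $D_\sigma$, i.e.\ for $\sigma$ an \emph{interior} ray of the subdivision, not for $\sigma\in\triangle_f^{*(1)}$ (which in rank two consists only of the two boundary rays of $\Sigma$). The count $|X'\cap E'_i| = \Vol_1(F_{\sigma'_i})$ then gives the total lattice length of $\Gamma(f)$ as you say; no gluing of branches is needed, since each branch of $X'$ meets the exceptional set in exactly one point.

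For \cref{it:curves_smooth} there is a missing idea. You assert that transversality of $\tilde X$ to an exceptional curve forces smoothness of $(X,0)$, and propose to extract the general criterion from self-intersections and continued-fraction data. But a smooth branch upstairs can still map to a cusp downstairs, so transversality to $E$ alone is insufficient; what is actually needed is $\mult(X,0)=1$. The paper supplies this via Artin \cite{Artin_iso}: since $(Y,0)$ is a rational surface singularity whose fundamental cycle is the reduced exceptional divisor $E$, the maximal ideal pulls back to $\O_{\tilde Y}(-E)$ without base points, whence $\mult(X,0)=(\tilde X\cdot E)$. Writing $\ell_\sigma = a\ell_i+b\ell_{i+1}$ for consecutive canonical vectors and passing to the chart near $E_i\cap E_{i+1}$ (where $\tilde X$ is cut out by $cx^b+dy^a$), the intersection number $(\tilde X\cdot E)$ is read off immediately, yielding the list in \cref{it:curves_smooth} in a few lines rather than the extended bookkeeping you anticipate.

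For \cref{it:curves_third}, your reduction to a lattice-geometry statement is correct but vague; the paper's device for making it precise is to fix an identification $N\cong M$ and rotate the primitive segment $\Gamma(f)$ by $\pi/2$ onto the vector $\ell_\sigma$, so that $(\Gamma_+^*(f)\setminus\Gamma_+(f))\cap M=\emptyset$ becomes $P(\ell_\sigma)^\circ\cap N=\emptyset$, where $P(\ell_\sigma)$ is the parallelogram with diagonal $\ell_\sigma$ and sides parallel to $\ell_0,\ell_{s+1}$. That this last condition is equivalent to $\ell_\sigma\in\partial\convx(\Sigma^\circ\cap N)$ follows because the irreducible elements of the semigroup $\Sigma^\circ\cap N$ are exactly those on that boundary.
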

\begin{rem}
One can ask why the vectors $\ell_0$
and $\ell_{s+1}$ do not appear in the list of (ii).
The answer
is that the corresponding divisors $D_\sigma$,
though they intersect $E$ transversaly,
they
are $\T$–invariant, hence they are eliminated by the convention of the
definition 3.6.
\end{rem}
\begin{proof}[Proof of \cref{lem:curves}]
We start with the following observations.
Write $\sigma_i = \R_{\geq 0} \langle \ell_i \rangle$.
Let $\triangle'$ be a regular subdivision
of $\triangle_\Sigma$ which refines both $\triangle_f$ and the canonical
subdivision of $\triangle_\Sigma$. The map $Y_{\triangle'}\to Y_\Sigma$
is then a resolution of $Y_\Sigma$ with exceptional divisor $E'$.
We can write $E' = \cup_{i=1}^{s'}E'_i$,  where each $E_i'$ is a rational
curve. Furthermore, if $i\neq j$, then
$E_i'$ and $E_j'$ intersect if and only if $|i-j| = 1$.
In fact, we can write
\[
  \triangle'^{(1)} = \{ \sigma_1',\ldots,\sigma_{s'}' \}
                     \cup \{ \sigma, \tau \}
\]
where $\sigma,\tau$ are the two faces of $\Sigma$ and
$E_i' = V(\sigma_i')$.

Similarly as in \cite{Oka}, we see that $Y_{\triangle'}$ resolves
$(X,0)$ and that the strict transform $X'$ of $X$ in
$Y_{\triangle'}$ intersects the exceptional divisor $E'$ transversally
in smooth points of $E'$. In fact, these intersection points
lie in the open orbit $O_{\sigma_i'}\subset E'_i$. Therefore, we have
(see \cite[Theorem 5.1]{Oka})
\[
  |X'\cap E'_i| = \chi(X'\cap O_{\sigma'_i}) = \Vol_1(F(\ell_i'))
\]
where $\ell_i'$ is the primitive generator of
$\sigma_i'$. Now, the components of $(X,0)$ are in bijection with the
intersection points $X'\cap E'$, which proves \ref{it:curves_irred}.

For \ref{it:curves_smooth}, let $\tilde\triangle_\Sigma$ be the canonical
subdivision, and $\pi:\tilde Y\to Y$ the associated modification,
which is a resolution of $Y$. Let $\X \subset \tilde Y$ be the strict
transform of $X$. The minimal cycle of the resolution
$\tilde Y \to Y$ is the reduced exceptional divisor $E \subset \tilde Y$
and $(Y,0)$
is rational. By \cite{Artin_iso}, the pullback of the maximal ideal
of $0\in Y$ is the reduced exceptional divisor in $\tilde Y$,
and the maximal ideal has no base points in $\tilde Y$.
It follows that the multiplicity of $(X,0)$ is the intersection number
between $\tilde X$ and $E$. In particular, $(X,0)$ is smooth if and
only if $E\cup \tilde X$ is a normal crossing divisor.
If $\sigma = \sigma_i$ for some $1\leq i \leq s$, then this is indeed the
case. Otherwise, there is an $0\leq i \leq s$ so that
$\ell_\sigma = a\ell_i+b\ell_{i+1}$. In a neighbourhood of
$E_i \cap E_{i+1}$ we have coordinates $u,v$ so that
$E_i = \{x=0\}$, $E_{i+1} = \{y=0\}$ and we have some generic
coefficients $c,d$ so that the strict transform of $X$ is defined by
$cx^b + dy^a$.
Thus, $(X,0)$ is not smooth if
$1<i<s$. In the case $i=0$ (the case $i=s$ is similar), $\tilde X$ is
smooth and transverse to $E_1$ if and only if $b=1$.

The condition in \cref{it:curves_third} is equivalent with the equality
\begin{equation} \label{eq:curve_points}
 (\Gamma^*_+(f)\setminus \Gamma_+(f))\cap M=\emptyset.
\end{equation}
Choose a basis for $N$, inducing an isomorphism $N \cong M$ via the
dual basis, as well as an inner product on $N_\R \cong M_\R$.
If we rotate the segment $\Gamma(f)$ by $\pi/2$ and translate it,
then it can be identified with the vector $\ell_i$
(segment $t\ell_i$, $t\in [0,1]$).
Consider the parallelogram $P(\ell_i)$
whose sides are parallel to $\ell_0$ and $\ell_{s+1}$,
and it has $\ell_i$ as diagonal.
It is divided by $\ell_i$ into two triangles,
each of them can be identified by $\Gamma^*_+(f)\setminus \Gamma_+(f)$.
Hence, \cref{eq:curve_points}
holds if and only if
$P(\ell_i)^\circ\cap N=\emptyset$.

Clearly, $P(\ell_i)^\circ\cap N$ is empty if
$\ell_\sigma \in \partial \convx(\Sigma^\circ \cap N)$.
The converse can be seen as follows. Let
$(\ell^{\mathrm b}_i)_{i\in \Z}$ be a family consisting of integral points on
$\partial\convx(\Sigma^\circ \cap N)$, ordered according one of the
orientation of this boundary. Two consecutive elements of this family
form a basis of $N$, and
\[
  \Sigma^\circ \cap N
  = \bigcup_{i\in \Z}
    \Z_{\geq 0}\langle \ell^{\mathrm b}_i, \ell^{\mathrm b}_{i+1} \rangle
    \setminus\{0\}.
\]
It follows that the set of irreducible elements in the semigroup
$\Sigma^\circ \cap N$ are presicely the elements on the boundary
$\partial\convx(\Sigma^\circ \cap N)$. In particular, if
$\ell_\sigma \in (\convx(\Sigma^\circ \cap N))^\circ$, then
$\ell_\sigma = \ell' + \ell''$ for some
$\ell', \ell'' \in \Sigma^\circ \cap N$. It follows that
$\ell', \ell'' \in P(\ell_i)^\circ$.
\end{proof}

\begin{figure}[ht]
\begin{center}
\input{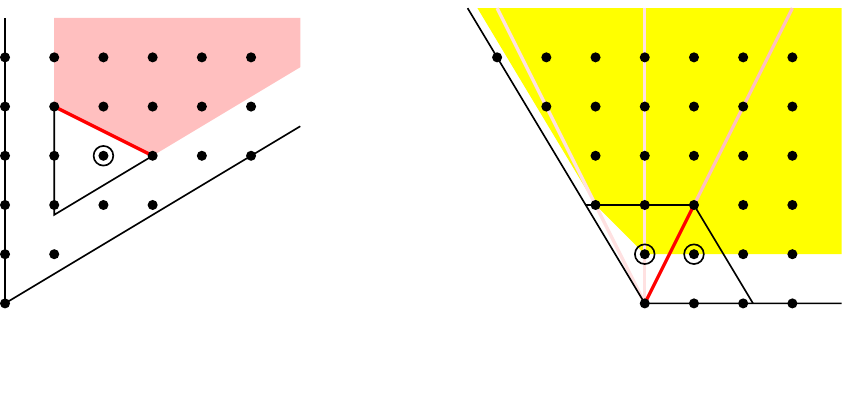_t}
\caption{The integral points in the interior of the
parallellogram $P(\ell_{\sigma})$.}
\label{fig:delta}
\end{center}
\end{figure}

\begin{cor}
Consider  the notation from the proof of \cref{lem:curves}(ii),
that is, $(X,0)$ irreducible and
$\ell_\sigma = a\ell_i+b\ell_{i+1}$ with $\gcd(a,b) = 1$. Then
the multiplicity of $(X,0)$ is
\[
\pushQED{\qed}
  \mult(X,0) =
\begin{cases}
  b     & i = 0,    \\
  a + b & 0 < i < s, \\
  a     & i = s.
\end{cases}
\qedhere
\popQED
\]
\end{cor}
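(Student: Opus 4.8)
The plan is to read off $\mult(X,0)$ as an intersection number on the canonical resolution, reusing the geometry set up in the proof of \cref{lem:curves}\,\ref{it:curves_smooth}. There $\pi\colon\X\to Y$ is the modification attached to the canonical subdivision $\tilde\triangle_\Sigma$, with exceptional curves $E_j=V(\sigma_j)$ for $\sigma_j=\R_{\geq0}\langle\ell_j\rangle$, $1\le j\le s$; the germ $(Y,0)$ is a rational (cyclic quotient) singularity whose fundamental cycle on this resolution is the \emph{reduced} divisor $E=E_1+\dots+E_s$ (since all $b_j\ge2$), so the maximal ideal pulls back without base points and hence
\[
  \mult(X,0)=\X\cdot E=\sum_{j=1}^{s}\X\cdot E_j ,
\]
where $\X$ also denotes the strict transform of $X$ (possibly singular on $\X$, but the $\X\cdot E_j$ are ordinary local intersection numbers, so this is harmless). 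We may assume $a,b\ge1$: if $\ell_\sigma\in\{\ell_1,\dots,\ell_s\}$ the germ is smooth by that proposition and all three formulas read $1$.

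Next I would pin down the local picture in one affine chart. Since $(X,0)$ is irreducible, \cref{lem:curves}\,\ref{it:curves_irred} gives that $\Gamma(f)=F_\sigma$ is a single segment whose only integral points are its endpoints $q_0,q_1$; these are vertices of $\Gamma_+(f)$, hence lie in $\supp(f)$ (so $a_{q_0},a_{q_1}\ne0$), and $q_1-q_0$ is a primitive vector of $M$. Because $a,b\ge1$, $\sigma$ lies in the relative interior of the cone $\langle\ell_i,\ell_{i+1}\rangle$ of $\tilde\triangle_\Sigma$, and, as in the proof of part (ii), the unique point of $\X\cap E$ lies in $U_{\langle\ell_i,\ell_{i+1}\rangle}\cong\C^2$. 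In coordinates $u,v$ dual to $\ell_i,\ell_{i+1}$ (so $x^p=u^{\ell_i(p)}v^{\ell_{i+1}(p)}$, $V(\sigma_i)=\{u=0\}$, $V(\sigma_{i+1})=\{v=0\}$) one has $f=u^{m_{\sigma_i}}v^{m_{\sigma_{i+1}}}g$ with $\X=\{g=0\}$ and $g=\sum_{p\in\supp(f)}a_p\,u^{\,\ell_i(p)-m_{\sigma_i}}\,v^{\,\ell_{i+1}(p)-m_{\sigma_{i+1}}}$. Orienting the canonical sequence so that it runs monotonically from $\ell_0$ to $\ell_{s+1}$ around $\Sigma$, with $q_0$ (resp.\ $q_1$) the endpoint of $\Gamma(f)$ adjacent to the noncompact face $F_{\sigma_0}$ (resp.\ $F_{\sigma_{s+1}}$), and using that here $\triangle_f^{(1)}=\{\sigma_0,\sigma,\sigma_{s+1}\}$, I get $F_{\sigma_i}=\{q_0\}$ for $i\ge1$ and $F_{\sigma_{i+1}}=\{q_1\}$ for $i+1\le s$ (in the two extreme cases $q_0\in F_{\sigma_0}$, $q_1\in F_{\sigma_{s+1}}$ anyway); in particular $\ell_i(q_0)=m_{\sigma_i}$ when $i\ge1$ and $\ell_{i+1}(q_1)=m_{\sigma_{i+1}}$ when $i+1\le s$, the complementary evaluations being strictly larger.

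The arithmetic core is then short. Put $\alpha=\ell_i(q_1)-\ell_i(q_0)>0$ and $\beta=\ell_{i+1}(q_0)-\ell_{i+1}(q_1)>0$, so that $q_1-q_0$ has coordinates $(\alpha,-\beta)$ in the basis of $M$ dual to $(\ell_i,\ell_{i+1})$; since $\ell_\sigma$ is constant on $[q_0,q_1]$ we get $0=\ell_\sigma(q_1-q_0)=a\alpha-b\beta$, and with $\gcd(\alpha,\beta)=1$ (primitivity) and $\gcd(a,b)=1$ this forces $\alpha=b$, $\beta=a$. Restricting $g$ to the axes through the relevant point kills every monomial off the corresponding face: $g(u,0)=\sum_{p\in F_{\sigma_{i+1}}}a_p\,u^{\ell_i(p)-m_{\sigma_i}}$ and $g(0,v)=\sum_{p\in F_{\sigma_i}}a_p\,v^{\ell_{i+1}(p)-m_{\sigma_{i+1}}}$, so $\ord_u g(u,0)$ and $\ord_v g(0,v)$ are the sought intersection numbers at that point.

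Finally I would split into the three cases. If $0<i<s$, then $F_{\sigma_i}=\{q_0\}$ and $F_{\sigma_{i+1}}=\{q_1\}$, so $g(u,0)=a_{q_1}u^{b}$ and $g(0,v)=a_{q_0}v^{a}$; thus $\X\cap E$ is the single point $E_i\cap E_{i+1}=\{u=v=0\}$, with $\X\cdot E_i=a$, $\X\cdot E_{i+1}=b$ and $\X\cdot E_j=0$ for $j\notin\{i,i+1\}$, giving $\mult(X,0)=a+b$. If $i=0$, the only exceptional curve meeting the chart is $E_1=\{v=0\}$, $F_{\sigma_1}=\{q_1\}$, $g(u,0)=a_{q_1}u^{b}$, so $\mult(X,0)=\X\cdot E_1=b$. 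The case $i=s$ is symmetric: $E_s=\{u=0\}$ is the only exceptional curve in the chart, $F_{\sigma_s}=\{q_0\}$, $g(0,v)=a_{q_0}v^{a}$, and $\mult(X,0)=\X\cdot E_s=a$. The hard part is the bookkeeping of the two previous paragraphs — correctly matching the exponents $a,b$ in the local equation of $\X$ to the coefficients in $\ell_\sigma=a\ell_i+b\ell_{i+1}$, and tracking which endpoint of $\Gamma(f)$ sits on which face $F_{\sigma_j}$ under the two admissible orientations of the canonical sequence; once $\mult(X,0)=\X\cdot E$ is quoted from the proof of \cref{lem:curves}\,\ref{it:curves_smooth}, everything else is a direct local computation.
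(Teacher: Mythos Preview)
Your proof is correct and follows exactly the approach implicit in the paper: the corollary is stated there with a \qed\ symbol because it is an immediate consequence of the local computation in the proof of \cref{lem:curves}\ref{it:curves_smooth}, namely that the strict transform in the chart $U_{\langle\ell_i,\ell_{i+1}\rangle}$ has equation $c u^b + d v^a$ and $\mult(X,0)=\X\cdot E$. You have simply written out the intersection-number bookkeeping in detail. One tiny remark: your parenthetical justification ``since all $b_j\ge 2$'' for the fundamental cycle being reduced does not cover the regular case (where the paper's convention gives $s=1$, $b_1=1$), but the conclusion still holds there since the single $(-1)$-curve is its own Artin cycle.
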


\begin{rem} \label{rem:alpha_beta}
Let $\ell, \ell'$ be any two linearly independent integral vectors in any free $\mathbb Z$ module, and let $N$
be the free $\mathbb Z$ module generated by them. Then the definitions from \ref{block:two_can} and \ref{def:canon_subdiv}
can be repeated in $N$. Then
the determinant of two such vectors can be seen as the greatest
common divisor of the maximal minors of the matrix having the
coordinate vectors of $\ell, \ell'$ as rows, see \cite{Oka}.
Note that   $\alpha(\ell, \ell') = \alpha(\ell', \ell)$. Moreover,
 $\beta(\ell_0, \ell_{s+1})\beta(\ell_{s+1}, \ell_0) \equiv 1
\,(\mod \alpha(\ell_0, \ell_{s+1}))$, cf.
\cite[Proposotion 5.6]{P-P_cfrac}.
\end{rem}

\section{Isolated surface singularities} \label{s:isol}

In the next theorem we give necessary and
sufficient conditions for a Newton nondegenerate surface
singularity to be isolated,
in terms of the Newton polyhedron.
In particular, we assume that $r=3$ in this section.
This is a (non-direct) generalization of a result of Kouchnirenko
valid in the classical case \cite{Kouchnirenko}.

\begin{thm}\label{thm:isol}
Let $(X,0)$ be a Newton nondegenerate singularity and assume $\rk N = 3$.
The following are equivalent
\begin{enumerate}

\item \label{it:isol_isol}
$(X,0)$ has an isolated singularity.

\item \label{it:isol_p}
If $p \in M$ satisfies
$\ell_\sigma(p) > m_\sigma$ for all
$\sigma\in\triangle_\Sigma^{(1)}$, then
$\ell_\sigma(p) > m_\sigma$ for all
$\sigma\in\triangle_f^{*(1)}$.

\item \label{it:isol_pp}
Let $\sigma_1,\sigma_2\in\triangle_\Sigma^{(1)}$ and
$\sigma =
\R_{\geq 0}\langle \sigma_1,\sigma_2\rangle \in \triangle_\Sigma^{(2)}$
and assume that
$\tau\in\triangle_f^{(1)}$ with $\tau \subset \sigma$.
If $p\in M$ so that $\ell_{\sigma_1}(p) > m_{\sigma_1}$ and
$\ell_{\sigma_2}(p) > m_{\sigma_2}$, then $\ell_\tau(p) > m_\tau$.

\item \label{it:isol_e}
Let $\sigma_1,\sigma_2\in\triangle_\Sigma^{(1)}$ and
$\sigma =
\R_{\geq 0}\langle \sigma_1,\sigma_2\rangle \in \triangle_\Sigma^{(2)}$.
Then there is at most one
$\tau\in\triangle_f^{(1)}$ with $\tau \subset \sigma$ and
$\sigma_1 \neq \tau \neq \sigma_2$.
If such a $\tau$ exists, then
$\ell_\tau$ is one of the following
\begin{equation} \label{eq:smooth_tau}
  \ell_1, \ldots, \ell_s, \quad
  a\ell_0 + \ell_1,\quad
  \ell_s + a\ell_{s+1},
  \qquad
  a \in \Z_{\geq 0}
\end{equation}
and, furthermore, there exists an $e \in \Q$ so that
\begin{equation} \label{eq:e}
  e\ell_\tau + \frac{\ell_{\sigma_1} }{ \alpha(\ell_\tau, \ell_{\sigma_1})}
             + \frac{\ell_{\sigma_2} }{ \alpha(\ell_\tau, \ell_{\sigma_2})}
  = 0,
\end{equation}
(see \cref{def:canon_subdiv} for $\alpha(\cdot,\cdot)$) and
\begin{equation} \label{eq:m}
  em_\tau + \frac{m_{\sigma_1} }{ \alpha(\ell_\tau, \ell_{\sigma_1})}
          + \frac{m_{\sigma_2} }{ \alpha(\ell_\tau, \ell_{\sigma_2})}
  = -1.
\end{equation}
\end{enumerate}
\end{thm}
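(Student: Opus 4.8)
The plan is to prove the equivalences in \cref{thm:isol} by going through the chain $\cref{it:isol_isol}\Leftrightarrow\cref{it:isol_p}\Leftrightarrow\cref{it:isol_pp}\Leftrightarrow\cref{it:isol_e}$, exploiting the resolution $\pi:\tilde Y\to Y$ of \cref{block:resolution} and the description of transverse types in \cref{lem:contains_orig}. The first step is to reduce the isolatedness of $(X,0)$ to a condition about orbits. By \cref{lem:smooth_open}, $\Xsing$ is contained in the union of orbits of codimension $\geq 2$, i.e. $O_\sigma$ for $\sigma\in\triangle_\Sigma$ with $\dim\sigma\geq 2$; since $\dim N=3$, these are the two-dimensional faces $\sigma\in\triangle_\Sigma^{(2)}$ (whose orbits are one-dimensional) and the vertex $\Sigma$ itself (the orbit is $\{0\}$). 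Thus $(X,0)$ has an isolated singularity if and only if, for every $\sigma\in\triangle_\Sigma^{(2)}$ with $O_\sigma\subset X$, the generic transverse type of $X$ along $O_\sigma$ is smooth. By \cref{lem:contains_orig}, this transverse type is a Newton nondegenerate curve germ with Newton polyhedron $\varpi_\sigma(\Gamma_+(f))$, so \cref{lem:curves} — in particular the smoothness criterion \cref{it:curves_third} applied inside the rank-two lattice $M_\sigma$ — translates smoothness of the transverse type into a combinatorial condition. Concretely, $O_\sigma\subset X$ iff $\triangle_f$ subdivides $\sigma$ (\cref{lem:ideal_gen}\cref{it:contains_orig_orbit}), and for such $\sigma$ the transverse curve is smooth iff whenever $p\in M_\sigma$ has $\ell_{\sigma_1}(p)>m_{\sigma_1}$, $\ell_{\sigma_2}(p)>m_{\sigma_2}$ (the two rays $\sigma_1,\sigma_2$ of $\sigma$) then $\ell_\tau(p)>m_\tau$ for the rays $\tau$ of $\triangle_f$ inside $\sigma$.

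The equivalence $\cref{it:isol_pp}\Leftrightarrow\cref{it:isol_isol}$ then follows by checking that a lattice point $p\in M$ with $\ell_{\sigma_1}(p)>m_{\sigma_1}$, $\ell_{\sigma_2}(p)>m_{\sigma_2}$ is essentially the same data as a lattice point in $M_\sigma\cong M/M(\sigma)$ with the analogous strict inequalities — one has to be slightly careful that the weights $m_\tau$ and the functionals $\ell_\tau$ descend correctly, but since $\ell_\tau\in\sigma$ for $\tau\subset\sigma$, the functional $\ell_\tau$ factors through $\varpi_\sigma$, so this is routine. For the equivalence $\cref{it:isol_p}\Leftrightarrow\cref{it:isol_pp}$ I would argue that a global lattice point $p\in M$ with $\ell_\sigma(p)>m_\sigma$ for all $\sigma\in\triangle_\Sigma^{(1)}$ restricts, for each two-face $\sigma$, to a point satisfying the hypothesis of \cref{it:isol_pp}; conversely, since the $\ell_\sigma$ for $\sigma\in\triangle_\Sigma^{(1)}$ generate (rationally) and the two-faces cover the boundary rays pairwise, the local conditions recombine into the global one. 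Here $\triangle_f^{*(1)}$ — the rays of $\triangle_f$ on $\partial\Sigma$ — is precisely the union over two-faces $\sigma$ of the rays of $\triangle_f$ inside $\sigma$, which is why \cref{it:isol_p} is phrased with $\triangle_f^{*(1)}$ rather than $\triangle_f^{(1)}$.

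The last equivalence $\cref{it:isol_pp}\Leftrightarrow\cref{it:isol_e}$ is where the real work lies, and I expect it to be the main obstacle. Fix a two-face $\sigma$ with rays $\sigma_1,\sigma_2$ and work in the rank-two lattice $N_\sigma$. By the curve analysis (the proof of \cref{lem:curves}\cref{it:curves_smooth}, via the parallelogram $P(\ell_\tau)$) the transverse curve is smooth iff there is at most one interior ray $\tau$ and its generator lies on $\partial\convx(\Sigma_\sigma^\circ\cap N_\sigma)$, i.e. $\ell_\tau$ is one of the vectors listed in \cref{eq:smooth_tau} (expressed via the canonical primitive sequence of the two-dimensional cone $\sigma$). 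This gives the first half of \cref{it:isol_e}. For the numerical identities \cref{eq:e} and \cref{eq:m}: since $\ell_\tau,\ell_{\sigma_1},\ell_{\sigma_2}$ all lie in the two-plane spanned by $\sigma$, they satisfy a unique linear relation up to scale, and normalizing the coefficients of $\ell_{\sigma_i}$ to $1/\alpha(\ell_\tau,\ell_{\sigma_i})$ — the reciprocal of the determinant of the pair, cf. \cref{rem:alpha} — is exactly the normalization that makes $e\in\Q$ the remaining coefficient; I would verify this by writing everything in the integral basis adapted to $\tau$ and computing the content of the restrictions, using \cref{rem:alpha}. Equation \cref{eq:m} is then the condition that the vanishing order of $f$ along $D_\tau$ (namely $m_\tau$, see \cref{eq:van_order}) is the one forced by $m_{\sigma_1},m_{\sigma_2}$ precisely when $D_\tau$ "meets $\tilde X$ with multiplicity one", i.e. the strict transform is transverse — equivalently the edge $F_\tau$ of $\Gamma(f)$ has lattice length one in the relevant direction. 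Concretely, \cref{it:isol_pp} for this $\sigma$ says: the only lattice point of $M_\sigma$ in the half-open region cut out by $\ell_{\sigma_1}>m_{\sigma_1}$, $\ell_{\sigma_2}>m_{\sigma_2}$ but $\ell_\tau\leq m_\tau$ is none at all; translating via the parallelogram picture of \cref{fig:delta}, this region is a (possibly degenerate) triangle whose only possible interior lattice point is detected by the $-1$ on the right-hand side of \cref{eq:m}, with the borderline case being $\ell_\tau(p)=m_\tau$ giving the value $-1$ exactly. So the strategy is: show the region in question is empty iff the triangle has no interior lattice points iff the unique affine functional vanishing on the two defining hyperplanes and equal to $-1$ at the "$m_\tau$ level" has the right sign, which unwinds to \cref{eq:e}–\cref{eq:m}. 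The delicate points will be bookkeeping of the $\alpha(\ell_\tau,\ell_{\sigma_i})$ factors and handling the boundary (regular-cone) cases $i=0$ and $i=s$ where the conventions of \cref{block:two_can} differ.
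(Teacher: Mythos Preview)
Your plan for \cref{it:isol_isol}$\Leftrightarrow$\cref{it:isol_pp}$\Leftrightarrow$\cref{it:isol_p} matches the paper's proof exactly: reduce via \cref{lem:smooth_open} to the one-dimensional orbits, then invoke \cref{lem:contains_orig} and \cref{lem:curves}\cref{it:curves_third} for each two-face; the paper likewise dismisses \cref{it:isol_p}$\Leftrightarrow$\cref{it:isol_pp} as an exercise.

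For \cref{it:isol_e} the paper takes a more direct route than your triangle/region argument, and it is worth knowing. Rather than analysing lattice points in the half-open region, the paper picks a smooth subdivision of $\sigma$ containing $\tau$ and lets $\tau_1,\tau_2$ be the rays adjacent to $\tau$ on either side. The selfintersection relation $-b\ell_\tau+\ell_{\tau_1}+\ell_{\tau_2}=0$ together with the continued-fraction identity $\ell_{\tau_i}=(\beta_i\ell_\tau+\ell_{\sigma_i})/\alpha_i$ (where $\alpha_i=\alpha(\ell_\tau,\ell_{\sigma_i})$) immediately yields \cref{eq:e} with $e=-b+\beta_1/\alpha_1+\beta_2/\alpha_2$; no separate content computation is needed. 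Then, writing $p_1,p_2$ for the endpoints of the face $F=F_\tau$ with $p_i\in F_{\sigma_i}$, one evaluates the left side of \cref{eq:m} as $-b\ell_\tau(p_1)+\ell_{\tau_1}(p_1)+\ell_{\tau_2}(p_2)=\ell_{\tau_2}(p_2-p_1)$, which is exactly minus the combinatorial length of $F$ since $\ell_{\tau_2}$ is primitive on the affine line through $F$. Thus \cref{eq:m} is equivalent to $\mathrm{length}(F)=1$, which combined with \cref{eq:smooth_tau} is precisely the smoothness criterion of \cref{lem:curves}\cref{it:curves_irred}--\cref{it:curves_smooth}. Your parallelogram approach would reach the same conclusion, but the adjacent-ray trick avoids the lattice-point bookkeeping and makes the role of the $1/\alpha_i$ normalisation transparent.
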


\begin{proof}
By \cref{lem:smooth_open}, the singular locus of the punctured
germ $X\setminus \{0\}$ is a union of orbits $O_\sigma$ for some
$\sigma\in\triangle_\Sigma^{(2)}$.
For such a $\sigma$, we have $(V(\sigma),0)\subset(X,0)$ if and only if
the projection of $\Gamma_+(f)$ in $M_\sigma$ is nontrivial,
by \cref{lem:contains_orig}. By the same lemma,
if $(V(\sigma),0)\subset(X,0)$, then the generic transverse type
to $V(\sigma)$ in $(X,0)$ is a Newton nondegenerate curve with Newton
polyhedron the projection of $\Gamma_+(f)$ to $M_\sigma$.
Therefore \cref{it:isol_isol}$\Leftrightarrow$\cref{it:isol_pp} follows
from \cref{lem:curves}. The equivalence of
\cref{it:isol_p} and \cref{it:isol_pp} is an exercise.

The generic transverse type to $(V(\sigma),0)$ in $(X,0)$ is smooth if and
only if its diagram has a single face corresponding to a $\tau$ as
in \cref{eq:smooth_tau}, and this face has length one.
\cref{it:isol_isol}$\Leftrightarrow$\cref{it:isol_e} follows,
once we show that
given such a $\tau$, an $e\in \Q$ satisfying \cref{eq:e} exists and is
unique, and that, furthermore, the left hand side of
\cref{eq:m} is minus the combinatorial length of the face $F$ of the Newton
diagram corresponding to $\tau$.

Take a smooth subdivision of $\sigma$ containing $\tau$ as a ray, and
let $\tau_i$ be the ray adjacent to $\tau$ between $\tau$ and $\sigma_i$.
Then there exists a $-b \in \Z$ so that
\begin{equation} \label{eq:minus_b}
  -b\ell_\tau + \ell_{\tau_1} + \ell_{\tau_2} = 0.
\end{equation}
Furthermore, for $i=1,2$, we may assume that
\[
  \ell_{\tau_i} = \frac{\beta_i\ell_{\tau} + \ell_{\sigma_i}}{\alpha_i}
\]
where $\alpha_i / \beta_i$ is the continued fraction associated with
$\ell_\tau$ and $\ell_{\sigma_i}$. As a result,
\cref{eq:minus_b} can be rewritten as (\ref{eq:e})
with
$e = -b + \beta_1/\alpha_1 + \beta_2/\alpha_2$.
Let $p_1, p_2$ be the endpoints of $F$ so that
$\ell_{\tau_1}(p_2 - p_1) > 0$ and $\ell_{\tau_2}(p_1 - p_2) > 0$.
Since $\ell_{\tau_i}$ is a primitive function on the affine hull of
the face of $F$,
$\ell_{\tau_1}(p_2 - p_1) =\ell_{\tau_2}(p_1 - p_2) =$ the length
of $F$. We find
\[
\begin{split}
  em_\tau + \frac{m_{\sigma_1}}{\alpha_1}
          + \frac{m_{\sigma_2}}{\alpha_2}
  &= e\ell_\tau(p_1) + \frac{\ell_{\sigma_1}(p_1)}{\alpha_1}
                     + \frac{\ell_{\sigma_2}(p_2)}{\alpha_2} \\
  &= -b\ell_\tau(p_1) + \ell_{\tau_1}(p_1) + \ell_{\tau_2}(p_2)
  = \ell_{\tau_2}(p_2 - p_1).\qedhere
\end{split}
\]
\end{proof}

\section{Resolution of Newton nondegenerate surface singularities} \label{s:res}

In this section, we retain the notation introduced in \cref{s:divisors},
with the assumption that $\rk N = 3$.
We describe Oka's algorithm which describes explicitly the graph of a
resolution of a Newton nondegenerate Weil divisor of dimension $2$.
This algorithm was originally described by Oka \cite{Oka}
for Newton nondegenerate hypersurface singularities in $(\C^3,0)$.
The general methods for resolving Newton nondegenerate hypersurface
singularities have been used in e.g. \cite{Var_zeta} and
\cite[Chapter 8]{AGZVII}.

\begin{definition} \label{def:cansubdiv}
A \emph{canonical subdivision} of $\triangle_f$ is a subdivision
$\tilde\triangle_f$ satisfying the following.
\begin{enumerate}

\item
$\tilde\triangle_f$ is a regular subdivision of $\triangle_f$.

\item
If $\sigma\in\triangle^{(2)}_f \setminus \triangle_f^*$, then
$\tilde\triangle_f|_\sigma$ is the canonical subdivision
$\tilde\triangle_\sigma$ of $\triangle_\sigma$ given
in \cref{def:canon_subdiv}.
\end{enumerate}
\end{definition}

\begin{block}
The existence of a canonical subdivision is proved
in \cite[\S3]{Oka}. We fix such a subdivision
$\tilde\triangle_f$. We will denote by $\tilde Y$ the toric variety
associated with $\tilde\triangle_f$. The map $\tilde Y \to Y$ is denoted by
$\pi$, and the strict transform of $X$ under this map is denoted by $\tilde X$.
We denote by $\pi_X$ the restriction $\pi|_{\tilde X}:\tilde X \to X$.
By \cref{lem:proper}, the map $\tilde Y \to Y$ is proper, hence
$\tilde X \to X$ is proper as well.
\end{block}

\begin{definition} \label{def:Fflmt}
For $i,d\in\N$, define
\[
\begin{split}
  \tilde\triangle_f^{(i,d)}
    &= \set{\sigma\in\tilde\triangle_f^{(i)}}
          {\dim(F_\sigma \cap \Gamma(f)) = d}\\
  \tilde\triangle_f^{*(i,d)}
    &= \tilde\triangle_f^{(i,d)} \cap \tilde\triangle_f^*.
\end{split}
\]
\end{definition}

\begin{definition} \label{def:okas_graph}
We start by defining a graph $G^*$ as follows. Index the set
$\tilde\triangle_f^{(1,2)}$ by a set $\Nd$, i.e. write
$\tilde\triangle_f^{(1,2)} = \set{\sigma_n}{n\in \Nd}$ in such a way that
the map $\Nd\to\tilde\triangle_f^{(1,2)}$, $n\mapsto \sigma_n$ is bijective.
Similarly, index the set
$\widetilde{\Delta}_f^{(1,2)} \cup \widetilde{\Delta}_f^{*(1,1)}$ by
$\mathcal {N}^*$. Hence $\mathcal {N}\subset \mathcal {N}^*$.
The elements of $\mathcal {N}^*$ are referred to as {\it extended nodes},
while ${\mathcal N}$ as {\it nodes}.

Denote by $F_n$ the face of $\Gamma_+(f)$ corresponding to $\sigma_n$
and by $\ell_n$ the primitive integral generator of $\sigma_n$.
Note that $n \in \Nd$ if and only if $F_n$ is bounded.
For $n,n'\in \Nd^*$, let $t_{n,n'}$ be the length of the segment
$F_n\cap F_{n'}$ if this is a bounded segment of dimension 1.
If $F_n\cap F_{n'}$ is unbounded, or has dimension 0,
then we set $t_{n,n'} = 0$.
Now, for every pair $n$ and $n'\in \Nd^*$, we join $n,n'$ by $t_{n,n'}$ bamboos
of type $\alpha(\ell_n, \ell_{n'}) / \beta(\ell_n, \ell_{n'})$, as in
\cref{fig:bamboo}. This finishes the construction of the graph $G^*$.
Denote its set of vertices $\V^*$.

Define the graph $G$ as the induced full subgraph of $G^*$
on the set of vertices
$  \V = \V^* \setminus (\Nd^*\setminus \Nd)$.

\begin{figure}[ht]
\begin{center}
\input{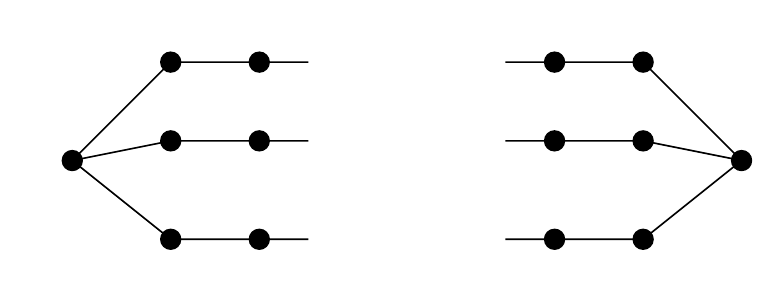_t}
\caption{We join $n,n'\in \Nd$ by $t_{n,n'}$ bamboos of the above form,
where the sequence $b_1, \ldots, b_s$ is defined as $b_1 = 1$ if
$\alpha(\ell_n, \ell_{n'}) = 1$, and by a negative continued
fraction expansion
$\alpha(\ell_n,\ell_{n'}) / \beta(\ell_n,\ell_{n'}) = [b_1,\ldots, b_s]$
otherwise.}
\label{fig:bamboo}
\end{center}
\end{figure}

In order to have a plumbing graph structure on $G$, we must specify
an Euler number and a genus for each vertex, as well as a sign for each
edge. All edges are positive. Vertices appearing on bamboos have genus
zero, whereas the genus $g_n$ associated with $n\in \Nd$ is defined
as the number of integral interior points in the polygon $F_n$.

To every extended node $n\in \Nd^*$ we have associated the cone $\sigma_n$
and its primitive integral generator $\ell_n$. If $v_1, \ldots, v_s$
are the vertices appearing on a bamboo, in this order, from $n$ to
$n'\in\Nd^*$, let $\ell_0, \ell_1, \ldots, \ell_{s+1}$ be
the canonical primitive sequence associated with $\ell_n, \ell_{n'}$.
We then set $\ell_v = \ell_i$ for $v=v_i$, $i=1,\ldots, s$, and
$\sigma_v = \R_{\geq 0}\langle \ell_i \rangle$. This induces
a map $\gamma:\V\to \tilde\triangle_f^{(1)}$ with the property that
$\gamma(n) = \sigma_n$ for $n\in \Nd^*$, and
$\ell_v, \ell_w$ generate an element of $\tilde\triangle_f^{(2)}$
if $v,w$ are adjacent in $G^*$.

For any $v\in \V$, let $\V_v$ and$\V_v^*$ be the set of neighbours of
$v$ in $G$ and $G^*$, respectively.
Then there exists a unique $-b_v\in \Z_{\leq -1}$ satisfying
\[
  -b_v \ell_v + \sum_{u\in \V_v^*} \ell_u = 0 \ \ \mbox{in} \ \   N,
\]
The number $-b_v$ is the Euler number associated with $v \in \V$.
We note that if $v$ lies on a bamboo, with the notation of the previous
paragraph, $v = v_i$, then $-b_v = -b_i$ and $-b_i \leq -2$ unless
$\alpha(\ell_n, \ell_{n'}) = 1$.
\end{definition}

\begin{rem} \label{rem:qhs3_oka}
The link of an isolated surface singularity is a
rational homology sphere if and only if it has a resolution whose
graph is a tree and all vertices have genus zero, see e.g.
\cite{Nemethi_resolution}. The above construction produces such
a graph if and only if all integral points on $\Gamma(f)$ lie on
its  boundary $\partial \Gamma(f)$.

Indeed,
if $P \subset \Gamma(f)$ is a vertex which is not on the boundary, then
the nodes corresponding to faces of $\Gamma(f)$ containing $P$ lie
on an embedded cycle.
Similarly,
if $S \subset \Gamma(f)$ is a face of dimension $1$ which is
not a subset of the boundary, and $S$ contains integral interior points, then
the nodes corresponding to the two faces containing $S$ are joined by
more than one bamboo, inducing an embedded cycle in $G$.
Finally, if $F \subset \Gamma(f)$ is a two dimensional
face containing interior integral interior points, then the corresponding node
has nonzero genus. The converse is not difficult.

The classical case $Y = \C^3$ is discussed in details in
\cite{Newton_nondeg}.
\end{rem}

\begin{example}
Let $\Sigma = \R^3_{\geq 0}$, and consider standard coordinates
$x,y,z$ on $Y = \C^3$, and the function
\[
  f(x,y,z) = x^5 + x^2y^2 + y^7 + z^{10}.
\]
The Newton diagram $\Gamma(f)$ consists of two triangular faces, whose
intersection is a segment of length two. The diagram, as well as the
graph obtained by Oka's algorithm can be seen in \cref{fig:ex6}.
\begin{figure}[ht]
\begin{center}
\input{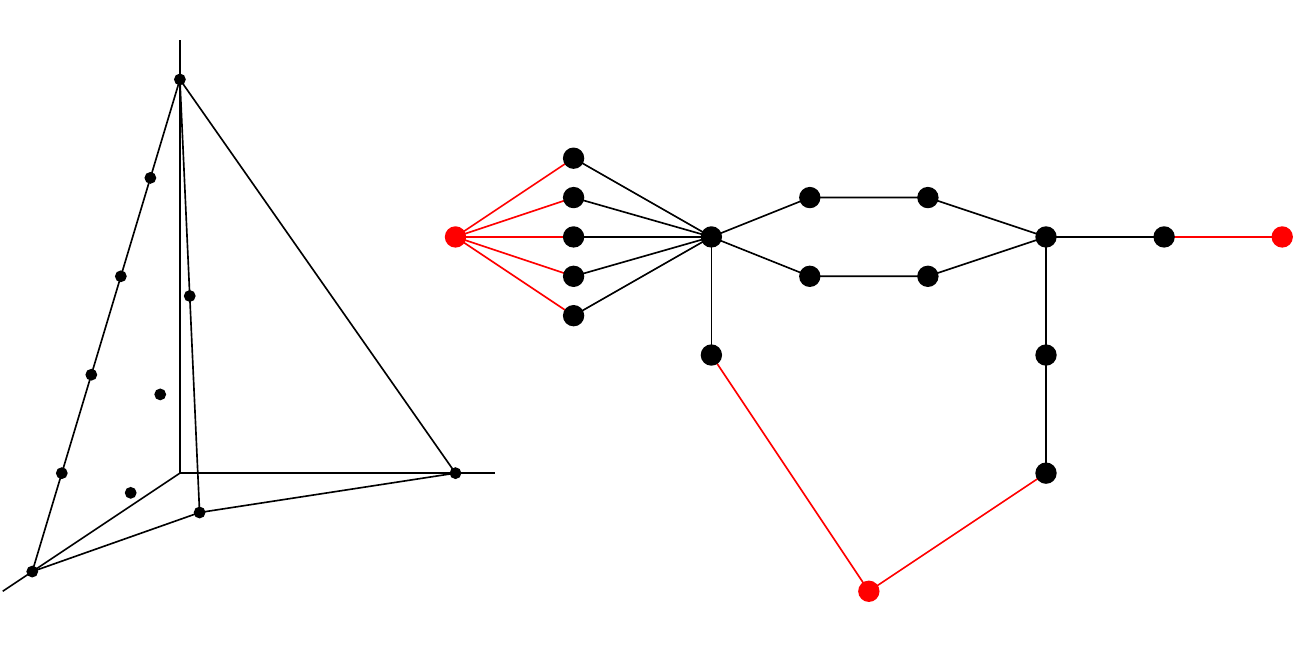_t}
\caption{A Newton diagram, and the graph $G^*$, with the subgraph
$G$ in black.}
\label{fig:ex6}
\end{center}
\end{figure}
\end{example}

\begin{prop}
Let $(X,0)$ be a Newton nondegenerate surface singularity. Then
the map $\X\to X$ is a resolution of $(X,0)$ whose
resolution graph is $G$.

More precisely, $\tilde X$ is smooth and
the exceptional set $E \subset \tilde X$
is a normal crossing divisor.
For each $\sigma\in\tilde\triangle_f^{(1)}$, we can enumerate the
irreducible components of $E_\sigma$ by $\gamma^{-1}(\sigma)$ so that
$E_\sigma = \amalg_{v \in \gamma^{-1}(\sigma)} E_v$, where $E_v$ is a smooth
curve.

If $\gamma(v) \in \tilde\triangle_f^{(1)} \setminus \tilde\triangle^*$, then
$E_v$ is compact, has
genus $g_v$, and its normal bundle in $\tilde X$ has Euler number $-b_v$.
If $\gamma(v) \in \tilde\triangle^{(1)*}$, then
$E_v$ is a smooth germ, transverse to a smooth point of the exceptional
divisor.

Furthermore, if $v,w\in\V$, then the number of intersection points
$|E_v\cap E_w|$ equals
the number of edges between $v$ and $w$ in $G$.
\end{prop}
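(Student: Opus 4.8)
The plan is to analyze the embedded resolution $\pi\colon\tilde Y\to Y$ provided by the fixed canonical subdivision $\tilde\triangle_f$ of \cref{block:resolution}, and read off the claimed structure orbit by orbit. The key point is that $\tilde X$ meets every torus orbit of $\tilde Y$ transversally (by Newton nondegeneracy, \cref{block:resolution}), so the stratification of $\tilde Y$ by orbits $O_\sigma$, $\sigma\in\tilde\triangle_f$, induces a stratification of $\tilde X$, and $E=\pi_X^{-1}(0)$ is the union of those strata $\tilde X\cap O_\sigma$ with $\sigma$ not a ray of $\triangle_\Sigma$ — that is, the strata lying over the origin. Since $\tilde Y$ is smooth and $\tilde X$ meets each $O_\sigma$ transversally, $\tilde X$ is smooth and $E$ is a normal crossing divisor in $\tilde X$.

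First I would treat the one-dimensional cones. For $\sigma\in\tilde\triangle_f^{(1)}$, the divisor $D_\sigma\subset\tilde Y$ is a toric variety with dense orbit $O_\sigma\cong(\C^*)^{r-1}$; restricting $f$ (more precisely $f_\sigma$) and using nondegeneracy, $\tilde X\cap D_\sigma$ is smooth. Its connected/irreducible components are indexed by $\gamma^{-1}(\sigma)$: for a node ($\sigma=\sigma_n$, $F_\sigma$ bounded two-dimensional) there is one component, a compact curve, whose genus is computed as the number of interior lattice points of $F_\sigma=F_n$ by the standard genus formula for a nondegenerate curve in a two-dimensional torus (adjunction / Khovanskii), giving $g_v=g_n$; for $\sigma$ a ray on a bamboo, $F_\sigma$ is a one-dimensional face and $\tilde X\cap O_\sigma$ is a single $\C^*$ inside $E_v\cong\CP^1$; for $\sigma\in\tilde\triangle^{*(1)}$ (a ray in $\partial\Sigma$), $D_\sigma$ is not contracted by $\pi$, so $E_v=\tilde X\cap D_\sigma$ is a smooth germ, not part of the exceptional set, meeting $E$ transversally at one point. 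This matches \cref{def:Fflmt} and the enumeration $E_\sigma=\amalg_{v\in\gamma^{-1}(\sigma)}E_v$.

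Next I would compute the Euler numbers and intersections. For a compact $E_v$ with $\gamma(v)=\sigma$, the self-intersection $E_v^2$ in $\tilde X$ equals the degree of the normal bundle, which (since $\tilde X$ meets $D_\sigma$ transversally) is $\deg N_{\tilde X/\tilde Y}|_{E_v}+\deg N_{D_\sigma/\tilde Y}|_{E_v}$ — the first contribution coming from the weights $m_{\sigma'}$ along the adjacent divisors and the second from the toric self-intersection on $D_\sigma$. Both are encoded in the relation $-b_v\ell_v+\sum_{u\in\V_v^*}\ell_u=0$ in $N$: evaluating the appropriate linear functional and using that $\ell_\sigma|_{F_\sigma}\equiv m_\sigma$ together with \cref{eq:van_order} translates this linear identity into $E_v^2=-b_v$; this is essentially Oka's computation \cite[Theorem 7.1]{Oka}, now carried out in the possibly non-regular ambient $\Sigma$. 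For two vertices $v,w$ with $\gamma(v)=\sigma$, $\gamma(w)=\sigma'$, the curves $E_v,E_w$ meet exactly along $\tilde X\cap O_\tau$ where $\tau=\R_{\ge0}\langle\ell_v,\ell_w\rangle\in\tilde\triangle_f^{(2)}$ (when such a two-cone exists); $\tilde X\cap O_\tau$ is a finite set of reduced points whose cardinality is $\Vol_1(\varpi_\tau(\Gamma(f)))$, and one checks this equals $t_{n,n'}$ along a bamboo joining nodes $n,n'$ (respectively $1$ for consecutive bamboo vertices), matching the number of edges in $G$ from \cref{def:okas_graph}.

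\textbf{Main obstacle.} The routine parts are the transversality and smoothness bookkeeping; the genuine work is twofold. First, verifying that the combinatorially defined Euler numbers $-b_v$ really are the self-intersections $E_v^2$ in the \emph{non-regular} setting — the subtlety is that $D_\sigma$ itself may be singular away from the generic point of $E_v$, so one must argue along $O_\sigma$ where everything is toric-smooth, and carefully match the two normal-bundle contributions; the identity $-b_v\ell_v+\sum_{u\in\V_v^*}\ell_u=0$ must be shown to be exactly the one governing $\deg(N_{E_v/\tilde X})$, which I expect to require the explicit local toric charts near $O_\tau$ for each adjacent two-cone $\tau$ and the transverse-curve description $cx^b+dy^a=0$ as in the proof of \cref{lem:curves}. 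Second, one must confirm that the bamboo shapes inserted in \cref{def:okas_graph} are forced: along a two-cone $\sigma\in\triangle_f^{(2)}\setminus\triangle_f^*$ the canonical subdivision introduces precisely the rays $\ell_1,\dots,\ell_s$ of \cref{def:canon_subdiv}, and the strict transform passes through the toric points $E_i\cap E_{i+1}$ in a way that produces a chain of $\CP^1$'s with self-intersections $-b_1,\dots,-b_s$ — this is again local and follows the curve analysis, but handling the endpoints (where $\ell_\sigma=a\ell_0+\ell_1$ etc., cf.\ \cref{lem:curves}(ii)) and the extended-node rays in $\triangle_f^*$ requires care. Once these local computations are assembled, the global statement follows by noting that $E$ is connected (as $(Y,0)$ and hence $(X,0)$ has connected link) and that the dual graph of $(E_v,E_v^2,g_v)$ with the intersection data is by construction $G$.
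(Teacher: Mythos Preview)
Your plan is correct and is precisely the approach the paper takes: the paper's own proof is a single sentence deferring entirely to Oka's original argument \cite{Oka}, and what you have sketched (orbit-by-orbit transversality from nondegeneracy, genus of $E_n$ via interior lattice points of $F_n$, bamboo chains from the canonical primitive sequence, and the self-intersection computation from the linear relation $-b_v\ell_v+\sum_{u\in\V_v^*}\ell_u=0$) is exactly that argument, transplanted verbatim to the non-regular cone $\Sigma$. One small correction to your normal-bundle bookkeeping: transversality of $\tilde X$ and $D_\sigma$ along $E_v$ gives $N_{E_v/\tilde X}\cong N_{D_\sigma/\tilde Y}|_{E_v}$ directly (a single term, not the sum you wrote), and the degree of this restriction is then read off from the toric data of the adjacent two-cones, yielding $-b_v$ as in \cite[\S6]{Oka}.
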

\begin{proof}
The proof goes exactly as in \cite{Oka}
\end{proof}

\begin{definition}
For $v\in \V^*$, (recall \cref{block:fan_comb} and \cref{def:Fflm}) let
\[
  F_v = F_{\gamma(v)},\quad
  \ell_v = \ell_{\gamma(v)},\quad
  m_v = m_{\gamma(v)}.
\]
\end{definition}

\begin{lemma} \label{lem:bv}
For $v\in \V$, we have
\[
  -b_v \ell_v + \sum_{u\in\V^*_v} \ell_u = 0,\quad
  -b_v m_v + \sum_{u\in\V^*_v} m_u = -2\Vol_2(F_v).
\]
\end{lemma}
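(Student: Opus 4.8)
The first identity, $-b_v\ell_v + \sum_{u\in\V^*_v}\ell_u = 0$, is precisely the defining relation of $-b_v$ in \cref{def:okas_graph}, so nothing needs to be proved there; the content is in the second identity. The plan is to reinterpret both sides by pairing with a suitable point of $M_\R$. Observe that $\ell_v$ takes the constant value $m_v$ on the face $F_v$, and more precisely each neighbour $u\in\V^*_v$ corresponds to a ray $\gamma(u)$ spanning, together with $\gamma(v)$, a two-dimensional cone of $\tilde\triangle_f$; the associated faces $F_v$ and $F_u$ share a common subface $F_v\cap F_u$, and $\ell_u$ restricted to $F_v$ is an affine function. So I would fix a vertex $P_0$ of the polygon $F_v$ and evaluate the linear combination $-b_v\ell_w + \sum_u \ell_u$ (a functional that vanishes identically by the first identity) against $P_0$: this gives $0$, and subtracting the corresponding evaluation against an arbitrary point $P\in F_v$ shows that $-b_v m_v + \sum_u \ell_u(P_0) = -b_v m_v + \sum_u m_u$ only after one accounts for the discrepancy $\sum_u(\ell_u(P_0) - m_u)$, which is where the volume term enters.

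More concretely, the key step is the identity $\sum_{u\in\V^*_v}(\ell_u(P) - m_u) = -2\Vol_2(F_v)$ for every $P$ in $F_v$ (the left side is in fact independent of $P$ since $\sum_u \ell_u = b_v\ell_v$ is constant on $F_v$). I would prove this by a direct geometric computation on the polygon $F_v$: each neighbour $u$ contributes a facet (an edge) $F_v\cap F_u$ of $F_v$, the function $\ell_u - m_u$ vanishes on that edge and is the (suitably normalized) "distance to the edge" linear form, and the sum over all edges of (minus) these distance forms, evaluated at any interior point and multiplied by the edge length, recovers twice the area of $F_v$ — this is the standard decomposition of a polygon into triangles over its edges with a common apex $P$. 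The normalization is exactly the one making $\ell_u$ primitive on the affine span of $F_u$; I would need to check that $\ell_u|_{\text{aff}(F_v\cap F_u)}$ has the correct content, i.e.\ that the "width" contributed by the cone spanned by $\gamma(v),\gamma(u)$ is $1$, which follows because $\gamma(v),\gamma(u)$ span a cone in the \emph{regular} subdivision $\tilde\triangle_f$ (cf.\ \cref{def:okas_graph} and \cref{def:cansubdiv}).

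For the boundary edges of $F_v$, i.e.\ edges lying on faces of $\Gamma_+(f)$ of dimension one that are not of the form $F_u$ for a neighbour in $G$ but rather correspond to rays of $\tilde\triangle_f^*$: these are exactly handled by the extended neighbours $\V^*_v$ (as opposed to $\V_v$), which is why the sum in \cref{lem:bv} runs over $\V^*_v$ and not $\V_v$. So every edge of the polygon $F_v$ is accounted for by exactly one $u\in\V^*_v$, and the triangulation argument closes up. The main obstacle I anticipate is bookkeeping the normalizations consistently — making sure that "$\ell_u$ is primitive on $\mathrm{aff}(F_u)$" translates into "$\ell_u-m_u$, restricted to the edge of $F_v$, is the primitive linear form vanishing on that edge" so that "$(\ell_u-m_u)(P)\cdot(\text{lattice length of the edge})$" equals twice the area of the triangle with that edge as base and $P$ as apex; once that is pinned down the computation is routine. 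An alternative, which I would mention as a cross-check, is to deduce the second formula from the canonical-divisor/adjunction computation: $-b_v m_v + \sum_u m_u$ is (up to sign) the degree of the canonical divisor restricted to $E_v$ coming from the toric adjunction formula on $\tilde Y$, which by the genus of $E_v$ and the self-intersection $-b_v$ must equal $2g_v - 2 - \sum_u|E_v\cap E_u|$-type data, matching $-2\Vol_2(F_v)$ via the number of interior lattice points (Pick's formula); but the direct polygon computation is cleaner and self-contained.
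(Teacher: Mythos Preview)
Your approach is essentially correct and gives a self-contained geometric argument, whereas the paper's proof simply cites \cite[Prop.~4.4.4]{Newton_nondeg} together with the primitive-sequence relation $\alpha\ell_1=\beta\ell_0+\ell_{s+1}$. Two points need tightening.

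First, a sign: since $\ell_u(P)\geq m_u$ for every $P\in\Gamma_+(f)$, the key identity should read
\[
  \sum_{u\in\V^*_v}\bigl(\ell_u(P)-m_u\bigr)=2\Vol_2(F_v),\qquad P\in F_v,
\]
with a plus sign; the minus sign in the lemma then appears when you subtract this from $0=-b_v\ell_v(P)+\sum_u\ell_u(P)$.

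Second, and more substantively, you write that ``each neighbour $u$ contributes a facet (an edge) $F_v\cap F_u$'', and then switch to ``the sum over all edges \ldots\ multiplied by the edge length''. These are not the same sum unless you observe that \emph{the number of neighbours $u\in\V^*_v$ whose face $F_u$ equals a given edge $S=F_n\cap F_{n'}$ of $F_v$ is exactly the lattice length $t_{n,n'}$ of $S$}: by \cref{def:okas_graph} the node $n$ is joined to $n'$ by $t_{n,n'}$ parallel bamboos, each contributing one neighbour of $v$ with the same primitive vector $\ell_u$. With this in hand your triangulation argument goes through: $\ell_v,\ell_u$ span a regular two-cone in $\tilde\triangle_f$, so $\ell_u$ restricts to a primitive affine function on the lattice $M\cap\{\ell_v=m_v\}$, the ``lattice height'' from $P$ to the line through $S$ is $\ell_u(P)-m_u$, and summing $\tfrac12\,t_{n,n'}\cdot(\ell_u(P)-m_u)$ over the edges of $F_v$ gives $\Vol_2(F_v)$. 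The case $v\notin\Nd$ (where $F_v$ is a segment and $\Vol_2(F_v)=0$) is immediate by evaluating the first identity at any point of $F_v$.

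Your proposed cross-check via adjunction and Pick's theorem is not independent: the paper derives the canonical-cycle formula (\cref{prop:cans}) \emph{from} \cref{lem:bv}, so running that direction would be circular here.
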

\begin{proof} The first equality follows from construction, see also
\cite[\S6]{Oka}. The second equality follows from
\cite[Prop. 4.4.4]{Newton_nondeg} and the formula
$\alpha \ell_1 = \beta \ell_0 + \ell_{s+1}$, where
$\ell_0,\ell_1,\ldots, \ell_{s+1}$ is a primitive sequence.
\end{proof}

\begin{rem} \label{rem:exceptional_div}
\begin{blist}

\item
The exceptional divisor $E$ is the union of $E_\sigma$ for which
$\sigma \in \tilde\triangle_f^{(1)}$ is a cone which is not contained in
$\partial\Sigma$, or, equivalently, $F_\sigma$ is compact.

\item
If $\sigma \in \tilde\triangle_f^{(1,2)}$,
then $E_\sigma$ is a compact smooth irreducible curve.
If $\sigma \in \tilde\triangle_f^{(1,1)} \setminus \tilde\triangle^*_f$,
then $E_\sigma$ is the union of $t_\sigma$ disjoint smooth compact rational curves.
For $\sigma \in \tilde\triangle_f^{*(1,1)}$, the intersection
$E_\sigma = V(\sigma) \cap \X$ is the disjoint union of $t$
smooth curve germs, where $t$ is the length of the segment
$F_\sigma \cap \Gamma(f)$.
If $\sigma \in \tilde\triangle_f^{(1,0)}$,
then $E_\sigma = \emptyset$ (the global divisor $D_\sigma$ does not intersect $\X$).
\end{blist}
\end{rem}

\begin{definition}
We denote by $L =  \Z\gen{ E_v }{v\in\V}$ the lattice of integral
cycles in $\tilde X$ supported on the exceptional divisor $E$.
\end{definition}

\begin{definition} \label{def:wt_div}
Let $g \in \O_{Y,0}$ and denote its restriction by $\overline g \in \O_{X,0}$.
For any $v\in\V^*$, we define
\[
\begin{split}
   \wt_v  (g)      = \min\set{\ell_v(p)}{p\in\supp(g)},  \quad
  &\wt(g) = \sum_{v\in\V} \wt_v(g) E_v \in L,          \\
   \wt_v  (\overline g) = \max\set{\wt_v(g+h)}{h\in I_X},     \quad
  &\wt(\overline g) = \sum_{v\in\V} \wt_v(\overline g) E_v \in L.
\end{split}
\]
For $\sigma = \gamma(v)$, we also write $\wt_\sigma$ instead of
$\wt_v$, as this is independent of $v\in\gamma^{-1}(\sigma)$.

Similarly, for any $v\in\V$, let $\div_v$ be the valuation on
$\O_{X,0}$ associated with the divisor $E_v$, that is, for
$\overline g\in\O_{X,0}$, denote by $\div_v(\overline g)$
the order of vanishing of the
function $\pi_X^*(\overline g)$ along $E_v$. Set also
\[
  \div(\overline g) = \sum_{v\in\V} \div_v(\overline g) E_v \in L.
\]
\end{definition}

\begin{rem}
\begin{blist}

\item
If $\sigma = \gamma(v)$ and $|\gamma^{-1}(\sigma)| > 1$, then
$\div_v$ is not independent of the choice of $v\in\gamma^{-1}(\sigma)$.

\item
For $\sigma\in\tilde\triangle_f^{(1)}$, the function
$\wt_\sigma:\O_{Y,0} \setminus \{0\} \to \Z$ is the valuation on
$\O_{Y,0}$ associated with the irreducible divisor $V(\sigma) \subset \tilde Y$, cf. \cref{eq:van_order}.

\item
In general,
the functions $\wt_v$ and $\div_v$ do not coincide on $\O_{X,0}$. However,
$\wt_v(\overline{g})\leq \div_v(\overline g)$ for any $\overline{g}\in \O_{X,0}$ and $v\in \V$.
 Furthermore,
if  $p\in M$ and
$\gamma(v) \in \tilde\triangle_f^{(1,>0)} \setminus \tilde\triangle_f^*$,
then $\div_v(x^p) = \wt_v(x^p) = \ell_v(p)$.
In particular, this defines a group homomorphism $M \to L$,
$p\to \wt(x^p)$.

\end{blist}
\end{rem}

\section{The geometric genus} \label{s:geom_genus}

In this section we provide a formula for the delta invariant and geometric
genus for an arbitrary generalized Newton nondegenerate singularity in
terms of its Newton polyhedron.
In this section, the rank $r$ of $N$ is under no restriction.
Recall that
we say that $f$ (or $\Gamma_+(f)$)
is \emph{pointed} at $p\in M_\Q$, if for any
$\sigma \in \triangle_\Sigma^{(1)}$ we have $m_\sigma = \ell_\sigma(p)$,
see \cref{def:pointed}.

\begin{rem}
In the proof of \cref{thm:geom_genus}, one of the main steps consists
of computing the cohomology of a line bundle on a toric variety. To do this,
we build on classical methods \cite{Fulton_toric,Danilov_toric}.
A more general method to compute such cohomology has been described
by Altmann and Ploog in \cite{Altmann_Ploog_coh}.
\end{rem}

\begin{definition} \label{def:geom_genus}
For a point $x$ in an analytic variety $X$, denote by $\overline\O_{X,x}$ the
normalization of its local ring $\O_{X,x}$. The \emph{delta invariant} associated with
$x\in X$ is defined as
\[
  \delta(X,x) = \dim_{\C} \overline\O_{X,x} / \O_{X,x}.
\]
Let $\X\to X$ be a resolution of the singularity $x\in X$ and assume that
$X$ has dimension $d$.
Assume, furthermore, that $\delta(X,x) < \infty$, and that the higher
direct image sheaves $R^i\pi_* \O_{\X}$, $i>0$, are concentrated at $x$.
The \emph{geometric genus} $p_g = p_g(X,0)$ is defined as
\[
  (-1)^{d-1} p_g(X,x)
    = \delta(X,x) + \sum_{i=1}^{d-1} (-1)^i h^i(\X, \O_{\X}).
\]
We say that $(X,x)$ is \emph{rational} if $\delta(X,x) = 0$ and
$h^i(\X,\O_{\X}) = 0$ for $i>0$.
\end{definition}

\begin{thm} \label{thm:geom_genus}
Let $(X,0) \subset (Y,0)$ be a Newton nondegenerate Weil divisor
of dimension $d = r-1$.
\begin{enumerate}

\item \label{it:geom_genus}
We have the following canonical identifications
\[
\begin{split}
  \overline\O_{X,0}/\O_{X,0}
  &\cong
  \bigoplus_{p\in M}
    \tilde H^0(\Gamma_+(x^pf) \setminus \Sigma^\vee, \C),\\
  H^i(\X, \O_{\X})
  &\cong
  \bigoplus_{p\in M}
    \tilde H^i(\Gamma_+(x^pf) \setminus \Sigma^\vee, \C),
  \quad
  i>0.
\end{split}
\]
In particular, if these vector spaces have finite dimension, then
\[
\begin{split}
  \delta(X,0)
  &=
  \sum_{p\in M} \tilde h^0(\Gamma_+(x^pf) \setminus \Sigma^\vee, \C),\\
  p_g(X,0)
  &=
  (-1)^{d-1}\sum_{p\in M} \tilde \chi(\Gamma_+(x^pf) \setminus \Sigma^\vee, \C),
\end{split}
\]
where $\tilde\chi$ denotes the \emph{reduced Euler characteristic},
that is, the
alternating sum of ranks of reduced singular cohomology groups.

\item \label{it:geom_genus_d-1}
We have
\[
  \tilde h^{d-1}(\Gamma_+(x^p f) \setminus \Sigma^\vee, \C) =
\begin{cases}
  1 &  \textrm{if} \quad 0 \in \Gamma_+^*(x^pf)^\circ \setminus
                               \Gamma_+(x^pf)^\circ,\\
  0 &  \textrm{else}.
\end{cases}
\]
In particular,
$h^{d-1}(\X, \O_{\X}) =
|M\cap \Gamma_+^*(f)^\circ \setminus\Gamma_+(f)^\circ|$
(recall \cref{def:Gamma_star}).

\item \label{it:geom_genus_pointed}
Assume that $f$ is $\Q$-pointed, that $d\geq 2$,
and that $(X,0)$ has only rational singularities outside the origin.
Then $(X,0)$ is normal and
$h^i(\X, \O_{\X}) = 0$ for $1\leq i < d-1$.
\end{enumerate}
\end{thm}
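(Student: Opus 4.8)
The plan is to reduce \cref{it:geom_genus} to the cohomology of an equivariant line bundle on the toric resolution, and then to read off \cref{it:geom_genus_d-1,it:geom_genus_pointed} from the resulting polytopal picture. Fix a regular subdivision $\tilde\triangle_f$ of $\triangle_f$ and the associated resolution $\pi\colon\tilde Y=Y_{\tilde\triangle_f}\to Y$, with $\X\subset\tilde Y$ the (smooth, reduced) strict transform, everything over a small Stein representative. Since $\X$ is a Cartier divisor in the smooth $\tilde Y$, there is a short exact sequence $0\to\O_{\tilde Y}(-\X)\to\O_{\tilde Y}\to\O_{\X}\to 0$. Using that $(Y,0)$ is rational (being toric), so $H^{>0}(\tilde Y,\O_{\tilde Y})=0$, $H^0(\tilde Y,\O_{\tilde Y})=\O_{Y,0}$, and that $\pi_{X*}\O_{\X}=\overline\O_{X,0}$ because $\X$ is smooth and $\pi_X$ proper birational, the long exact sequence collapses to $0\to\O_{X,0}\to\overline\O_{X,0}\to H^1(\tilde Y,\O_{\tilde Y}(-\X))\to 0$ and to isomorphisms $H^i(\X,\O_{\X})\cong H^{i+1}(\tilde Y,\O_{\tilde Y}(-\X))$ for $i\geq 1$. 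The key observation is that $\O_{\tilde Y}(-\X)$ is an \emph{equivariant} bundle: the regular function $\pi^*f$ has divisor exactly $\X+\sum_{\sigma\in\tilde\triangle_f^{(1)}}m_\sigma D_\sigma$ by \cref{eq:van_order} (with all $m_\sigma\geq 0$ as $f$ is regular on $Y$), so multiplication by $\pi^*f$ identifies $\O_{\tilde Y}(-\X)$ with $\L:=\O_{\tilde Y}\bigl(\sum_\sigma m_\sigma D_\sigma\bigr)$.

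It remains to compute $H^i(\tilde Y,\L)$. I would decompose into $M$-graded pieces and apply the \v{C}ech computation of toric sheaf cohomology (as in \cite[\S3.5]{Fulton_toric}, adapted to a resolution of the affine $Y_\Sigma$, and using the $\C^*$-action to compare the algebraic cohomology with its analytic-local counterpart). For each $p\in M$ this expresses the $p$-graded piece of $H^i(\tilde Y,\L)$ as $\tilde H^{i-1}$ of the cone $Z_p=\{v\in\Sigma:\langle p,v\rangle<\psi_\L(v)\}$, where $\psi_\L$ is the support function of $\L$; here $\psi_\L=-h$ with $h$ the concave, $\triangle_f$-piecewise-linear support function of $\Gamma_+(f)$, so that $Z_p=\{v\in\Sigma:h_{\Gamma_+(x^pf)}(v)<0\}$ with $h_{\Gamma_+(x^pf)}=h+\langle p,\cdot\rangle$. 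The combinatorial heart of the argument is the identification of the reduced cohomology of $Z_p\subset N_\R$ with that of $\Gamma_+(x^pf)\setminus\Sigma^\vee\subset M_\R$; for this I would invoke Dowker's theorem for the relation $\{(v,u)\in\Sigma\times\Gamma_+(x^pf):\langle u,v\rangle<0\}$: its two associated nerves are, by the nerve lemma (all relevant intersections being convex, hence contractible), homotopy equivalent respectively to $Z_p$ and to $\Gamma_+(x^pf)\setminus\Sigma^\vee$, and Dowker's theorem gives them the same homotopy type. Combining with the degree shifts above yields the stated identifications, and taking dimensions and Euler characteristics gives the formulae for $\delta(X,0)$ and $p_g(X,0)$.

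For \cref{it:geom_genus_d-1}, note that $Z_p$ is a cone with apex deleted (all defining inequalities are homogeneous), hence homotopy equivalent to its link $\hat Z_p=Z_p\cap S^{r-1}$, which is the topological $(r-1)$-ball $\Sigma\cap S^{r-1}$ with the \emph{convex} compact set $A_p=\{h_{\Gamma_+(x^pf)}\geq 0\}\cap\Sigma\cap S^{r-1}$ removed ($A_p$ convex because $h_{\Gamma_+(x^pf)}$ is concave). Such a set deformation retracts onto the outer boundary sphere $S^{d-1}$ precisely when $A_p$ is nonempty and disjoint from $\partial(\Sigma\cap S^{r-1})$, and has no reduced cohomology in degree $d-1=r-2$ otherwise; hence $\tilde h^{d-1}(\hat Z_p)=1$ exactly in the former case. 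Unwinding the conditions: $A_p$ avoids the boundary iff $h_{\Gamma_+(x^pf)}<0$ on $\partial\Sigma\setminus\{0\}$, which by concavity and $\triangle_f$-linearity is equivalent to $h_{\Gamma_+(x^pf)}(\ell_\sigma)<0$ for all $\sigma\in\triangle_f^{*(1)}$, i.e.\ $0\in\Gamma_+^*(x^pf)^\circ$; and $A_p\neq\emptyset$ iff $h_{\Gamma_+(x^pf)}(\ell_\sigma)\geq 0$ for some $\sigma\in\triangle_f^{(1)}$, i.e.\ $0\notin\Gamma_+(x^pf)^\circ$. Summing over $p$ and re-indexing by $-p$ gives $h^{d-1}(\X,\O_{\X})=|M\cap(\Gamma_+^*(f)^\circ\setminus\Gamma_+(f)^\circ)|$.

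For \cref{it:geom_genus_pointed}: the hypothesis that $(X,0)$ has only rational singularities away from $0$ means, via \cref{lem:contains_orig}, that every transverse type along a positive-dimensional orbit is rational, so $R^i\pi_{X*}\O_{\X}$ is concentrated at $0$, the vector spaces of \cref{it:geom_genus} are finite-dimensional, and $(X,0)$ satisfies Serre's condition $R_1$. Since $f$ is $\Q$-pointed, $(X,0)$ is $\Q$-Cartier in $(Y,0)$ by \cref{prop:pg_Cartier}, so there is a finite cyclic toric cover $Y'\to Y$, corresponding to a finite-index sublattice $N'\subset N$, over which the pullback of $f$ becomes pointed, i.e.\ the corresponding divisor $X'\subset Y'$ is Cartier; a Cartier divisor in the Cohen--Macaulay variety $Y'$ is Cohen--Macaulay, and $(X,0)=X'/G$ is a finite quotient of a Cohen--Macaulay germ, hence Cohen--Macaulay, so it is normal (equivalently, one checks directly from \cref{it:geom_genus} that $\Q$-pointedness forces each $\Gamma_+(x^pf)\setminus\Sigma^\vee$ to be empty or connected, whence $\delta(X,0)=0$). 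The vanishing $h^i(\X,\O_{\X})=0$ for $1\leq i<d-1$ then follows by running the analysis of \cref{it:geom_genus_d-1} in this reduced (pointed) situation, where the regions $\Gamma_+(x^pf)\setminus\Sigma^\vee\simeq\hat Z_p$ become homotopy equivalent to a point, a sphere $S^{d-1}$, or $\emptyset$ and thus carry no reduced cohomology in intermediate degrees; this is the toric counterpart of the classical hypersurface vanishing of \cite{Merle_Teissier}. The main obstacle throughout is the combinatorial identification in the second paragraph — fixing the directions of all the inequalities and proving the $N_\R$-to-$M_\R$ homotopy equivalence; the cohomological bookkeeping of the first paragraph and the ball-minus-convex topology feeding \cref{it:geom_genus_d-1,it:geom_genus_pointed} are comparatively routine.
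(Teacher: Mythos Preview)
Your setup for \cref{it:geom_genus} and \cref{it:geom_genus_d-1} matches the paper's: pass to the equivariant bundle $\O_{\tilde Y}(\sum_\sigma m_\sigma D_\sigma)$, compute its cohomology torically as $\bigoplus_p$ of reduced cohomology of $\Sigma\setminus Z_p$, and identify this with the reduced cohomology of $\Gamma_+(x^pf)\setminus\Sigma^\vee$. For the last step the paper slices by hyperplanes in $N_\R$ and $M_\R$, retracts to $\partial\Sigma$ and $\partial\Sigma^\vee$, and matches the resulting CW complexes via polar duality and barycentric subdivision; your Dowker/nerve proposal encodes the same duality more abstractly and can be made rigorous by passing to finite covers (index the $M$-side by rays of $\triangle_\Sigma$, the $N$-side by vertices of $\Gamma(x^pf)$), though it is not a one-line citation. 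Your argument for \cref{it:geom_genus_d-1} via ``ball minus spherically convex $A_p$'' is correct and is the paper's argument transported from $M_\R$ to $N_\R$.

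The gap is in \cref{it:geom_genus_pointed}. Your Cohen--Macaulay argument (cyclic cover to make $X$ Cartier, hence CM, then descend) is fine and indeed---once combined with the local-cohomology identifications $H^i(\X,\O_{\X})\cong H^{i+1}_{\{0\}}(X,\O_X)$ recorded in the paper's remark after the theorem---already yields the intermediate vanishing; this is a shorter route than the paper's. But the argument you actually give for the vanishing, namely that under the hypotheses $\hat Z_p$ ``becomes a point, a sphere $S^{d-1}$, or $\emptyset$'', does not follow from the ball-minus-convex picture. Retracting from a point of $A_p$ gives $\hat Z_p\simeq(\partial\Sigma\cap S^{r-1})\setminus A_p$, and since $\partial\Sigma$ is \emph{not} convex, $A_p\cap\partial\Sigma$ need not be spherically convex in $S^{d-1}$; the complement can have nontrivial cohomology in any degree below $d-1$. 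Concretely, with $r=3$, $\Sigma$ the cone on four rays $e_1,e_2,e_1+e_3,e_2+e_3$, and $\Gamma(f)$ the segment $[(2,0,0),(0,2,0)]$, one finds $A_p\cap\partial\Sigma$ consisting of two points on opposite facets and $\hat Z_p$ equal to two arcs---here the rationality-outside-$0$ hypothesis fails (the transverse curve along one edge-orbit is reducible), which is precisely the point. The paper proves \cref{it:geom_genus_pointed} by an induction on skeleta of $\partial\Sigma$: $\Q$-pointedness pins down the $0$-skeleton (at the rays of $\Sigma$ the support function is linear in $p$), and the rationality hypothesis drives the inductive step via \cref{it:geom_genus_d-1} applied to each transverse type, collapsing the complex coming from $\triangle_f^*$ down to the one coming from $\triangle_\Sigma^*$, which \emph{is} cut out by a single linear inequality and hence convex. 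This is where both hypotheses do their work; it is not the ``comparatively routine'' step.
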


\begin{cor} \label{cor:pg_normal_surface}
Assume that  $d=2$ and $(X,0)$  is normal. Then
\[
\pushQED{\qed}
  p_g(X,0) = |M \cap \Gamma_+^*(f)^\circ \setminus \Gamma_+(f)^\circ|.\qedhere
\popQED
\]
\end{cor}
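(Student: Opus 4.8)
The plan is to read the corollary off \cref{thm:geom_genus} directly, the only new input being that normality of a surface germ forces $\delta(X,0)=0$.

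First I would note that a normal two-dimensional germ $(X,0)$ has an isolated singularity, since the singular locus of a normal variety has codimension at least two. Shrinking the representative, $X$ is therefore smooth away from $0$, so the resolution $\pi:\X\to X$ from \cref{block:resolution} is an isomorphism over $X\setminus\{0\}$; consequently $R^i\pi_*\O_{\X}$ is concentrated at $0$ for $i>0$, each $h^i(\X,\O_{\X})$ is finite, and $\overline\O_{X,0}=\O_{X,0}$, so $\delta(X,0)=0$. In particular the hypotheses under which $p_g(X,0)$ is defined in \cref{def:geom_genus} are met, and the finiteness assumption in the ``in particular'' part of \cref{thm:geom_genus}\cref{it:geom_genus} is satisfied. (Alternatively, the first identification in \cref{thm:geom_genus}\cref{it:geom_genus} shows that normality is exactly the vanishing $\tilde h^0(\Gamma_+(x^pf)\setminus\Sigma^\vee,\C)=0$ for all $p\in M$, which then has to be zero for each $p$.)

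Next I would specialize \cref{def:geom_genus} to $d=2$. The defining identity then reads
\[
  (-1)^{1}\,p_g(X,0)=\delta(X,0)+(-1)^{1}h^{1}(\X,\O_{\X}),
\]
so $\delta(X,0)=0$ yields $p_g(X,0)=h^{1}(\X,\O_{\X})$. Finally, part \cref{it:geom_genus_d-1} of \cref{thm:geom_genus}, applied with $d-1=1$, gives
\[
  h^{1}(\X,\O_{\X})=\bigl|\,M\cap \Gamma_+^{*}(f)^{\circ}\setminus\Gamma_+(f)^{\circ}\,\bigr|,
\]
and combining the two displays proves the corollary.

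I do not expect a genuine obstacle: granted \cref{thm:geom_genus}, the argument is a one-line specialization. The only point that deserves care is the implication ``$(X,0)$ normal'' $\Longrightarrow$ ``$\delta(X,0)=0$ and the higher direct images are concentrated at $0$'', which rests on a normal surface germ having an isolated singularity.
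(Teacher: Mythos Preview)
Your proposal is correct and matches the paper's intent exactly: the corollary is stated with a \qed and no proof, so it is meant to be read off immediately from \cref{thm:geom_genus}\cref{it:geom_genus_d-1} together with $\delta(X,0)=0$ for a normal surface germ. Your only addition is to spell out why normality gives $\delta=0$ and why the finiteness hypotheses in \cref{def:geom_genus} are met, which is a reasonable elaboration of what the paper leaves implicit.
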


This  generalizes a result of Merle and Teissier
\cite{Merle_Teissier} valid for  the
classical case $\Sigma={\mathbb R}_{\geq 0}^3$.

\begin{cor} \label{cor:partition}
Assume that $d = 1$ and  $(X,0)$ is an irreducible germ of a curve,
and that $\sigma\in \tilde\triangle^{(1)}_f $ satisfies $F_\sigma=\Gamma(f)$
(cf. \cref{lem:curves}\cref{it:curves_irred}).
Then $\delta(X,0)$ is the number of unordered pairs
$\ell', \ell'' \in \Sigma^\circ \cap N$ satisfying
$\ell' + \ell'' = \ell_\sigma$.
\end{cor}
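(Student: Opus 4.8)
The plan is to read off $\delta(X,0)$ as a lattice point count from \cref{thm:geom_genus}, and then to match that count with the asserted number of pairs by means of the parallelogram construction already used in the proof of \cref{lem:curves}\cref{it:curves_third}. Since $d=1$ forces $\rk N=2$, the polyhedron $\Gamma_+(f)$ lives in the plane $M_\R$ throughout.

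First I would specialize \cref{thm:geom_genus}. As $d-1=0$, part \cref{it:geom_genus_d-1} gives, for every $p\in M$, that $\tilde h^0(\Gamma_+(x^pf)\setminus\Sigma^\vee,\C)$ equals $1$ if $0\in\Gamma_+^*(x^pf)^\circ\setminus\Gamma_+(x^pf)^\circ$ and $0$ otherwise (recall \cref{def:Gamma_star}). Multiplication by $x^p$ translates both $\Gamma_+(f)$ and $\Gamma_+^*(f)$ by $p$ and leaves $\triangle_f$ unchanged, so this condition is equivalent to $-p\in\Gamma_+^*(f)^\circ\setminus\Gamma_+(f)^\circ$. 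Substituting into the identity $\delta(X,0)=\sum_{p\in M}\tilde h^0(\Gamma_+(x^pf)\setminus\Sigma^\vee,\C)$ of part \cref{it:geom_genus} (applicable because all the relevant cohomology spaces are finite dimensional for a curve germ, $\delta(X,0)<\infty$) and replacing $p$ by $-p$, I obtain
\[
  \delta(X,0)=\bigl|M\cap\bigl(\Gamma_+^*(f)^\circ\setminus\Gamma_+(f)^\circ\bigr)\bigr|.
\]
Because $\rk N=2$, the only rays of $\triangle_f$ in $\partial\Sigma$ are the two extreme rays of $\Sigma$, with primitive generators $\ell_0,\ell_{s+1}$ and associated weights $m_0,m_{s+1}$; and by \cref{lem:curves}\cref{it:curves_irred}, $\Gamma(f)=F_\sigma$ is a single segment of combinatorial length $1$, so $\triangle_f^{(1)}=\{\R_{\geq 0}\ell_0,\sigma,\R_{\geq 0}\ell_{s+1}\}$ and the set above is
\[
  T=\{u\in M_\R:\ \ell_0(u)>m_0,\ \ell_{s+1}(u)>m_{s+1},\ \ell_\sigma(u)\leq m_\sigma\},
\]
a triangle whose only non-excluded edge is the relative interior of $\Gamma(f)$; as $\Gamma(f)$ has combinatorial length $1$, that edge carries no point of $M$, so $|M\cap T|=|M\cap T^\circ|$ where $T^\circ$ is the open triangle.

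Next I would identify $T^\circ$ with half of a parallelogram in $N_\R$. Fix an integral basis of $N$, declare it orthonormal, and use the resulting isometry to identify $N_\R\cong M_\R$ (so that $\ell(u)=\langle\ell,u\rangle$) and $N\cong M$; the rotation $R$ by $\pi/2$ then lies in $\mathrm{GL}_2(\Z)$. Since $\sigma$ is interior to $\Sigma$ we have $\ell_\sigma\in\Sigma^\circ$; writing $\ell_\sigma=c_0\ell_0+c_{s+1}\ell_{s+1}$ with $c_0,c_{s+1}>0$, set $P(\ell_\sigma)^\circ=\Sigma^\circ\cap(\ell_\sigma-\Sigma^\circ)$, the open parallelogram with diagonal $[0,\ell_\sigma]$ and sides along $\ell_0,\ell_{s+1}$, split by the diagonal into open half-triangles $P_1^\circ$ and $P_2^\circ$. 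Let $a,b\in M$ be the endpoints of $\Gamma(f)$ with $\ell_0(a)=m_0$ and $\ell_{s+1}(b)=m_{s+1}$; then $b-a$ is primitive, and since $\ell_\sigma$ and $R(b-a)$ are both primitive and orthogonal to $b-a$ we may arrange $R(b-a)=\ell_\sigma$, so $\psi(u)=R(u)-R(a)$ is a lattice isomorphism $M_\R\to N_\R$ with $\psi(a)=0$, $\psi(b)=\ell_\sigma$. The apex $q$ of $T$ satisfies $q-a\perp\ell_0$ and $q-b\perp\ell_{s+1}$, so $\psi(q)\in\R\ell_0\cap(\ell_\sigma+\R\ell_{s+1})=\{c_0\ell_0\}$; thus $\psi$ carries $T^\circ$ onto $P_1^\circ$, giving $|M\cap T^\circ|=|N\cap P_1^\circ|$. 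Finally, $v\mapsto(v,\ell_\sigma-v)$ is a bijection of $N\cap P(\ell_\sigma)^\circ$ onto the ordered pairs $(\ell',\ell'')\in(\Sigma^\circ\cap N)^2$ with $\ell'+\ell''=\ell_\sigma$, while $v\mapsto\ell_\sigma-v$ interchanges $P_1^\circ$ and $P_2^\circ$ and — $\ell_\sigma$ being primitive — has neither a fixed lattice point nor a lattice point on the open diagonal; hence $|N\cap P(\ell_\sigma)^\circ|=2\,|N\cap P_1^\circ|$ and the number of ordered pairs is twice the number of unordered ones. Chaining the equalities yields $\delta(X,0)=|N\cap P_1^\circ|=$ the number of unordered pairs $\ell',\ell''\in\Sigma^\circ\cap N$ with $\ell'+\ell''=\ell_\sigma$.

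The step demanding the most care is the identification $\psi:T^\circ\xrightarrow{\sim}P_1^\circ$: one must verify that the translation vector $-R(a)$ is integral, that the merely rational apex $q$ of $T$ lands exactly on the vertex $c_0\ell_0$ of $P(\ell_\sigma)$ (so $\psi(T^\circ)$ is the correct half and not a dilated copy), and carefully track which edges of the two triangles are included — this is precisely the point where combinatorial length $1$, i.e. irreducibility, enters. The specialization of \cref{thm:geom_genus} in the first step and the parity count for the parallelogram in the last are routine.
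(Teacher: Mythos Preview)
Your proof is correct and follows essentially the same route as the paper's: both invoke \cref{thm:geom_genus}\cref{it:geom_genus_d-1} to express $\delta(X,0)$ as $|M\cap\Gamma_+^*(f)^\circ\setminus\Gamma_+(f)^\circ|$, identify this set with one open triangle of the parallelogram $P(\ell_\sigma)$ via the $\pi/2$-rotation from the proof of \cref{lem:curves}, and then use the involution $v\mapsto\ell_\sigma-v$ to match $|N\cap P_1^\circ|$ with the number of unordered pairs. Your write-up is simply more explicit about the affine map $\psi$ and the edge-inclusion bookkeeping than the paper's two-sentence sketch.
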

\begin{proof}
Let $P(\ell_\sigma)$ be the parallelogram introduced in the proof of
\cref{lem:curves}.
The diagonal splits $P(\ell_\sigma)$ into two triangles,
$T_1$ and $T_2$, say.
If $\ell' \in T_1^\circ$, then $\ell_\sigma - \ell' \in T^\circ_2$.
This induces a bijection between elements $\ell' \in T^\circ_1 \cap N$
and unordered pairs $\{\ell', \ell''\} \subset \Sigma^\circ \cap N$
adding up to $\ell_\sigma$. By rotating by $\pi/2$ as in the proof of
\cref{lem:curves},  $T_1^\circ\cap N$ is  in bijection
with $M \cap \Gamma_+^*(f)^\circ \setminus \Gamma_+(f)^\circ$.
\end{proof}

\begin{rem}
Assume that $d \geq 2$, and that $X$ is rational outside $\{0\}$.
Then, for $0<i<d-1$, we have
\[
  H^i(\X,\O_{\X})
  \cong
  H^i(\X\setminus E,\O_{\X})
  \cong
  H^i(X\setminus \{0\}, \O_X)
  \cong
  H^{i+1}_{\{0\}}(X,\O_X).
\]
Here, the first isomorphism comes from the long exact sequence for
cohomology with support in $E$, and the vanishing
$H^i_E(\X, \O_{\X}) = 0$, for $i < d$
\cite[Corollary 3.3]{Karras_duality}.
The second isomorphism follows from the rationality assumption, and
the Leray spectral sequence.
The third isomorphism comes from
the similar long exact sequence for cohomology with support in $\{0\}$,
and the fact that $H^j(X,\O_X) = 0$ for $j > 0$,
if we choose a Stein representative $X$ of the germ $(X,0)$.
This last long exact sequence furthermore gives
\[
  H^1_{\{0\}}(X,\O_X)
  \cong
  \frac{H^0(X\setminus \{0\}, \O_X)}{H^0(X, \O_X)}
  \cong
  \overline\O_{X,0} / \O_{X,0}.
\]
Therefore, in this case, the groups described in \cref{thm:geom_genus}
are closely related with the depth of $\O_{X,0}$.
In particular, the conclusion of
\cref{thm:geom_genus}\cref{it:geom_genus_pointed} is that
$(X,0)$ is a Cohen--Macaulay ring.

If $f$ is pointed at $p\in M$, then this statement can be proved as follows.
Since $(X,0)$ is a Cartier divisor in $(Y,0)$,
cf.  \cref{prop:pg_Cartier}\cref{it:pg_Cartier},
and $(Y,0)$ is Cohen--Macaulay
\cite[Theorem 6.3.5]{Bruns_Herzog}
so is $(X,0)$
\cite[Theorem 2.1.3]{Bruns_Herzog}.
\end{rem}

\begin{proof}[Proof of \cref{thm:geom_genus}]
To prove \cref{it:geom_genus}, we use results and notation from
\cite[\S7]{Danilov_toric}, see also \cite[3.5]{Fulton_toric}.
Define
\[
  D_m = \sum \set{ m_\sigma D_\sigma}{\sigma\in\tilde\triangle_f^{(1)}}.
\]
Then $D_m + \X$ is the divisor of the pullback of $f$ to $\tilde Y$,
and we have a short exact sequence
\[
  0
  \to \O_{\Yt}(D_m)
  \stackrel{\cdot f}{\to} \O_{\Yt}
  \to \O_{\X}
  \to 0.
\]
By \cite[Corollary 7.4]{Danilov_toric}, we have $H^i(\Yt,\O_{\Yt}) = 0$
for all $i>0$.
Furthermore,
$H^0(\X,\O_{\X}) \cong \overline\O_{X,0}$, and the image of
$H^0(\Yt,\O_{\Yt}) = \O_{Y,0}$ in $\overline\O_{X,0}$
is $\O_{X,0}$. Therefore,
\[
\begin{split}
  \overline\O_{X,0} / \O_{X,0}
  &\cong
  H^1(\Yt, \O_{\Yt}(D_m)), \ \ \mbox{and} \\
  H^i(\X,\O_{\X})
  &\cong
  H^{i+1}(\Yt, \O_{\Yt}(D_m)),
  \quad
  i>0.
\end{split}
\]
Denote by $g$ the \emph{order function} defined in
\cite[\S6]{Danilov_toric} (using the natural trivialization of
$\O_{\Yt}(D_m)$ on the open torus)
\[
  g:|\tilde\triangle_f| \to \R,\quad
  g(\ell) = - \min\set{ \ell(q)}{q \in \Gamma_+(f)}
\]
and define the sets
\[
  Z_p = \set{\ell \in |\tilde\triangle_f|}
            {\ell(p) \geq g(\ell)},
  \quad p\in M.
\]
We note that $Z_p$ is a convex cone and that $0\in Z_p$ for all $p\in M$.
By \cite[Theorem 7.2]{Danilov_toric}, we have isomorphisms
\[
  H^{i+1}(\Yt,\O_{\Yt}(D_m))
  \cong
  \bigoplus_{p\in M} H^{i+1}_{Z_p}(|\tilde\triangle_f|, \C).
\]
Since $|\tilde\triangle_f| = \Sigma$ is a convex set, the long exact
sequence associated with cohomology with supports provides, for any
$p\in M$
\[
  0
  \cong \tilde H^i(|\tilde\triangle_f|,\C)
  \to   \tilde H^i(|\tilde\triangle_f|\setminus Z_p,\C)
  \cong H^{i+1}_{Z_p}(|\tilde\triangle_f|,\C)
  \to   \tilde H^{i+1}(|\tilde\triangle_f|,\C)
  \cong 0.
\]

To finish the proof of \ref{it:geom_genus}, we will show that for any $p\in M$,
the spaces
$|\tilde\triangle_f|\setminus Z_p = \Sigma\setminus Z_p$ and
$\Gamma_+(x^pf) \setminus \Sigma^\vee$ are in fact homotopically equivalent.
We start by noting that the
the condition
$Z_p \subset \partial|\tilde\triangle_f|$ (including the case when $Z_p=\emptyset$)
is equivalent to
$0 \in \Gamma_+(x^pf) \setminus \Gamma(x^pf)$. If this happens then we can choose
a $q\in \Sigma^\vee$ small so that
$-q \in \Gamma_+(x^pf) \setminus \Gamma(x^pf)$ as well,
and so
$\Gamma_+(x^pf) \setminus \Sigma^\vee$ is star-shaped with
center $-q$. In particular, in this case,
\[
  \Sigma\setminus Z_p
  \sim \{\mbox{a point}\} \sim
  \Gamma_+(x^pf) \setminus \Sigma^\vee,
\]
where $\sim$ denotes the homotopy equivalence.
Thus, in what follows, we assume that $Z_p$ contains an interior
point in $\Sigma$, equivalently,
$0 \notin \Gamma_+(x^pf) \setminus \Gamma(x^p f)$.

Choose $\ell_0 \in \Sigma^\circ$ and $q_0 \in (\Sigma^\vee)^\circ$
satisfying $\ell_0(q_0) = 1$ and define the hyperplanes
\[
  H = \set{\ell \in N_\R}{\ell(q_0) = 1},\quad
  H^\vee = \set{q \in M_\R}{\ell_0(q) = 1}.
\]
Then, seeing $H$ and $H^\vee$ as linear spaces by choosing origins
$\ell_0, q_0$,
the pairing $H\times H^\vee \ni (\ell, q) \mapsto \ell(q) - 1$ is
nondegenerate and the polyhedrons $H\cap \Sigma$ and $H^\vee\cap\Sigma^\vee$
are each others polar sets as in \cite[1.5]{Fulton_toric}.

Since $0\in Z_p$, we have
\[
  \Sigma \setminus Z_p
  \,\sim\,
  (H \cap \Sigma \setminus Z_p) \times \R
  \,\sim\,
  H \cap \Sigma \setminus Z_p.
\]
By the assumptions made above, there is an $\ell \in Z_p \cap \Sigma^\circ$.
Both $\Sigma \cap H$ and $Z_p\cap H$ are compact convex polyhedrons in $H$.
Projection away from $\ell$ onto $\partial(H\cap \Sigma)$ then induces
a homotopy equivalence
\[
  H\cap \Sigma \setminus Z_p \sim H \cap \partial \Sigma \setminus Z_p.
\]
By projection, we mean that any element in a ray
$r = \ell + \R_{>0}\ell' \subset H$
maps to the unique element in $r\cap \partial(H\cap \Sigma)$.
By \cref{lem:subcx}\cref{it:subcx_setminus}, this has the subset
\[
  \cup \set{ H \cap \sigma}
           {\sigma \in \triangle_f^*,\, H\cap \sigma \cap Z_p = \emptyset}
\]
as a strong deformation retract. All this yields
\begin{equation} \label{eq:ht_N}
  \Sigma \setminus Z_p
  \sim
  \cup\set{H\cap\sigma}
          {\sigma \in \triangle_f^*,\,
           \sigma\cap Z_p = \{0\}}.
\end{equation}

Using a projection, this time onto
$\partial \Sigma^\vee$ in $M$, having as center any element in
$(\Sigma^\vee \cap \Gamma_+(x^pf))^\circ$, we get a homotopy equivalence
\[
  \Gamma_+(x^p f) \setminus \Sigma^\vee
  \sim
  \Gamma_+(x^p f)^\circ \cap \partial\Sigma^\vee.
\]
By \cref{lem:subcx}\cref{it:subcx_cap}, we have a homotopy equivalence
\[
  \Gamma_+(x^p f)^\circ \cap \partial\Sigma^\vee
  \sim
  \cup \set{(\sigma^\perp \cap \Sigma^\vee)^\circ}
           {\sigma \in \triangle_\Sigma,\,
            \sigma \neq \{0\},\,
            (\sigma^\perp\cap\Sigma^\vee)^\circ \cap \Gamma_+(x^p f)^\circ
            \neq \emptyset}.
\]
Since, by assumption made above, $0 \notin \Gamma_+(x^p f)^\circ$,
and so the right hand side above has a free action by $\R_{>0}$
which has a section
given by intersection with $H^\vee$.
Furthermore, one checks that if $\sigma \in \triangle_f^*$, then
\[
  (\sigma^\perp\cap\Sigma^\vee)^\circ \cap \Gamma_+(x^p f)^\circ \neq \emptyset
  \quad\Leftrightarrow\quad
  \fa{\ell\in \sigma\setminus \{0\}}{\ell(p)+m_\ell < 0}.
\]
Here, the condition on the left is equivalent to
$\sigma\cap Z_p = \{0\}$, so
\begin{equation} \label{eq:ht_M}
  \Gamma_+(x^p f) \setminus \Sigma^\vee
  \sim
  \cup\set{H\cap(\sigma^\perp\cap\Sigma^\vee)^\circ}
          {\sigma \in \triangle_f^*,\,
           \sigma\cap Z_p = \{0\}}.
\end{equation}

Now, consider the CW structure $K$ given by the cells
$H\cap \sigma$ in $H\cap \partial \Sigma$ and $K'$ given by
cells $H^\vee \cap (\sigma^\perp\cap\Sigma^\vee)$
in $H^\vee \cap \partial \Sigma^\vee$.
Using barycentric subdivision, one obtains a homeomorphism
$\phi:H\cap \partial \Sigma \to H^\vee \cap \partial \Sigma^\vee$,
sending the center of a cell $H\cap \sigma$ to the center of the dual
cell $H\cap \sigma^\vee$, thus identifying $K$ with the dual of $K'$.
By this identification, the left hand side of \cref{eq:ht_M}
is a regular neighbourhood around the image under $\phi$ of the
left hand side of \cref{eq:ht_N}. This concludes
\ref{it:geom_genus}.

Next, we prove \ref{it:geom_genus_d-1}. By the above discussion,
the result is clear in the
cases  when $Z_p=\emptyset$ or  $Z_p \subset \partial \Sigma$.
Assuming that this
is not the case, the complex, say, $A$, on the right
hand side of \cref{eq:ht_N} is a closed subset of
$H \cap \partial \Sigma \sim S^{d-1}$. Then $h^{d-1}(A,\C) = 0$, unless
$A = H \cap \partial \Sigma$, in which case $h^{d-1}(A,\C) = 1$. But this
is equivalent to $\ell(p) + m_\ell < 0$ for all
$\ell \in \partial\Sigma \setminus 0$, that is,
$0\in\Gamma_+^*(x^p f)^\circ$.

For \ref{it:geom_genus_pointed}, we will show that
$\Gamma_+(x^pf) \setminus \Sigma^\vee$ has trivial homology in degrees
$i<d-1$ for all $p\in M$.
By assumption, there is a $q \in M_\Q$ so that for
$\sigma \in \triangle_\Sigma^{(1)}$ we have $m_\sigma = \ell_\sigma(q)$.
We can again assume that
$0 \in \Gamma_+(x^p f) \setminus \Gamma(x^p f)$. We must show that
$\tilde h^i(A,\C) = 0$ for $i<d-1$, where $A$ is the right hand side of
\cref{eq:ht_N}. We note that by definition, $A$ consists of cells
$H\cap \sigma$ for $\sigma\in\triangle_f^*$ satisfying
$\fa{\ell\in H\cap \sigma}{\ell(p) < -m_\ell}$.
Define similarly
\[
  A_\Sigma = \cup \set{H\cap \sigma}
                {\sigma\in \triangle_\Sigma^*,
                 \fa{\ell\in H\cap \sigma}
                    {\ell(p) < -\ell(q)}}.
\]
Define
\[
  A_q= \set{\ell\in H\cap \partial \Sigma}
           {\ell(p) < -\ell(q)}.
\]
This space can be either $S^{d-1}$, an $d-1$ dimensional
ball, or empty. In each case, $\tilde H^i(A_q,\C) =0$ for $i<d-1$.
We will show that $A_q \supset A_\Sigma \subset A$, and
that these inclusions
are homotopy equivalences. For the first one, in fact, this is clear
by definition and \cref{lem:subcx}\ref{it:subcx_setminus}.

For the second one, denote by $A_\Sigma^i$ the $i$-skeleton of the complex
$A_\Sigma$, and define similarly
\[
  A^i = A \setminus \cup
        \set{\sigma^\circ}
            {\sigma\in \triangle_\Sigma^{*(\geq i+2)}}.
\]
We will prove by induction on $i$ that $A^i_\Sigma \subset A^i$ and that
this is a homotopy equivalence. The case $i = 0$ follows from the pointed
condition: assuming $\sigma\in\triangle_\Sigma^{*(1)}$ is a ray, there is
a $t>0$ so that $H\cap\sigma = \{t\ell_\sigma\}$. By assumption, we have
$m_{\ell_\sigma} = \ell_\sigma(q)$,
so that $H\cap \sigma\subset A_\Sigma$ if and only if $H\cap \sigma \subset A$.
Since $A^0$ consists only of such zero-cells, we get
$A_\Sigma^0 = A^0$.

Next, assume that for some $i> 0$ we have an inclusion
$A^{i-1}_\Sigma \subset A^{i-1}$ which is a homotopy equivalence.
Let $\sigma \in \triangle_\Sigma^{*(i+1)}$
provide an $i$-cell $H\cap\sigma$ in $A_\Sigma$.
In this case, we want to show that $H\cap \sigma \subset A^i$.
In fact, we have $\partial(H\cap\sigma) \subset A^{i-1}_\Sigma$, hence
$\partial(H\cap\sigma) \subset A^{i-1}$, by induction.
But by the rationality assumption on the transverse type, it follows from
\ref{it:geom_genus_d-1} and \cref{lem:contains_orig} that we must have
$\sigma \subset A^i$, thus $A_\Sigma^i \subset A^i$.

To show that this inclusion is a homotopy equivalence, let
$\sigma \in \triangle_f^{*(i+1)}$ provide an $i$-cell $H\cap \sigma$
which is not in $A_\Sigma^i$. By definition, we see that
$\sigma \not\subset A^i$ as well. In fact, similarly as in the proof of
\ref{it:geom_genus}, the inclusion
$\partial(H\cap \sigma) \cap A^i \subset (H\cap \sigma) \cap A^i$
is a strong deformation retract. Since these cells, along with
$A_\Sigma^i$ provide a finite closed covering, these glue
together to form a strong deformation retract
$A^i \to A_\Sigma^i$.
\end{proof}

\begin{lemma} \label{lem:subcx}
Let $K,L \subset \R^N$. Assume that $K$ is given as a finite disjoint union
$K=\cup_{\alpha\in I} K_\alpha$ of relatively open convex
polyhedrons $K_\alpha$,
i.e. each $K_\alpha$ is given by a finite number of affine equations and strict
inequalities. Furthermore, assume the following two conditions:
\begin{itemize}
\item
If $F$ is the face of
$\overline K_\alpha$ for some $\alpha$, then $F = \overline K_\beta$ for
some $\beta$.
\item
For any $\alpha,\beta$, the intersection
$\overline K_\alpha \cap \overline K_\beta$ is a face of both
$\overline K_\alpha$ and $\overline K_\beta$.
\end{itemize}
Note that the polyhedrons $K_\alpha$ may be unbounded. In this case
\begin{enumerate}

\item \label{it:subcx_setminus}
Assume that $K$ is compact and $L$ is convex. Then the inclusion
\begin{equation} \label{eq:subcx_setminus}
  \bigcup_{\alpha\in I}
  \set{\overline K_\alpha}{\overline K_\alpha \cap L = \emptyset}
  \subset
  K \setminus L
\end{equation}
is a strong deformation retract.

\item \label{it:subcx_cap}
Assume that $L$ is convex. Then the inclusion
\[
  \bigcup_{\alpha\in I}
  \set{K_\alpha}{K_\alpha \cap L \neq \emptyset}
  \subset
  K \cap L
\]
is a strong deformation retract.

\end{enumerate}
\end{lemma}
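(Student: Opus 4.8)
The plan is to reduce both parts to a single local statement and then build the global deformation retraction one cell at a time. \emph{Local Claim.} If $P\subset\R^N$ is a compact convex polytope and $C\subseteq P$ is convex with $\emptyset\neq C\subsetneq P$, then the inclusion $\partial P\setminus C\hookrightarrow P\setminus C$ is a strong deformation retract by a homotopy that fixes $\partial P$ pointwise. The hypotheses on the $K_\alpha$ say precisely that the closures $\overline{K_\alpha}$ form a polyhedral complex whose cells are the $\overline{K_\beta}$; in particular $K$ is closed and a face of a cell disjoint from $L$ is again disjoint from $L$. Granting the Claim, \cref{it:subcx_setminus} would go as follows: put $A=\bigcup\{\overline{K_\alpha}:\overline{K_\alpha}\cap L=\emptyset\}$, a subcomplex contained in $K\setminus L$, and collapse the remaining cells in order of decreasing dimension. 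For a fixed dimension $d$ the relative interiors of the $d$-cells are pairwise disjoint and each boundary $\partial\overline{K_\alpha}$ lies in dimensions $<d$, so every ``bad'' $d$-cell $\overline{K_\alpha}$ (one with $\overline{K_\alpha}\cap L\neq\emptyset$) can be treated simultaneously: if $\overline{K_\alpha}\cap L=\overline{K_\alpha}$ there is nothing to do, since then this cell and all its faces lie in $L$, and otherwise the Local Claim retracts $\overline{K_\alpha}\setminus L$ onto $\partial\overline{K_\alpha}\setminus L$ by a homotopy fixing $\partial\overline{K_\alpha}$. These homotopies glue with the identity on the rest of the space, and after running through all dimensions the space has been retracted onto $A$. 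For \cref{it:subcx_cap} I would argue by the dual top-down collapse: one checks $K\cap L\subseteq\bigcup\{K_\alpha:K_\alpha\cap L\neq\emptyset\}$, and the cellwise step is to retract the part of $\overline{K_\alpha}$ contained in that union onto $\overline{K_\alpha}\cap L$, again a consequence of (a mild variant of) the Local Claim and the fact that a convex polytope strongly deformation retracts onto any nonempty convex subset meeting its relative interior.

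To prove the Local Claim I would split into two cases. If $C$ meets $\mathrm{relint}(P)$, one can pick $c\in\mathrm{relint}(C)\cap\mathrm{relint}(P)$ — given $y\in C\cap\mathrm{relint}(P)$ and $y'\in\mathrm{relint}(C)$, the midpoint $\tfrac12(y+y')$ lies in both — and use radial projection \emph{away} from $c$: for $x\in P\setminus C$ let $R(x)$ be the last point of $P$ on the ray issuing from $c$ through $x$. Since $C$ is convex and $c\in\mathrm{relint}(C)$, this ray leaves $C$ no later than it reaches $x$, so the whole segment from $x$ to $R(x)$ avoids $C$ and $R(x)\in\partial P\setminus C$; and $c\in\mathrm{relint}(P)$ forces $R$ to fix $\partial P$. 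The straight-line homotopy along these rays is then the required strong deformation retraction.

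The remaining case $C\subseteq\partial P$ is where the real work lies and is the step I expect to be the main obstacle: here no single radial projection will both avoid $C$ and fix $\partial P$ (a center chosen inside $C$ fails to fix the face carrying $C$, while a center in $\mathrm{relint}(P)$ pushes the ``shadow'' of $C$ through $C$). The plan is to first observe, using a supporting hyperplane of $P$ at a relative interior point of $C$, that $C$ is contained in a proper face $F$ of $P$, taken minimal so that $\mathrm{relint}(C)\subseteq\mathrm{relint}(F)$, and then to exploit the local conical structure of $P$ along $F$: a neighbourhood of $\mathrm{relint}(F)$ in $P$ is PL-homeomorphic to $\mathrm{relint}(F)\times\mathrm{Cone}(\mathrm{lk}\,F)$, with $F$ corresponding to $\mathrm{relint}(F)$ times the apex of the cone. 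Using this product description one can deformation retract $P\setminus C$ off an open collar of $\mathrm{relint}(F)$ by pushing in the cone direction (which is the identity on $\partial P$), reducing to the case already treated, where the removed set meets the relative interior of the ambient polytope. All that remains — gluing the cellwise retractions, the dimension bookkeeping, and the dual argument for \cref{it:subcx_cap} — is routine.
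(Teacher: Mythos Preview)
Your overall strategy—collapse the complex cell by cell using radial projection from a point of $L$—is the same as the paper's, and your treatment of the case $C\cap\mathrm{relint}(P)\neq\emptyset$ is correct. The gap is in the remaining case $C\subset\partial P$. Your assertion that the push ``in the cone direction'' is the identity on $\partial P$ is not justified: in the product neighbourhood $\mathrm{relint}(F)\times\mathrm{Cone}(\mathrm{lk}\,F)$, the boundary $\partial P$ consists of $\mathrm{relint}(F)$ together with $\mathrm{relint}(F)$ times the boundary of the cone, and pushing in the cone direction moves points in the latter piece. More seriously, even if one arranges a retraction off a collar of $\mathrm{relint}(F)$, you have not explained what the ``ambient polytope'' and ``removed set'' become afterwards, so the claimed reduction to the first case is not carried out. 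Already in the example $P=[0,1]^2$ with $C$ the closed left edge, your top-dimension step asks for a strong deformation retraction of $(0,1]\times[0,1]$ onto the three remaining boundary edges that fixes those edges pointwise; this is true, but no single radial projection does it, and your sketch does not produce one.

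The paper avoids this case entirely by a different induction. Rather than processing cells by decreasing dimension, it picks any $\alpha_0$ with $K_{\alpha_0}\cap L\neq\emptyset$ (relative interior, not closure) and removes the whole open star of $\overline{K_{\alpha_0}}$ in one step: set $I'=\{\alpha:\overline{K_\alpha}\not\supset\overline{K_{\alpha_0}}\}$ and $K'=\bigcup_{\alpha\in I'}K_\alpha$. Choosing $q\in K_{\alpha_0}\cap L$, one radially projects each $\overline{K_\alpha}\setminus L$ with $\alpha\notin I'$ from $q$ onto $\partial\overline{K_\alpha}\setminus K_{\alpha_0}$. Because $q$ lies in the relative interior of the face $\overline{K_{\alpha_0}}$ of every such $\overline{K_\alpha}$, this projection is well defined, lands off $L$, and is the identity on the rest of the boundary—so the pieces glue to a retraction $K\setminus L\to K'\setminus L$. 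The induction is on the number of $\alpha$ with $K_\alpha\cap L\neq\emptyset$, and the troublesome cells (those whose closure meets $L$ only along a proper face) are swept up automatically when that face is processed. This reorganisation is the missing idea; once you have it, your radial-projection argument for the easy case is exactly what is needed.
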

\begin{proof}
We prove \cref{it:subcx_setminus}, similar arguments work for
\ref{it:subcx_cap}.
We use induction on the number of $\alpha$ with
$K_\alpha \cap L \neq \emptyset$. Indeed, if this number is zero,
then the inclusion in \cref{eq:subcx_setminus} is an equality.

Otherwise, there is an $\alpha_0$ with $K_{\alpha_0} \cap L \neq \emptyset$.
Define
\[
  I' = \set{\alpha \in I}
           {\overline K_\alpha \not\supset \overline K_{\alpha_0}}
     \subsetneq I,
  \quad
  K' = \cup_{\alpha \in I'} K_\alpha.
\]
Then the left hand side of \cref{eq:subcx_setminus} does not change if we
replace $I$ by $I'$. Therefore, using the induction hypothesis, it is enough
to show that the inclusion $K'\setminus L \subset K\setminus L$ is a
homotopy equivalence.
We do this by constructing a deformation retract
$h:K\setminus L \times [0,1] \to K\setminus L$.
For this, we use the finite closed covering
$\overline K_\alpha \setminus L$, $\alpha \in I$ of $K\setminus L$.
It is then enough to define the restriction $h_\alpha$ of $h$ to
$(\overline K_\alpha \setminus L)\times [0,1]$ for $\alpha \in I$ in such
a way that these definitions coincide on intersections.

For any $\alpha \in I'$, we define $h_\alpha(x,t) = x$.
Let $q\in K_{\alpha_0} \cap L$.
If $\alpha \in I \setminus I'$, then
$q \in \overline K_\alpha$, and we define $h_\alpha$ by projecting away from
$q$, that is, for any $x\in K_\alpha$ there is a unique
$y$ in the intersection of
$\partial \overline K_\alpha \setminus K_{\alpha_0}$ and they ray
starting at $q$ passing through $x$. We define
$h_\alpha(x,t) = (1-t)x + ty$.
One readily verifies that these functions
are continuous, agree on intersections of their domains and define
a strong deformation retract.
\end{proof}

\section{Canonical divisors and cycle} \label{s:can}

In this section we describe possible  canonical divisors for
$\tilde Y = Y_{\tilde \triangle_f}$ and $\X$.
Furthermore, in the case $d=2$, we give a formula for the canonical cycle.

\begin{definition}
Let $\X \to X$ be a resolution of singularities of an
$(r-1)$-dimensional singularity.
A \emph{canonical divisor} $K_{\X}$ on $\X$ is any divisor satisfying
$\O_{\X}(K_{\X}) \cong \Omega_{\X}^{r-1}$.

If $r=3$ then  let $E = \cup_{v\in\V}  E_v$ be the exceptional divisor of a
resolution $\X \to X$, where $E_v$ are the irreducible  components of $E$.
 Recall that we denoted by $L$  the lattice of integral
cycles in $\tilde X$ supported on the exceptional divisor $E$: that is,   $ L=\Z\gen{ E_v }{v\in\V}$.
We also set $L_\Q = L\otimes\Q$ and
\[
  L' = \Hom(L,\Z) \cong \set{l'\in L_\Q}{\fa{l\in L}{(l',l) \in \Z}},
\]
where $(\cdot,\cdot)$ denotes the intersection form, extended linearly
to $L_\Q$. Moreover, set  $E^*_v\in L' $ for  the unique rational cycle
 satisfying $(E_v, E_v^*) = -1$ and $(E_w, E_v^*) = 0$ for
$w\neq v$.

In this surface singularity case  the
 \emph{canonical cycle} $Z_K\in L'$   is the unique rational cycle on $\X$
supported on the exceptional divisor,
  satisfying the
\emph{adjunction formula}
\[
  (E_v, Z_K) = - b_v + 2 - 2g_v
\]
for any irreducible component $E_v$ of the exceptional divisor, where
$-b_v$ is the Euler number of the normal bundle of $E_v \subset \X$,
and $g_v$ is the genus of $E_v$ (we assume here that the components $E_v$
of the exceptional divisor are smooth).
\end{definition}

\begin{rem}
The cycles $Z_K$ and $E_v^*$ are well defined, since the intersection
matrix, with entries $(E_v, E_w)$,
associated with any resolution is negative definite.
Notice also that any two canonical divisors are linearly equivalent, and
that any canonical divisor $K$ is numerically equivalent to $-Z_K$.
However,
it can happen that $\O_{\X}(K_{\X}+Z_K)$ has infinite order in the Picard group.
\end{rem}

\begin{prop} \label{prop:cans} Fix any  $r$.
Let $(X,0) \subset (Y,0)$ be a Newton nondegenerate Weil divisor, and
$\tilde \triangle_f$ a subdivision of the normal fan $\triangle_f$ so
that $\tilde Y \to Y$ is an embedded resolution.
Then the divisors
\begin{equation} \label{eq:can_divs}
  K_{\tilde Y} = - \sum_{\sigma\in\tilde\triangle^{(1)}_f} D_\sigma
               \in \Div(\tilde Y),\quad
  K_{\X}       = - \sum_{\sigma\in\tilde\triangle^{(1)}_f}
                        (1 + m_\sigma)E_\sigma
               \in \Div(\X)
\end{equation}
are possible canonical divisors for $\tilde Y$ and $\X$, respectively.

Furthermore, in the surface case ($r=3$),
the canonical cycle on $\X$ is given by the formula
\begin{equation} \label{eq:can_cycle}
  Z_K - E = \wt(f) - \sum (m_n+1)E_v^*,
\end{equation}
where the sum to the right runs through edges $\{n,v\}$ in the graph $G^*$
so that $n\in\Nd^*\setminus\Nd$ and $v\in\V$ (and the identity is in $L$).
\end{prop}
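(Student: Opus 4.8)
The plan is to prove the three formulae in turn: the expressions for $K_{\tilde Y}$ and $K_{\X}$ follow from classical toric geometry and the adjunction formula, and \cref{eq:can_cycle} is then a bookkeeping computation inside the intersection lattice $L$, using the description of the resolution $\X$ recalled above. First, since $\tilde\triangle_f$ is a regular subdivision, $\tilde Y$ is smooth, and I would invoke the standard fact (\cite[\S4.3]{Fulton_toric}) that on a smooth toric variety a canonical divisor is minus the sum of the invariant prime divisors: concretely, for a basis of $M$ with torus coordinates $x_1,\dots,x_r$, the rational top form $\tfrac{dx_1}{x_1}\wedge\cdots\wedge\tfrac{dx_r}{x_r}$ has divisor $-\sum_{\sigma\in\tilde\triangle_f^{(1)}}D_\sigma$. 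This is the asserted $K_{\tilde Y}$.

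For $K_{\X}$: by \cref{block:resolution}, Newton nondegeneracy makes $\X\subset\tilde Y$ a smooth divisor, hence Cartier (as $\tilde Y$ is smooth), meeting every torus orbit transversally; in particular $\X\not\subset D_\sigma$ and the scheme $D_\sigma\cap\X$ is reduced and equal to $E_\sigma$ for each $\sigma\in\tilde\triangle_f^{(1)}$. Moreover the divisor of the regular function $\pi^*f$ on $\tilde Y$ is $\X+D_m$ with $D_m=\sum_{\sigma\in\tilde\triangle_f^{(1)}}m_\sigma D_\sigma$: the strict transform of $X$ appears with multiplicity one since it is reduced, and the order of $\pi^*f$ along $D_\sigma$ is $\wt_\sigma(f)=m_\sigma$ by \cref{eq:van_order}; therefore $\O_{\tilde Y}(\X)\cong\O_{\tilde Y}(-D_m)$. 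Combining this with the adjunction isomorphism $\Omega^{r-1}_{\X}\cong\bigl(\Omega^r_{\tilde Y}\otimes\O_{\tilde Y}(\X)\bigr)\big|_{\X}$ and the identification $D_\sigma|_{\X}=E_\sigma$ gives
\[
  \O_{\X}(K_{\X}) \cong \O_{\X}\Bigl(-\sum_{\sigma\in\tilde\triangle_f^{(1)}}(1+m_\sigma)E_\sigma\Bigr),
\]
so $-\sum_\sigma(1+m_\sigma)E_\sigma$ is a canonical divisor on $\X$.

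For the canonical cycle (case $r=3$): writing $E_\sigma=\amalg_{v\in\gamma^{-1}(\sigma)}E_v$ and extending $\gamma$ to $\V^*$ by $\gamma(n)=\sigma_n$, the previous paragraph gives $K_{\X}=-\sum_{v\in\V^*}(1+m_v)E_v$ as a divisor on $\X$, whose components with $v\in\Nd^*\setminus\Nd$ are non-compact germs. The cycle $Z_K\in L'$ is characterised by $(Z_K,E_{v_0})=-b_{v_0}+2-2g_{v_0}$ for $v_0\in\V$, and since $(E_{v_0},E_{v_0})=-b_{v_0}$ and $E_{v_0}$ is a smooth curve of genus $g_{v_0}$, the adjunction formula on the smooth surface $\X$ rewrites this as $(Z_K,E_{v_0})=-(K_{\X},E_{v_0})=\sum_{w\in\V^*}(1+m_w)(E_w,E_{v_0})$. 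It is therefore enough to check that the cycle $W:=E+\wt(f)-\sum(m_n+1)E_v^*$, the sum being over edges $\{n,v\}$ of $G^*$ with $n\in\Nd^*\setminus\Nd$ and $v\in\V$, satisfies $(W,E_{v_0})=\sum_{w\in\V^*}(1+m_w)(E_w,E_{v_0})$ for every $v_0\in\V$; negative-definiteness of the intersection form then forces $W=Z_K$, and rearranging yields \cref{eq:can_cycle}. For the check I would use $E=\sum_{v\in\V}E_v$ and $\wt(f)=\sum_{v\in\V}\wt_v(f)E_v=\sum_{v\in\V}m_vE_v$ (as $\wt_v(f)=\wt_{\gamma(v)}(f)=m_v$), so that $E+\wt(f)=\sum_{v\in\V}(1+m_v)E_v$; and $(E_v^*,E_{v_0})=-\delta_{v,v_0}$, so the subtracted term contributes $\sum(m_n+1)$ summed over edges $\{n,v_0\}$ with $n\in\Nd^*\setminus\Nd$. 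Since the number of such edges equals the intersection number $(E_n,E_{v_0})$ --- this being the ``edges count intersection points'' property of the resolution, applied to a bamboo vertex $v_0$ and an adjacent extended node $n$ --- the contributions to $(W,E_{v_0})$ assemble into $\sum_{v\in\V}(1+m_v)(E_v,E_{v_0})+\sum_{n\in\Nd^*\setminus\Nd}(1+m_n)(E_n,E_{v_0})=\sum_{w\in\V^*}(1+m_w)(E_w,E_{v_0})$, as wanted.

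The adjunction steps are routine; the delicate point is the bookkeeping in the $r=3$ case. One must be careful that the non-compact germ components $E_n$ for $n\in\Nd^*\setminus\Nd$ still carry well-defined intersection numbers with the compact curves $E_{v_0}$ and that these equal the corresponding edge-multiplicities in $G^*$, and one must recognise that passing from the numerical class of $-K_{\X}$ --- represented by a divisor with these non-compact pieces --- to an honest element of $L'$ is precisely what introduces the correction terms $-(m_n+1)E_v^*$. I would extract the needed intersection data for extended nodes from the proof of the resolution proposition before assembling the identity.
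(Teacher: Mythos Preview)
Your proof is correct. The first two parts (the formulas for $K_{\tilde Y}$ and $K_{\X}$) match the paper's argument exactly: cite Fulton for the toric canonical, then adjunction using that $\X + D_m$ is principal on $\tilde Y$.

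For \cref{eq:can_cycle} your route differs from the paper's. The paper shows $(Z_K-E,E_{v_0}) = (\wt(f)-\sum(m_n+1)E_v^*,E_{v_0})$ by computing each side separately: the left side is $2-2g_{v_0}-\delta_{v_0}$ by adjunction, and the right side is handled by a case split on the valency $\delta_{v_0}$, invoking \cref{lem:bv} (in particular the identity $-b_v m_v+\sum_u m_u=-2\Vol_2(F_v)$) together with Pick's theorem for the node case. You instead observe that $(Z_K,E_{v_0})=-(K_{\X},E_{v_0})$ directly from the adjunction formula on the smooth surface $\X$, expand $-(K_{\X},E_{v_0})=\sum_\sigma(1+m_\sigma)(E_\sigma,E_{v_0})$, and recognise this as $(E+\wt(f),E_{v_0})$ plus the non-compact contributions, which are exactly absorbed by the $E_v^*$-terms. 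This bypasses both \cref{lem:bv} and Pick's theorem, and is arguably cleaner for this statement alone. The paper's approach, on the other hand, makes the combinatorial content (face volumes, interior lattice points) visible, which fits the running theme of translating analytic data into Newton-diagram combinatorics. One small point to tighten: when you write $K_{\X}=-\sum_{v\in\V^*}(1+m_v)E_v$, be explicit that for $n\in\Nd^*\setminus\Nd$ the symbol $E_n$ stands for the full divisor $E_{\sigma_n}$ (a disjoint union of $t_{n,n'}$ germs), so that $(E_n,E_{v_0})$ equals the number of edges between $n$ and $v_0$ in $G^*$, not just for a single germ.
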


\begin{proof}
For $K_{\Yt}$, see e.g. 4.3 of \cite{Fulton_toric}. Since the divisor
$\X + \sum_{\sigma\in\tilde\triangle_f^{(1)}} m_\sigma D_\sigma = (\pi^*f)$
is principal in $\tilde Y$
(and $D_\sigma|_{\X}=E_\sigma$), the adjunction formula gives
\[
  K_{\X} = \left.\left( K_{\tilde Y} + \X \right)\right|_{\X}
    = -\sum_{\sigma\in\tilde\triangle_f^{(1)}} (m_\sigma + 1) E_\sigma,
\]
which proves \cref{eq:can_divs}.
To prove \cref{eq:can_cycle}, it is enough to show that in $L$  for all $v\in \V$,
\begin{equation} \label{eq:can_cycle_pf}
  (Z_K-E, E_v) = \left(\wt(f) - \sum (m_n+1)E_v^*, E_v\right),
\end{equation}
where the sum is as in \cref{eq:can_cycle}.
Recall that $\wt(f) = \sum_{v\in \V} m_v E_v$.
We note that
the adjunction formula gives $(Z_K-E, E_v) = 2 - 2g_v - \delta_v$ for all
$v \in \V$, where $\delta_v$ is the valency of the vertex
$v$ in $G$, and $g_v$ is the genus of $E_v$.
Furthermore, it follows from \cref{def:okas_graph}
that if $v\in \V$, then
\begin{itemize}

\item
$\delta_v = 1$ if and only $v$ is on the end of a bamboo
joining a node $n\in \Nd$ and an extended node $n' \in \Nd^*\setminus \Nd$.
In this case, $v$ has exactly one neighbour in $\V^*\setminus \V$ in the
graph $G^*$.

\item
$\delta_v = 2$ if and only $v$ is on a bamboo joining two extended nodes,
and is not of the form described in the previous item.

\item
$\delta_v \geq 3$ if and only if $v$ is a node.

\end{itemize}

Consider first the case $\delta_v = 1$, and let $n$ be the unique
neighbour of $v$ in $\Nd^* \setminus \Nd$. It follows from
\cref{lem:bv} that $(\wt(f), E_v) = -m_n$, since $F_v$ is a
segment, and so has area zero. As a result, the right hand side
of \cref{eq:can_cycle_pf} is $1 = (Z_K-E,E_v)$.

Next, assume that $\delta_v = 2$. Then both sides of \cref{eq:can_cycle_pf}
vanish (use again \cref{lem:bv}).

Assume finally that $v \in \Nd$. Then, $v$ has no neighbours in
$\Nd^* \setminus \Nd$. Furthermore, $\delta_v$ coincides with the
number of integral points on the boundary of $F_v$, since
each edge adjacent to $v$ can be seen to correspond to a primitive
segment of the boundary.
By using Pick's theorem and \cref{lem:bv}, we therefore get
\[
  (Z_K-E, E_v) = 2 - 2g_v - \delta_v = -2\Vol_2(F_v) = (E_v, \wt(f)),
\]
which finishes the proof.
\end{proof}
\begin{rem}
As $m_\sigma$ depends on the choice of $f$ up to a $x^p$ multiplication,
the right hand side of the second formula from \cref{eq:can_divs} depends  on this choice too.
In fact,  the monomial rational function $x^p$ realizes the
  linear equivalence between the two   divisors $K_{\X}$ associated with two such choices.
\end{rem}

\section{Gorenstein surface singularities} \label{s:Gor}

In this section we prove \cref{thm:Gor},
which characterizes nondegenerate normal surface Gorenstein
singularities by their Newton polyhedron.
The key technical \cref{lem:can_wt_div,lem:g_Gor} provide the
tools for the proof. They are proved using
vanishing of certain cohomology groups calculated
by toric methods.
In the first lemma, the restriction $r=3$ is not needed.
However, the second lemma relies on the negative
definiteness of the intersection form, restricting our result to the
surface case.

\begin{definition} \label{def:pointedG}
Let $f$ and $\triangle_f$ be as above.
We say that $\Gamma_+(f)$, or $f$, is ($\Q$-)\emph{Gorenstein-pointed}
if there exists a $p\in M$ ($p\in M_\Q$) such  that $\ell_\sigma(p) = m_\sigma+1$
for all $\sigma \in \triangle^{*(1,1)}_f$.
\end{definition}

\begin{example}
Recall that
$(Y,0)$ is Gorenstein if and only if there is a $p \in M$
satisfying $\ell_\sigma(p) = 1$ for all $\sigma\in\triangle_\Sigma^{(1)}$,
see e.g. \cite{BruGub}, Theorem 6.32.
Therefore, if $(X,0)$ is Cartier, and
$\triangle_f^* = \triangle_\Sigma^*$, then
$f$ is Gorenstein pointed (since
$m_\sigma = 0$ for $\sigma\in\triangle_\Sigma^{(1)}$).
Furthermore,  $(X,0)$ is Gorenstein since $(Y,0)$ is Gorenstein and
$f$ forms a regular sequence.

Similarly,  $(Y,0)$ is $\Q$-Gorenstein if there is a $p \in M_\Q$
satisfying $\ell_\sigma(p) = 1$ for all $\sigma\in\triangle_\Sigma^{(1)}$,
see e.g. \cite{Altmann_QGor}.
Therefore, if $(X,0)$ is Cartier, and
$\triangle_f^* = \triangle_\Sigma^*$, then
$f$ is $\Q$-Gorenstein pointed.
\end{example}
\begin{rem}
Though the two combinatorial conditions in definitions \ref{def:pointed} and
\ref{def:pointedG}  look very similar,
they codify two rather different
geometrical properties. Being `pointed'  codifies an embedding property, namely
that $(X,0)\subset (Y,0)$ is Cartier,    see  \cref{prop:pg_Cartier}.
However, being `Gorenstein pointed'  codifies an abstract property of the germ $(X,0)$, namely its Gorenstein property, see
\cref{thm:Gor} below.
\end{rem}

\begin{block}
Recall also that $(X,0)$ is Gorenstein if it admits a Gorenstein form.
A Gorenstein form is a
nowhere vanishing section in $H^0(X\setminus 0, \Omega^2_{X\setminus 0}) =
H^0(\widetilde{X}\setminus E,\Omega^2_{\widetilde{X}\setminus E})$.
A Gorenstein pluri-form is a
nowhere vanishing section in
$H^0(\widetilde{X}\setminus E,(\Omega^2_{\widetilde{X}\setminus E})^{\otimes k})$
for some $k\in\Z_{>0}$.

In this section $K_{\tilde{Y}}$ and $K_{\X}$ are canonical divisors with a  choice as in
\cref{eq:can_divs}.
\end{block}

\begin{definition}
Let $\omega_f$ be some meromorphic 2-form on $\X$ whose divisor
$(\omega_f)$ is $K_{\X}$.
\end{definition}

\begin{thm} \label{thm:Gor}
Assume that $(X,0) \subset (Y,0)$ is a normal Newton nondegenerate surface
singularity (i.e. $r=3$).
The following conditions are equivalent:
\begin{enumerate}

\item \label{it:Gor_point}
$f$ is Gorenstein-pointed at some $p\in M$.

\item \label{it:Gor_ell}
There exists a $p\in M$ so that for all $v\in \V^*\setminus \V$ we have
$\ell_v(p) = m_v + 1$.

\item \label{it:Gor_ellG}
There exists a $p\in M$ so that for all $v\in \V$ we have
$\ell_v(p) = m_v + 1 - m_v(Z_K)$.

\item \label{it:Gor_form}
There exists a $p\in M$ so that $x^p\omega_f$ is a Gorenstein form.

\item \label{it:Gor_Gor}
$(X,0)$ is Gorenstein.

\end{enumerate}
When these conditions hold,
\ref{it:Gor_point},  \ref{it:Gor_ell},
\ref{it:Gor_ellG} and \ref{it:Gor_form} uniquely identify the same point $p$.
\end{thm}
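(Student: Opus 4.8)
The plan is to prove the chain of implications $\ref{it:Gor_point} \Rightarrow \ref{it:Gor_ell} \Rightarrow \ref{it:Gor_form} \Rightarrow \ref{it:Gor_Gor} \Rightarrow \ref{it:Gor_point}$, together with $\ref{it:Gor_ell} \Leftrightarrow \ref{it:Gor_ellG}$, and then observe that all four of \ref{it:Gor_point}--\ref{it:Gor_form} pin down the \emph{same} lattice point $p$. The bridge between the combinatorics (conditions \ref{it:Gor_point}, \ref{it:Gor_ell}, \ref{it:Gor_ellG}) and the geometry (conditions \ref{it:Gor_form}, \ref{it:Gor_Gor}) will be the explicit canonical divisor $K_{\X} = -\sum_{\sigma}(1+m_\sigma)E_\sigma$ from \cref{prop:cans}: a monomial $x^p$ has divisor $\sum_\sigma \ell_\sigma(p) E_\sigma$ on $\X$ (restricting $\div(x^p) = \sum \ell_\sigma(p)D_\sigma$ from $\tilde Y$), so $x^p\omega_f$ has divisor $\sum_\sigma (\ell_\sigma(p) - 1 - m_\sigma)E_\sigma$. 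Hence $x^p\omega_f$ is a Gorenstein form (nowhere zero, no poles on $\X\setminus E$) precisely when this divisor is supported on $E$ with the correct vanishing along the non-compact components, which is exactly a condition on the coefficients $\ell_v(p) - 1 - m_v$ for $v \in \V^* \setminus \V$; this is how I would get $\ref{it:Gor_ell} \Leftrightarrow \ref{it:Gor_form}$. The equivalence $\ref{it:Gor_ell}\Leftrightarrow\ref{it:Gor_ellG}$ is then pure linear algebra: both sides assert the cycle $\wt(x^pf) - \wt(f) - K_{\X}$ (or its relevant restriction) is numerically trivial, and using the formula for $Z_K - E$ from \cref{eq:can_cycle} together with negative-definiteness of the intersection form one translates the conditions on $v \in \V^*\setminus\V$ into conditions on all $v\in\V$.

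For $\ref{it:Gor_form}\Rightarrow\ref{it:Gor_Gor}$ I would just note that the existence of a nowhere-vanishing section of $\Omega^2_{X\setminus 0}$ extending across a resolution is precisely the definition of Gorenstein recalled in the block preceding the theorem, using normality of $(X,0)$ so that sections over $X\setminus 0$ and over $\widetilde X\setminus E$ agree. The reverse direction $\ref{it:Gor_Gor}\Rightarrow\ref{it:Gor_point}$ is where the real work lies: starting from an abstract Gorenstein form $\omega$ on $X\setminus 0$, pulled back to $\widetilde X\setminus E$, its divisor on $\widetilde X$ is a canonical divisor supported on $E$, hence differs from $K_{\X}$ by a principal divisor; one must show this principal divisor is (the restriction of) a \emph{toric} principal divisor $\div(x^p)$ for some $p\in M$. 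This is the analogue of the Cartier argument in \cref{prop:pg_Cartier}\cref{it:pg_Cartier}: I would use the vanishing $H^{\ge 1}(\widetilde Y, \O_{\widetilde Y}) = 0$ and its local analogue, identify $\Pic$ with $H^2(\cdot,\Z)$ via the exponential sequence, use that $Y$ is weighted-homogeneous so $H^2$ of algebraic and local-analytic resolutions coincide, and thus reduce the analytic linear-equivalence class of a divisor supported on $E \cup \X$ to its algebraic toric class, which by the exact sequence $0\to M\to \Div_{\T}(\widetilde Y)\to A_{r-1}(\widetilde Y)\to 0$ is detected by an element of $M$. The key extra input needed here that was not needed in \cref{prop:pg_Cartier} is that the ambient form must \emph{restrict} to the given one on $\X$ — here one uses that $K_{\X} = (K_{\widetilde Y} + \X)|_{\X}$ by adjunction and that the restriction map $A_{r-1}(\widetilde Y) \to A_{r-1}(\X)$ (or the corresponding statement for the exceptional lattice $L'$) is understood, together with the negative-definiteness of the intersection form which guarantees that a cycle supported on $E$ is determined by its intersection numbers.

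The main obstacle, therefore, is the implication $\ref{it:Gor_Gor}\Rightarrow\ref{it:Gor_point}$: one must upgrade an \emph{analytic} statement about a form on the abstract germ $(X,0)$ to the \emph{combinatorial} existence of a lattice point $p \in M$ realizing $\ell_\sigma(p) = m_\sigma + 1$ on $\triangle_f^{*(1,1)}$. The subtlety is that a priori the canonical class of $\X$ could fail to be represented by a toric divisor on the nose even though it is represented by one numerically (the remark after the canonical-cycle definition warns that $\O_{\X}(K_{\X}+Z_K)$ may have infinite order); the Gorenstein hypothesis is exactly what forces the \emph{honest} (not merely numerical) equivalence, and one must feed this through the $\Pic$ computation carefully, in its local analytic form, to land in the image of $M$ rather than $M_\Q$. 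Once that is done, the final sentence of the theorem — that \ref{it:Gor_point}, \ref{it:Gor_ell}, \ref{it:Gor_ellG}, \ref{it:Gor_form} identify the same $p$ — is immediate: in each case $p$ is determined modulo $\{q\in M : \ell_\sigma(q) = 0 \ \forall \sigma\in\triangle_\Sigma^{(1)}\} = \Sigma^\perp \cap M = \{0\}$ (since $\Sigma$ is strictly convex of full dimension), and the translation dictionary above shows the conditions are equivalent reformulations of one another, hence select the same point.
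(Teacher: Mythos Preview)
Your treatment of the equivalences \ref{it:Gor_point}--\ref{it:Gor_form} and of \ref{it:Gor_form}$\Rightarrow$\ref{it:Gor_Gor} is correct and agrees with the paper's \cref{prop:QGor}. The problem is \ref{it:Gor_Gor}$\Rightarrow$\ref{it:Gor_point}, where the Picard/Chow route of \cref{prop:pg_Cartier} does not carry over. There, the Cartier hypothesis hands you a torus-invariant divisor $D_f$ \emph{on $Y$} that is analytically principal, and the sequence $0\to M\to\Div_{\T}(Y)\to A_{r-1}(Y)\to 0$ locates $p$. Here the principal divisor $(\omega/\omega_f)=-Z_K-K_{\X}$ lives on $\X$, not on $\tilde Y$, and you cannot push it into that toric sequence: lifting its coefficients to $\sum c_vD_v$ on $\tilde Y$ gives a torus-invariant divisor, but triviality of its restriction to $\X$ does not imply triviality on $\tilde Y$, since $\Pic(\tilde Y)\to\Pic(\X)$ has a large kernel. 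More decisively, an argument that uses only divisor-class data (principality plus negative definiteness, as you sketch) would apply verbatim to a $\Q$-Gorenstein pluricanonical form and yield $\Q$-Gorenstein $\Rightarrow$ $\Q$-Gorenstein-pointed; \cref{rem:Gor} shows this is false. So somewhere the honest analytic function $\overline g=\omega/\omega_f$, and not merely its divisor, must enter.

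The paper does exactly that. \Cref{lem:can_wt_div} extends $\overline g$ to a Laurent series $g\in\O_{Y,0}[x^M]$ with $\wt_\sigma(g)\geq m_\sigma+1$ on all boundary rays, by proving the obstruction $H^1(\tilde Y\setminus\Dc,\O(K_{\tilde Y}))$ vanishes via a direct toric-cohomology computation. \Cref{lem:g_Gor} (using the Lipman cone, so here $r=3$ is essential) then forces $\wt_v(g)=\div_v(\overline g)$ for every $v\in\V^*$. Finally, at each node $n$ one combines $((\overline g),E_n)=0$ with the relation $-b_n\ell_n+\sum_{u\in\V_n^*}\ell_u=0$ and the fact that $\{\ell_u:u\in\V_n^*\}$ spans $N$ to pin each principal part $g_n$ down to a single monomial $ax^{p_n}$; propagating along bamboos shows $p_n=p$ is independent of $n$. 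This $p$ satisfies $\ell_v(p)=\div_v(\overline g)$ for all $v\in\V^*$, so $(x^p\omega_f)=(\omega)$, giving \ref{it:Gor_form}.
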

In fact, the analogues of parts \cref{it:Gor_point}--\cref {it:Gor_form} are equivalent over
rational points $p\in M_\Q$ as well.

\begin{prop}\label{prop:QGor}
Under the assumption of \cref{thm:Gor},
the following conditions are equivalent, and imply that
$(X,0)$ is $\Q$-Gorenstein:
\begin{enumerate}

\item \label{it:Gor_point_Q}
$f$ is $\Q$-Gorenstein-pointed at some $p\in M_\Q$.

\item \label{it:Gor_ell_Q}
There exists a $p\in M_\Q$ so that for all $v\in \V^*\setminus \V$ we have
$\ell_v(p) = m_v + 1$.

\item \label{it:Gor_ellG_Q}
There exists a $p\in M_\Q$ so that for all $v\in \V$ we have
$\ell_v(p) = m_v + 1 - m_v(Z_K)$.

\item \label{it:Gor_form_Q}
There exists a $p\in M_\Q$ so that $x^{kp}(\omega_f)^{\otimes k}$ is a Gorenstein pluri-form
 for some $k\in \Z_{>0}$.

\end{enumerate}
Furthermore, all these  these conditions identify the very same $p$ uniquely.
\end{prop}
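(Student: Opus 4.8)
The plan is to transcribe the proof of \cref{thm:Gor}, replacing $M$ by $M_\Q$ throughout and replacing ``Gorenstein form'' by ``Gorenstein pluri-form'', and then to append the short observation that \cref{it:Gor_form_Q} forces $(X,0)$ to be $\Q$-Gorenstein. The first step is to note that the equivalences \cref{it:Gor_point_Q}$\Leftrightarrow$\cref{it:Gor_ell_Q}$\Leftrightarrow$\cref{it:Gor_ellG_Q} are proved word for word as \cref{it:Gor_point}$\Leftrightarrow$\cref{it:Gor_ell}$\Leftrightarrow$\cref{it:Gor_ellG} in \cref{thm:Gor}: those arguments only compare the linear functionals $\ell_v$, use the relations of \cref{lem:bv}, the canonical cycle formula \cref{eq:can_cycle} from \cref{prop:cans}, and the negative definiteness of the intersection form (to pass from an equality of intersection numbers with every $E_v$ to an equality in $L_\Q$); nowhere do they use that the candidate point is integral. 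The very same linear system $\ell_v(p)=m_v+1-m_v(Z_K)$, $v\in\V$, has at most one rational solution, which yields the asserted uniqueness of $p$.

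The second step is the equivalence \cref{it:Gor_ell_Q}$\Leftrightarrow$\cref{it:Gor_form_Q}. I would fix $k\in\Z_{>0}$ with $kp\in M$ and examine the meromorphic pluri-form $x^{kp}(\omega_f)^{\otimes k}$ on $\X$: its divisor is $k\,\div(x^p)+kK_{\X}$, and by nondegeneracy (so that $\X$ meets each toric divisor transversally) together with \cref{eq:can_divs}, this divisor is supported on the curves $E_v$, $v\in\V^*$, with multiplicity $k(\ell_v(p)-m_v-1)$ along $E_v$. By \cref{rem:exceptional_div}, the compact part of this family is $E=\cup_{v\in\V}E_v$, while the non-compact components are precisely the $E_v$ with $v\in\V^*\setminus\V$. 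Since $\X\setminus E\cong X\setminus\{0\}$, the pluri-form $x^{kp}(\omega_f)^{\otimes k}$ restricts to a nowhere vanishing section of $(\Omega^2_{\X\setminus E})^{\otimes k}$ --- i.e.\ a Gorenstein pluri-form --- if and only if every non-compact multiplicity vanishes, that is $\ell_v(p)=m_v+1$ for all $v\in\V^*\setminus\V$; this is \cref{it:Gor_ell_Q}, and nonvanishing along the $E_v$ is then automatic by nondegeneracy. Uniqueness reappears here as well: for two points $p,p'$ satisfying these conditions, pass to a common $k$; then $x^{k(p-p')}$ is a nowhere vanishing holomorphic function on $X\setminus\{0\}$, hence, by normality of $(X,0)$, a unit on $X$, hence a nonzero constant, forcing $p=p'$.

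The third step is \cref{it:Gor_form_Q}$\Rightarrow$``$(X,0)$ is $\Q$-Gorenstein''. A Gorenstein pluri-form trivialises $(\Omega^2_{X\setminus\{0\}})^{\otimes k}$, so $kK_X$ is Cartier and linearly trivial on $X\setminus\{0\}$; since $(X,0)$ is a normal surface germ and $0$ has codimension $2$, the reflexive sheaf $\O_X(kK_X)$ coincides with $\O_X$, so $kK_X$ is Cartier on $X$ and $K_X$ is $\Q$-Cartier.

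I do not expect a serious obstacle, precisely because \cref{prop:QGor} does not assert the converse implication ``$\Q$-Gorenstein $\Rightarrow$ the four conditions'' (which is in fact false, cf.\ \cref{rem:Gor}), so none of the cohomological-vanishing machinery underlying the Gorenstein direction of \cref{thm:Gor} is needed. The one point that genuinely requires care --- exactly as in the proof of \cref{thm:Gor} --- is to reconcile the index set $\V^*\setminus\V$ appearing in \cref{it:Gor_ell_Q} with the a priori smaller set $\triangle_f^{*(1,1)}$ appearing in the definition of $\Q$-Gorenstein-pointedness (\cref{def:pointedG}): the equations attached to the extra rays introduced by the canonical subdivision inside two-dimensional boundary cones are implied by the equations attached to the generators of those cones, through the bamboo recursion encoded in \cref{lem:bv}. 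This bookkeeping is routine and identical to the one already carried out for the integral case.
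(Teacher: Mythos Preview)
Your proposal is correct and uses essentially the same ingredients as the paper (\cref{lem:bv}, \cref{eq:can_cycle}, and \cref{eq:can_divs}), but be aware that in the paper the logical order is reversed: \cref{prop:QGor} is proved directly and first, and the proof of \cref{thm:Gor} for the equivalence of its first four conditions simply cites \cref{prop:QGor}, so ``transcribing the proof of \cref{thm:Gor}'' is circular as stated. One further simplification: the paper observes that $\triangle_f^{*(1,1)}=\V^*\setminus\V$ is an identity by the construction in \cref{def:okas_graph} (the extra boundary rays introduced by the subdivision have $0$-dimensional $F_\sigma$), so the ``bamboo bookkeeping'' you flag at the end is not needed for \cref{it:Gor_point_Q}$\Leftrightarrow$\cref{it:Gor_ell_Q}.
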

\begin{proof}
\ref{it:Gor_ell_Q} is a rephrasing of \ref{it:Gor_point_Q}, since $\triangle^{*(1,1)}_f=\V^*\setminus \V$.

\ref{it:Gor_ell_Q}$\Rightarrow$\ref{it:Gor_ellG_Q}\ For any $p\in M_\Q$
consider the cycles
\[
  Z_1:=\sum_{v\in\V}\ell_v(p)E_v\in L_\Q,\qquad
  Z_2:=\sum (m_n+1)E_v^*\in L_\Q,
\]
where the sum
runs over edges $\{n,v\}$ in $G^*$ so that $n\in\V^*$ and $v\in\V$
(as in \cref{eq:can_cycle}),
 and $Z^*:=\sum_{n\in\V^*\setminus \V}
\ell_n(p)E_n$ (where all these $E_n$'s are the noncompact curves in $\X$).

If $\{n,v\}$ is an edge as above, then $(Z_2, E_v)=-(m_n+1)$. Moreover, $(Z^*, E_v)_{\X}=\ell_n(p)$.
Therefore, by  assumption \ref{it:Gor_ell_Q},
$(Z^*+Z_2, E_u)_{\X}=0$ for any $u\in \V$. On the other hand, by \cref{lem:bv}, $(Z^*+Z_1, E_u)_{\X}=0$ for any $u\in \V$
as well. Hence $Z_1=Z_2$. But by \cref{eq:can_cycle} $m_u(Z_2)=m_v+1-m_v(Z_K)$.

\ref{it:Gor_ellG_Q}$\Rightarrow$\ref{it:Gor_ell_Q}\ With the above notations, \ref{it:Gor_ellG_Q} shows that
$Z_1=Z_2$. Let $\{n,v\}$ be an edge as above, let $w\in\V$ be the other neighbour of $v$, and note that
$E_v=b_vE^*_v- E^*_w$ in $L'$.
Then,
\[
  m_n+1=(Z_2, -E_v)= (Z_1, -b_vE^*_v+E^*_w)= \ell_v(p)b_v-\ell_w(p)=\ell_n(p)
\]
(in the last equality  use \cref{lem:bv}).

For \ref{it:Gor_ell_Q}$\Leftrightarrow$\ref{it:Gor_form_Q} use the second identity of \cref{eq:can_divs}.
\end{proof}

\begin{rem} \label{rem:Gor}
Similarly as in \cref{thm:Gor}, one may ask
whether the equivalent cases in \ref{prop:QGor}
are equivalent with the property that  $(X,0)$ is $\Q$-Gorenstein. If $f$ is
$\Q$-Gorenstein-pointed at $p\in M_\Q$, then \ref{it:Gor_form_Q} implies that
 $(X,0)$ is $\Q$-Gorenstein. {\it The converse does not hold}, as seen by the
following example.

 Let $N = \Z^3$ and
\[
  \Sigma = \R_{\geq 0}\langle (1,0,0),
                              (0,1,0),
                              (1,0,1),
                              (0,1,1) \rangle,\quad
  f(x) = x^{(0,0,2)} + x^{(1,0,1)} + x^{(0,2,0)} + 2x^{(1,2,-1)}.
\]
Write $\sigma_i$, $i=1,2,3,4$ for the rays generated by the vector specified
above
and denote by
$m_i$ the corresponding multiplicities. We find $m_1=m_2=m_3=0$ and
$m_4 = 1$. As a result, since the linear equation
\[
\left(
\begin{matrix}
  1 & 0 & 0 \\
  0 & 1 & 0 \\
  1 & 0 & 1 \\
  0 & 1 & 1
\end{matrix}
\right)
  \cdot p
  =
\left(
\begin{matrix}
  1 \\
  1 \\
  1 \\
  2
\end{matrix}
\right)
\]
has no solution, hence $f$ is not $\Q$-Gorenstein  pointed.

\begin{figure}[ht]
\begin{center}
\input{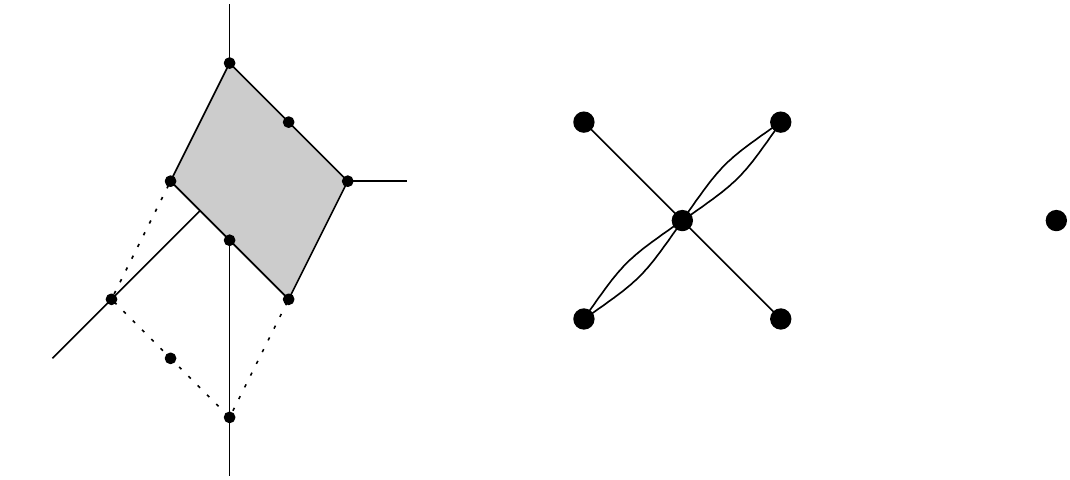_t}
\caption{A Newton diagram, and the output of Oka's algorithm. The dotted line
shows the intersection of the affine hull of the only face of the diagram
intersected with $\partial \Sigma^\vee$.
For simplicity, here  in $G^*$  we have blown down
the $(-1)$-vertices constructed in the last paragraph
of \cref{block:two_can}.}
\label{fig:ex_3}
\end{center}
\end{figure}

On the other hand, one verifies that the Weil divisor defined by $f$
is normal
using \cref{thm:geom_genus}. Furthermore, Oka's algorithm shows that
this singularity has a resolution with an exceptional divisor
consisting of a single rational curve with Euler number $-3$.
Such a singularity is a cyclic quotient singularity.
In particular, it is $\Q$-Gorenstein.
\end{rem}

\begin{block}
Next,  we focus  on the proof of \cref{thm:Gor}.
The equivalences of the first four cases follow from (or, as) \cref{prop:QGor}.
For \cref{it:Gor_point}$\Rightarrow$\cref{it:Gor_Gor} note that
 if $f$ is Gorenstein-pointed at $p\in M$
then $x^{p} \omega_f$ trivializes the canonical bundle.
The implication \cref{it:Gor_Gor}$\Rightarrow$\cref{it:Gor_point}
will be proved below based on two lemmas.
\end{block}

\begin{lemma} \label{lem:can_wt_div}
Let $\,\overline g\in H^0(\X\setminus E,\O_{\X}(K_{\X}))$, that is,
$\overline g$ is
a meromorphic function on the complement of the exceptional divisor
in $\X$ satisfying
\begin{equation} \label{eq:can_div}
  (\overline g)
    \geq -K_{\X} |_{\X\setminus E}
    =    \sum_{v\in\V^*\setminus \V} (m_v+1)E_v.
\end{equation}
Then, there exists a Laurent series $g\in\O_{Y,0}[x^M]$
satisfying $(\pi^*g)|_{\X\setminus E} = \overline g$ and
\begin{equation} \label{eq:can_wt}
  \fa{\sigma \in \tilde\triangle_f^{*(1)}}
     {\wt_\sigma g \geq m_\sigma+1}.
\end{equation}
\end{lemma}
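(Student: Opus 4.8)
The plan is to transfer the statement to the toric resolution $\tilde Y=Y_{\tilde\triangle_f}$ and to lift $\overline g$ through a restriction sequence. Write $D_m=\sum_{\sigma\in\tilde\triangle_f^{(1)}}m_\sigma D_\sigma$ as in the proof of \cref{thm:geom_genus} (note $m_\sigma\ge 0$, hence $1+m_\sigma>0$, for every $\sigma$). Using $(\pi^*f)=\X+D_m$ on $\tilde Y$ together with the fact that $\X$ shares no component with the invariant divisors, one has a short exact sequence of sheaves on a small Stein representative $\Ytloc$,
\[
  0\longrightarrow\O_{\tilde Y}(K_{\tilde Y})\xrightarrow{\;\cdot\,\pi^*f\;}\O_{\tilde Y}(K_{\tilde Y}-D_m)\xrightarrow{\;\mathrm{res}\;}\O_{\X}(K_{\X})\longrightarrow 0 ,
\]
where $K_{\tilde Y}=-\sum_\sigma D_\sigma$, the middle term $\O_{\tilde Y}(K_{\tilde Y}-D_m)=\O_{\tilde Y}\bigl(-\sum_\sigma(1+m_\sigma)D_\sigma\bigr)$ is the ideal sheaf of $\sum_\sigma(1+m_\sigma)D_\sigma$, and $\mathrm{res}$ is honest restriction of functions, which is onto $\O_{\X}(K_{\X})$ because the chosen divisors satisfy $(K_{\tilde Y}-D_m)|_{\X}=K_{\X}$ (the identity underlying \cref{eq:can_divs}). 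Consequently, a preimage under $\mathrm{res}$ of a section of $\O_{\X}(K_{\X})$ is exactly a holomorphic function $g'$ on $\Ytloc$ with $\wt_\sigma g'\ge 1+m_\sigma$ for every $\sigma\in\tilde\triangle_f^{(1)}$ (in particular for every $\sigma\in\tilde\triangle_f^{*(1)}$) whose restriction to $\X$ is that section.

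Assume $r\ge 3$ (the case $r=2$ is elementary and I would treat it separately, e.g. by restricting the whole sequence to $\Ytloc\setminus\pi^{-1}(0)$). First I would extend $\overline g$ across $E$: since $\X$ is smooth of dimension $d=r-1\ge 2$ and $\pi_X$ is proper with $\pi_{X*}\O_{\X}=\overline\O_{X}$ (the structure sheaf of the normalization of $X$), one gets $H^0(\X\setminus E,\O_{\X})=\overline\O_{X,0}=H^0(\X,\O_{\X})$ by Hartogs applied on the normalized germ, whose puncture locus has codimension $\ge 2$; hence $\overline g$ extends uniquely to a global section of $\O_{\X}(K_{\X})$, still vanishing to order $\ge 1+m_\sigma$ along each noncompact $E_\sigma$. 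Now take cohomology of the displayed sequence over $\Ytloc$: the obstruction to lifting $\overline g$ to some $g'\in H^0(\Ytloc,\O_{\tilde Y}(K_{\tilde Y}-D_m))$ lies in $H^1(\Ytloc,\O_{\tilde Y}(K_{\tilde Y}))=H^1(\Ytloc,\omega_{\tilde Y})$, and this vanishes: Grauert--Riemenschneider gives $R^{>0}\pi_*\omega_{\tilde Y}=0$, and $\Yloc$ is Stein, so $H^{>0}(\Ytloc,\omega_{\tilde Y})=H^{>0}(\Yloc,\pi_*\omega_{\tilde Y})=0$. (Alternatively, one can compute $H^\bullet(\Ytloc,\O_{\tilde Y}(K_{\tilde Y}))$ directly by the toric/combinatorial method of \cite{Danilov_toric} used in the proof of \cref{thm:geom_genus}.) Thus $\overline g$ lifts to a holomorphic $g'$ on $\Ytloc$ with $\wt_\sigma g'\ge 1+m_\sigma$ for all $\sigma\in\tilde\triangle_f^{(1)}$ and $g'|_{\X}=\overline g$.

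Finally I would descend $g'$ to $Y$: since $(Y,0)$ is normal (being toric) and rational, $\pi_*\O_{\Ytloc}=\O_{\Yloc}$, so $g'=\pi^*g$ for a unique $g\in\O_{Y,0}$; then $(\pi^*g)|_{\X\setminus E}=g'|_{\X\setminus E}=\overline g$ and $\wt_\sigma g=\wt_\sigma g'\ge m_\sigma+1$ for all $\sigma\in\tilde\triangle_f^{*(1)}$, so $g\in\O_{Y,0}\subset\O_{Y,0}[x^M]$ has the required properties. The step I expect to be the main obstacle is the identification and vanishing of the obstruction group $H^1(\Ytloc,\omega_{\tilde Y})$: one must arrange the restriction map so that ``preimage of $\overline g$'' genuinely means a function restricting to $\overline g$ and not its Poincar\'e residue (the naive reading of the canonical adjunction sequence), invoke the analytic form of Grauert--Riemenschneider correctly for the proper birational $\pi$, and, in the borderline case $r=2$, replace the $H^1$ vanishing by a local-cohomology duality computing $H^2_{\pi^{-1}(0)}(\Ytloc,\omega_{\tilde Y})$ out of the rationality of $(Y,0)$.
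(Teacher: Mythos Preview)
There is a genuine gap in the extension step. Your Hartogs argument correctly shows that $\overline g$ extends across $E$ to a holomorphic function on $\X$, i.e.\ to an element of $H^0(\X,\O_{\X})$. But you then assert that this extension lies in $H^0(\X,\O_{\X}(K_{\X}))$, and this is not justified: since $K_{\X}=-\sum_\sigma(1+m_\sigma)E_\sigma$ is anti-effective, a section of $\O_{\X}(K_{\X})$ must vanish to order $\geq 1+m_\sigma$ along \emph{every} $E_\sigma$, including the compact exceptional ones. The hypothesis on $\overline g$ only controls its behaviour along the noncompact $E_\sigma$ ($\sigma\in\tilde\triangle_f^{*(1)}$), and the Hartogs extension gives no information about the order of vanishing along the exceptional curves. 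So you cannot feed the extended $\overline g$ into your short exact sequence on $\Ytloc$, and the Grauert--Riemenschneider vanishing for $H^1(\Ytloc,\omega_{\tilde Y})$ is never reached. (Note also that even bypassing the sequence, lifting the extension naively through $\O_{Y,0}\twoheadrightarrow\O_{X,0}$ gives no control on $\wt_\sigma$ of the lift, and if $(X,0)$ is not normal the target is $\overline\O_{X,0}\supsetneq\O_{X,0}$ anyway.)

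The paper sidesteps this problem entirely by working on the open set $\Yt\setminus\Dc$, where $\Dc$ is the union of the compact invariant divisors. The same short exact sequence, but on $\Yt\setminus\Dc$, restricts directly to $\O_{\X\setminus E}(K_{\X})$, so no extension across $E$ is needed; the sections of the middle term are precisely Laurent series with $\wt_\sigma\geq m_\sigma+1$ for $\sigma\in\tilde\triangle_f^{*(1)}$ only. The price is that the obstruction group becomes $H^1(\Yt\setminus\Dc,\O(K_{\Yt}))$, which is not killed by Grauert--Riemenschneider; instead the paper computes it by the toric method of Danilov--Fulton as $\bigoplus_{p\in M}\tilde H^0(\partial\Sigma\setminus Z(p),\C)$ and observes that $\partial\Sigma\setminus Z(p)$ is always connected when $r\geq 3$.
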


\begin{proof}
Let $I = H^0(\X\setminus E, \O_{\X}(K_{\X}))$
and let $J$ be the set of meromorphic functions obtained as a restriction
of Laurent series satisfying \cref{eq:can_wt}. We want to show that $I = J$.

We immediately see $J \subset I$. In fact, this inclusion fits into
an exact sequence as follows. Recall the notation
$D_m = \sum_{\sigma \in \tilde\triangle_f^{(1)}} m_\sigma D_\sigma$
from the proof of \cref{thm:geom_genus}, and
$K_{\Yt} = -\sum_{\sigma \in \tilde\triangle_f^{(1)}} D_\sigma$.
Also, define $\Dc$ as the union of compact divisors in $\tilde Y$, that is,
$\cup_\sigma  D_\sigma$ for $\sigma \not \in \tilde\triangle_f^{*(1)}$.
Since $(\pi^*f)=\X+D_m$,
we have a short exact sequence of sheaves
\[
  0
  \to
  \O_{\Yt\setminus\Dc}(K_{\Yt})
  \stackrel{\cdot f}{\to}
  \O_{\Yt\setminus\Dc}(-D_m + K_{\Yt})
  \to
  \O_{\X\setminus E}(-D_m + K_{\Yt})
  \to
  0
\]
yielding a long exact sequence of cohomology groups.
We have
\[
  I = H^0(\X \setminus E, \O_{\X\setminus E}(-D_m + K_{\Yt})),
\]
since $K_{\widetilde{X}}=(-D_m+K_{\widetilde{Y}})|_{\widetilde{X}}$.
Furthermore, since $\Yt$
is normal, $H^0(\Yt\setminus\Dc, \O_{\Yt\setminus\Dc}(-D_m+K_{\Yt}))$
is the set of Laurent
series satisfying \cref{eq:can_wt}. Thus, its image in
$I$ is $J$. Therefore, the quotient $I/J$ injects into
$H^1(\Yt\setminus\Dc, \O_{\Yt\setminus\Dc}(K_{\Yt}))$.
On the other hand,
\begin{equation} \label{eq:van_gp}
  H^1(\Yt\setminus\Dc, \O_{\Yt\setminus\Dc}(K_{\Yt}))
  \cong
  \bigoplus_{p\in M} H^1_{Z(p)}(\partial\Sigma, \C),
\end{equation}
where, following Fulton \cite{Fulton_toric},
$\psi_K:\partial\Sigma \to \R$ is the unique function restricting
to linear function on all $\sigma\in\tilde\triangle_f^*$, and satisfying
$\psi_K(\ell_\sigma) = 1$ for $\sigma\in\tilde\triangle_f^{(1)*}$,
and for $p\in M$ we set
\[
  Z(p) = \set{\ell\in\partial\Sigma}{\ell(p) \geq \psi_K(\ell)}.
\]
Firstly, since $\partial\Sigma$ is contractible, we find
\[
  H^1_{Z(p)}(\partial\Sigma, \C)
  \cong
  \tilde H^0(\partial\Sigma\setminus Z(p), \C).
\]
Secondly, define $Z'(p)$ as the union of those cones
$\sigma\in\tilde\triangle_f^*$ satisfying $p|_\sigma \geq 0$
(i.e. $\ell(p) \geq 0$ for all $\ell\in \sigma$),
and
let $Z''(p)$ be the set of $\ell\in\partial\Sigma$ satisfying
$\ell(p) \geq 0$. By \cref{lem:subcx}, the inclusions
\[
  \partial\Sigma \setminus Z(p)
  \subset
  \partial\Sigma \setminus Z'(p)
  \supset
  \partial\Sigma \setminus Z''(p)
\]
are strong deformation retracts. But the right hand side above is either a
contractible set, or it has the homotopy of $S^{r-2}$. In particular,
it is connected, by our assumption $r>2$, and so \cref{eq:van_gp} vanishes.
\end{proof}

\begin{lemma} \label{lem:g_Gor}
Assume that $(X,0)$ is a Gorenstein normal
surface singularity, i.e. $r=3$, and that
we have a Gorenstein form $\omega$ on $\X\setminus E$.
Thus, $-K_{\X}-Z_K$ is linearly trivial,
and there exists
\[
  \overline g\in H^0(\X,\O_{\X}(K_{\X}+Z_K)), \quad
  (\overline g) = (\omega) - (\omega_f) = -Z_K - K_{\X}.
\]
Then there is a $g \in \O_{Y,0}[x^M]$ satisfying
\begin{equation} \label{eq:g_Gor}
 (\pi^* g)|_{\X} = \overline g\quad\mathrm{and}\quad
  \fa{v\in\V^*}{\wt_v(g) = \div_v(\overline g)}.
\end{equation}
\end{lemma}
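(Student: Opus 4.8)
The plan is to produce the lift with \cref{lem:can_wt_div} and then to upgrade the resulting weight \emph{inequalities} to \emph{equalities} using \cref{lem:bv} together with negative definiteness of the intersection form.

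\textit{Step 1 (lifting).} Since $Z_K$ is supported on the compact exceptional divisor $E$, the divisor $(\overline g)=-Z_K-K_{\X}$ restricts on $\X\setminus E$ to $-K_{\X}|_{\X\setminus E}$, so $\overline g\in H^0(\X\setminus E,\O_{\X}(K_{\X}))$ and \cref{lem:can_wt_div} yields $g\in\O_{Y,0}[x^M]$ with $(\pi^*g)|_{\X\setminus E}=\overline g$ and $\wt_\sigma g\ge m_\sigma+1$ for all $\sigma\in\tilde\triangle_f^{*(1)}$. As $\X$ is smooth and the two rational functions $(\pi^*g)|_{\X}$ and $\overline g$ agree on the dense open set $\X\setminus E$, they agree on $\X$; this is the first assertion of \cref{eq:g_Gor}.

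\textit{Step 2 (the easy inequality and the noncompact components).} For every $v\in\V^*$, the order of vanishing of $g$ along $D_{\gamma(v)}$ is $\wt_v(g)$, and since $E_v$ is a reduced component of $D_{\gamma(v)}\cap\X$ (\cref{block:resolution}), restriction cannot lower this order: $\wt_v(g)\le\div_v((\pi^*g)|_{\X})=\div_v(\overline g)$. If $v\in\V^*\setminus\V$ then $m_v(Z_K)=0$, so by \cref{eq:can_divs} $\div_v(\overline g)=m_v(-Z_K-K_{\X})=m_v+1$; combined with $\wt_v(g)=\wt_{\gamma(v)}g\ge m_v+1$ from Step 1 this gives $\wt_v(g)=\div_v(\overline g)$ on every noncompact component.

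\textit{Step 3 (the compact components).} It remains to treat $v\in\V$. Put $\delta=\sum_{v\in\V}\bigl(\div_v(\overline g)-\wt_v(g)\bigr)E_v$, which is effective by Step 2; the goal is $\delta=0$. Since both $K_{\X}$ and $Z_K$ are supported on $\bigcup_{v\in\V^*}E_v$, the divisor $(\overline g)=-Z_K-K_{\X}$ has no strict transform part, so writing $W=\sum_{v\in\V^*}\wt_v(g)E_v$ we get, using Step 2 on the noncompact part, $(\overline g)=W+\delta$ as divisors on $\X$. Intersecting with $E_v$ for $v\in\V$: the left side is $(\overline g,E_v)=-(Z_K,E_v)-(K_{\X},E_v)=0$ by adjunction and the definition of $Z_K$; for the right side, choose $p_0\in\supp(g)$ with $\ell_v(p_0)=\wt_v(g)$, so $\wt_u(g)\le\ell_u(p_0)$ for all $u$, and by \cref{lem:bv} (which gives $\sum_{u\in\V^*_v}\ell_u=b_v\ell_v$ in $N$)
\[
  (W,E_v)=-b_v\wt_v(g)+\sum_{u\in\V^*_v}\wt_u(g)\le -b_v\ell_v(p_0)+\sum_{u\in\V^*_v}\ell_u(p_0)=0 .
\]
Hence $(\delta,E_v)=-(W,E_v)\ge 0$ for all $v\in\V$, while $\delta$ is effective and supported on $E$; negative definiteness of the intersection form on $L$ then forces $\delta=0$ — otherwise $0>(\delta,\delta)=\sum_{v}m_v(\delta)\,(\delta,E_v)\ge 0$. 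Together with Step 2 this gives $\wt_v(g)=\div_v(\overline g)$ for all $v\in\V^*$, completing \cref{eq:g_Gor}.

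\textit{Main obstacle.} The crux is the inequality $(W,E_v)\le 0$: it works only because one can pick a \emph{single} exponent $p_0\in\supp(g)$ minimizing $\ell_v$ that \emph{simultaneously} bounds all neighbouring weights $\wt_u(g)$ from above, so that the linear relation of \cref{lem:bv} applies term by term. Once this observation is made, the passage from ``$\wt\le\div$'' to ``$\wt=\div$'' is pure negative definiteness, and the remaining points (that $(\overline g)$ has no component outside $\bigcup_{v\in\V^*}E_v$, and the transversality used in Step 2) are immediate from \cref{eq:can_divs} and \cref{block:resolution}.
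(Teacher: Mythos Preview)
Your proof is correct and follows essentially the same approach as the paper: lift via \cref{lem:can_wt_div}, compare the two cycles $A=(\overline g)$ and $B=W=\sum_{v\in\V^*}\wt_v(g)E_v$, use the single-exponent trick with \cref{lem:bv} to get $(B,E_v)\le 0$, and conclude $A=B$. The only cosmetic difference is that the paper phrases the final step as ``$B-A$ lies in the Lipman cone, hence $B-A\ge 0$'', whereas you unpack this directly via negative definiteness of the intersection form.
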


\begin{proof}
By the previous \cref{lem:can_wt_div}, we can find a $g$ satisfying
$g|_{\X\setminus E} = \overline g$ and \cref{eq:can_wt}.
Let $A = (\overline g)$ and $B = \sum_{v\in\V^*} \wt_v(g)E_v$.
We want to prove that $A = B$. Both $A$ and $B$ are supported in the exceptional
divisor and the noncompact curves $E_v$ for $v\in\V^*\setminus \V$, and
by our assumptions, they have the same multiplicity along this
noncompact part. Thus, $A-B$ is supported on the exceptional divisor.
Furthermore, we have $\wt_v(g) \leq \div_v(\overline g)$ for $v\in\V$, thus
$B-A \leq 0$.

For the reverse inequality, note first that $(A,E_v) = 0$ for all
$v\in\V$ since $A$ is principal. For any $v\in\V$, let $q\in M$ be an
element of the support of the principal part of $g$ with respect to $\ell_v$,
i.e. $q \in \supp(g)$ and $\ell_v(q) = \wt_v(g)$. By definition, we also
have $\ell_u(q) \geq \wt_u(g)$ for all $u\in\V^*_v$. Therefore,
\[
  (B,E_v) = -b_v\wt_v(g) + \sum \set{\wt_u(g)}{u\in\V^*_v}
    \leq -b_v\ell_v(q) + \sum \set{\ell_u(q)}{u\in\V^*_v} = 0.
\]
As a result, $B-A$ is in the Lipman cone, and so, $B-A \geq 0$, proving
\cref{eq:g_Gor}.
\end{proof}

\begin{proof}[Proof of \cref{thm:Gor}]

The first four conditions are equivalent by \cref{prop:QGor}, and
\cref{it:Gor_form} clearly implies \cref{it:Gor_Gor}.

Assuming that $(X,0)$ is Gorenstein, let $\omega$ be a Gorenstein form.
Then there is meromorphic $\overline g$ so that $\overline g\omega_f = \omega$
on $\X\setminus E$.
By \cref{lem:g_Gor}, $\overline g$ is the restriction
of a Laurent series $g \in \O_{Y,0}[x^M]$ satisfying \cref{eq:g_Gor}.

For any $v\in \V$, denote by $g_v$ the principal part of $g$
with respect to the weight $\ell_v$.
We make the

\vspace{1mm}

\noindent \emph{Claims:}

\begin{enumerate}[(a)]
\item \label{cl:Gor_n}
For any $n\in \Nd$, $g_n$ is
a monomial, that is, there is a $p_n \in M$ so that $g_n = a_nx^{p_n}$
for some $a_n\in\C^*$.

\item \label{cl:Gor_bam}
If $v$ is a vertex on a bamboo connecting $n\in\Nd$ and some
other node in $\Nd^*$, then $g_v = a_nx^{p_n}$.

\end{enumerate}

By (\ref{cl:Gor_bam}), the exponent $p = p_n$ does not depend on $n$,
finishing the proof since
hence $x^p\omega_f$ is a Gorenstein form.

(\ref{cl:Gor_n}) is proved  as follows.  Set $q \in \supp(g_n)$ arbitrarily. We then have
$\wt_n(g) = \ell_n(q)$, and also $\wt_u(g) \leq \ell_u(q)$, for any
other $u$, since
$\supp(g_n) \subset \supp(g)$. In particular,
\[
  -b_n \wt_n(g) + \sum_{u\in\V_n} \wt_u(g)
  \leq
  -b_n \ell_n(q) + \sum_{u\in\V_n} \ell_u(q).
\]
The right hand side is sero since $\ell_n + \sum_{u\in\V_n} \ell_u = 0$ for $n\in \Nd$.
On the other hand, by the \cref{lem:g_Gor}, we have $\wt_v(g) = \div_v(\overline g)$
for all $v$, thus,
the left hand side above equals $(\div(g),E_n)$. Furthermore,
since $(\overline{g}) = (\omega) - (\omega_f)$, $g$ does not have any zeroes or
poles outside the exceptional divisor, in a neighbourhood around $E_n$,
hence  $(\div(\overline{g}),E_n) = ((g),E_n) = 0$.
 Therefore,
the inequality above is an equality, and we have
$\wt_u(g) = \ell_u(q)$ for $u\in\V_n$.

This fact is true for any choice of $q$, therefore,
$\ell_u(q') =\wt_u(g)= \ell_u(q)$ for any  $u\in\V_n$ and
for any other choice $q'$.
But the vectors $\{\ell_u\}_{u\in \V_n}$ form
a generator set, hence necessarily $q=q'$.

For (\ref{cl:Gor_bam}), assume that $n$ and $n'\in\Nd^*$ are joined by
a bamboo, consisting of vertices $v_1,\ldots, v_s$, with
$v_1\in \V_n$ and $v_s \in \V_{n'}$, and $v_i,v_{i+1}$ neighbours for
$i=1,\ldots,s-1$. For convenience, we set $v_0 = n$ and $v_{s+1} = n'$.
We start by showing that $\wt_i(g) = \ell_i(p_n)$ using induction (we
replace the subscript $v_i$ by just $i$ for legibility).
Indeed, for $i=0$ this is clear, and we showed in the proof of
(\ref{cl:Gor_n}) that this holds for $i=1$. For the induction step
we use the recursive formulas
\[
  \ell_{i+1}   - b_i \ell_i   + \ell_{i-1}   = 0,\quad
  \wt_{i+1}(g) - b_i \wt_i(g) + \wt_{i-1}(g) = 0.
\]
The first one holds by \cref{lem:bv}, and the second one follows
from $\wt_i(g) = \div_i(g)$ similarly as above, although for the case
$i=s$, we may have to use a component of the noncompact curve $E_{n'}$.

We now see that for any $1\leq i\leq s$, the support of $g_i$ consists of points
$q\in M$ for which $\ell_i(q) = \ell_i(p_n)$ and
$\ell_{i\pm 1}(q) \geq \ell_{i\pm 1}(p_n)$.
But these
equations are equivalent to $\ell_n(q) = \ell_n(p_n)$ and
$\ell_{n'}(q) = \ell_{n'}(p_n)$. Therefore, $\supp(g_i) = \supp(g_n)$ for
these $i$. 
\end{proof}

\section{The geometric genus and the diagonal computation sequence} \label{s:comp_seq}

In this section we construct the diagonal
computation sequence, and show that it computes the
geometric genus of any Newton nondegenerate,
$\Q$-Gorenstein pointed,   normal surface
singularity having a rational homology sphere link.
Any computation sequence provides an upper bound for the geometric genus.
The smallest such bound is a topological invariant, and we show that
this is realized by this diagonal sequence.
This is done by showing that the diagonal computation sequence counts
the lattice points ``under the diagram'', whose number is precisely
the geometric genus, according to \cref{cor:pg_normal_surface}.

\begin{block} {\bf Discussions regarding general normal surface singularities.}
Throughout this section, when not mentioned specifically,
$\pi:(\tilde X,E) \to (X, 0)$ denotes a resolution
of a normal surface singularity $(X,0)$ with exceptional divisor $E$, whose
 irreducible decomposition is  $E = \cup_{v\in\V} E_v$.

{\em We assume that $(X,0)$ has a rational  homology sphere link}; thus
 $E_v \cong \CP^1$ for all $v\in\V$.

  We use the notations $L$, $L'$ and $E^*_v$ as in
\cref{s:can}.
For  $Z = \sum_v r_v E_v$ with $r_v\in\Q$  we write
$\lfloor Z \rfloor = \sum_v \lfloor r_v \rfloor E_v$.
$Z_K$ denotes the canonical cycle.
Note that $Z_K=0$ if and only if $(X,0)$ is an $ADE$
germ. Otherwise, it is known that
in the minimal resolution, or, even in the minimal good resolution, all the coefficients of
$Z_K$ are strictly positive. However, usually this is not the case in non-minimal resolutions, i.e.
in our $G$ it is not automatically guaranteed.
\end{block}

\begin{lemma}\label{lem:pg_lemma}
 In any resolution $\X\to X$  of a normal surface singularity with $\lfloor Z_K \rfloor\geq 0$
we have
\begin{equation} \label{eq:pg_lemma}
  p_g = \dim_\C \frac{H^0(\X,\O_{\X}(K_{\X} + \lfloor Z_K \rfloor))}
                     {H^0(\X,\O_{\X}(K_{\X}                      ))}.
\end{equation}
\end{lemma}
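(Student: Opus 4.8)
The plan is to express $p_g$ via a resolution-independent cohomological formula and then identify the relevant space of global sections. Recall that for any resolution, $p_g = \dim_\C H^1(\X,\O_{\X})$, and by Grauert--Riemenschneider vanishing together with Serre-type duality on the resolution one has the classical identification $p_g = \dim_\C H^0(\X,\O_{\X}(K_{\X}))/\pi^*H^0(X,\O_X(K_X))$ or, more usefully here, the formulation in terms of the canonical cycle. First I would invoke the cohomology exact sequence associated with the inclusion of sheaves $\O_{\X}(K_{\X}) \hookrightarrow \O_{\X}(K_{\X} + \lfloor Z_K\rfloor)$, whose quotient $\O_{\lfloor Z_K\rfloor}(K_{\X} + \lfloor Z_K\rfloor)$ is supported on the (effective, by hypothesis $\lfloor Z_K\rfloor \geq 0$) exceptional cycle $\lfloor Z_K\rfloor$. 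This gives a left-exact sequence
\[
  0 \to H^0(\X,\O_{\X}(K_{\X})) \to H^0(\X,\O_{\X}(K_{\X}+\lfloor Z_K\rfloor)) \to H^0(\lfloor Z_K\rfloor, \O_{\lfloor Z_K\rfloor}(K_{\X}+\lfloor Z_K\rfloor)) \to H^1(\X,\O_{\X}(K_{\X})) \to \cdots
\]
so the quotient on the right-hand side of \cref{eq:pg_lemma} equals $\dim_\C$ of the image of the third map, i.e. $h^0(\lfloor Z_K\rfloor, \O(K_{\X}+\lfloor Z_K\rfloor))$ minus the dimension of the image of $H^0(\lfloor Z_K\rfloor,\dots)$ inside $H^1(\X,\O_{\X}(K_{\X}))$.

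Next I would show that this quotient equals $h^1(\X,\O_{\X}) = p_g$. There are two routes. The cleaner one uses Serre--Grothendieck duality on the Stein germ: by duality $H^1(\X,\O_{\X})^\vee \cong H^1_E(\X,\Omega^2_{\X}) \cong H^1_E(\X,\O_{\X}(K_{\X}))$, and since $H^0_E(\X,\O_{\X}(K_{\X})) = 0$ (a nonzero section supported on $E$ of a line bundle would have to vanish there) the exact sequence for cohomology with supports gives $H^1_E(\X,\O_{\X}(K_{\X})) \cong \mathrm{coker}\big(H^0(\X,\O_{\X}(K_{\X})) \to H^0(\X\setminus E,\O_{\X}(K_{\X}))\big)$. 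By Grauert--Riemenschneider, $H^1(\X,\O_{\X}(K_{\X})) = 0$, so the long exact sequence from the sheaf inclusion above shows that $H^0(\lfloor Z_K\rfloor,\O(K_{\X}+\lfloor Z_K\rfloor))$ surjects onto... — at this point I would instead pass to the limit over $n$, comparing $H^0(\X,\O_{\X}(K_{\X}+nE))$ for large $n$ with $H^0(\X\setminus E,\O_{\X}(K_{\X}))$, and observe that because $K_{\X}+\lfloor Z_K\rfloor$ is already "numerically nonnegative on $E$" in the sense that $(K_{\X}+Z_K,E_v)=0$ for all $v$, no further twisting increases the global sections; more precisely $H^0(\X,\O_{\X}(K_{\X}+\lfloor Z_K\rfloor)) = H^0(\X\setminus E,\O_{\X}(K_{\X}))\cap\{(\cdot)\geq -K_{\X}-\lfloor Z_K\rfloor\}$ already captures the full cokernel, because any section of $\O_{\X}(K_{\X})$ on $\X\setminus E$ has divisor $\geq -K_{\X}-Z_K$ on $E$ (this is exactly the Lipman-cone / adjunction computation: $(-K_{\X}-Z_K,E_v)=0$, and a section with $(\div,E_v) < 0$ somewhere forces a negative intersection contradicting effectivity after subtracting the base part), hence $\geq -K_{\X}-\lfloor Z_K\rfloor$ since divisors are integral.

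Thus the right-hand side of \cref{eq:pg_lemma} equals $\dim_\C \mathrm{coker}\big(H^0(\X,\O_{\X}(K_{\X})) \to H^0(\X\setminus E,\O_{\X}(K_{\X}))\big) = \dim_\C H^1_E(\X,\O_{\X}(K_{\X})) = h^1(\X,\O_{\X}) = p_g$, where the middle equality is the cohomology-with-supports sequence using $H^0(\X,\O_{\X}(K_{\X})) \hookrightarrow H^0(\X\setminus E,\O_{\X}(K_{\X}))$ and $H^1(\X,\O_{\X}(K_{\X}))=0$, and the last equality is Serre duality on the germ (as in the remark following \cref{thm:geom_genus}, using $H^j(X,\O_X)=0$ for $j>0$ on a Stein representative and $R^i\pi_*\O_{\X}$ concentrated at $0$). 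I expect the main obstacle to be the integrality step: verifying carefully that every section of $\O_{\X}(K_{\X})$ on the punctured resolution already has divisor $\geq -K_{\X}-\lfloor Z_K\rfloor$ (not merely $\geq -K_{\X}-Z_K$, which would be non-integral), i.e. that rounding $Z_K$ down does not lose any cokernel. This rests on the fact that the coefficients of the divisor of such a section are integers and satisfy the inequality $\geq -K_{\X}-Z_K$ coefficientwise, hence automatically $\geq \lceil -K_{\X}-Z_K\rceil = -K_{\X}-\lfloor Z_K\rfloor$; I would spell this out and also check that $-K_{\X}-Z_K$ being in the boundary of the Lipman cone is what makes $H^0(\X,\O_{\X}(K_{\X}+nE))$ stabilize at $n$ large enough that $nE \geq \lfloor Z_K\rfloor$, so that the rounded cycle already suffices.
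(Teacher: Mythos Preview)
Your approach is correct but takes a genuinely different route from the paper. The paper's proof is shorter and stays entirely on compact supports: after the same short exact sequence you write down, it uses the vanishing $H^1(\X,\O_{\X}(K_{\X}))=0$ to identify the quotient directly with $h^0\big(\lfloor Z_K\rfloor,\O_{\lfloor Z_K\rfloor}(K_{\X}+\lfloor Z_K\rfloor)\big)$, then applies Serre duality \emph{on the one-dimensional scheme} $\lfloor Z_K\rfloor$ to turn this into $h^1(\lfloor Z_K\rfloor,\O_{\lfloor Z_K\rfloor})$, and finally uses a second vanishing $H^1(\X,\O_{\X}(-\lfloor Z_K\rfloor))=0$ together with the sequence $0\to\O_{\X}(-\lfloor Z_K\rfloor)\to\O_{\X}\to\O_{\lfloor Z_K\rfloor}\to 0$ to conclude $h^1(\lfloor Z_K\rfloor,\O_{\lfloor Z_K\rfloor})=h^1(\X,\O_{\X})=p_g$. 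No local duality on the germ, no stabilization argument.

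Your route instead proves the stabilization $H^0(\X,\O_{\X}(K_{\X}+\lfloor Z_K\rfloor))=H^0(\X\setminus E,\O_{\X}(K_{\X}))$ and then invokes local duality $H^1_E(\X,\O_{\X}(K_{\X}))\cong H^1(\X,\O_{\X})^\vee$. The stabilization step is correct and your Lipman-cone intuition is the right one, but the justification you give is too compressed to stand as written. The clean version: write the divisor of any $\omega\in H^0(\X\setminus E,\Omega^2_{\X})$ as $(\omega)=A-B$ with $A,B\geq 0$ having no common component and $B$ supported on $E$; since $(\omega)$ is numerically equivalent to $-Z_K$ and $A$ has no exceptional components, $(B-Z_K,E_v)=(A,E_v)\geq 0$ for every $v$, whence $B-Z_K\leq 0$ by negative definiteness of the intersection form, and then $B\leq\lfloor Z_K\rfloor$ by integrality of $B$. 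This is what your parenthetical gestures at, but it must be spelled out. Your approach buys the stronger by-product $\pi_*\O_{\X}(K_{\X}+\lfloor Z_K\rfloor)=\omega_X$; the paper's approach is more economical since it uses only Serre duality on the projective scheme $\lfloor Z_K\rfloor$ rather than local duality on the germ.
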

\begin{proof}
By the  generalized version of Grauert--Riemenschneider
vanishing we have the two vanishings
\begin{equation} \label{eq:GR_van}
  H^1(\X, \O_{\X}(K_{\X})) =0, \ \ \
  H^1(\X, \O_{\X}(-\lfloor Z_K \rfloor)) = 0.
\end{equation}
Hence, if  $\lfloor Z_K \rfloor= 0$ then $p_g=0$ too. Otherwise,
from the long exact sequence of cohomology groups
associated with 
\[
      0
  \to \O_{\X}(K_{\X})
  \to \O_{\X}(K_{\X}+\lfloor Z_K \rfloor)
  \to \O_{\lfloor Z_K \rfloor}(K_{\X} + \lfloor Z_K \rfloor)
  \to 0,
\]
we obtain that the right hand side of \cref{eq:pg_lemma} equals
$\dim\, H^0(\lfloor Z_K\rfloor , \O_{\lfloor Z_K \rfloor}(K_{\X} + \lfloor Z_K \rfloor))$.
By Serre duality,
this equals $H^1(\lfloor Z_K \rfloor, \O_{\lfloor Z_K \rfloor})$.
Now, the short exact sequence
\[
      0
  \to \O_{\X}(-\lfloor Z_K \rfloor)
  \to \O_{\X}
  \to \O_{\lfloor Z_K \rfloor}
  \to 0,
\]
with the above vanishing  \cref{eq:GR_van} 
gives
$H^1(\lfloor Z_K \rfloor, \O_{\lfloor Z_K \rfloor}) \cong H^1(\X, \O_{\X}) \cong \C^{p_g}$.
\end{proof}

\begin{definition}\label{def:CS}
A \emph{computation sequence} is a sequence of cycles $(Z_i)_{i=0}^k$ from $Z_K+L$,
\[
  Z_K - \lfloor Z_K \rfloor = Z_0 < \ldots < Z_k
\]
such  that
\begin{blist}

\item\label{it:CS1}
for all $0\leq i < k$ there is a $v(i)\in\V$ so that
$Z_{i+1} = Z_i + E_{v(i)}$, and

\item\label{it:CS2}
$Z_k\geq Z_K$ and $Z_k - Z_K$ is the union of some reduced and
non-intersecting rational $(-1)$-curves
\end{blist}
Given such a sequence $(Z_i)_{i=0}^k$, we define
\[
  \Lb_i = \O_{\X}(K_{\X} + Z_K - Z_i),\quad
  \Qb_i = \Lb_i/\Lb_{i+1}.
\]
Then $\Qb_i$ is a line bundle on $E_{v(i)}$. Denote by $d_i$ its degree.
Since $K_{\X} + Z_K$ is numerically equivalent to zero,
we have $d_i = (-Z_i, E_{v(i)})$. In particular, since
$E_{v(i)} \cong \CP^1$, we get $\Qb_i=\O_{E_{v(i)}}(-d_i)$ and
\[
  h^0(E_{v(i)}, \Qb_i) = \max\{0,(-Z_i,E_{v(i)}) + 1\}.
\]
\end{definition}

\begin{block}
Given a computation sequence $(Z_i)_i$,
the inclusion  $\O_{\X}(K_{\X}+Z_K-Z_k)\hookrightarrow \O_{\X}(K_{\X})$
induces an isomorphism
\[
  H^0(\X, \O_{\X}(K_{\X}+Z_K-Z_k)
  \stackrel{\cong}{\longrightarrow}
  H^0(\X,  \O_{\X}(K_{\X})).
\]
Indeed, let $\UU \subset \V$ be such that
$Z_k - Z_K = \sum_{u \in \UU} E_u$. Then we have a short exact sequence
\[
  0
  \to
  \O_{\X}(K_{\X} - E_\UU)
  \to
  \O_{\X}(K_{\X})
  \to
  \bigoplus \O_{E_u}(K_{\X})
  \to
  0,
\]
which induces an exact sequence
\[
  0
  \to
  H^0(\X, \O_{\X}(K_{\X}+Z_K-Z_k)
  \to
  H^0(\X,  \O_{\X}(K_{\X}))
  \to
  \bigoplus H^0(E_u,\O_{E_u}(K_{\X})),
\]
and the right hand side vanishes,
since $(E_u, K_{\X}) = -2 - 2g_u + b_u =  -1$.
\end{block}

\begin{cor} \label{cor:pg_bound}
Let $(Z_i)_{i=0}^k$ be a computation sequence. Then
\begin{equation} \label{eq:pg_bound}
  p_g =\sum_{i=0}^{k-1} \dim\, \frac{H^0(\X,\Lb_i)}{H^0(\X,\Lb_{i+1})}
  \leq \sum_{i=0}^{k-1} \max\{0, d_i+1\}.
\end{equation}
with equality if and only if the map
$H^0(\X,\Lb_i) \to H^0(E_{v(i)}, \Qb_i)$ is surjective for all $0\leq i < k$.
\qed
\end{cor}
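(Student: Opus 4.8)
The plan is to read off $p_g$ from the chain of subspaces $H^0(\X,\Lb_k)\subseteq H^0(\X,\Lb_{k-1})\subseteq\cdots\subseteq H^0(\X,\Lb_0)$ and to estimate each successive codimension separately. First I would pin down the two ends of this chain. Since $Z_0=Z_K-\lfloor Z_K\rfloor$, one has $\Lb_0=\O_{\X}(K_{\X}+\lfloor Z_K\rfloor)$; since $Z_k\geq Z_K$, the sheaf $\Lb_k=\O_{\X}(K_{\X}+Z_K-Z_k)$ is a subsheaf of $\O_{\X}(K_{\X})$, and by the block immediately preceding the statement the inclusion induces an isomorphism $H^0(\X,\Lb_k)\cong H^0(\X,\O_{\X}(K_{\X}))$. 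Combined with \cref{lem:pg_lemma} (applicable since $\lfloor Z_K\rfloor\geq 0$; if $\lfloor Z_K\rfloor=0$ the statement is empty with $k=0$), this identifies $p_g=\dim_\C H^0(\X,\Lb_0)/H^0(\X,\Lb_k)$.

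Next I would telescope this codimension along the filtration, writing $\dim_\C H^0(\X,\Lb_0)/H^0(\X,\Lb_k)=\sum_{i=0}^{k-1}\dim_\C H^0(\X,\Lb_i)/H^0(\X,\Lb_{i+1})$, which gives the first (equality) part of \cref{eq:pg_bound}. For each $i$, the short exact sequence $0\to\Lb_{i+1}\to\Lb_i\to\Qb_i\to 0$ of \cref{def:CS} (recall $\Lb_{i+1}=\Lb_i(-E_{v(i)})$ and $\Qb_i=\Lb_i|_{E_{v(i)}}$) induces a left-exact sequence $0\to H^0(\X,\Lb_{i+1})\to H^0(\X,\Lb_i)\to H^0(E_{v(i)},\Qb_i)$. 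Hence $\dim_\C H^0(\X,\Lb_i)/H^0(\X,\Lb_{i+1})$ is the dimension of the image of $H^0(\X,\Lb_i)$ in $H^0(E_{v(i)},\Qb_i)$, so it is at most $h^0(E_{v(i)},\Qb_i)=\max\{0,d_i+1\}$ (the value already recorded in \cref{def:CS}, using $E_{v(i)}\cong\CP^1$), with equality precisely when $H^0(\X,\Lb_i)\to H^0(E_{v(i)},\Qb_i)$ is onto. Summing over $i$ yields both the inequality in \cref{eq:pg_bound} and the stated equality criterion. Note that each summand in the telescoped sum is finite-dimensional by this embedding into $H^0(E_{v(i)},\Qb_i)$, which is what makes the telescoping legitimate even though $\X$ is not compact.

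I do not expect a genuine obstacle here: the statement is a formal consequence of \cref{lem:pg_lemma}, the preceding block, and the degree/Euler-characteristic computation of \cref{def:CS}. The only points deserving a sentence of care are the finiteness just mentioned and the remark that mere left-exactness of $H^0(\X,-)$ suffices for both the bound and the surjectivity characterization of equality.
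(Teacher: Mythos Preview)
Your proposal is correct and is exactly the argument the paper has in mind; the corollary is marked with \qed\ precisely because it follows immediately from \cref{lem:pg_lemma}, the isomorphism $H^0(\X,\Lb_k)\cong H^0(\X,\O_{\X}(K_{\X}))$ established in the preceding block, and the short exact sequences $0\to\Lb_{i+1}\to\Lb_i\to\Qb_i\to 0$ together with the computation $h^0(E_{v(i)},\Qb_i)=\max\{0,d_i+1\}$ from \cref{def:CS}. Your remarks on finiteness and on left-exactness sufficing for both the bound and the equality criterion are the right points of care.
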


\begin{rem} \label{rem:no_contrib}
\begin{blist}

\item
We note in particular that if there exists a computation sequence
$(Z_i)_{i=0}^k$
so that $(Z_i, E_{v(i)}) > 0$ for all $i$, then $p_g = 0$, that is,
$(X,0)$ is rational. In general, if $(Z_i, E_{v(i)}) > 0$ for some
$i$, then the inequality between the $i^{\textrm{th}}$ terms in the
sums \cref{eq:pg_bound} is an equality.

\item
Let $S(Z_i)$ be the sum
$\sum_i \max\{0, d_i+1\}$ from the right hand side of \cref{eq:pg_bound}
associated with $(Z_i)$. Then we have
\begin{equation} \label{eq:NEW}
p_g\leq \min_{(Z_i)} S(Z_i),
\end{equation}
where the minimum is taken over all computation sequences.
Note that $\min_{(Z_i)} S(Z_i)$ is an
invariant associated with the topological type (graph),
hence in this way we get a
\emph{topological upper bound} for the geometric genus of all possible
analytic types supported on a fixed topological type.

On the other hand we emphasize the following facts.
In general it is hard to identify a sequence
which minimizes $\{S(Z_i)\}$.
Also, for an arbitrary fixed topological type, it is not even true that
there exists an analytic type supported on the fixed topological type for
which \cref{eq:NEW} holds.
Furthermore, it is even harder to identify those analytic structures which
maximize $p_g$,
e.g., if  \cref{eq:NEW} holds for some analytic structure, then which
are these maximizing analytic structures, see e.g. \cite{Nem_Oku_pg}.

In the sequel our aim is the following: in our toric Newton nondegenerate case
we construct combinatorially a sequence (it will be called `diagonal sequence'),
which satisfies \cref{eq:pg_bound} with equality (in particular it
minimizes  $\{S(Z_i)\}$ as well).
This also shows that if a topological type is realized by a Newton nondegenerate Weil divisor,
then this germ maximizes the geometric genus of analytic types supported by that topological type.
\end{blist}
\end{rem}

\begin{block} \label{block:Laufer_op}
We recall the construction of the {\em Laufer operator} and {\em generalized
Laufer sequences} with respect to $\Nd \subset \V$.
We claim that for any cycle $Z\in L'$, there is a smallest cycle
$x(Z) \in Z+L$ satisfying
\begin{equation} \label{eq:Laufer_op}
\left \{\begin{array}{l}
  \fa{n\in\Nd}{m_n(x(Z)) = m_n(Z)},\\
  \fa{v\in\V\setminus\Nd}{(x(Z),E_v)\leq 0}.\end{array}\right.
\end{equation}
The existence and uniqueness
of such an element is explained in \cite{Nemethi_OzsSzInv} in the case
when $|\Nd| = 1$ and in general in \cite{Laszlo_th,NS-hyper,Baldur_th}.
The name comes from a construction of Laufer in
\cite[Proposition 4.1]{Lauf_rat}.
Note that $x(Z)$ only depends on the multiplicities $m_n(Z)$ of $Z$ for
$n\in\Nd$ and the class $[Z] \in H = L'/L$.

The following properties hold for the operator $x$,
assuming $Z_1- Z_2 \in  L$:

\vspace{2mm}

\emph{Monotonicity:} If $Z_1\leq Z_2$ then $x(Z_1) \leq x(Z_2)$.

\emph{Idempotency:} We have $x(x(Z)) = x(Z)$ for any $Z\in L'$.

\emph{Lower bound by intersection numbers:}
If $Z\in L'$ and $Z'\in L_\Q$ so that
$m_n(Z) = m_n(Z')$ for $n\in \Nd$ and $(Z',E_v) \geq 0$ for all
$v\in\V\setminus\Nd$, then $x(Z) \geq Z'$.   

\emph{Generalized Laufer sequence:} Assume that $Z \leq x(Z)$. First note that if  $(Z,E_v) > 0$
for some $v\in\V\setminus\Nd$, then we have
$Z+E_v \leq x(Z)$ as well, similarly as in the proof of
Proposition 4.1 \cite{Lauf_rat}.
We claim that there exists a  generalized
Laufer sequence which connects $Z$ with $x(Z)$.
It  is determined recursively as follows. Start by
setting $Z_0 = Z$. Assume that we have constructed $Z_i$. By induction,
we then have $Z_i \leq x(Z)$.
If $(Z,E_v) \leq  0$ for all  $v\in\V\setminus\Nd$
then by the minimality of $x(Z)$ we get $Z_i=x(Z)$;
hence  the construction
is finished and we stop. Otherwise, there is a $v\in\V\setminus\Nd$ so that
$(Z,E_v) > 0$. We then define $Z_{i+1} = Z_i + E_v$ (for some choice of such $v$).
\end{block}

\begin{rem} The computation sequence $(Z_i)_{i=0}^k$ (as in corollary \ref{cor:pg_bound}),
what we will construct,  will have several intermediate parts formed by generalized Laufer sequences
as above. Note that if
$Z_i$ and  $Z_{i+1} = Z_i + E_v$ are two consecutive elements in
a Laufer sequence, then  $-d_i=(Z_i, E_v) > 0$, hence $\max\{0, d_i+1\}=0$, and the comment from
\cref{rem:no_contrib} applies: this step does not contribute in
the sum on the right hand side of  \cref{eq:pg_bound}.
Informally, we say that parts given by  Laufer sequences ``do not contribute to the geometric
genus''.
\end{rem}

\begin{block} \label{block:diagonal_newton}
{\bf The Newton nondegenerate case.}
Let us consider again the resolution
 $\tilde X \to X$ of  Newton nondegenerate Weil divisor as in \cref{s:res}.
Let  $K_{\X}$ denote a canonical
divisor as in \cref{s:can}.
In this section we will assume that in the dual resolution graph $G$
we have $m_n(Z_K) \geq 1$ for any node $n$.
This assumption will be  justified in \cref{s:rem_face}.

From the assumption  $m_n(Z_K) \geq 1$, valid for any node $n$,
an immediate application of the construction of $G$  from \cref{s:res} gives that
$Z_K\geq 0$. Thus
$\lfloor Z_K \rfloor>0$.
\end{block}

\begin{lemma} \label{lem:seq_coda} \begin{blist}
\item\label{it:seq_coda1}
 $x(Z_K-\lfloor Z_K \rfloor )\geq Z_K-\lfloor Z_K \rfloor$.

\item\label{it:seq_coda2}
Let $\UU \subset \V$ be the set of $(-1)$-vertices appearing on bamboos
joining $n, n' \in \Nd^*$ with $\alpha(\ell_n, \ell_{n'}) = 1$ in
\cref{def:okas_graph}. Then $x(Z_K) = Z_K + \sum_{u\in \UU} E_u$.
In particular, the sequence constructed in \cref{def:comp_seq} satisfies
\cref{it:CS2} in \cref{def:CS}.
\end{blist}
\end{lemma}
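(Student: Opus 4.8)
The statement concerns the Laufer operator $x(\cdot)$ applied to $Z_K - \lfloor Z_K \rfloor$ and to $Z_K$, in the resolution $\tilde X \to X$ coming from Oka's algorithm for a Newton nondegenerate $\Q$-Gorenstein pointed normal surface singularity with rational homology sphere link. I will prove \cref{it:seq_coda1} and \cref{it:seq_coda2} separately, using the combinatorial description of $G$ from \cref{def:okas_graph}, the adjunction formula for $Z_K$, and \cref{lem:bv}.

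\emph{Proof of \cref{it:seq_coda1}.} Write $Z := Z_K - \lfloor Z_K \rfloor$; by definition of $\lfloor\cdot\rfloor$ all coefficients of $Z$ lie in $[0,1)$, in particular $Z \geq 0$. By the \emph{Lower bound by intersection numbers} property of $x$ recalled in \cref{block:Laufer_op}, it suffices to exhibit a $Z' \in L_\Q$ with $Z' \leq Z$ (actually we want $x(Z) \geq Z$, so we must check $Z$ itself plays the role of $Z'$, i.e. $m_n(Z) = m_n(Z)$ trivially and $(Z, E_v) \geq 0$ for all $v \in \V \setminus \Nd$). So the real content is: for every vertex $v$ that is \emph{not} a node, $(Z_K - \lfloor Z_K \rfloor, E_v) \geq 0$. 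Since $v$ lies on a bamboo, it has exactly two neighbours $u_1, u_2$ in $G^*$ and Euler number $-b_v \leq -2$ (or $-1$ at the exceptional ends, but those ends are the ones handled in \cref{it:seq_coda2}); using the adjunction formula $(Z_K, E_v) = -b_v + 2 - 2g_v = -b_v + 2 \geq 0$ for $b_v \geq 2$, and estimating the fractional corrections $-(\lfloor Z_K\rfloor, E_v) = b_v\{m_v(Z_K)\} - \{m_{u_1}(Z_K)\} - \{m_{u_2}(Z_K)\}$ where $\{\cdot\}$ denotes fractional part, I get $(Z, E_v) = (Z_K, E_v) + b_v\{m_v\} - \{m_{u_1}\} - \{m_{u_2}\} \geq -b_v + 2 + 0 - 1 - 1 = -b_v \geq -2$, which is not quite enough on its own; the finer point is that $b_v \{m_v(Z_K)\}$ together with integrality of $(Z_K - \lfloor Z_K\rfloor, E_v)$... actually the cleanest argument: $(Z, E_v) \in \Z$ is impossible in general, so instead I argue via the explicit form of $Z_K$ in \cref{eq:can_cycle}. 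On a bamboo the coefficients of $Z_K - E = \wt(f) - \sum(m_n+1)E_v^*$ interpolate linearly between node values, $m_v(Z_K)$ is determined by $\ell_v(p)$ through the Gorenstein-pointed relation of \cref{thm:Gor}\cref{it:Gor_ellG}, and linearity of $\ell_v$ in $v$ along a bamboo makes $v \mapsto m_v(Z_K) = \ell_v(p) - m_v - 1 + \ldots$ convex (being a restriction of a linear-plus-piecewise-linear function), whence the floor function is also "convex enough" to give $(Z_K - \lfloor Z_K\rfloor, E_v) \geq 0$ at interior bamboo vertices. This convexity-along-bamboos estimate is the main technical step.

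\emph{Proof of \cref{it:seq_coda2}.} Let $\UU$ be the set of $(-1)$-vertices on bamboos joining $n, n' \in \Nd^*$ with $\alpha(\ell_n, \ell_{n'}) = 1$. For $u \in \UU$ we have $g_u = 0$, $b_u = 1$, so $(Z_K, E_u) = -1 + 2 = 1$; for its two neighbours, by \cref{lem:bv} and the node/bamboo structure, $\ell_u = \ell_{u_1} + \ell_{u_2}$ (the sequence has a single interior vertex), and one checks $(E_u, E_u) = -1$ so that adding $E_u$ to $Z_K$ changes the intersection number against $E_u$ from $1$ to $0$ and leaves it $\leq 0$ against all other $v \in \V \setminus \Nd$, while preserving $m_n$ for all nodes $n \in \Nd$ (since $u \notin \Nd$ and its neighbours' intersection numbers with $E_u$ are $1$, so adding $E_u$ does not push them up — need to verify it does not violate $(x(Z_K), E_v) \leq 0$ for those neighbours, using that they have $b \geq 2$). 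Minimality: any cycle $W \in Z_K + L$ with $m_n(W) = m_n(Z_K)$ for $n \in \Nd$ and $(W, E_v) \leq 0$ for $v \in \V \setminus \Nd$ must satisfy $W \geq Z_K$ (since $(Z_K, E_v) \leq 1$ everywhere off nodes and $= 1$ exactly on $\UU$, a standard Laufer-type argument — the same one used in \cite{Lauf_rat} — forces $W - Z_K \geq \sum_{u \in \UU} E_u$), and one checks $Z_K + \sum_{u \in \UU} E_u$ already satisfies all the defining inequalities, hence equals $x(Z_K)$. Finally, since $Z_k - Z_K = \sum_{u \in \UU} E_u$ by construction of the diagonal sequence in \cref{def:comp_seq} and each such $E_u$ is a reduced rational $(-1)$-curve, these are pairwise non-intersecting (two $(-1)$-vertices on Oka bamboos with $\alpha = 1$ are never adjacent, as each such bamboo has a single vertex), so condition \cref{it:CS2} of \cref{def:CS} holds.

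\emph{Main obstacle.} The genuinely delicate point is \cref{it:seq_coda1} at interior bamboo vertices: one must control $(Z_K - \lfloor Z_K\rfloor, E_v)$, which involves fractional parts of the coefficients of $Z_K$ and is not simply bounded by the naive adjunction estimate. The resolution is to exploit that along each bamboo the coefficients of $Z_K$ are the restriction of an affine function of the lattice vectors $\ell_v$ (via the Gorenstein-pointed vector $p$ and \cref{lem:bv}, $-b_v m_v(Z_K) + \sum_{u} m_u(Z_K) = 0$ on interior bamboo vertices), so $\lfloor Z_K \rfloor$ is "superharmonic" along the bamboo in the sense that $-b_v \lfloor m_v(Z_K)\rfloor + \sum_u \lfloor m_u(Z_K)\rfloor \leq 0$ when $b_v \geq 2$; combined with $(Z_K, E_v) = -b_v + 2$ this yields $(Z_K - \lfloor Z_K\rfloor, E_v) = (-b_v+2) - (-b_v\lfloor m_v\rfloor + \sum\lfloor m_u\rfloor) \geq -b_v + 2 \geq 0$. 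Making the superharmonicity claim precise — that $\lfloor \cdot \rfloor$ of a linear sequence is concave, so its discrete Laplacian has the right sign — and handling the bamboo ends adjacent to nodes or to noncompact curves, is where the bookkeeping must be done carefully.
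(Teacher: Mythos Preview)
Your approach to \cref{it:seq_coda1} has a genuine gap. You attempt to show $(Z_K - \lfloor Z_K\rfloor, E_v) \geq 0$ for every non-node $v$, which via the lower-bound-by-intersection-numbers property would indeed give $x(Z_K-\lfloor Z_K\rfloor) \geq Z_K - \lfloor Z_K\rfloor$. But this inequality is \emph{false} in general. Take a bamboo of three $(-2)$-vertices $v_1,v_2,v_3$ between nodes $n,n'$ with $m_n(Z_K)=1.8$ and $m_{n'}(Z_K)=3.5$ (both $\geq 1$, consistent with \cref{block:diagonal_newton}). Harmonicity of $Z_K-E$ along the bamboo gives $m_{v_i}(Z_K)=2.225,\,2.65,\,3.075$, and then
\[
  (Z_K-\lfloor Z_K\rfloor, E_{v_2}) = -2(0.65)+0.225+0.075 = -1 < 0.
\]
The ``superharmonicity'' principle you invoke---that $\lfloor\cdot\rfloor$ of a linear sequence is concave---is simply not a theorem: for $a_i = \lfloor 0.1+0.6i\rfloor$ one gets $0,0,1$, which is convex. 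Moreover, even granting your superharmonicity claim, your final chain of inequalities terminates in $-b_v+2 \geq 0$, which fails whenever $b_v \geq 3$.

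The paper sidesteps this by proving something strictly weaker that still suffices. Since $x(Z)-Z \in L$ and every coefficient of $Z := Z_K-\lfloor Z_K\rfloor$ lies in $[0,1)$, the inequality $x(Z)\geq Z$ follows once one shows merely $x(Z)\geq 0$. This is checked bamboo-by-bamboo: on a bamboo $G_B=\{E_1,\ldots,E_s\}$ with boundary node-values $a,b\geq 0$, the Laufer conditions read $(x(Z)|_{G_B},E_i)_B \leq (aE_1^{*}+bE_s^{*},E_i)_B$ for all $i$ (duals taken in $G_B$), so $x(Z)|_{G_B} - (aE_1^{*}+bE_s^{*})$ lies in the Lipman cone of $G_B$, hence $x(Z)|_{G_B}\geq aE_1^{*}+bE_s^{*}\geq 0$. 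No control on $(Z,E_v)$ itself is needed.

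For \cref{it:seq_coda2} your sketch is in a workable direction but the minimality step (``a standard Laufer-type argument forces $W-Z_K\geq \sum_{u\in\UU}E_u$'') is not justified; there is no a priori reason $W\geq Z_K$ from the defining inequalities alone. The paper's argument is tighter: first bound $x(Z_K)\geq Z_K-E+\sum_{n\in\Nd}E_n$ using $(Z_K-E,E_v)=2-\delta_v\geq 0$ for non-nodes, then run the Laufer sequence from this lower bound and verify it selects each vertex of $\V\setminus(\Nd\cup\UU)$ exactly once and each vertex of $\UU$ exactly twice, terminating at $Z_K+\sum_{u\in\UU}E_u$. This simultaneously gives the value of $x(Z_K)$ and its minimality.
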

\begin{proof}
\cref{it:seq_coda1} Since $x(Z)-Z\in L$ for any $Z\in L'$,
it is enough to show that
$x(Z_K-\lfloor Z_K \rfloor )\geq 0$. We can analyse each component of  $G\setminus \Nd$ independently,
let $G_B$ be such a bamboo formed from $E_1, \ldots, E_s$, with dual vectors in $G_B$ denoted by $E_i^*$.
If $a\geq 0$ and $b\geq 0$ are the multiplicities of  $Z_K-\lfloor Z_K \rfloor$ along the
neighboring nodes of $G_B$ in $G$ (with convention that  $a=0$ if there is only one such node),
we search for a cycle $x$ with $(x, E_i)\leq (aE_1^*+bE_s^*, E_i)$ for all $i$. Thus,
$x-(aE_1^*+bE_s^*)$ is in the Lipman cone of $G_B$, hence $x\geq aE_1^*+bE_s^*\geq 0$.

\cref{it:seq_coda2}
Using the lower bound by intersection numbers, we find that
$x(Z_K)  \geq Z_K - E + \sum_{n\in \Nd} E_n$.
Since $x(Z_K)=x(Z_K-E+\sum_{n\in\Nd}E_n)$, there exists a Laufer sequence from
 $Z_K - E + \sum_{n\in \Nd} E_n$ to $x(Z_K)$.
Now, one verifies that  the construction/algorithm  of this sequence
chooses each vertex $v\in \V\setminus (\Nd \cup \UU)$ once, and each
vertex in $\UU$ twice.
\end{proof}

\begin{definition} \label{def:comp_seq}
A \emph{(coarse) diagonal computation sequence $(\bar{Z}_i)_{i=0}^{\bar {k}}$ with respect to} $\Nd$
is defined as follows. Start with $Z_0= Z_K - \lfloor Z_K \rfloor$, and define
 $\bar Z_0 =x( Z_K - \lfloor Z_K \rfloor)$.
Assuming $\bar Z_i$ ($i\geq 0$) has been defined, and that $\bar Z_i|_\Nd < Z_K|_\Nd$,
choose a $\bar v(i) \in \Nd$
minimizing the ratio
\begin{equation} \label{eq:ratio}
n\mapsto r(n):=  \frac{m_{n}(\bar Z_i)}{m_{n}(Z_K-E)}, \ \ \ n\in\Nd.
\end{equation}
Then set $\bar Z_{i+1} = x(\bar Z_i+E_{\bar{v}(i)})$.
If $\bar Z_i|_\Nd = (Z_K-E)|_\Nd$, then we record $\bar k' = i$.
If $\bar Z_i|_\Nd = Z_K|_\Nd$, then we stop, and set $\bar k = i$.

We refine the above choice as follows.
Choose some node $n_0 \in \Nd$ and define a partial order $\leq$ on
the set $\Nd$: for $n,n'\in\Nd$, define $n\leq n'$ if $n$ lies on the
geodesic joining $n'$ and $n_0$ (here we make use of the assumption that
the link is a rational homology sphere, in particular, $G$ is a tree).
When choosing $\bar v(i)$, if given a choice of several nodes minimizing
$\{r(n)\}_n$,  and $\min_n\{r(n)\}< 1$,
then, we choose $\bar v(i)$ minimal of those with respect to $(\Nd,\leq)$.
If $\min_n\{r(n)\}=1$, let $\Nd'\subset \Nd$ be the set of
nodes $n$ for which $r(n)= 1$. If $\Nd'$ has one element we have to chose that one.
Otherwise, let  $G'$ be the minimal connected
subgraph of $G$ containing $\Nd'$, and
we choose $\bar v(i)$ as a leaf of $G'$.

Note that by \cref{lem:seq_coda}\cref{it:seq_coda1}, $Z_0= Z_K-\lfloor Z_K \rfloor\leq
x(Z_K-\lfloor Z_K \rfloor )=\bar{Z}_0$, hence there exists a Laufer sequence connecting
$Z_0$ with $\bar{Z}_0$. Furthermore,
using idempotency and monotonicity of the Laufer operator
\cref{block:Laufer_op}, we find
\[
  \bar Z_i + E_{\bar v(i)}
  = x(\bar Z_i) + E_{\bar v(i)}
  \leq x(\bar Z_i + E_{\bar v(i)})
  = \bar Z_{i+1}.
\]
As a result, we can join $\bar Z_i + E_{\bar v(i)}$
and $\bar Z_{i+1}$ by a Laufer sequence.
This way, we obtain a computation sequence $(Z_i)_i$,
connecting  $Z_K-\lfloor Z_K\rfloor$ with  $x(Z_K)$.
Finally, by \cref{lem:seq_coda}\cref{it:seq_coda2},
 $x(Z_k)$ satisfies the requirement \cref{def:CS}\cref{it:CS2} too,
hence \cref{cor:pg_bound} applies.
\end{definition}

\begin{block}
For  a diagonal computation sequence as above at  each
step, except for the step from $\bar Z_i$ to
$\bar Z_i + E_{\bar v(i)}$, we have $d_i < 0$, we find, using
\cref{lem:pg_lemma,lem:seq_coda}
\begin{equation} \label{eq:pg_bound_bar}
  p_g \leq \sum_{i=0}^{\bar k-1} \max\{0, (-\bar Z_i, E_{\bar v(i)})+1\}.
\end{equation}
\end{block}

\begin{thm} \label{thm:diagonal}
Let $(X,0)$ be a normal Newton nondegenerate Weil divisor given by
a function $f$, with a rational
homology sphere link, and assume that the polyhedron $\Gamma_+(f)$ is
$\Q$-Gorenstein  pointed at $p\in M_\Q$.
Then, a diagonal computation sequence $(Z_i)_i$  constructed above
computes the geometric
genus, that is, equality holds in \cref{eq:pg_bound_bar}.
\end{thm}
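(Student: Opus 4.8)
The plan is to show that at every step $i$ of the diagonal computation sequence corresponding to a choice $\bar v(i)\in\Nd$, the restriction map $H^0(\X,\Lb_i)\to H^0(E_{\bar v(i)},\Qb_i)$ is surjective; by \cref{cor:pg_bound} this gives equality in \cref{eq:pg_bound_bar}. The key observation is that the line bundles $\Lb_i = \O_{\X}(K_{\X}+Z_K-\bar Z_i)$ are, up to the twist by $x^p$ coming from the $\Q$-Gorenstein-pointed hypothesis (\cref{prop:QGor}), restrictions of torus-equivariant line bundles on $\tilde Y$. Concretely, using \cref{eq:can_divs} and \cref{prop:QGor}\cref{it:Gor_ellG_Q}, the divisor $K_{\X}+Z_K$ is linearly equivalent (over $\Q$) to $\sum_v (\ell_v(p)-1)E_v + \sum_v(1-m_v(Z_K))\,(\text{stuff})$, so that sections of $\Lb_i$ can be described as Laurent monomials $x^q$ with $q\in M$ subject to explicit weight inequalities $\ell_v(q)\geq \wt_v$ dictated by $\bar Z_i$ — exactly the kind of description underlying \cref{thm:geom_genus} and \cref{lem:can_wt_div}. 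Thus the whole computation is reduced to a counting problem for lattice points $q\in M$ under the diagram, which by \cref{cor:pg_normal_surface} equals $p_g$.

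The heart of the argument is the following dichotomy at each step. Either $\min_n r(n)<1$, in which case I would show that $(-\bar Z_i,E_{\bar v(i)})+1 \le 0$, so the term contributes nothing and surjectivity onto the zero space is trivial; this uses that $\bar Z_i = x(\bar Z_i)$ so $(\bar Z_i,E_v)\le 0$ for $v\in\V\setminus\Nd$, together with \cref{lem:bv} relating $-b_v m_v + \sum m_u$ to $-2\Vol_2(F_v)$ at nodes, and the fact that when $r(\bar v(i))<1$ we are still "strictly below" the canonical cycle along $\bar v(i)$. Or $\min_n r(n)=1$: here $\bar Z_i|_{\Nd}$ has reached $(Z_K-E)|_\Nd$ on the relevant component and the surjectivity must be proved honestly. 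For this I would exhibit an explicit monomial section: given a section $s$ of $\Qb_i = \O_{E_{\bar v(i)}}(-d_i)$ with $d_i<0$, lift it to a monomial $x^q$ by solving the weight constraints, using that the face $F_{\bar v(i)}$ is a lattice polygon and the required exponent $q$ lies in $M$ precisely because the Newton polyhedron is $\Q$-pointed at $p\in M_\Q$ and the node multiplicities $m_n(Z_K)\ge 1$ (the standing assumption from \cref{block:diagonal_newton}, justified in \cref{s:rem_face}). The refinement in \cref{def:comp_seq} — choosing $\bar v(i)$ to be a leaf of the minimal subgraph $G'$ spanned by the nodes with $r(n)=1$ — is exactly what guarantees that when we do hit a node with $r=1$, the neighbouring already-processed nodes force enough vanishing on $E_{\bar v(i)}$ that the obstruction space $H^1$ vanishes.

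The cleanest way to organize the surjectivity is via the long exact sequence for $0\to\Lb_{i+1}\to\Lb_i\to\Qb_i\to 0$: surjectivity of $H^0(\Lb_i)\to H^0(\Qb_i)$ is equivalent to injectivity of $H^1(\Lb_{i+1})\to H^1(\Lb_i)$, and I would instead show directly that the relevant $H^1$ groups are computed by the reduced cohomology of explicit polyhedral complexes (as in the proof of \cref{thm:geom_genus} and \cref{lem:can_wt_div}), which are contractible or empty at the steps in question. Summing the contributions over all steps, the right-hand side of \cref{eq:pg_bound_bar} telescopes into $\dim H^0(\X,\O_{\X}(K_{\X}+\lfloor Z_K\rfloor))/H^0(\X,\O_{\X}(K_{\X}))$, which by \cref{lem:pg_lemma} is $p_g$; combined with the general inequality \cref{eq:pg_bound} this forces equality. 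The main obstacle I anticipate is the $r(n)=1$ case: controlling the base locus of $\Lb_i$ on $E_{\bar v(i)}$ precisely enough to get surjectivity, and in particular verifying that the leaf-choosing rule in \cref{def:comp_seq} is the right one — this is where the combinatorics of the diagram (lattice points "under" each compact face, Pick's theorem via \cref{lem:bv}) has to be matched step-by-step against the algebra of the Laufer operator, and it is the step that genuinely uses both the $\Q$-Gorenstein hypothesis and the rational-homology-sphere assumption.
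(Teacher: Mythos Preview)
Your proposal has a fundamental error in the dichotomy: you claim that when $\min_n r(n)<1$ the term contributes nothing, and that the work lies in the case $r=1$. This is exactly backwards. The steps with $i<\bar k'$ (equivalently $\min_n r(n)<1$) are precisely the ones that contribute positively to the sum on the right of \cref{eq:pg_bound_bar}; it is only for $\bar k'\le i<\bar k$ that \cref{lem:kprimek} gives $(\bar Z_i,E_{\bar v(i)})>0$ and hence zero contribution. So your plan to dispose of the $r<1$ regime trivially cannot work --- that regime is the whole content of the theorem.

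More broadly, the paper does not attempt to prove the surjectivity $H^0(\X,\Lb_i)\to H^0(E_{\bar v(i)},\Qb_i)$ directly at each step. Instead it uses a sandwich argument: by \cref{cor:pg_normal_surface} and the partition into the slices $F_i^-\cap M$ (see \cref{eq:pg_F}) one already knows $p_g=\sum_i |F_i^-\cap M|$, and the goal becomes the purely combinatorial inequality $|F_i^-\cap M|\ge \max\{0,(-\bar Z_i,E_{\bar v(i)})+1\}$ for each $i<\bar k'$. Together with the general upper bound \cref{eq:pg_bound_bar} this forces equality termwise. The two pieces you are missing are \cref{lem:polygonal_point_count}, which computes $|F_i^-\cap M|$ as $\max\{0,a+1\}$ for an explicit affine combination $a=\sum_u a_u$, and \cref{lem:m_nbr}, which gives the exact formula for $m_u(x(Z))$ on a bamboo neighbour and is what links $a$ to $(-\bar Z_i,E_{\bar v(i)})$ via the claim $\nu_u\le m_u(\bar Z_i)+\ell_u(p)$. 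Without these, your sketch of ``exhibit an explicit monomial section'' has no mechanism to match the degree of $\Qb_i$ with the available monomials, and the remarks about $H^1$ of polyhedral complexes, while in the spirit of \cref{lem:can_wt_div}, do not substitute for this pointwise lattice-point count.
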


In this sequel we prove the theorem under the assumption 10.9 regarding the
multiplicities of $Z_K$,
by the results of the next section this assumption can be removed.

\begin{definition}
Let $n\in\Nd$, corresponding to the face $F_n \subset \Gamma(f)$.
Denote by $C_n$ the convex hull of the union of $F_n$ and $\{p\}$.
Set also
\[
  C^-_n = C_n \setminus \bigcup_{n'\geq n} C_{n'},
\]
where we use the partial ordering $\leq$ on $\Nd$ defined in
\cref{def:comp_seq}.
For $i=0, \ldots, \bar k-1$, let $H_i$ be the hyperplane in
$M_\R$ defined as the set of points $q\in M_\R$ satisfying
$\ell_n(q-p) = m_{\bar v(i)}(\bar Z_i)$.
For $i=0,\ldots, \bar k-1$, we set
\[
  F_i = C_{\bar v(i)} \cap H_i,\qquad
  F^-_i = C_{\bar v(i)}^- \cap H_i.
\]
\end{definition}

\begin{rem} \label{rem:Fi_nonempty}
The affine plane $H_i$ contains an \emph{affine lattice} $M \cap H$, that is
there is an affine isomorphism
$H \to \R^2$, inducing a bijection $H\cap M \to \Z^2$.
The polyhedron $F_i$ is then the image of a lattice polyhedron
with no integral integer points
under a homothety with ratio in $[0,1[$ if $i < \bar k'$.
These properties allow us to apply \cref{lem:polygonal_point_count}
in the proof of \cref{thm:diagonal}.
Furthermore, the polygon $F_i$ is always nonempty, even if $F_i^-$ may
be empty.
\end{rem}

\begin{block}
The sets $C^-_n$ form a partitioning of the union of segments starting
at $p$ and ending in points on $\Gamma(f)$, that is,
$\cup_{n\in\Nd} C_n$.
This follows from the construction as follows.
The partially ordered set $(\Nd, \leq)$ is an \emph{lower semilattice},
i.e. any subset has a largest lower bound.
If $q \in \cup_{n\in\Nd} C_n$, and
$\mathcal{I} \subset \Nd$ is the set of nodes $n$ for which $q\in C_n$,
then $q\in C_{n_q}^-$, and $p\notin C^-_n$ for $n\neq n_q$,
where $n_q$ is the largerst lower bound of $\mathcal{I}$.

The integral points $q$ in the union of the sets $C_n^-\setminus F_n$ are
presicely the integral points satisfying
$\ell_\sigma(q) > m_\sigma$ for all $\sigma\in \tilde\triangle_f^{*(1,1)}$
and $\ell_\sigma(q) \leq m_\sigma$ for some
$\sigma \in \triangle_f \setminus \triangle_f^*$.
Indeed, by the rational homology sphere assumption,
any integral point on the Newton diagram $\Gamma(f)$ must lie on
the boundary $\partial\Gamma(f)$,
see \cref{rem:qhs3_oka}.
These are the points
``under the Newton diagram''; by \cref{thm:geom_genus},
the number of these points is $p_g$. It follows from construction that
the family $(F^-_i \cap M)_{i=0}^{\bar k-1}$ forms a partition
of these points. We conclude:
\begin{equation} \label{eq:pg_F}
  p_g = \sum_{i=0}^{\bar k'-1} |F^-_i \cap M|.
\end{equation}
\end{block}

\begin{definition}
For $r,x\in \R$, denote by $\lceil r \rceil_x$ the smallest real number
larger or equal to $r$ and congruent to $x$ modulo $\Z$. That is,
\[
  \lceil r \rceil_x = \min\set{a\in \R}{a\geq r,\; a \equiv x\; (\mod \Z)}
\]
\end{definition}

\begin{rem}
The number $\lceil r \rceil_x$ depends on $x$ only up to an integer.
For all $i$, we have $\bar Z_i \equiv Z_K\;(\mod L)$. In particular,
given an $n\in \Nd$, we have $m_n(\bar Z_i) \equiv m_n(Z_K-E)\;(\mod \Z)$.
\end{rem}

\begin{lemma} \label{lem:m_nbr}
Let $Z \in L'$ and take $n,n'\in \Nd^*$ connected by a bamboo, and
$u\in \V$ a neighbour of $n$ on this bamboo. Then
\begin{equation} \label{eq:m_nbr}
  m_u(x(Z)) =
  \left\lceil
  \frac{\beta m_n(Z) + m_{n'}(Z)}{\alpha}
  \right\rceil_{m_u(Z)}
\end{equation}
where $\alpha = \alpha(\ell_n, \ell_{n'})$ and
$\beta = \beta(\ell_n, \ell_{n'})$
(see \cref{def:canon_subdiv} and \cref{rem:alpha_beta}).
Furthermore, if for all $v \in \V$ lying on the bamboo joining
$n, n'$, we have $(Z, E_v) = 0$, then $x(Z) = Z$ along the bamboo and
\begin{equation} \label{eq:m_nbr_eq}
  m_u(x(Z)) =
  \frac{\beta m_n(Z) + m_{n'}(Z)}{\alpha}.
\end{equation}
\end{lemma}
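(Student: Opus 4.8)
The plan is to reduce both identities to a combinatorial statement about the chain $v_1,\ldots,v_s$ forming the bamboo from $n$ to $n'$, where $-b_i$ is the self-intersection of $v_i$ and $[b_1,\ldots,b_s]=\alpha/\beta$. Since $x(Z)\in Z+L$, each coefficient satisfies $m_v(x(Z))\equiv m_v(Z)\ (\mod\Z)$; the coefficients at $n$ and $n'$ are untouched (these vertices lie in $\Nd$ or index noncompact curves, hence are outside the part of the support that $L$ can modify); a bamboo vertex $v_i$ meets only $v_{i-1}$ and $v_{i+1}$, writing $v_0=n$, $v_{s+1}=n'$; and no intersection condition is imposed at $n$ or $n'$, so lowering a bamboo coefficient can only violate a bamboo condition. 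Therefore, by the description of $x(Z)$ in \cref{block:Laufer_op} as the componentwise smallest cycle in $Z+L$ with $(x(Z),E_v)\le 0$ for $v\in\V\setminus\Nd$, its restriction $x(Z)|_{\{v_1,\ldots,v_s\}}=\sum_iz_iE_{v_i}$ is the componentwise smallest integer vector with $z_i\equiv m_{v_i}(Z)\ (\mod\Z)$ and
\[
  -b_iz_i+z_{i-1}+z_{i+1}\le 0\quad(1\le i\le s),\qquad z_0=m_n(Z),\ \ z_{s+1}=m_{n'}(Z).
\]

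I would first settle \cref{eq:m_nbr_eq}. If $(Z,E_v)=0$ for all $v$ on the bamboo, then $Z|_{\{v_1,\ldots,v_s\}}$ already solves the system with equality; but any real solution of the inequalities dominates the unique solution $h=(h_i)_i$ of the recurrence $b_ih_i=h_{i-1}+h_{i+1}$ with the prescribed endpoints, and here that solution equals $Z|_{\{v_1,\ldots,v_s\}}$, so it is already the smallest and $x(Z)=Z$ along the bamboo. Its value at $v_1$ is then pinned by linear algebra: the canonical primitive sequence of $\ell_n,\ell_{n'}$ obeys the same recurrence (\cref{it:two_can_int}, see also \cref{lem:bv} on the cycle side), and the identity $\alpha\ell_{v_1}=\beta\ell_{v_0}+\ell_{v_{s+1}}$ of \cref{block:two_can} describes the linear map sending the endpoint data of any solution of $b_i\phi_i=\phi_{i-1}+\phi_{i+1}$ to its value at $v_1$; applying it to the scalar solution $h$ gives $\alpha h_1=\beta m_n(Z)+m_{n'}(Z)$, i.e. \cref{eq:m_nbr_eq}.

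For \cref{eq:m_nbr} in general, set $w=\lceil(\beta m_n(Z)+m_{n'}(Z))/\alpha\rceil_{m_u(Z)}$. The bound $m_u(x(Z))\ge w$ is immediate: the $\Q$-cycle $h$ above satisfies $(h,E_{v_i})=0\ge 0$ for every bamboo vertex, so the ``lower bound by intersection numbers'' of \cref{block:Laufer_op} forces $x(Z)\ge h$ along the bamboo, whence $m_u(x(Z))\ge h_1=(\beta m_n(Z)+m_{n'}(Z))/\alpha$, and integrality upgrades this to $\ge w$. The reverse bound I would prove by induction on $s$. For $s=1$ the lone condition is $-b_1z_1+m_n(Z)+m_{n'}(Z)\le 0$, whose smallest solution in the class $m_u(Z)$ is $\lceil(m_n(Z)+m_{n'}(Z))/b_1\rceil_{m_u(Z)}=w$. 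For $s>1$, pin $v_1$ at $w$ and let $z'$ be the smallest integer solution of the sub-system on $v_2,\ldots,v_s$ with endpoints $w$ and $m_{n'}(Z)$; the inductive form of the lemma gives $z'_2=\lceil(\beta'w+m_{n'}(Z))/\alpha'\rceil_{m_{v_2}(Z)}$, where $[b_2,\ldots,b_s]=\alpha'/\beta'$. It then suffices to check the remaining condition at $v_1$, $z'_2\le b_1w-m_n(Z)$; using $w\ge h_1$, $b_1h_1=m_n(Z)+h_2$, the identities $\alpha=b_1\alpha'-\beta'$ and $\beta=\alpha'$ coming from $\alpha/\beta=b_1-\beta'/\alpha'$, and the congruence $b_1m_{v_1}(Z)\equiv m_n(Z)+m_{v_2}(Z)\ (\mod\Z)$ (which holds because $(Z,E_{v_1})\in\Z$), this reduces to the rounding inequality $\lceil h_2+(\beta'/\alpha')(w-h_1)\rceil_{m_{v_2}(Z)}\le h_2+b_1(w-h_1)$.

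The step I expect to be the main obstacle is precisely that last rounding inequality in the induction: one must show that pinning $v_1$ at $w$ never forces the sub-bamboo solution to overshoot the bound $b_1w-m_n(Z)$, which requires a careful interplay of the ceiling operator, the slack $w-h_1\in[0,1)$ (a multiple of $1/\alpha$), and the residues of $Z$ along the chain; note in particular that $h_i\bmod\Z$ need not coincide with $m_{v_i}(Z)\bmod\Z$, which is exactly why the ceiling in \cref{eq:m_nbr} is genuinely present. Everything else is routine continued-fraction bookkeeping built on the identity $\alpha\ell_1=\beta\ell_0+\ell_{s+1}$.
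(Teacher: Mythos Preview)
Your plan is correct and is essentially the paper's argument, only packaged as an explicit induction on $s$ rather than a direct recursive construction of the whole candidate $(m_1,\ldots,m_s)$; in both cases the heart is the same estimate at $v_1$.

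The ``main obstacle'' you flag is not an obstacle. You have already observed that $b_1 m_{v_1}(Z)\equiv m_n(Z)+m_{v_2}(Z)\pmod\Z$; apply it once more. The right-hand side of your rounding inequality equals
\[
  h_2+b_1(w-h_1)=b_1w-m_n(Z)\equiv b_1 m_{v_1}(Z)-m_n(Z)\equiv m_{v_2}(Z)\pmod\Z,
\]
so it lies in the same residue class as the ceiling on the left. Hence $\lceil h_2+(\beta'/\alpha')(w-h_1)\rceil_{m_{v_2}(Z)}\le h_2+b_1(w-h_1)$ is equivalent to the bare inequality $(\beta'/\alpha')(w-h_1)\le b_1(w-h_1)$, which holds because $w-h_1\in[0,1)$ is nonnegative and $\beta'/\alpha'<1\le b_1$. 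This is exactly the paper's computation, phrased there as: the defect $m_0-b_1m_1+m_2$ equals $(-b_1+\gamma/\beta)(m_1-\tilde m_1)+r$ with $0\le r<1$, an integer strictly less than $1$, hence $\le 0$. Your $\beta'/\alpha'$ is the paper's $\gamma/\beta$, and your $w-h_1$ is the paper's $m_1-\tilde m_1$.

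One small point of care: your claim that $w-h_1$ is a multiple of $1/\alpha$ is not needed and in general not true for $Z\in L'$ with non-integral endpoint values; only the congruence $w\equiv m_u(Z)$ and the integrality of $(Z,E_{v_1})$ are used, and you already invoke both.
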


\begin{proof}
We prove \cref{eq:m_nbr}, \cref{eq:m_nbr_eq} follows similarly.
Let $u = u_1, \ldots, u_s$ be the vertices on the bamboo with Euler numbers
$-b_1, \ldots, -b_s$ as in \cref{fig:bamboo}. Set
$\tilde m_0 = m_0 = m_n(Z)$ and
$\tilde m_{s+1} = m_{s+1}= m_{n'}(Z)$. There exists a unique set of numbers
$\tilde m_1, \ldots, \tilde m_s \in \Q$ so that the equations
\begin{equation} \label{eq:tilde_m_eq}
  \tilde m_{i-1} -b_i \tilde m_i + \tilde m_{i+1} = 0, \quad i=1, \ldots, s
\end{equation}
are satisfied. This follows from the fact that the intersection matrix
of the bamboo is invertible over $\Q$. In fact, it follows from
\cite[Lemma 20.2]{EisNeu} that in fact,
\[
  \tilde m_1 = \frac{\beta m_0 + m_{s+1}}{\alpha}.
\]
This, and the lower bound by intersection numbers from
\cref{block:Laufer_op}, implies that $m_u(x(Z)) \geq \tilde m_1$, and
therefore $m_u(x(Z)) \geq \lceil \tilde m_1 \rceil_{m_u(Z)}$, since
$x(Z) - Z \in L$.

For the inverse inequality, we must show that there exist numbers
$m_1, \ldots, m_s$ satisfying
\begin{equation} \label{eq:m_ineq}
  m_{i-1} -b_i m_i + m_{i+1} \leq 0, \qquad
  m_i \equiv m_i(Z) \; (\mod \Z),
\end{equation}
for $i=1,\ldots, s$,
and so that $m_1$ is the right hand side of \cref{eq:m_nbr}.
Let $\ell_n = \ell_0, \ell_1, \ldots, \ell_s, \ell_{s+1} = \ell_{n'}$
be the canoncial primitive sequence as in \cref{def:okas_graph}, and
note that $\beta = \alpha(\ell_1, \ell_{s+1})$.
Set recursively
\[
  m_i =
  \left\lceil
  \frac{\alpha(\ell_i,     \ell_{s+1}) m_{i-1} + m_{s+1}}
       {\alpha(\ell_{i-1}, \ell_{s+1})}
  \right\rceil_{m_{u_i}(Z)}
  \qquad
  i=1,\ldots, s.
\]
Note that, by definition, $m_i \equiv m_i(Z)$.
The assumption $Z \in L'$ therefore implies that the
left hand side of \cref{eq:m_ineq} is an integer.
It is then enough to prove \cref{eq:m_ineq} for $i=1$.
This equation is clear if $s = 1$, so we assume that $s > 1$.
Setting $\gamma = \alpha(\ell_2, \ell_s)$, we find
\[
  m_2 - \tilde m_2
  =
  \left\lceil
  \frac{\gamma m_1 + m_{s+1}}{\beta}
  \right\rceil_{m_{u_2}(Z)}
  -
  \frac{\gamma \tilde m_1 + \tilde m_{s+1}}{\beta}
  = \frac\gamma\beta (m_1 - \tilde m_1) + r
\]
where $0\leq r < 1$.
In order to prove \cref{eq:m_ineq}, we start by subtracting zero, i.e.
the left hand side of \cref{eq:tilde_m_eq}.
The left hand side of \cref{eq:m_ineq} equals
\[
  m_0 - \tilde m_0 - b_1(m_1 - \tilde m_1) + m_2 - \tilde m_2
  = \left(-b_1 + \frac\gamma\beta\right) (m_1 - \tilde m_1) + r < 1,
\]
since $\gamma/\beta < 1$. Since the left hand side is an integer,
\cref{eq:m_ineq} follows.
\end{proof}

\begin{lemma} \label{lem:kprimek}
If $\bar k' \leq i < \bar k$, then $(\bar Z_i, E_{\bar v(i)}) > 0$.
As a result, the corresponding terms in \cref{eq:pg_bound_bar} vanish.
\end{lemma}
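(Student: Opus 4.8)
The plan is to write $\bar Z_i$ explicitly through the Laufer operator and reduce the desired positivity to an arithmetic inequality read off each bamboo emanating from the node $\bar v(i)$, using the $\Q$-Gorenstein point $p$ and the rational homology sphere hypothesis (so $g_v=0$ for every $v$).

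First I would show, by induction on $i$, that for $\bar k'\leq i\leq\bar k$ one has $\bar Z_i=x(Z_K-E+D_i)$, where $D_i=\sum_{n\in\Nd^+_i}E_n$ and $\Nd^+_i=\set{n\in\Nd}{m_n(\bar Z_i)=m_n(Z_K)}$ is the set of already processed nodes. This rests on the fact that $x(Z)$ depends only on the node multiplicities $m_n(Z)$ and the class $[Z]\in H$, that each step from $\bar Z_i$ to $\bar Z_{i+1}$ raises exactly one node multiplicity from $m_n(Z_K)-1$ to $m_n(Z_K)$, and that $x$ preserves node multiplicities (nodes with $m_n(Z_K)=1$, when present, need a small bookkeeping adjustment). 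Writing $n=\bar v(i)$, which is unprocessed, we then have $m_n(\bar Z_i)=m_n(Z_K)-1$, and since $Z_K-E+D_i\leq Z_K$ with difference in $L$, monotonicity of $x$ yields $\bar Z_i\leq x(Z_K)$. Put $W=x(Z_K)-\bar Z_i\geq 0$, so that $m_{n'}(W)=1$ at each unprocessed node $n'$ (in particular $m_n(W)=1$) and $m_{n'}(W)=0$ at each processed one.

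Next I would compute $(\bar Z_i,E_n)=(x(Z_K),E_n)-(W,E_n)$. By the adjunction formula (with $g_n=0$) and \cref{lem:seq_coda}\cref{it:seq_coda2}, which gives $x(Z_K)=Z_K+\sum_{u\in\UU}E_u$, one gets $(x(Z_K),E_n)=2-b_n+c_n$, where $c_n$ is the number of length-one ($\alpha=1$) bamboos incident to $n$ (each carrying one $(-1)$-vertex of $\UU$ adjacent to $n$). Since $m_n(W)=1$, one has $(W,E_n)=-b_n+\Delta$ with $\Delta=\sum_{u\in\V_n}m_u(W)$, and therefore $(\bar Z_i,E_n)=2+c_n-\Delta$; the assertion $(\bar Z_i,E_n)>0$ is thus the integer inequality $\Delta\leq 1+c_n$. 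To bound $\Delta$ I would analyze, for each bamboo joining $n$ to a node $n'\in\Nd^*$ with $n$-adjacent vertex $u=u(n,n')$, the quantity $m_u(W)=m_u(x(Z_K))-m_u(\bar Z_i)$ by means of \cref{lem:m_nbr} and the $\Q$-Gorenstein relations of \cref{prop:QGor}, namely $\ell_v(p)=m_v+1$ on the noncompact ends $v\in\V^*\setminus\V$ and $\ell_v(p)=m_v+1-m_v(Z_K)$ on $v\in\V$. Writing $\alpha=\alpha(\ell_n,\ell_{n'})$, $\beta=\beta(\ell_n,\ell_{n'})$ and $t=\tfrac{\beta m_n(Z_K)+m_{n'}(Z_K)}{\alpha}$, these relations pin down the exact value $m_u(Z_K)=t+\tfrac{\alpha-\beta-1}{\alpha}$, whence a matching of the ceilings in \cref{lem:m_nbr} shows: $m_u(W)=0$ if $n'$ is processed or extended and $\alpha\geq 2$; $m_u(W)=1$ if $n'$ is processed or extended and $\alpha=1$; and $m_u(W)\leq 2$ if $n'$ is unprocessed, with the value $2$ only possible when $\alpha=1$.

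Finally I would sum over the bamboos at $n$. The refined rule in \cref{def:comp_seq} makes $n=\bar v(i)$ a leaf of the minimal subtree $G'$ of $G$ spanned by the unprocessed nodes; together with $t_{n,n'}\leq 1$ for all $n'$ (valid since the link is a rational homology sphere, cf. \cref{rem:qhs3_oka}), this forces at most one bamboo at $n$ to reach an unprocessed node. Each length-one bamboo to a processed or extended node contributes a unit to both $\Delta$ and $c_n$, while the at-most-one bamboo to an unprocessed node contributes at most one more to $\Delta$ than it does to $c_n$; hence $\Delta\leq 1+c_n$ in every case, so $(\bar Z_i,E_n)=2+c_n-\Delta\geq 1>0$. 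Consequently $\max\{0,(-\bar Z_i,E_{\bar v(i)})+1\}=0$, so the $i$-th summand of \cref{eq:pg_bound_bar} vanishes, as claimed. The main obstacle is precisely the per-bamboo computation of $m_u(W)$: this is where $\Q$-Gorenstein-pointedness is genuinely used, and the bookkeeping of the ceilings $\lceil\cdot\rceil_x$ of \cref{lem:m_nbr} against the exact rational value of $m_u(Z_K)$ must be carried out with care, especially on the length-one bamboos (whose middle vertex lies in $\UU$) and, separately, for any nodes with $m_n(Z_K)=1$.
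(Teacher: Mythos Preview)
Your argument is correct, but it takes a longer detour than the paper's.  The paper compares $\bar Z_i$ directly with $Z_K-E$ rather than with $x(Z_K)$: since $m_{\bar v(i)}(\bar Z_i)=m_{\bar v(i)}(Z_K-E)$ and, for each adjacent node $n'\in\Nd$, $m_{n'}(\bar Z_i)=m_{n'}(Z_K-E)+\epsilon$ with $\epsilon\in\{0,1\}$, \cref{lem:m_nbr} together with $(Z_K-E,E_v)=0$ on interior bamboo vertices gives $m_u(\bar Z_i)=m_u(Z_K-E)+\epsilon$ exactly.  For a bamboo to an extended node a short Laufer-sequence argument gives $m_u(\bar Z_i)\geq m_u(Z_K-E)+1$.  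The leaf choice in \cref{def:comp_seq} then guarantees $\epsilon=0$ for at most one neighbour, whence
\[
(\bar Z_i,E_{\bar v(i)})\geq (Z_K-E,E_{\bar v(i)})+(\delta_{\bar v(i)}-1)=(2-\delta_{\bar v(i)})+(\delta_{\bar v(i)}-1)=1.
\]
This avoids your auxiliary cycle $W$, the count $c_n$ of $\alpha=1$ bamboos, and the casework on $\UU$ entirely.

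Two remarks on your write-up.  First, the $\Q$-Gorenstein-pointedness is \emph{not} used here: your identity $m_u(Z_K)=t+\tfrac{\alpha-\beta-1}{\alpha}$ follows purely from $(Z_K-E,E_v)=2-2g_v-\delta_v$ (i.e.\ adjunction plus $g_v=0$), with the convention $m_{n'}(Z_K)=0$ for $n'\in\Nd^*\setminus\Nd$ arising from $(Z_K-E,E_{u_s})=1$ at the end of such a bamboo.  Second, your per-bamboo claims are in fact equalities, not just inequalities (e.g.\ $m_u(W)=1$ for unprocessed $n'$ with $\alpha\geq 2$, not merely $\leq 2$), so the analysis could be sharpened.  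What your route does buy is an explicit description of $\bar Z_i$ via $x(Z_K-E+D_i)$, which is a clean statement in its own right; but for the lemma itself the paper's direct comparison is shorter.
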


\begin{proof}
Let $u\in \V_n$ be a neighbour of $\bar v(i)$. Assume first that
$u$ lies on a bamboo connecting $\bar v(i)$ and $n\in \Nd$.
We then have $m_{\bar v(i)}(\bar Z_i) =  m_{\bar v(i)}(Z_K-E)$.
Furthermore, $m_n(\bar Z_i) = m_n(Z_K-E) + \epsilon$,
where $\epsilon$ equals $0$ or $1$. By the previous lemma, we find
\[
  m_u(\bar Z_i) =
  \left\lceil
  \frac{\beta m_{\bar v(i)}(\bar Z_i) + m_n(Z_K-E) + \epsilon}{\alpha}
  \right\rceil_{m_u(Z_K)}
  =
  m_u(Z_K-E) + \epsilon.
\]
with $\alpha, \beta$ as in the lemma.

Next, assume that $u$ lies on a bamboo connecting $\bar v(i)$ and
$n'\in \Nd^*\setminus \Nd$. Name the vertices on the bamboo $u_1, \ldots, u_s$
as in the proof of the previous lemma. We then have $(Z_K-E,E_{u_j}) = 0$
for $j=1,\ldots, s-1$, and $(Z_K-E,E_{u_s}) = 1$. By the lower bound on
intersection numbers, we find $x(Z_K-E) \geq Z_K-E$. A Laufer sequence
which computes $x(Z_K-E)$ from $Z_K-E$ may start with
$E_{u_s}, E_{u_{s-1}}, \ldots, E_{u_1}$. This shows that
$m_u(x(Z_K-E)) \geq m_u(Z_K-E) + 1$ in this case.

As a result, for every $u\in \V_{\bar v(i)}$, we have
$m_u(x(Z_K-E)) \geq m_u(Z_K-E)$, with an equality for at most one neighbour.
As a result, since $(Z_K-E, E_v) = 2 - \delta_v$ we find
\[
  (\bar Z_i, E_{\bar v(i)})
  \geq (Z_K-E, E_{\bar v(i)}) + (\delta_{\bar v(i)} - 1)
  = 1.
\]
The final statement of the lemma is now clear.
\end{proof}

\begin{lemma} \label{lem:polygonal_point_count}
Let $F \subset \R^2$ be an integral polygon with no internal integral points.
Let $S_1, \ldots, S_r$ be the faces of $F$ and let $c_j$ be the
integral lenght of $S_j$.
Let $0 \leq \rho < 1$, $J \subset \{1, \ldots, r\}$.
Then let $a_i:\R^2 \to \R$ be the unique integral affine function whose
minimal set on $\rho F$ is $\rho S_j$ and this minimal value is
$\lambda_j \in ]-1,0]$ if $j \notin J$ and $\lambda_j \in [-1,0[$ if
$j \in J$. Set $F_\rho^- = \rho F \setminus \cup_{j\in J} \rho S_j$. Then
there exists an $a \in \Z$ satifying
\[
  \sum_{j=1}^s c_j a_j \equiv a, \qquad
  |F_\rho^- \cap \Z^2| = \max\{0,a+1\}.
\]
\end{lemma}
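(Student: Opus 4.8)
The plan is to strip the statement down to an elementary lattice point count and then exploit the structure of hollow lattice polygons in the plane. For each face $S_j$ of $F$ write $n_j\in\Z^2$ for its primitive inward normal, so that $S_j$ lies on the line $\{\langle n_j,x\rangle=m_j\}$ with $m_j\in\Z$. The condition that $a_j$ be integral affine with minimal set $\rho S_j$ forces the linear part of $a_j$ to be $n_j$, and then $\lambda_j$ is the unique element of $\,]-1,0]$ (if $j\notin J$), resp. $[-1,0[$ (if $j\in J$), with $\rho m_j-\lambda_j\in\Z$; set $d_j:=\rho m_j-\lambda_j\in\Z$, so $a_j(x)=\langle n_j,x\rangle-d_j$ and $\rho F=\{x:a_j(x)\ge\lambda_j\ \forall j\}$. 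Walking once around $\partial F$ the edge vectors sum to zero; rotating this identity by a quarter turn and using that the $j$-th edge vector has integral length $c_j$ turns it into $\sum_j c_j n_j=0$. Hence $\sum_j c_j a_j$ has vanishing linear part, i.e. is the constant
\[
  a:=-\sum_j c_j d_j\in\Z ,
\]
which gives the first assertion of the lemma.

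Next I would reduce to counting one polyhedron. Put $P=\{x\in\R^2:a_j(x)\ge 0\ \text{for all }j\}\subseteq\rho F$. For $q\in\Z^2$ one has $a_j(q)\in\Z$; combining this with $\lambda_j\le 0$, with $\lambda_j<0$ when $j\in J$, and with $\lambda_j>-1$ when $j\notin J$, a short case check gives $q\in F_\rho^-\iff a_j(q)\ge 0\ \text{for all }j\iff q\in P$. Thus $F_\rho^-\cap\Z^2=P\cap\Z^2$, and it remains to prove $|P\cap\Z^2|=\max\{0,a+1\}$. A useful first consequence: since $\sum_j c_j a_j(q)=a$ with all $c_j\ge 1$ and $a_j(q)\ge 0$ integers, any lattice point of $P$ forces $a\ge 0$.

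For the count I would invoke the classification of hollow lattice polygons in the plane: after a linear unimodular change of coordinates (harmless, as it commutes with the homothety about the origin and preserves all the data), $F$ is either of lattice width $\le 1$, or a lattice translate of $2\Delta:=\convx\{(0,0),(2,0),(0,2)\}$. In the width-$\le 1$ case, $\rho F$ — hence $P\subseteq\rho F$ — is contained in a strip $\{\nu\le\langle u,\cdot\rangle\le\nu+\rho\}$ of width $\rho<1$, so all its lattice points share the value of $\langle u,\cdot\rangle$ and lie on a single lattice line $\ell_0$; then $|P\cap\Z^2|$ is the number of lattice points of the segment $P\cap\ell_0$, obtained by intersecting the inequalities $a_j\ge 0$ with $\ell_0$. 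One matches this with $\max\{0,a+1\}$ by a one-variable computation, using Pick's theorem for the hollow polygon $F$ (equivalently $\sum_j c_j m_j=2-\sum_j c_j=-2\,\mathrm{Area}(F)$) together with $\rho<1$ and $\lambda_j\le 0$. The exceptional case $F=2\Delta+t$ splits into finitely many subcases according to $\lfloor 2\rho\rfloor\in\{0,1\}$ and the class of $\rho t$ in $\R^2/\Z^2$, each settled by an explicit computation; this is the only case in which $P$ can be genuinely two-dimensional with non-collinear lattice points.

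The main obstacle is the bookkeeping concentrated in the width-$\le 1$ case: after normalizing $u=(0,1)$ and slicing $P$ by the lattice line $\ell_0=\{y=k\}$ carrying all its lattice points, one must identify the two extreme lattice points of that segment — which brings in floors and ceilings of the quantities $(d_j-k\,n_{j,2})/n_{j,1}$ — and verify that their difference is exactly $a=-\sum_j c_j d_j$ (and that the segment is empty precisely when $a\le-1$). This is where hollowness of $F$ is used decisively, through the fact that the only lattice lines meeting $F$ are the two bounding its width, which both pins down the distribution of the $B(F)=\sum_j c_j$ boundary lattice points of $F$ and controls the length of the slice of $P$. An alternative that avoids the classification would be a direct induction on $\sum_j c_j$, peeling off one bounding lattice line of $F$ at a time while sweeping $P$ by parallel lattice lines; I expect such a sweep to reduce $|P\cap\Z^2|$ to a telescoping sum of one-dimensional counts, but hollowness of $F$ would still be the ingredient controlling each sliced length.
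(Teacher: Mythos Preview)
The paper does not actually prove this lemma: its ``proof'' consists of a single citation to \cite[Theorem~4.2.2]{Baldur_th}. So there is no argument in the paper to compare yours against directly.

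Your reductions are correct and constitute the substantive part of a proof. The identity $\sum_j c_j n_j=0$ (rotated closedness of $\partial F$) does force $\sum_j c_j a_j$ to be the integer constant $a=-\sum_j c_j d_j$. The equivalence $F_\rho^-\cap\Z^2=P\cap\Z^2$ with $P=\{a_j\geq 0\ \forall j\}$ is also right: for an integer point $q$ each $a_j(q)\in\Z$, and the interval conditions on $\lambda_j$ together with the removal of the faces indexed by $J$ collapse to $a_j(q)\geq 0$ in every case. The observation that any lattice point in $P$ forces $a\geq 0$ is a nice immediate consequence. Your caution about using only \emph{linear} unimodular maps (so that the dilation $x\mapsto\rho x$ is preserved), and then absorbing the translation into the case analysis, is exactly the right way to invoke the hollow-polygon classification; note that this classification appears explicitly later in the paper as \cref{lem:empty_polygon}.

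That said, your proposal is a strategy rather than a proof: you explicitly flag the width-$\leq 1$ bookkeeping as the ``main obstacle'' and do not carry it out, and the $2\Delta+t$ case is left as ``finitely many subcases \ldots\ each settled by an explicit computation.'' These computations are where all the remaining content lies. In the width-$1$ case, after normalizing so that $F\subset\{k\leq y\leq k+1\}$, you still have to track an arbitrary integer translate and the full set of boundary edges (not just two), and show that the floor/ceiling expressions for the endpoints of $P\cap\ell_0$ telescope to $a$; this is doable but requires care with the signs of the $n_{j,1}$ and with the possibility that no lattice line meets $\rho F$ at all. Since the paper itself defers to an external reference, your sketch is already more informative than what appears there, but it is not yet a complete proof.
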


\begin{proof}
This is \cite[Theroem 4.2.2]{Baldur_th}.
\end{proof}

\begin{proof}[Proof of \cref{thm:diagonal}]
Recall the order $\leq$ on the set $\Nd$ defined in \cref{def:comp_seq}.
We extend this order in the obvious way to all of $\V$.
Also, by assumption, $f$ is $\Q$-pointed at the point $p\in M_\Q$.
Fix an $0 \leq i \leq \bar k'$ and set $H = H_i$.
For $u\in \V_{\bar v(i)}$, define
\[
  \lambda_u = \inf \set{ \ell_u(q)}{q\in F_i}
\]
(recall that $F_i$ is nonempty, see \cref{rem:Fi_nonempty}) and
\[
  \nu_u =
\begin{cases}
  \lambda_u + 1 & \mathrm{if}\; u \leq \bar v(i) \;
                  \mathrm{and}\; \lambda_u \in \Z,\\
  \lceil \lambda_u \rceil  & \mathrm{else.}
\end{cases}
\]
Define the affine functions $a_u:H \to \R$, $a_u=\ell_u|_H - \nu_u$.
By construction, these are primitive integral functions on $H$ with respect
to the affine lattice $H \cap M$. It now follows from
\cref{lem:polygonal_point_count}
that there is an $a\in \Z$ so that $\sum_u a_u \equiv a$ and
$|F^-_i \cap M| = \max\{0,a+1\}$.

On the other hand, we claim that
$\nu_u - \ell_u(p) \leq m_u(\bar Z_i)$ for $u\in \V_{\bar v(i)}$.
Using \cref{lem:bv}, and the definition of $H_i$, i.e.
$\ell_{\bar v(i)}(q - p) |_H = m_{\bar v(i)}(\bar Z_i)$ for $q\in H$,
it follows that
\[
  a
  = \sum_{u} a_u(q)
  = \sum_u \ell_u(q-p) - (\nu_u - \ell_u(p))
  \geq b_{\bar v(i)} \ell_{\bar v(i)}(q-p) - \sum_u m_u(\bar Z_i)
  = (-\bar Z_i, E_{\bar v(i)}).
\]
where $q$ is any element of $H$. As a result, using
\cref{eq:pg_bound_bar} and \cref{lem:kprimek},
as well as \cref{eq:pg_F},
we have
\[
  p_g
  = \sum_{i=0}^{\bar k'-1} |F^-_i\cap M|
  \geq \sum_{i=0}^{\bar k-1} \max\{ 0, (-\bar Z_i, E_{\bar v(i)}) + 1\}
  \geq p_g,
\]
and so these inequalities are in fact equalities.

We are left with proving the claim
$\nu_u \leq m_u(\bar Z_i) + \ell_u(p)$ for $u\in \V_{\bar v(i)}$.
Fix $u$, and let $n\in \Nd^*$ so that $u$ lies on a bamboo connecting
$\bar v(i)$ and $n$.
Let $S = F_{\bar v(i)} \cap F_n$. Then
$S$ is the minimal set of $\ell_u$ on $F_{\bar v(i)}$, i.e.,
$S = F_u$.
Let $A$ be the affine hull of $S \cup \{p\}$. Since the two affine functions
\[
  \frac{\ell_{\bar v(i)} - \ell_{\bar v(i)}(p)}{m_{\bar v(i)}(Z_K-E)},\quad
  \frac{\ell_n - \ell_n(p)}{m_n(Z_K-E)},
\]
both take value $0$ on $p$ and $1$ on $S$,
by \cref{thm:Gor}\cref{it:Gor_ellG}, they conincide on $A$.
Let
\[
  r = \frac{m_{\bar v(i)}(\bar Z_i)}{m_{\bar v(i)}(Z_K-E)}
\]
Using the minimality of \cref{eq:ratio}, we get for any
$q\in p+ r(S-p) \subset H \cap A$
\begin{equation} \label{eq:point_count_ineq}
  \frac{\ell_n(q-p)}{m_n(Z_K-E)}
  = \frac{\ell_{\bar v(i)}(q-p)}{m_{\bar v(i)}(Z_K-E)}
  = \frac{m_{\bar v(i)}(\bar Z_i)}{m_{\bar v(i)}(Z_K-E)}
  \leq \frac{m_n(\bar Z_i)}{m_n(Z_K-E)},
\end{equation}
and so $\ell_n(q-p) \leq m_n(\bar Z_i)$.
In the case when $n \leq \bar v(i)$, or equivalently,
$u \leq \bar v(i)$, this inequality is strict.
It follows, using \cref{lem:m_nbr}, that
\begin{equation} \label{eq:diagonal_claim}
\begin{split}
  m_u(\bar Z_i)
  &=
  \left\lceil
    \frac{\beta(\ell_{\bar v(i)}, \ell_n) m_{\bar v(i)}(\bar Z_i)
           + m_n(\bar Z_i)}
         {\alpha(\ell_{\bar v(i)}, \ell_n)}
  \right\rceil_{m_u(\bar Z_i)} \\
  &\geq
    \frac{\beta(\ell_{\bar v(i)}, \ell_n) \ell_{\bar v(i)}(q-p)
           + \ell_n(q-p)}
         {\alpha(\ell_{\bar v(i)}, \ell_n)} \\
  &= \ell_u(q-p) \\
  &= \lambda_u - \ell_u(p).
\end{split}
\end{equation}
Therefore, since
$m_u(\bar Z_i) \equiv m_u(Z_K) \equiv -\ell_u(p)$ $(\mod \Z)$, we find
\[
  m_u(\bar Z_i)
  \geq
  \lceil
    \lambda_u
  \rceil
  - \ell_u(p).
\]
This proves the claim, unless
$u\leq \bar v(i)$ and $\lambda_u \in \Z$.
In that case, the numbers $\ell_{\bar v(i)}(q)$ and $\ell_u(q) = \lambda_u$
are both integers. Since $\ell_n, \ell_u$ form a part of an integral basis of
$N = M^\vee$, we can assume that $q \in M$,
hence,
\[
  \frac{\beta(\ell_{\bar v(i)}, \ell_n) \ell_{\bar v(i)}(q-p)
         + \ell_n(q-p)}
       {\alpha(\ell_{\bar v(i)}, \ell_n)} \\
  =
  \ell_u(q-p)
  \equiv
  -\ell_u(p)
  \equiv m_u(Z_K)
  \equiv m_u(\bar Z_i)
  \quad
  (\mod \Z).
\]
As a result,
since we have a strict inequality $m_n(\bar Z_i) > \ell_n(q-p)$ we get
a strict inequality in \cref{eq:diagonal_claim} as well.
Therefore, we have
\[
  m_u(\bar Z_i) > \lambda_u - \ell_u(p)\qquad
  \mathrm{and} \qquad
  m_u(\bar Z_i) \equiv \lambda_u - \ell_u(p) \quad (\mod \Z),
\]
and so
$m_u(\bar Z_i) \geq \lambda_u - \ell_u(p) + 1 = \nu_u - \ell_u(p)$,
which finishes the proof of the claim.
\end{proof}

\section{Removing $B_1$-facets} \label{s:rem_face}

In this section we consider only surface singularities, i.e. we
assume that $r=3$. We consider \emph{removable} $B_1$-facets of
two dimensional Newton diagrams
and show that they can be \emph{removed} without affecting certain invariants
of nondegenerate Weil divisors.
This is stated in \cref{prop:removable}.
In parallel we also prove \cref{prop:ZK-E_geq_0}, which allows us to assume
that the divisor $Z_K-E$ on the resolution provided by Oka's algorithm
has nonnegative multiplicities on nodes,
cf. \cref{block:diagonal_newton} and the sentence after
\cref{thm:diagonal}.
Similar computations are given in
\cite{Newton_nondeg}, providing a
stronger result in the case of a hypersurface singularity in $\C^3$
with rational homology sphere link.

The concept of a $B_1$-facet appears in \cite{Denef_poles} in the
case of hypersurfaces in $K^r$, where $K$ is a $p$-adic field,
and is further studied in \cite{Lem_Pro_mon,Bories_Veys}.

\begin{definition} \label{def:B1}
Let $F \subset \Gamma(f)$ be a compact facet,
i.e. of dimension $2$. Then $F$ is a
\emph{$B_1$-facet} if $F$ has exactly $3$ vertices $p_1,p_2,p_3$ so that
there is a $\sigma\in \triangle^{(1)}_\Sigma$ so that
$m_\sigma = \ell_\sigma(p_1) = \ell_\sigma(p_2) = \ell_\sigma(p_3)-1$.
A $B_1$-facet $F$ is \emph{removable} if furthermore,
the segment $[p_2,p_3]$ is contained in the boundary
$\partial\Gamma(f)$ of $\Gamma(f)$.
\end{definition}

\begin{figure}[ht]
\begin{center}
\input{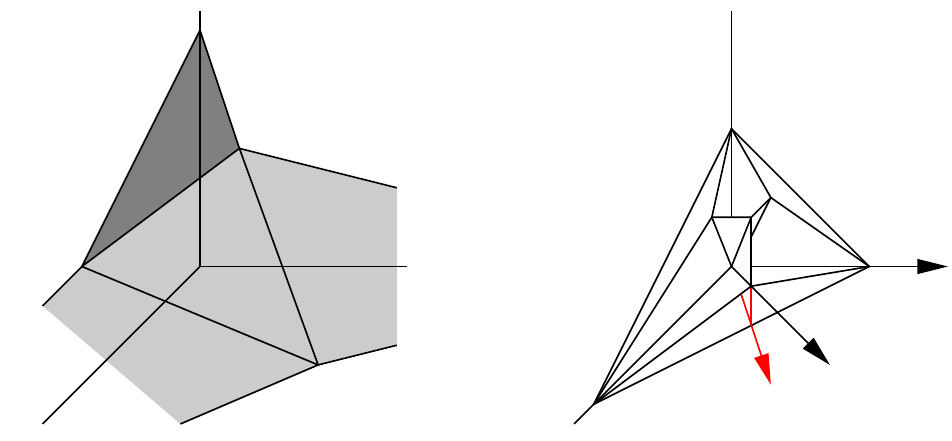_t}
\caption{On the left we have a Newton diagram in $\R^3_{\geq 0}$
with a removable $B_1$ facet $F$.
To the left, we see the $2$-skeleton of the dual fan, and an intersection
with a hyperplane. In this example we have
$\ell_\sigma(p_1) = \ell_\sigma(p_2) = 0$ and $\ell_\sigma(p_3) = 1$.}
\label{fig:B1_dem}
\end{center}
\end{figure}

\begin{definition} \label{def:trop}
Let $T(f)$ be closure in $N_\R$ of
the union of cones in $\triangle_f$ which correspond to
compact facets of $\Gamma_+(f)$ which have dimension $>0$.
This is the \emph{tropicalization} of $f$.
We say that $\Sigma$ is
\emph{generated by the tropicalization of $f$}, if $\Sigma$ is
generated as a cone by the set $T(f)$.

Let $\Sigma'$ be the cone generated by $T(f)$. This is a finitely generated
rational strictly convex cone, and if $(X,0)$ is not rational, then
$\Sigma'$ has dimension $r = 3$.
This cone induces an affine toric variety $Y' = Y_{\Sigma'}$, and the function
$f$ defines a Weil divisor $(X',0) \subset (Y',0)$.
Furthermore, the inclusion $\Sigma' \subset \Sigma$ induces a morphism
$Y' \to Y$, which restricts to a morphism $(X',0) \to (X,0)$.
\end{definition}

\begin{rem}
The closure of $T(f)$ in a certain partial compactification of $N_\R$
is called the \emph{local tropicalization} of $(X,0)$ \cite{pop_step_trop}.
\end{rem}

\begin{figure}[ht]
\begin{center}
\input{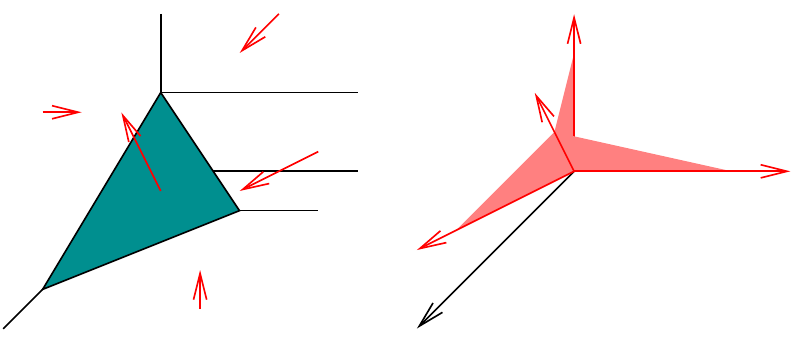_t}
\caption{
Here, $\Sigma = \R^3_{\geq 0}$ is the positive octant,
and $f(x,y,z) = x^3 + xy^3 + z^2$ is the $E_7$ singularity in normal form.
In this case, $T(f)$ does not generate $\Sigma$, but the cone
generated by $(2,0,1)$, $(0,1,0)$ and $(0,0,1)$.}
\label{fig:E7_dem}
\end{center}
\end{figure}

\begin{lemma} \label{lem:orbit_trop}
Let $\sigma \in \tilde\triangle_f$. Then the orbit $O_\sigma$ intersects
$\tilde X$ if and only if $\sigma \subset T(f)$.
\end{lemma}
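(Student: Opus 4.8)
The plan is to unwind the definitions and use the transversality properties of the resolution $\tilde Y = Y_{\tilde\triangle_f}$ already recorded in \cref{block:resolution}. Recall that a cone $\sigma \in \tilde\triangle_f$ is a subcone of some cone of $\triangle_f$, and that $F_\sigma = F(\ell)$ for $\ell \in \sigma^\circ$; moreover $T(f)$ is by \cref{def:trop} the closure of the union of those cones of $\triangle_f$ whose associated face of $\Gamma_+(f)$ is a compact face of positive dimension. Since $\tilde\triangle_f$ refines $\triangle_f$, a cone $\sigma\in\tilde\triangle_f$ satisfies $\sigma\subset T(f)$ if and only if $F_\sigma$ is a compact face of $\Gamma_+(f)$ of dimension $\geq 1$, equivalently $\dim(F_\sigma\cap\Gamma(f))\geq 1$, i.e. $F_\sigma\cap\Gamma(f)\neq\emptyset$ and $F_\sigma$ is not a single vertex. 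So the statement to prove is: $O_\sigma\cap\tilde X\neq\emptyset$ iff $F_\sigma$ is a compact face of positive dimension.

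The key geometric input is that by nondegeneracy (\cref{block:resolution}) the strict transform $\tilde X$ meets every orbit $O_\sigma$ of $\tilde Y$ transversally, and the intersection $\tilde X\cap O_\sigma$ is, up to the torus factor, the zero set in a torus of the restricted principal part $f_\sigma$ (or rather of $\varpi_\sigma$ applied to it). Concretely, one can compute $O_\sigma\cap\tilde X$ as in the proofs of \cref{lem:curves} and \cref{thm:isol}: writing $s=\dim\sigma$, the orbit $O_\sigma$ sits inside $U_\sigma\cong Y_\sigma\times(\C^*)^{r-s}$ as $\{0\}\times(\C^*)^{r-s}$, and the strict transform of $X$ restricted there is cut out by the polynomial obtained from $f_\sigma$ by the projection $\varpi_\sigma: M\to M_\sigma$, after dividing by a monomial. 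Thus $O_\sigma\cap\tilde X\neq\emptyset$ exactly when this Laurent polynomial is a nonzero non-unit, i.e. when $\varpi_\sigma(F_\sigma)$ has more than one point; since $\sigma\in\tilde\triangle_f$ and $\tilde\triangle_f$ refines $\triangle_f$, this is equivalent to $F_\sigma$ itself having more than one point \emph{and} $F_\sigma$ being compact (if $F_\sigma$ is noncompact, i.e. $\sigma\subset\partial\Sigma$ with $F_\sigma$ unbounded, then either $\varpi_\sigma(\Gamma_+(f))$ is trivial and $D_\sigma\cap\tilde X=\emptyset$, or the relevant transverse Newton polyhedron has a vertex at the ``monomial'' direction). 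This matches \cref{rem:exceptional_div}: $E_\sigma=D_\sigma\cap\tilde X\neq\emptyset$ precisely for $\sigma\in\tilde\triangle_f^{(1,\geq 1)}$ together with the $\tilde\triangle^{*(1,1)}_f$ germs, and $E_\sigma=\emptyset$ for $\sigma\in\tilde\triangle_f^{(1,0)}$, which is exactly the rank-one shadow of the claim.

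Concretely I would argue as follows. First reduce to the case $\dim\sigma=1$ is not enough — we need all $\sigma$ — so instead: if $\sigma\subset T(f)$, pick $\ell\in\sigma^\circ$; then $F_\sigma=F(\ell)$ is a compact face of positive dimension, hence $f_\sigma$ is a non-monomial polynomial, and by Newton nondegeneracy $\{f_\sigma=0\}\subset\T^r$ is smooth and, crucially, nonempty (a smooth hypersurface in a torus defined by a non-monomial polynomial is nonempty). Restricting the defining equation of $\tilde X$ to the chart $U_\sigma$ and to $O_\sigma$, and using \cref{block:transverse}, one identifies $O_\sigma\cap\tilde X$ with $\{x\in(\C^*)^{r-s} : \bar f_\sigma(x)=0\}$ where $\bar f_\sigma$ is $\varpi_\sigma$ applied to $f_\sigma$ divided by a suitable monomial; nondegeneracy of $f_\sigma$ along the facet forces $\bar f_\sigma$ to be again a non-monomial (because the minimality of $\ell$ on $F_\sigma$ is strict on all of $\triangle_f$, so $\varpi_\sigma$ is injective on $F_\sigma$), so this set is nonempty. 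Conversely, if $\sigma\not\subset T(f)$ then $F_\sigma$ is either a vertex or noncompact; in the vertex case $f_\sigma$ is a monomial times a unit and the strict transform is empty along $O_\sigma$, while in the noncompact case $\sigma\subset\partial\Sigma$ and either $D_\sigma\cap\tilde X=\emptyset$ or the transverse type of $X$ along $O_\sigma$ does not pass through the torus-fixed part — in both cases $O_\sigma\cap\tilde X=\emptyset$ (using \cref{lem:contains_orig} and the computation of the transverse Newton polyhedron $\varpi_\sigma(\Gamma_+(f))$).

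The main obstacle I expect is the careful bookkeeping in the converse direction and in the noncompact case: one must be precise about what ``$O_\sigma$ meets $\tilde X$'' means when $\sigma\subset\partial\Sigma$, distinguishing the local strict transform $\tilde X$ from the torus-invariant divisors that were deliberately removed in \cref{def:pointed}'s surrounding definition, and checking that the transverse-type Newton polyhedron $\varpi_\sigma(\Gamma_+(f))$ has its relevant compact face collapse to a point exactly when $\sigma\not\subset T(f)$. This amounts to verifying that $\varpi_\sigma$ is injective on each face $F_\tau$ with $\tau\supset\sigma$, which follows from the construction of $\triangle_f$ (the value of $\ell$ determines $F(\ell)$), but needs to be spelled out so the ``non-monomial restricts to non-monomial'' step is airtight. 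Once that lemma about injectivity of $\varpi_\sigma$ on faces is in hand, the rest is a direct unwinding of \cref{block:transverse} and \cref{block:resolution}.
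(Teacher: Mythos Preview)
Your overall strategy---restrict the defining equation to the orbit and check when the principal part $x^{-p_\sigma}f_\sigma$ is a unit---is exactly the paper's approach, and the paper's proof is essentially three sentences computing
\[
  \tilde X \cap O_\sigma \cong \Spec\bigl(\C[M(\sigma)]/(x^{-p_\sigma}f_\sigma)\bigr),
\]
which is empty precisely when $f_\sigma$ is a monomial. So the core idea is right.

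However, your reformulation contains a genuine error. You claim that $\sigma\subset T(f)$ is equivalent to ``$F_\sigma$ is a compact face of positive dimension'', and you then organize your case analysis around this. This equivalence is false. Take any boundary ray $\sigma\in\tilde\triangle_f^{*(1,1)}$: its face $F_\sigma$ is \emph{noncompact}, yet $F_\sigma$ contains a compact edge of $\Gamma(f)$, the dual two-cone of that edge contains $\sigma$, and hence $\sigma\subset T(f)$. For such $\sigma$ the intersection $O_\sigma\cap\tilde X$ is nonempty---these are exactly the noncompact curve germs $E_\sigma$ described in \cref{rem:exceptional_div}, which you yourself cite. So your assertion that in the noncompact case ``$O_\sigma\cap\tilde X=\emptyset$'' is wrong, and the case split compact/vertex/noncompact does not line up with the dichotomy $\sigma\subset T(f)$ versus $\sigma\not\subset T(f)$.

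The correct combinatorial translation is the one you also wrote down but then conflated with the wrong one: $\sigma\subset T(f)$ if and only if $\dim(F_\sigma\cap\Gamma(f))\geq 1$, equivalently $F_\sigma$ contains a compact subface of positive dimension, equivalently $F_\sigma$ has at least two vertices. Since vertices of $\Gamma_+(f)$ lie in $\supp(f)$, this is in turn equivalent to $f_\sigma$ not being a monomial, which is exactly the paper's criterion. Once you use this characterization there is no separate ``noncompact case'' to worry about: the argument is uniform, and all the hedging about transverse types and injectivity of $\varpi_\sigma$ becomes unnecessary.
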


\begin{proof}
The orbit $O_\sigma$ is an affine variety
$O_\sigma = \Spec(\C[M(\sigma)])$ (recall $M(\sigma) = M \cap \sigma^\perp)$,
and if $p_\sigma$ is an element of the affine hull of $F_\sigma$, then
$x^{-p_\sigma} f_\sigma \in \C[M(\sigma)]$
and
\[
  \X \cap O_\sigma
  \cong
  \Spec
  \left(
    \frac{\C[M(\sigma)]}
         {(x^{-p_\sigma} f_\sigma)}
  \right).
\]
Therefore, $\X\cap O_\sigma$ is empty if and only if $x^{-p_\sigma} f_\sigma$
is a unit in $\C[M(\sigma)]$,
which is equivalent to $f_\sigma$ being a monomial,
i.e. $\dim F_\sigma = 0$.
\end{proof}

\begin{lemma} \label{lem:trop_normal}
Let $(X,0)$ and $(X',0)$ be as in \cref{def:trop}.
If $(X,0)$ is normal, then the morphism $(X',0) \to (X,0)$ is an isomorphism.
\end{lemma}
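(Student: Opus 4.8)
The plan is to show that the morphism $\phi:(X',0) \to (X,0)$ is a finite, birational map onto a normal target, so that Zariski's main theorem (or simply the universal property of normalization) forces it to be an isomorphism. First I would observe that $\phi$ is induced by the inclusion of cones $\Sigma' \subset \Sigma$, which by construction of $\Sigma'$ (it is generated by $T(f)$, which lies in $\Sigma$) gives a toric morphism $Y' \to Y$. This morphism restricts to $\phi$ on the Weil divisors defined by the same Laurent series $f$. The key point is that $\phi$ is an isomorphism away from the "bad" locus: more precisely, let $\tilde\triangle_f'$ be a canonical subdivision of the dual fan of $f$ with respect to $\Sigma'$, and let $\tilde Y' \to Y'$ be the associated resolution; by Lemma~\ref{lem:orbit_trop} the strict transform $\tilde X'$ meets only the orbits $O_\sigma$ with $\sigma \subset T(f)$, and the analogous statement holds for $\tilde X \to Y$. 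Since $T(f)$ is the same subset of $N_\R$ whether one works inside $\Sigma'$ or inside $\Sigma$, the two resolutions $\tilde X'$ and $\tilde X$ are canonically identified in a neighbourhood of their exceptional sets: the cones of $\tilde\triangle_f$ and $\tilde\triangle_f'$ that actually meet the strict transform agree, so $\tilde X' \cong \tilde X$ as resolutions, and in particular the compositions $\tilde X \to X$ and $\tilde X' \to X' \to X$ agree. This shows $\phi$ is proper (being dominated by a proper map with the same source mapping properly to $X$) and birational (an isomorphism over the smooth torus orbit, where both $X'$ and $X$ restrict to $\{f=0\}$ in the same big torus).

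Next I would argue finiteness. The map $\phi$ is quasi-finite: its fibres are finite because $\phi$ is an isomorphism over $X \setminus \{0\}$ (all positive-dimensional orbits of $Y$ meeting $X$ correspond to cones in $T(f)$, over which $\Sigma' \hookrightarrow \Sigma$ is a local isomorphism of fans), and the fibre over $0$ is finite since $\phi$ is proper with finite generic fibre and $X'$ has no component contracted to $0$ (again by the orbit–tropicalization dictionary, $X'$ meets every relevant orbit in the same way $X$ does). A proper quasi-finite morphism is finite. Thus $\phi$ is a finite birational morphism, hence factors through the normalization: $\overline{\O}_{X,0} \supset \phi_*\O_{X',0} \supset \O_{X,0}$ inside the function field. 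Now invoke the hypothesis that $(X,0)$ is normal, i.e. $\O_{X,0} = \overline{\O}_{X,0}$; this squeezes $\phi_*\O_{X',0} = \O_{X,0}$, so $\phi$ is an isomorphism.

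An alternative, perhaps cleaner, route that avoids the quasi-finiteness bookkeeping: show directly that $\phi_*\O_{X',0} = \O_{X,0}$ using the cohomological description of $\overline{\O}_{X,0}/\O_{X,0}$ from Theorem~\ref{thm:geom_genus}\ref{it:geom_genus}. Indeed, $\overline{\O}_{X',0}/\O_{X',0} \cong \bigoplus_{p\in M'} \tilde H^0(\Gamma_+'(x^pf)\setminus \Sigma'^\vee,\C)$ where $M' = (N')^\vee = M$ (since $N' = N$), and one checks that $\Gamma_+'(x^pf)\setminus\Sigma'^\vee$ deformation retracts onto the part of the Newton polyhedron ``visible from $T(f)$'', which is the same for $\Sigma$ and $\Sigma'$; combined with normality of $(X,0)$, which says the corresponding sum for $\Sigma$ vanishes, this forces $(X',0)$ to be normal with the same structure sheaf, hence $\phi$ an isomorphism. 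I expect the main obstacle to be the careful matching of the two fans near $T(f)$ — specifically, verifying that a canonical subdivision of $\triangle_f'$ (with respect to $\Sigma'$) and one of $\triangle_f$ (with respect to $\Sigma$) can be chosen to literally agree on the cones meeting the strict transform, so that the two resolutions of $(X',0)$ and $(X,0)$ are identified rather than merely abstractly isomorphic. Once that identification is in hand, both the finiteness argument and the cohomological argument go through routinely.
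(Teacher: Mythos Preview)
Your approach is essentially the same as the paper's: both identify the strict transforms $\tilde X$ and $\tilde X'$ via \cref{lem:orbit_trop} (after choosing a smooth subdivision of $\triangle_f$ that also subdivides $\Sigma'$, which resolves exactly the fan-matching issue you flagged), deduce that $X'\setminus\{0\} \to X\setminus\{0\}$ is an isomorphism, and then invoke normality of $(X,0)$. The paper's proof is more direct in that it skips your proper/quasi-finite/finite detour and concludes immediately from the fact that a morphism of germs which is an isomorphism off the origin onto a normal target is an isomorphism; your alternative cohomological route via \cref{thm:geom_genus} is unnecessary here.
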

\begin{proof}
We can assume that the smooth subdivision $\tilde\triangle_f$ subdivides
the cone $\Sigma'$, so that we get a subdivision
$\tilde\triangle_f' = \tilde\triangle_f|_{\Sigma'}$ of the cone $\Sigma'$.
Let $\tilde Y'$ be the corresponding toric variety.
Let $\triangle_{T(f)}$ be the fan consisting of cones
$\sigma \in \tilde\triangle_f$ which are contained in $T(f)$.
We then get open inclusions
\[
  Y_{T(f)} \subset \tilde Y' \subset \tilde Y
\]
where $Y_{T(f)}$
is the toric variety associated with the fan $\triangle_{T(f)}$.

It follows from \cref{lem:orbit_trop} that the strict transforms
$\X$ and $\X'$ of
$X$ and $X'$, respectively, are contained in $Y_{T(f)}$, and so
$\X' = \X$. As a result,
$X'\setminus \{0\} \cong \X' \setminus \pi^{-1}(0)
= \X \setminus \pi^{-1}(0) \cong X\setminus \{0\}$. Since $(X,0)$ is normal,
the morphism $(X',0) \to (X,0)$ is an isomorphism.
\end{proof}

\begin{block} \label{block:remove}
Assume that $F\subset \Gamma(f)$ is a removable $B_1$-face,
and let
$\sigma \in \triangle^{(1)}_\Sigma$ and $p_i$ be as in
\cref{def:B1}.
If $F$ is the only facet of $\Gamma(f)$, then we leave as
an exercise to show that the graph
$G$ is equivalent to a string of rational curves, and so
$(X,0)$ is rational.
We will always assume that $F$ is not the only facet of $\Gamma(f)$.
There exists an element of $\Sigma^\circ$ which is
constant on the segment $[p_1, p_3]$ (e.g. the normal vector to $F$).
As a result, the boundary
$\partial \Sigma$ intersects the hyperplane of elements $\ell\in N_\R$ which
are constant on $[p_1,p_3]$ in two rays, $\sigma_+$ and $\sigma_-$, where
$\ell \in \sigma_+$ satisfies $\ell|_{[p_1,p_3]} \equiv \max_F \ell$, and
$\ell \in \sigma_-$ satisfies $\ell|_{[p_1,p_3]} \equiv \min_F \ell$.

Let $\ell_+ \in N$ be a primitive generator of $\sigma_+$, set
$m_+ = \max_F \ell$ and define
\[
  \bar f(x) = \sum \set{ a_p x^p }{ p\in M, \, \ell_+(p) \geq m_+ },
\]
where $a_p$ are the coefficients of $f$ as in \cref{eq:expansion}.
Let $(\bar X,0)$ be the Weil divisor defined by $\bar f$.
We get a Newton polyhedron $\Gamma_+(\bar f)$, from which we calculate
invariants of $(\bar X,0)$ as described in previous sections.
It follows from this construction that
$\Gamma(\bar f) = \overline{\Gamma(f)\setminus F}$,
and that  $\bar f$ is Newton nondegenerate.

Now, assume that $\Sigma$ is generated by the tropicalization of $f$.
Let
$\sigma_1$ and $\sigma_3 \in \triangle_f^{(1)}$ be the rays corresponding
to the noncompact faces of $\Gamma_+(f)$ containing the segments
$[p_2,p_3]$ and $[p_1,p_2]$, respectively. Let $\ell_1, \ell_3$ be
primitive generators of $\sigma_1, \sigma_3$.
By construction, and the above assumption that $\Sigma$ is generated by
$T(f)$,
we have
$\R_{\geq 0}\langle \ell_1, \ell_3 \rangle \subset \partial \Sigma$, and so
$\ell_+ \in \R_{\geq 0}\langle \ell_1, \ell_3 \rangle \in \triangle_f$.

In fact, we have $\ell_+ = \ell_1 + t \ell_3$ where
$t = \ell_1(p_1 - p_2)$.
Indeed, $\ell_+$ is the unique positive linear combination of
$\ell_1$ and $\ell_3$ which vanishes on $p_1-p_3$, and is primitive.
Since $\ell_3 = \ell_\sigma$, by definition of $F$,
and since $\ell_1(p_3) = \ell_1(p_2)$, we have
\[
  (\ell_1+t\ell_3)(p_1 - p_3)
  =
  \ell_1(p_1-p_3) + \ell_1(p_1 - p_2) \cdot \ell_3(p_1 - p_3)
  =
  \ell_1(p_1-p_2) - \ell_1(p_1 - p_2)
  =
  0.
\]
Furthermore, we have $\ell_1(p_3-p_2) = 0$ and $\ell_3(p_3-p_2) = 1$,
and so by \cref{rem:alpha}, $\ell_1, \ell_3$ form a part of an integral
basis, which implies that $\ell_1 + t\ell_3$ is primitive.

Now, define $t'$ as the combinatorial length of the segment
$[p_1,p_2]$.
We have $t'|t$ and via Oka's algorithm (\cref{def:okas_graph}),
this segment corresponds
to $t'$ bamboos in $G$, each consisting of a single $(-1)$-curve, whereas
$[p_2,p_3]$ corresponds to one bamboo with determinant $t/t'$.
\end{block}

\begin{prop} \label{prop:removable}
Let $f$, $F$ and $\bar f$ be as above, and assume that $f$ is Newton
nondegenerate. Assume also that $\Sigma$ is generated by the
tropicalization $T(f)$ as
described in \cref{def:trop}. Then
\begin{enumerate}

\item \label{it:removable_nondeg}
$\bar f$ is Newton nondegenerate.

\item \label{it:removable_Gamma}
$\Gamma(\bar f) = \overline{\Gamma(f) \setminus F}$.

\item \label{it:removable_link}
The singularities $(X,0)$ and $(\bar X,0)$ have diffeomorphic links.

\item \label{it:removable_pg}
The singularities $(X,0)$ and $(\bar X,0)$ have equal geometric genera
and $\delta$-invariants.

\item \label{it:removable_normal}
If $(X,0)$ is normal, then $(\bar X,0)$ is normal.

\item \label{it:removable_Gor}
If $f$ is $\Q$-Gorenstein-pointed at $p \in M_\Q$, then so is $\bar f$.
In particular, if $(X,0)$ is Gorenstein,
then $(\bar X,0)$ is also Gorenstein.

\end{enumerate}
\end{prop}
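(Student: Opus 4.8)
The plan is to establish the six assertions of \cref{prop:removable} by analyzing how removing the removable $B_1$-facet $F$ changes the Newton diagram, the dual fan, and the resolution graph, and then tracking the relevant invariants through this change. The key combinatorial input is the analysis in \cref{block:remove}: $\ell_+ = \ell_1 + t\ell_3$ with $t = \ell_1(p_1-p_2)$, the segment $[p_1,p_2]$ contributes $t'$ $(-1)$-bamboos (each a single $(-1)$-curve), and $[p_2,p_3]$ contributes one bamboo of determinant $t/t'$, where $t'$ is the combinatorial length of $[p_1,p_2]$.

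First I would dispatch \cref{it:removable_Gamma} and \cref{it:removable_nondeg}: part \cref{it:removable_Gamma} is immediate from the definition of $\bar f$ in \cref{block:remove} (the condition $\ell_+(p) \geq m_+$ cuts off exactly the facet $F$), and for \cref{it:removable_nondeg} one observes that every compact face of $\Gamma_+(\bar f)$ is either a face of $\Gamma_+(f)$ or the new face lying in $\{\ell_+ = m_+\}$; for the former, nondegeneracy is inherited from $f$, and for the new face, $\bar f_{\sigma_+}$ equals $f$ restricted to that hyperplane slice, whose zero set in the torus is smooth because it is cut out inside the smooth stratum by the same equations. Then for \cref{it:removable_link} I would compare the graphs $G$ and $\bar G$ produced by Oka's algorithm (\cref{def:okas_graph}): removing $F$ deletes the node $n_F$ corresponding to $F$, and the two bamboos attaching $n_F$ to its neighbours $n_1$ (via $[p_2,p_3]$, determinant $t/t'$) and $n_3$ (via $[p_1,p_2]$, the $t'$ parallel $(-1)$-curves) get replaced by the single bamboo joining $n_1$ and $n_3$ in $\bar G$ coming from the new noncompact edge with $\ell_+$. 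A direct calculation with continued fractions — using $\ell_+ = \ell_1 + t\ell_3$ and the fact that blowing down the $(-1)$-node $n_F$ (which has genus zero, being a triangle with no interior points once we remember $F$ is a $B_1$-facet) together with the $(-1)$-strings yields exactly the $\alpha/\beta$ data of the $\ell_+$-bamboo — shows $G$ and $\bar G$ are related by a sequence of blow-downs of $(-1)$-vertices, hence define the same link.

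For \cref{it:removable_pg} I would use \cref{cor:pg_normal_surface} in the normal case, or more robustly \cref{thm:geom_genus}\cref{it:geom_genus}: the lattice point counts $\tilde h^i(\Gamma_+(x^pf)\setminus\Sigma^\vee)$ are compared with those for $\bar f$. The point is that translating by $x^p$ and removing $F$ only changes $\Gamma_+$ in the region ``behind'' the facet $F$, and because $F$ is a $B_1$-facet — so the lattice hyperplane $\ell_\sigma = m_\sigma$ passes through $p_1, p_2$ but $p_3$ is at height $m_\sigma+1$ — the sliver region $C_F$ between $F$ and its replacement contains no lattice points in its relevant strata; thus $\Gamma_+(x^pf)\setminus\Sigma^\vee$ and $\Gamma_+(x^p\bar f)\setminus\Sigma^\vee$ are homotopy equivalent (indeed one deformation-retracts onto the other across the sliver) for every $p\in M$, giving equality of $\delta$ and $p_g$ term by term. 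Part \cref{it:removable_normal} then follows from \cref{lem:trop_normal} applied to $\bar f$ — one checks $\Sigma$ is still generated by $T(\bar f)$, or more directly that $\X = \tilde{\bar X}$ as subvarieties of the common toric ambient, so $X\setminus 0 \cong \bar X\setminus 0$ and normality transfers — combined with the fact that normality is detected by $\delta = 0$ together with the cohomological vanishing in codimension one, which we have just shown is preserved. Finally, \cref{it:removable_Gor} is handled via \cref{thm:Gor}\cref{it:Gor_point}: if $\ell_\tau(p) = m_\tau + 1$ for all $\tau \in \triangle_f^{*(1,1)}$, one must check the same for $\bar\tau \in \triangle_{\bar f}^{*(1,1)}$; the new ray $\sigma_+$ has $m_{\sigma_+} = m_+ = \max_F \ell_+$, and using $\ell_+ = \ell_1 + t\ell_3$ together with the Gorenstein-pointed relations for $\ell_1$ and $\ell_3$ (which survive, being rays in the boundary unaffected by the removal) one computes $\ell_+(p) = \ell_1(p) + t\ell_3(p) = (m_1+1) + t(m_3+1)$, and a short affine-geometry computation on the triangle $F$ identifies this with $m_+ + 1$; the remaining rays of $\triangle_{\bar f}^{*(1,1)}$ are shared with $\triangle_f$.

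The main obstacle I expect is \cref{it:removable_pg}: making precise that the ``sliver'' $C_F$ behind the removable facet — more precisely the polyhedral region $\Gamma_+(x^p\bar f) \setminus \Gamma_+(x^pf)$ and its complement-with-$\Sigma^\vee$-removed — contributes no change to any of the reduced (co)homology groups $\tilde H^i(\Gamma_+(x^pf)\setminus\Sigma^\vee)$, uniformly in $p$. The $B_1$-condition $m_\sigma = \ell_\sigma(p_1) = \ell_\sigma(p_2) = \ell_\sigma(p_3)-1$ is exactly what forces this sliver to be ``lattice-width one'' in the $\ell_\sigma$-direction, so it carries no interior lattice points in the strata that matter, but turning this into a clean homotopy equivalence of the two complements — ideally via an explicit strong deformation retract in the spirit of \cref{lem:subcx} — requires care about which faces of $\Sigma^\vee$ (equivalently which orbits) meet the sliver, and about the case $0 \in \Gamma_+(x^pf)$ versus $0\notin\Gamma_+(x^pf)$. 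I would isolate this as a lemma: the inclusion $\Gamma_+(x^p\bar f)\setminus\Sigma^\vee \hookleftarrow \Gamma_+(x^pf)\setminus\Sigma^\vee$ (or the reverse) is a deformation retract for all $p\in M$, proved by projecting along the $\ell_\sigma$-direction across the sliver and invoking the removability hypothesis $[p_2,p_3]\subset\partial\Gamma(f)$ to guarantee the projection stays inside the relevant set.
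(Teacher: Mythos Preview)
Your proposal has the right overall architecture, and parts \cref{it:removable_nondeg}, \cref{it:removable_Gamma}, \cref{it:removable_Gor} match the paper essentially verbatim. But there are genuine gaps in \cref{it:removable_link}, \cref{it:removable_pg}, and \cref{it:removable_normal}.

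For \cref{it:removable_link}, your plan is to blow down the node $n_F$ itself, claiming it becomes a $(-1)$-vertex after contracting the $t'$ $(-1)$-strings. You justify this only by noting that $F$ has genus zero, but genus zero says nothing about the Euler number. In fact $n_F$ is \emph{not} a $(-1)$-vertex in general: the paper's proof shows instead that after blowing down the $t'$ $(-1)$-curves, $v_F$ survives as a vertex with the \emph{same} Euler number $-b_F$ in both $G$ and $\bar G$. The paper's mechanism is to refine $\tilde\triangle_{\bar f}$ so that $\sigma_F$ is a ray, identify $v_F$ on both sides, and then prove the two key equalities $\alpha(\ell_F,\ell_+)=\alpha(\ell_F,\ell_1)$ and $\beta(\ell_F,\ell_+)=\beta(\ell_F,\ell_1)$ (so the bamboo from $v_F$ toward $v_1$ in $G$ matches the bamboo from $v_F$ toward $v_+$ in $\bar G$), together with an explicit check that $-b_F=-\bar b_F$ via \cref{lem:bv}. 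Your blow-down picture would only work if you first established $-b_F + t' = -1$, which you have not argued and which fails in examples.

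For \cref{it:removable_pg} and \cref{it:removable_normal}, your ``sliver has lattice-width one, hence no interior lattice points'' heuristic is aimed at the lattice-point count, but what \cref{thm:geom_genus} requires is equality of \emph{all} the reduced cohomologies $\tilde H^i(\Gamma_+(x^q f)\setminus\Sigma^\vee)$, uniformly in $q$; this is a homotopy statement, not a counting statement. Your proposed fix---project along $\ell_\sigma$ across the sliver---is plausible but you have not controlled how $\Sigma^\vee$ meets the sliver, and the removability condition $[p_2,p_3]\subset\partial\Gamma(f)$ enters in a subtler way than you suggest. The paper's argument is different: it retracts each side to the diagram $\Gamma$, passes via \cref{lem:subcx} to the subcomplex $K$ of faces disjoint from $\Sigma^\vee$, and then reduces by excision to showing $H^*(K\cap F,\bar K\cap F)=0$. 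The heart of the proof is a convexity dichotomy in the affine hull $A$ of $F$: using the vertex $r$ of $A\cap\Sigma^\vee$ where both $\ell_1$ and $\ell_3$ are minimized, one shows that \emph{exactly one} of the segments $[p_1,p_2],[p_2,p_3]$ lies in $K$ (the ``neither'' case forces $p_2\in\Sigma^\vee$, the ``both'' case forces $\ell_3(r)\notin\Z$, each a contradiction). This is where the $B_1$ condition $\ell_\sigma(p_3)=\ell_\sigma(p_2)+1$ is actually used. Your sketch does not supply this argument or an equivalent one.

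Finally, your route to \cref{it:removable_normal} via \cref{lem:trop_normal} breaks: after removing $F$ there is no reason for $\Sigma$ to be generated by $T(\bar f)$ (cf.\ the example following \cref{prop:removable}), and the alternative claim $\tilde X=\tilde{\bar X}$ is false since $f\neq\bar f$. The paper instead obtains \cref{it:removable_normal} simultaneously with \cref{it:removable_pg} from the cohomological identification above.
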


\begin{proof}
\cref{it:removable_nondeg} and \cref{it:removable_Gamma} follow from
definition.

We now prove \cref{it:removable_link}. We have $G$, the output of
Oka's algorithm for the Newton polyhedron $\Gamma_+(f)$, and
$\bar G$, the output of Oka's algorithm for $\Gamma_+(\bar f)$.
Let $\sigma_F \in \triangle_f$ be the ray dual to $F$ and let $F'$
be the unique face of $\Gamma_+(f)$ adjacent to $F$, i.e.
$F'\cap F = [p_1, p_3]$.
Then
$\sigma_F \subset \R_{\geq 0}\langle \ell_{F'}, \ell_+ \rangle
\in \triangle_{\bar f}$, and we can subdivide the canonical subdivision
of $\R_{\geq 0}\langle \ell_F', \ell_+ \rangle$ so that
we can assume that $\sigma_F \in \tilde\triangle_{\bar f}$.
We can therefore identify vertices $v_F$ of $G$ and $\bar G$
corresponding to the same ray
$\sigma_F \in \tilde\triangle^{(1)}_f$ and
$\sigma_F \in \tilde\triangle^{(1)}_{\bar f}$.
It is then clear from construction that the components of
$G\setminus v_F$ and $\bar G\setminus v_F$
in the direction of $v_{F'}$ are isomorphic. After blowing down the
$(-1)$-curves corresponding to the segment $[p_1, p_2]$, we must show
\begin{itemize}

\item
The two bamboos joining $\ell_F$ with $\ell_+$ on one hand, and with
$\ell_1$ on the other, are isomorphic.

\item
The vertex $v_F$ has the same Euler number in $G$ and in $\bar G$.

\end{itemize}

\begin{figure}[ht]
\begin{center}
\input{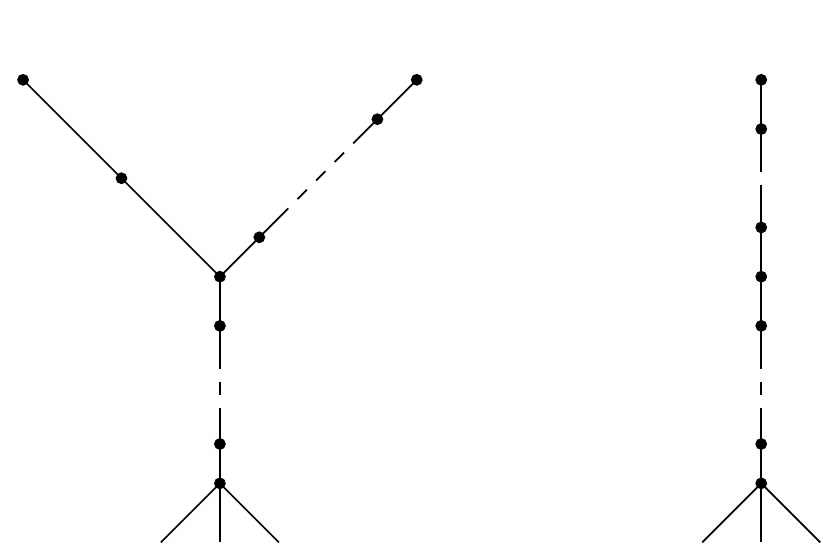_t}
\caption{The $(-1)$-curve to the left is blown down, so that the two
graphs $G$ and $\bar G$, obtained by deleting $v_3, v_1, v_+$ and their
adjacent edges, look topologically the same.
To the right, the bamboo
connecting $\bar v_{F'}$ and $v_+$ corresponds to a subdivision of
the cone generated by $\ell_+$ and $\ell_{F'}$ which contains the
ray generated by $\ell_F$.}
\label{fig:m1}
\end{center}
\end{figure}

For the first of these, we prove that
\[
  \alpha(\ell_F, \ell_+) = \alpha(\ell_F, \ell_1), \quad
  \beta(\ell_F, \ell_+) = \beta(\ell_F, \ell_1).
\]

We calculate $\alpha(\ell_F, \ell_+)$ as the greatest common divisor
of maximal minors of the matrix having coordinate vectors for
$\ell_F$ and $\ell_+ = \ell_1+t\ell_3$ as rows.
But $\alpha(\ell_F, \ell_1) = t/t'$, and so adding a multiple of $t$
to $\ell_1$ does not modify the greatest common divisor of
these determinants, hence
$\alpha(\ell_F, \ell_+)
=\alpha(\ell_F, \ell_1+t\ell_3)
=\alpha(\ell_F, \ell_1)$.

The invariant $\beta(\ell_F, \ell_+)$ can be calculated as the unique
number $0\leq \beta < \alpha(\ell_F, \ell_+)$ so that
$\beta \ell_F + \ell_+$ is a multiple of $\alpha(\ell_F, \ell_+)$.
On the other hand, we find, setting $\beta = \beta(\ell_F, \ell_1)$
and $\alpha = \alpha(\ell_F, \ell_+) = \alpha(\ell_F, \ell_1) = t/t'$,
\[
  \frac{\beta \ell_F + \ell_+}{\alpha}
  = \frac{\beta \ell_F + \ell_1 + t\ell_3}{\alpha}
  = \frac{\beta \ell_F + \ell_1}{\alpha} + t'\ell_3 \in N.
\]

Finally, we show that $v_F$ has the same Euler number in the graphs
$G$ and $\bar G$. Denote these by $-b_F$ and $-\bar b_F$. After blowing
down the $(-1)$ curves associated with the segment $[p_1, p_2]$,
the vertex $v_F$ has two neighbors
in either graph $G$ or $\bar G$. Denote by $v_{-1}$ and $\bar v_{-1}$
the neighbor of $v_F$ contained in the same component of $G\setminus v_F$
and $\bar G_\setminus v_F$ as $v_{F'}$. It is then clear that
$\ell_{v_{-1}} = \ell_{\bar v_{-1}}$.

Denote by $u, \bar v$ the neighbours of $v_F, \bar v_F$
in the direction of $v_1, v_+$, respectively, and
$u', \bar u'$ the other neighbours, as in
\cref{fig:m1}.
Then we have $\ell_{u'} = \ell_{\bar u'}$ and
\[
  \ell_u = \frac{\beta \ell_F + \ell_1}{\alpha},\quad
  \ell_{\bar u} = \frac{\beta \ell_F + \ell_+}{\alpha}
  = \ell_u + t'\ell_3,
\]
where $\alpha, \beta$ are as above. The two numbers $-b_F$ and $-\bar b_F$ are
identified by \cref{lem:bv}
\[
  -b_F \ell_F + \ell_u + \ell_{u'} + t'\ell_3 = 0,\quad
  -\bar b_F \ell_F + \ell_{\bar u} + \ell_{\bar u'} = 0,
\]
which leads to their equality.

Next, we prove \cref{it:removable_pg} and \cref{it:removable_normal}.
By \cref{thm:geom_genus}, it suffices to show that
\[
  \Gamma_+(f) \setminus (\Sigma^\vee + q),
  \qquad
  \Gamma_+(\bar f) \setminus (\Sigma^\vee + q)
\]
have the same cohomology for all $q\in M$. By shifting $\Gamma_+(f)$, we
simplify the following proof by assuming $q=0$.
The inclusion
\[
  \Gamma(f) \setminus \Sigma^\vee
  \subset
  \Gamma_+(f) \setminus \Sigma^\vee
\]
is a homotopy equivalence. Indeed, one can construct a suitable vectorfied
on $\Gamma_+(f) \setminus \Sigma^\vee$ pointing in the direction
of $-\Sigma^\vee$, whose trajectories end up in
$\Gamma(f) \setminus \Sigma$, thus giving a homotopy inverse to the above
inclusion.

Now, let $K$ be the union of faces of $\Gamma(f)$ which do
not intersect $\Sigma^\vee$. By \cref{lem:subcx}, the inclusion
$K\subset \Gamma(f) \setminus \Sigma$ is a homotopy equivalence.
Define $\bar K$ similarly, using $\bar f$.
Thus it suffices to prove that
$\tilde H^i(K,\bar K;\Z)$ vanish for all $i$. By excision, this is equivalent
to showing
\begin{equation} \label{eq:red_vanishing}
  \fa{i\in \Z_{\geq 0}}{\tilde H^i(K\cap F,\bar K\cap F;\Z) = 0}.
\end{equation}

If $\Sigma^\vee$ does not intersect the face $F$, then
$K\cap F = F = \bar K \cap F$.
Also, if $p_2 \in \Sigma^\vee$, then
$K\cap F = \bar K \cap F$. In either case,
\cref{eq:red_vanishing} holds.
We can therefore assume that $p_2 \in K$ and $F \not\subset K$.
With these assumptions at hand, it is then enough to prove that excactly
one of the segements $[p_1,p_2]$ and $[p_2,p_3]$ is contained in $K$,
i.e. it cannot happen that either both or neither is contained in $K$.

Let $A$ be the affine hull of $F_n$, i.e. the
hyperplane in $M_\R$ defined by $\ell_n = m_n$, and let
$C = \Sigma^\vee \cap A$.
Define a point $r \in A$ by
\[
  \ell_3(r) = 0,\quad
  \ell_1(r) = 0,\quad
  \ell_n(r) = m_n.
\]
This is well defined, since the functions $\ell_1, \ell_3, \ell_n$ are
linearly independent.
Then $C$ is a convex polygon in $A$, and
$r$ is a vertex of $C$. Furthermore, $r$ is the unique point in
$C$ where both functions $\ell_1|_C$ and $\ell_3|_C$ take their minimal values.

If neither of the segments $[p_1,p_2]$, $[p_2,p_3]$ are contained in
$K$, i.e. both intersect $\Sigma^\vee$, then
we can choose $r_1 \in C\cap[p_1,p_2]$ and $r_2\in C \cap [p_2,p_3]$.
Furthermore,
we have $\ell_3(r) \leq \ell_3(r_1) = \ell_3(p_2)$, and
$\ell_1(r) \leq \ell_1(r_2) = \ell_1(p_2)$. Therefore, $p_2$ is
in the convex hull of $r,r_1,r_2$, and so $p_2\in C$, contrary to the
assumption $p_2\in K$.
\begin{figure}[ht]
\begin{center}
\input{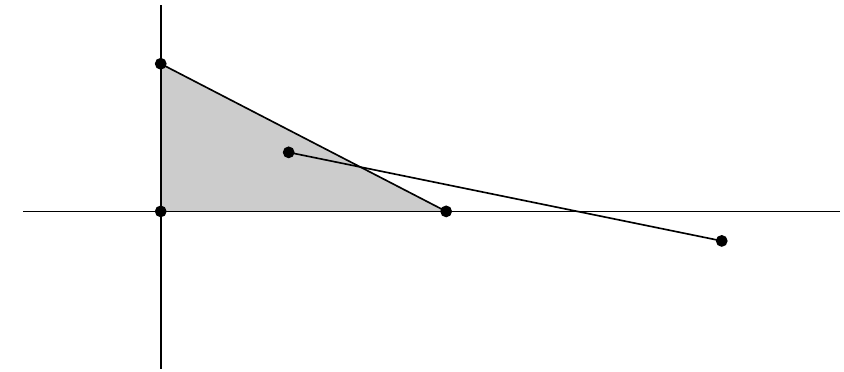_t}
\caption{The segment $[r',r]$ intersects neither $[p_1,p_2]$ nor $[p_2,p_3]$.}
\label{fig:segment}
\end{center}
\end{figure}

Next, assume that both segments $[p_1,p_2]$, $[p_2,p_3]$ are contained
in $K$. We start by showing that in this case, we have $r\in F_n$.
By assumption, we can choose $r' \in C \cap F_n$.

We have $\ell_3(r') > m_{\ell_3}$.
One verifies (see \cref{fig:segment})
that if $\ell_3(r) \leq m_{\ell_3}$, then we would
have $\ell_1(r') < \ell_1(r)$, but $r$ is a minimum for $\ell_1|_C$.
Therefore, we can assume that $\ell_3(r) > m_{\ell_3}$, similarly,
$\ell_1(r) > m_{\ell_1}$. It follows, since $C \cap F \neq \emptyset$,
that $r \in F$, so we can assume that $r' = r$.
But, since $r \notin [p_1,p_2] \cup [p_2,p_3]$, we find
\[
  \ell_3(p_2) < \ell_3(r) < \ell_3(p_3) = \ell_3(p_2)+1,
\]
and so $\ell_3(r) \notin \Z$. But this is a contradiction, since
$\ell_3(r) = \ell_3(q) \in \Z$.

Next we prove \cref{it:removable_Gor}. Assume that
$\Gamma_+(f)$ is $\Q$-Gorenstein pointed at $p\in M_\Q$.
It suffices to show that
$\ell_+(p) = \bar m_{\ell_+} + 1$, where $\bar m_{\ell_+}$ is the minimal
value of $\ell_+$ on $\Gamma_+(\bar f)$. We immediately find
\[
  \bar m_{\ell_+} = \ell_+(p_3)
  = \ell_1(p_3) + t\ell_3(p_3)
  = m_{\ell_1} + t(m_{\ell_3}+1)
  = \ell_1(p)-1 + t\ell_3(p)
  = \ell_+(p)-1.\qedhere
\]
\end{proof}

\begin{example}
Consider the cone $\Sigma = \R^3_{\geq 0}$ and the function
\[
  f(x,y,z) = x^3 + xy^3 + z^5 + y^{10}z,
\]
which defines a nonrational singularity $(X,0)$.
In this case, $\Gamma(f)$ has a $B_1$-facet
\[
  F = \convx \{ (1,3,0), (0,10,1), (0,0,5) \},
\]
corresponding to a node $n\in \Nd$.
The normal vector to $F$ is $(19,2,5)$ and
\cref{eq:can_cycle} gives $m_n(Z_K-E) = -1$.
By the above computations, removing the monomial $y^{10}z$ from $f$
gives another singularity with the same link and geometric genus, but
$Z_K-E$ is nonnegative on the other node. After removing $F$ we find
\[
  \bar f(x,y,z) = x^3 + xy^3 + z^5.
\]
Note that $\Sigma$ is generated by the tropicalization of $f$, but the
tropicalization of $\bar f$ generates the cone
$\R_{\geq 0}\langle(5,0,1), (0,1,0), (0,0,1)\rangle$.

\begin{figure}[ht]
\begin{center}
\input{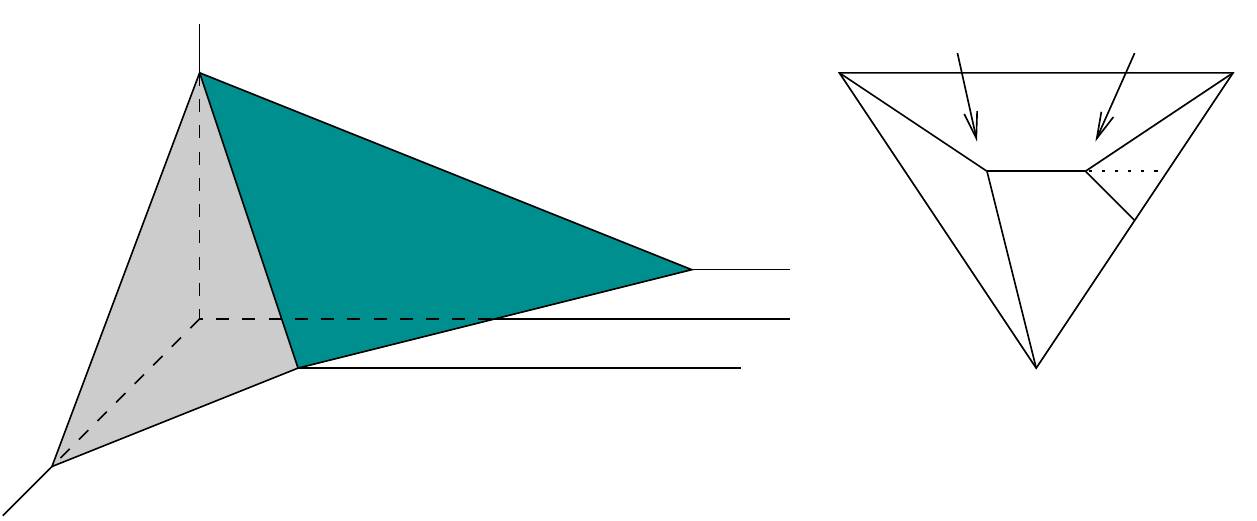_t}
\caption{A diagram with a $B_1$-facet $F$ and its dual.
The dotted line to the right replaces
its two neighbouring segments if the $B_1$-facet is removed.}
\label{fig:B1_ex}
\end{center}
\end{figure}
\end{example}

\begin{block} \label{block:leaf_graph}
In what follows, we connect the above construction with the coefficients
of $Z_K-E$.
We introduce a simplified graph, whose vertices are the nodes of $G$.
whose vertices are the nodes of $G$, and a  bamboo of $G$
connecting two nodes of $G$ is replaced in $G_\Nd$ by an edge.
Then $G_\Nd$ is a tree, with an edge connecting
$n,n'$ if and only if $F_n$ and $F'_n$ intersect in a segment
(of length $1$).
Recall that a \emph{leaf} of a tree is a vertex with exactly one neighbour.
If we assume that $|\Nd| > 1$, then we see that
the following are equivalent, since $G_\Nd$ is a tree:
\begin{itemize}

\item
$n\in \Nd$ is a leaf in $G_\Nd$,

\item
$\Gamma(f) \setminus F_n$ is connected,

\item
all edges of $F_n$, except for one, lie on the
boundary $\partial\Gamma(f)$ of the Newton diagram.

\end{itemize}
If $|\Nd| = 1$, then there is a unique $n\in \Nd$, and
$\Gamma(f) = F$, in particular, $\partial\Gamma(f) = \partial F_n$.
Finally, if $|\Nd| = 0$, and if we assume that $(X,0)$ is normal, then
$(X,0)$ is rational.
\end{block}

\begin{figure}[ht]
\begin{center}
\input{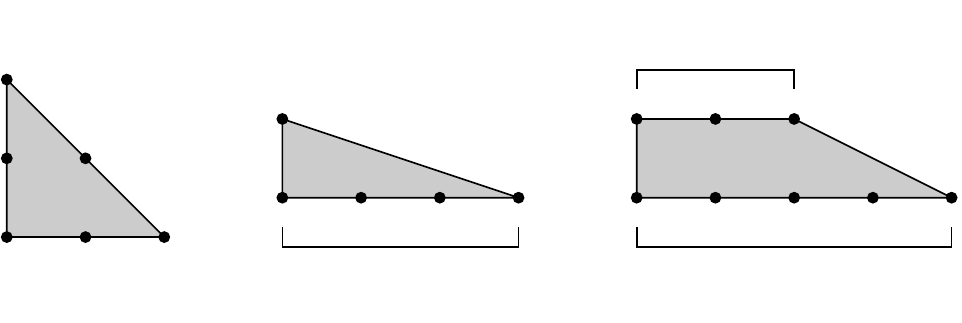_t}
\caption{A big triangle, a small triangle of type $t=3$, and a
trapezoid of type $(t,s) = (4,2)$.}
\label{fig:empty}
\end{center}
\end{figure}

The following lemma is elementary:

\begin{lemma} \label{lem:empty_polygon}
Let $F$ be an integral polyhedron in $\R^2$, having no integral interior
points. Then, up to an integral
affine automorphism of $\R^2$, $F$ is one one the following:
\begin{itemize}
\item {\bf Big triangle}
The convex hull of $(0,0)$, $(2,0)$, $(0,2)$.
\item {\bf Small triangle of type $t$}
The convex hull of $(0,0)$, $(t,0)$, $(0,1)$.
\item {\bf Trapezoid of type $(t,s)$}
The convex hull of $(0,0)$, $(t,0)$, $(0,1)$, $(s,1)$, where
$t,s \in \Z$, $t \geq s > 0$ and $t > 0$.
\qed
\end{itemize}
\end{lemma}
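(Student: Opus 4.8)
This lemma is the classification of two-dimensional lattice polygons with empty lattice interior (``hollow'' polygons), so I will take $\dim F = 2$ throughout (a lower-dimensional $F$ does not occur in the list). The plan is to stratify according to the \emph{lattice width}
\[
  w(F) = \min\set{\max_F u - \min_F u}{u \in (\Z^2)^\vee \setminus \{0\}},
\]
to first prove the bound $w(F) \le 2$, and then to read off the normal form from the value of $w(F)$. After a lattice automorphism of $\R^2$ and a translation I may assume that $u = (0,1)$ realizes $w := w(F)$ and that $F \subseteq \{0 \le y \le w\}$ meets both bounding lines; minimality of $w$ then says that $F$ has width $\ge w$ in every direction, in particular $\max_F x - \min_F x \ge w$. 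I will write $F_j = F \cap \{y = j\}$ for the horizontal slices: each is a (possibly degenerate) segment, $a_j := \min F_j$ is a convex function of $j\in[0,w]$ and $b_j := \max F_j$ a concave one, both piecewise linear with breakpoints only at integers (the vertices of $F$ are lattice points inside the strip, hence lie at integral heights).

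For the case $w = 1$: here $F_0$ and $F_1$ are faces of $F$, hence lattice segments with integral endpoints; translating horizontally I can arrange $F_0 = [0,t]\times\{0\}$ with $t\in\Z_{\ge0}$, and a shear $(x,y)\mapsto(x-ry,y)$ fixing the line $y=0$ then arranges $F_1 = [0,s]\times\{1\}$ with $s\in\Z_{\ge0}$. Since $F = \convx(F_0\cup F_1)$, this gives $F = \convx\{(0,0),(t,0),(0,1),(s,1)\}$ with $t,s$ not both zero: $s=0$ yields a small triangle of type $t$, $t=0$ a small triangle of type $s$ after $y\mapsto 1-y$, and $t,s\ge1$ a trapezoid, where a reflection followed by a shear and a translation lets me assume $t\ge s$. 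No interior lattice point appears, since the interior meets no line $y\in\Z$.

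The heart of the matter is the bound $w(F)\le2$ together with the identification in the case $w=2$. The elementary input is that for each integer $j$ with $0<j<w$ the relative interior of $F_j$ lies in the interior of $F$, so contains no lattice point, whence $|F_j|\le1$. Feeding in $F\supseteq\convx(F_0\cup F_w)$, which gives $|F_j| \ge \tfrac{w-j}{w}|F_0| + \tfrac{j}{w}|F_w|$, and adding the cases $j=1$ and $j=w-1$ yields $|F_0|+|F_w|\le2$. I would then run a short case analysis on the integral edge-lengths $(|F_0|,|F_w|)$: if $|F_0|=2$ (symmetrically $|F_w|=2$), the chain $\tfrac{2(w-1)}{w}\le|F_1|\le1$ forces $w\le2$; if $|F_0|,|F_w|\le1$, then for $(|F_0|,|F_w|)\in\{(0,0),(1,0),(0,1)\}$ one normalizes $F_w$ by a shear fixing the strip and uses the piecewise-linearity of $a_j,b_j$ together with $|F_j|\le1$ to bound $\max_F x-\min_F x$ strictly below $w$ whenever $w\ge2$, contradicting the width bound; and for $(|F_0|,|F_w|)=(1,1)$ all interior slices are pinned, forcing $F$ to be a parallelogram $\convx\{(0,0),(1,0),(c,w),(c+1,w)\}$ with $\gcd(c,w)=1$, which by Pick's formula has $w-1$ interior lattice points and so is impossible for $w\ge2$. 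Hence $w(F)\le2$, and the only surviving possibility for $w=2$ is $(|F_0|,|F_w|)=(2,0)$ with $F_2$ sheared to a point over the origin, giving $F=\convx\{(0,0),(2,0),(0,2)\}$, the big triangle; the remaining $w=2$ subcases either produce an interior lattice point or in fact have $w(F)=1$ and were treated above.

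The step I expect to be the main obstacle is exactly this last case analysis: each individual case is routine (convexity and piecewise-linearity of the slice boundaries, Pick's formula, and bookkeeping of which shears remain available after fixing the strip), but one must be careful to rule out \emph{every} hollow polygon of lattice width $\ge 3$ and \emph{every} hollow width-$2$ polygon other than the big triangle. A shorter alternative is simply to cite the known classification of hollow lattice polygons in the plane, but since the present lemma \emph{is} that classification, the self-contained argument above seems the preferable route.
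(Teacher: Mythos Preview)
The paper does not actually prove this lemma: it is stated as ``elementary'' and closed with a \qed. Your lattice-width approach is the standard route to this classification and is correct in outline.

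A couple of small points in your case analysis deserve tightening. In the $(|F_0|,|F_w|)=(1,1)$ case you assert $\gcd(c,w)=1$, but a shear only normalizes $c$ modulo $w$, not its gcd with $w$; what you actually get is that the parallelogram has $w-\gcd(c,w)$ interior lattice points, which is positive unless $w\mid c$, and in that residual case the polygon has width~$1$ in the $x$-direction, contradicting $w=w(F)\ge 2$. Either way you are done, so the conclusion stands. Similarly, in passing from ``$|F_j|=1$ at all integer heights'' to ``$F$ is a parallelogram'' you are implicitly using that $b_j-a_j$ is piecewise linear with breaks only at integers (since the vertices of $F$ lie at integral heights), hence identically~$1$, and then that $a_j$ is simultaneously convex and concave; it is worth saying this explicitly. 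The remaining subcases $(0,0),(1,0),(0,1)$ are, as you note, the fiddly part; they do go through by bounding the horizontal extent against~$w$, but you should write them out.
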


\begin{lemma} \label{lem:removable_leaf}
Assume that $(X,0)$ is normal, Gorenstein-pointed at $p\in M$,
and not rational.
If $n\in\Nd$ is a leaf in $G_\Nd$ and $m_n(Z_K-E) < 0$,
then $F_n$ is a removable $B_1$-facet of $\Gamma(f)$
(See \cref{block:leaf_graph} for the definition of $G_\Nd$).
\end{lemma}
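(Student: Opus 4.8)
The plan is to translate the numerical hypothesis into the geometric statement that the Gorenstein point lies just above the facet $F_n$, and then to read the $B_1$-structure off the shape of $F_n$.

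\emph{Setup.} Because $(X,0)$ is Gorenstein-pointed at $p\in M$, \cref{thm:Gor}\cref{it:Gor_ellG} gives $m_v(Z_K)=m_v+1-\ell_v(p)$ for every $v\in\V$, hence $m_n(Z_K-E)=m_n-\ell_n(p)$; since $Z_K-E\in L$ by \cref{prop:cans}, this is an integer, so the hypothesis $m_n(Z_K-E)<0$ is equivalent to $\ell_n(p)\ge m_n+1$, where $m_n=\wt_n(f)=\min_{\Gamma_+(f)}\ell_n$ is attained exactly on $F_n$. Since $(X,0)$ is normal and not rational, $\Nd\neq\emptyset$; since $n$ is a leaf of the tree $G_\Nd$ we have $|\Nd|\ge 2$, and $F_n$ is a two-dimensional facet all of whose edges except the single edge $S=F_n\cap F_{n'}$ shared with the unique neighbour $n'\in\Nd$ lie on $\partial\Gamma(f)$, while $S$ has length $1$ (\cref{block:leaf_graph}). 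Moreover, by the rational homology sphere hypothesis and \cref{rem:qhs3_oka}, $F_n$ has no lattice point in its relative interior, so \cref{lem:empty_polygon} applies to it.

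\emph{Shape of $F_n$ and the $B_1$-structure.} Since $F_n$ carries the unit edge $S$, it cannot be a big triangle, so \cref{lem:empty_polygon} leaves only a trapezoid or a small triangle. For each boundary edge $e$ of $F_n$, let $\tau_e\in\triangle_f^{*(1,1)}$ be the ray dual to the noncompact facet of $\Gamma_+(f)$ containing $e$; then $\ell_{\tau_e}\equiv m_{\tau_e}$ on $e$, and by Gorenstein-pointedness $\ell_{\tau_e}(p)=m_{\tau_e}+1$, so $\convx(F_n\cup\{p\})$ has $\ell_{\tau_e}$-width exactly $1$ over each boundary edge $e$ while having $\ell_n$-height $\ge 1$ over $F_n$. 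I claim that, together with the fact that exactly one edge of $F_n$ (namely $S$) fails to lie on $\partial\Gamma(f)$, these constraints force $F_n$ to be a small triangle: a trapezoid would pin $\ell_n(p)$ down to $\le m_n$. Once $F_n$ is a small triangle, $S$ is necessarily one of its two unit edges; write $F_n\cong\convx\{(0,0),(t,0),(0,1)\}$ with $S$ the left unit edge, so that the long edge $[(0,0),(t,0)]$ and the right unit edge $[(t,0),(0,1)]$ are the two boundary edges. Put $\sigma=\tau_{[(0,0),(t,0)]}$; then $\ell_\sigma\equiv m_\sigma$ on $[(0,0),(t,0)]$, and $\ell_\sigma((0,1))=m_\sigma+1$ because the apex lies off that edge at lattice height $1$ while $\Gamma_+(f)\subset\{\ell_\sigma\ge m_\sigma\}$. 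In the notation of \cref{def:B1}, $F_n$ is thus a $B_1$-facet with the two level-$m_\sigma$ vertices being the endpoints $(0,0),(t,0)$ of the long edge and the level-$(m_\sigma+1)$ vertex the apex $(0,1)$; it is removable because the level-$m_\sigma$ vertex adjacent to the apex along a boundary edge may be taken to be $(t,0)$, so that the corresponding edge $[(t,0),(0,1)]$ lies on $\partial\Gamma(f)$.

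\emph{Main obstacle.} The essential step is excluding the trapezoid: the local width-$1$ conditions at the boundary edges of $F_n$ by themselves do not determine the sign of $\ell_n(p)-m_n$, so one must use the global behaviour of the Gorenstein point — that $\ell_\tau(p)=m_\tau+1$ for \emph{every} boundary edge of $\Gamma(f)$, that $m_v(Z_K-E)=m_v-\ell_v(p)$ varies linearly along the bamboos of $G$ and vanishes at the extended nodes, and the position of the neighbour $n'$ across $S$ — to propagate a contradiction from $\ell_n(p)\ge m_n+1$ back into the diagram. A second point, used tacitly above, is that the distinguished normal $\sigma=\tau_{[(0,0),(t,0)]}$ must be a genuine ray of the ambient cone, $\sigma\in\triangle_\Sigma^{(1)}$, and not merely a ray of $\triangle_f$ inside $\partial\Sigma$; here normality enters, via the reduction (cf. \cref{def:trop} and \cref{lem:trop_normal}) to the case in which $\Sigma$ is generated by the tropicalization $T(f)$, where every such normal automatically lies in $\triangle_\Sigma^{(1)}$.
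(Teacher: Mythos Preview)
Your overall strategy matches the paper's: translate $m_n(Z_K-E)<0$ into $\ell_n(p)\ge m_n+1$, classify the empty polygon $F_n$ via \cref{lem:empty_polygon}, and rule out the non-$B_1$ shapes. Your exclusion of the big triangle via the unit edge $S$ is a valid shortcut. But the heart of the argument---ruling out the trapezoid---is left as an unproven claim, and your proposed mechanism for it (global propagation along bamboos through the neighbour $n'$) is not what is needed. There is also a subcase of the small triangle you have missed.

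The paper's argument for the ``bad'' shapes is purely local to $F_n$. Let $\ell_1,\ell_2$ be the primitive normals to the noncompact facets containing two adjacent boundary edges $[q_1,q_2],[q_2,q_3]$ of $F_n$, and in the affine hull $A$ of $F_n$ consider the parallelogram $\{m_{\ell_i}\le\ell_i\le m_{\ell_i}+1,\ i=1,2\}$. For a trapezoid, a big triangle, \emph{or a small triangle with $\alpha_1:=\ell_1(q_3)-m_{\ell_1}>1$}, this parallelogram lies inside $F_n$; in particular its corner $q_0$ (where $\ell_i(q_0)=m_{\ell_i}+1$) satisfies $\ell_n(q_0)=m_n$. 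Gorenstein-pointedness gives $\ell_i(p)=m_{\ell_i}+1$, so $p-q_0$ lies in the one-dimensional face $R=\{\ell_1=\ell_2=0\}$ of $\Sigma^\vee$, and $\ell_n(p)>m_n$ then forces $p\in q_0+R^\circ\subset\Gamma_+(f)^\circ$, contradicting non-rationality via \cref{cor:pg_normal_surface}. No global information about other facets or bamboos is used.

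The subcase $\alpha_1>1$ is the one you omit: in your small-triangle paragraph you assert $\ell_\sigma(\text{apex})=m_\sigma+1$ ``because the apex lies off that edge at lattice height $1$'', but that height is measured in the classification coordinates of \cref{lem:empty_polygon}, not in the $\ell_\sigma$-grading. The restriction $\ell_\sigma|_A$ can have content $\alpha_1>1$, in which case the apex sits at level $m_\sigma+\alpha_1$ and $F_n$ is \emph{not} a $B_1$-facet for this $\sigma$; the paper throws this case in with the trapezoid via the unit-parallelogram argument above. Only when $\alpha_1=1$ does your identification of the $B_1$-structure go through.
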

\begin{proof}
By assumption, $F_n$ has two adjacent edges contained in $\partial \Gamma(f)$,
say $[q_1, q_2]$ and $[q_2, q_3]$.
Let $F_1, F_2$ be the noncompact faces of $\Gamma_+(f)$ containing
the segments $[q_1, q_2]$ and $[q_2, q_3]$, respectively, and let
$\ell_1, \ell_2 \in \partial \Sigma$ be the primitive functions
having $F_1, F_2$ as minimal sets on $\Gamma_+(f)$,
denote these minimal values by $m_{\ell_1}, m_{\ell_2}$.

Let $l_1 = \length([q_2, q_3])$ and
$\alpha_1 = \ell_1(q_3-q_2)/l_1$ and
$l_2 = \length([q_2, q_3])$ and
$\alpha_2 = \ell_2(q_1-q_2)/l_2$.
Then, the bamboos corresponding to the segements $[q_1,q_2]$ and
$[q_2,q_3]$ have determininats $\alpha_1, \alpha_2$, see \cref{rem:alpha}.

Assume first that $F_n$ is a small triangle of type $t$,
that the segment $[q_1, q_2]$ has length $t$,
and that $\alpha_1 = 1$.
This implies that $F_n$ is a removable $B_1$-facet.

Otherwise, let $A$ be the affine hull of $F_n$.
If $F_n$ is a big triangle, a trapezoid,
or a small triangle as above, but with $\alpha_1 > 1$, then the square
\[
  \set{q\in A}{m_{\ell_1} \leq \ell_1(q) \leq m_{\ell_1}+1,\,
               m_{\ell_2} \leq \ell_2(q) \leq m_{\ell_2}+1}
\]
is contained in $F_n$.
In particular, its vertex $q_0$, the unique point in
$A$ satisfying $\ell_i(q_0) = m_{\ell_i}+1$ for $i=1,2$, is contained in
$F_n$. The set
\[
  R = \set{q \in \Sigma^\vee}{\ell_i(q) = 0,\;i=1,2}
\]
is a one dimensional face of $\Sigma^\vee$
(here we use the condition that $\Sigma$ is generated by the
tropicalization of $(X,0)$).
By our assumption $m_n \leq \ell_n(p)$
we have $p \in q_0 + R^\circ \subset \Gamma_+(f)^\circ$, contradicting
the assumption that $(X,0)$ is not rational.
\end{proof}

\begin{prop} \label{prop:has_removable}
Assume that $(X,0)$ is normal, Gorenstein-pointed at $p\in M$,
and not rational.
If there is an $n\in \Nd$ so that $m_n(Z_K-E) < 0$, then
$\Gamma(f)$ has a removable $B_1$-facet.
\end{prop}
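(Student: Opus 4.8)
The plan is to reduce the statement to producing a \emph{leaf} of the tree $G_\Nd$ carrying negative $Z_K-E$-multiplicity, and then to invoke \cref{lem:removable_leaf}. Since $(X,0)$ is normal, \cref{lem:trop_normal} lets us replace $\Sigma$ by the cone generated by the tropicalization $T(f)$ without changing the germ or the Newton diagram $\Gamma(f)$, so we may assume $\Sigma=\langle T(f)\rangle$ — the hypothesis under which \cref{lem:removable_leaf} is proved. As $(X,0)$ is not rational we have $|\Nd|\ge 2$ (the cases $|\Nd|\le 1$ are degenerate: either $m_n(Z_K-E)<0$ cannot occur or $(X,0)$ is rational, cf. \cref{block:leaf_graph} and \cref{block:remove}), so $G_\Nd$ is a tree with at least two leaves, and by \cref{lem:removable_leaf} it suffices to find a leaf $n$ of $G_\Nd$ with $m_n(Z_K-E)<0$.

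First I would record the arithmetic of $Z_K-E$. As $(X,0)$ is normal and Gorenstein-pointed at $p$, \cref{thm:Gor}\cref{it:Gor_ellG} gives
\[
  m_v(Z_K-E)=m_v-\ell_v(p)=:a_v\qquad(v\in\V),
\]
and, extending by $\hat a_{n'}:=m_{n'}-\ell_{n'}(p)$ on the extended nodes $n'\in\V^*\setminus\V=\triangle_f^{*(1,1)}$, the Gorenstein-pointed condition gives $\hat a_{n'}=-1$. Applying the first identity of \cref{lem:bv} to $p$ and subtracting it from the second yields, for every $v\in\V$,
\[
  -b_v\hat a_v+\sum_{u\in\V^*_v}\hat a_u=-2\Vol_2(F_v),
\]
which is $0$ for a bamboo vertex and $\le-1$ for a node. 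Two consequences: (i) $a_n\in\Z$ for a node $n$, so $a_n<0$ forces $a_n\le-1$; (ii) along any bamboo joining $n,n'\in\Nd^*$ the values $\hat a$ are the interpolation of $\hat a_n,\hat a_{n'}$ prescribed by the Euler numbers $-b_i$, and since $b_i\ge 2$ on the interior this interpolation is \emph{convex}, so $\hat a\ge\min(\hat a_n,\hat a_{n'})$ on the bamboo; moreover by \cref{lem:m_nbr} (see \cref{eq:m_nbr_eq}) the value at the neighbour $u$ of $n$ on a \emph{node--node} bamboo is exactly $(\beta\hat a_n+\hat a_{n'})/\alpha$ with $\alpha=\alpha(\ell_n,\ell_{n'})$, $\beta=\beta(\ell_n,\ell_{n'})$ — the sole exception being a $(-1)$-bamboo $\{w\}$, where $\hat a_w=\hat a_n+\hat a_{n'}$.

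Next I would choose a node $n^*$ with $a_{n^*}$ minimal (breaking ties by taking $n^*$ extremal in $G_\Nd$ for a fixed rooting), so $a_{n^*}=\min_{\Nd}a_n\le-1$, and claim that $n^*$ is a leaf of $G_\Nd$. If not, $n^*$ is joined by bamboos to at least two nodes, and I would play the equation above at $v=n^*$,
\[
  -b_{n^*}a_{n^*}+\sum_{u\in\V_{n^*}}\hat a_u=-2\Vol_2(F_{n^*}),
\]
together with $-b_{n^*}\ell_{n^*}+\sum_{u\in\V_{n^*}}\ell_u=0$ from \cref{lem:bv}, against the lower bounds for the $\hat a_u$ coming from (ii): on each bamboo out of $n^*$ the far-end value is $\ge a_{n^*}$ (for node bamboos by minimality, for extended bamboos it equals $-1\ge a_{n^*}$), so convexity bounds the near-end value $\hat a_u$ below. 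The bookkeeping should then show these bounds are incompatible with the displayed equation unless either a neighbouring node has $a<a_{n^*}$ (contradicting the choice of $n^*$) or all the bounds are attained with equality; in the latter case one obtains, exactly as in the last paragraph of the proof of \cref{lem:removable_leaf}, that the ``corner point'' $q_0$ of $F_{n^*}$ cut out at level $+1$ by a suitable pair of edges of $F_{n^*}$ lies in $F_{n^*}$ together with the relative interior of the ray $R=\Sigma^\vee\cap\ell_1^\perp\cap\ell_2^\perp$ (here $\Sigma=\langle T(f)\rangle$ forces $\dim R=1$), whence $p\in q_0+R^\circ\subset\Gamma_+(f)^\circ$, contradicting that $(X,0)$ is not rational. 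Thus $n^*$ is a leaf of $G_\Nd$ with $m_{n^*}(Z_K-E)<0$, and \cref{lem:removable_leaf} concludes.

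The hard part will be that bookkeeping in the last step: one must simultaneously track the invariants $\alpha,\beta$ of every bamboo incident to $n^*$ (hence the exact near-end value $(\beta\hat a_n+\hat a_{n'})/\alpha$), deal with the $(-1)$-bamboos where convexity fails and the interpolated value can fall below $\min_{\Nd}a_n$, keep account of the value $-1$ at the extended nodes, and then turn the numerical equality case into the geometric conclusion $p\in\Gamma_+(f)^\circ$ — i.e.\ exhibit, for a hypothetical interior bad node $n^*$, a pair of edges of $F_{n^*}$ producing the corner point $q_0$ as above. This last geometric translation, and the essential use of the reduction $\Sigma=\langle T(f)\rangle$ in it, is the crux.
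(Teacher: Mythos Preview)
Your strategy is genuinely different from the paper's and, as written, has a real gap. You aim to show that a node $n^*$ with \emph{minimal} $a_{n^*}=m_{n^*}(Z_K-E)$ must be a leaf of $G_\Nd$, then invoke \cref{lem:removable_leaf}. But the ``bookkeeping'' you defer is the entire content of the argument: your lower bound $\hat a_u\ge a_{n^*}$ on near-end values fails on $(-1)$-bamboos (where $\hat a_w=a_{n^*}+a_{n'}$ can drop to $2a_{n^*}$, as you note), and even on ordinary bamboos the resulting inequality gives no contradiction without independent control on $b_{n^*}$, on the number and determinants of incident bamboos, and on $\Vol_2(F_{n^*})$. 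The equality-case step you sketch---reproducing the corner-point argument from the proof of \cref{lem:removable_leaf} at a \emph{non-leaf} node---requires two adjacent edges of $F_{n^*}$ lying in $\partial\Gamma(f)$, which is precisely what you do not have when $n^*$ is interior in $G_\Nd$. So neither the inequality step nor the geometric translation closes as stated, and it is not clear that the claim ``the minimal node is a leaf'' is even true.

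The paper avoids all of this arithmetic. Starting from \emph{any} $n$ with $m_n(Z_K-E)<0$, if $n$ is not already a leaf then removing $F_n$ disconnects $\Gamma(f)$; the paper observes that the homotopy-equivalent set
\[
  \Gamma_+^*(f)^\circ\setminus\bigl(\{\ell_n\le m_n\}\cup\Gamma_+(f)^\circ\bigr)
\]
is likewise disconnected and contains $p$ (since $\ell_n(p)>m_n$ and $(X,0)$ is not rational). One then picks a component $C$ of $\Gamma(f)\setminus F_n$ lying in a component of this set \emph{not} containing $p$; for every $n'$ with $F_{n'}\subset\overline C$ this forces $\ell_{n'}(p)>m_{n'}$, i.e.\ $m_{n'}(Z_K-E)<0$. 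The subtree $G_C\subset G_\Nd$ on $C$ then has a leaf $n'$ not adjacent to $n$ (or is a single vertex), and such an $n'$ is automatically a leaf of $G_\Nd$ with negative multiplicity, so \cref{lem:removable_leaf} applies. No minimality, no convexity along bamboos, no case analysis on $\alpha,\beta$ or $(-1)$-curves is needed; the only input beyond \cref{lem:removable_leaf} is that the Gorenstein point $p$ selects one side of the cut at $F_n$.
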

\begin{proof}
If $n$ is a leaf in $G_\Nd$ (see \cref{block:leaf_graph}),
then $F_n$ is removable by
\cref{lem:removable_leaf}.
So let us assume that $n$ is not a leaf in $G_\Nd$, i.e. that
$\Gamma(f) \setminus F_n$ is
disconnected. The inclusion
\[
  \Gamma(f)^\circ \setminus F_n \subset
  \Gamma_+^*(f)^\circ \setminus (\{\ell_n \leq m_n\} \cup \Gamma_+(f)^\circ )
\]
is a strong homotopy retract
(here we set $\Gamma(f)^\circ = \Gamma(f) \setminus \partial\Gamma(f)$).
In particular, the right hand side is
disconnected as well.
But it follows from our assumptions that
the point $p$ is in the right hand side above. Let $C$ be a component
of $\Gamma(f)\setminus F_n$ contained in a component of the right hand side
which does not contain $p$. Then, for any $n'$ so that
$F_{n'} \subset \overline C$ we have $\ell_{n'}(p) > m_{n'}$, i.e.
$m_{n'}(Z_K-E) < 0$.
Let $G_C$ be the induced subgraph of $G_\Nd$ having vertices
$n'$ for $F_{n'} \subset \overline C$.
This graph is a nonempty tree, and
so has either exactly one vertex, or
at has least two leaves.
In the first case, the unique vertex $n'$ of $G_C$ is a leaf of $G$.
In the second case, $G_C$ has at least two leaves,
so we can choose a leaf $n'$ of $G_C$ which is
not adjacent to $n$ in $G$.
In either case, $F_{n'}$
is a removable $B_1$-facet by \cref{lem:removable_leaf}.
\end{proof}

\begin{prop} \label{prop:ZK-E_geq_0}
Assume that $f$ defines a normal Newton nondegenerate Weil divisor $(X,0)$,
which is not rational.
Then there exists a normal Newton nondegenerate Weil divisor
$(\bar X,0)$, defined by a function $\bar f$ and a cone
$\Sigma'$ (possibly different than $\Sigma$)
satisfying the following conditions:
\begin{itemize}

\item
$(\bar X,0)$ and $(X,0)$ have diffeomorphic links.

\item
$p_g(\bar X,0) = p_g(X,0)$.

\item
If $(X,0)$ is Gorenstein or pointed
at $p\in M_\Q$, then so is $(\bar X,0)$.

\item
If $F_n \subset \Gamma_+(\bar f)$ is a compact facet, then
$m_n(Z_K-E) \geq 0$.

\end{itemize}
In fact, $\Gamma(\bar{f})$ is the union of those facets $F_n$ of
$\Gamma(f)$ for which $m_n(Z_K-E) \geq 0$.
\end{prop}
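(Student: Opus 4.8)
The plan is to prove \cref{prop:ZK-E_geq_0} by induction on the number of facets $F_n$ of $\Gamma(f)$ with $m_n(Z_K-E) < 0$. If this number is zero, we may take $(\bar X,0) = (X,0)$ and there is nothing to prove (note that by definition of $\Gamma(\bar f)$ as the union of facets with $m_n(Z_K-E)\geq 0$, this is consistent). If the number is positive, \cref{prop:has_removable} guarantees that $\Gamma(f)$ has a removable $B_1$-facet $F$. Before applying \cref{prop:removable} we must arrange that $\Sigma$ is generated by the tropicalization $T(f)$; this is achieved by replacing $\Sigma$ by the cone $\Sigma'$ generated by $T(f)$ as in \cref{def:trop}, and invoking \cref{lem:trop_normal} to see that the new Weil divisor $(X',0)\subset (Y_{\Sigma'},0)$ is isomorphic to $(X,0)$ (using normality of $(X,0)$), so that all relevant invariants — the link, $p_g$, normality, the ($\Q$-)Gorenstein property — are unchanged.

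First I would set up this reduction carefully: starting from $(X,0)$ normal and non-rational, pass to $\Sigma'$ generated by $T(f)$, note $\dim\Sigma' = 3$ since $(X,0)$ is not rational, and observe that the combinatorial data entering \cref{prop:has_removable} (the nodes $\Nd$, the graph $G_\Nd$, the cycle $Z_K-E$) is intrinsic to the resolution $\tilde X\to X$, hence unchanged by this replacement. Then, having a removable $B_1$-facet $F = F_n$ with $m_n(Z_K-E)<0$ (which exists by \cref{prop:has_removable} — I would note that the facet produced there indeed has negative multiplicity, as is visible from its construction via leaves of $G_C$), I apply \cref{prop:removable}: the new function $\bar f$ is Newton nondegenerate with $\Gamma(\bar f) = \overline{\Gamma(f)\setminus F}$, and parts \cref{it:removable_link,it:removable_pg,it:removable_normal,it:removable_Gor} give that $(\bar X,0)$ has a diffeomorphic link, the same geometric genus and $\delta$-invariant, is again normal, and remains ($\Q$-)Gorenstein pointed (resp. Gorenstein) if $(X,0)$ was.

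The key remaining point is that removing $F$ strictly decreases the count of facets with negative $Z_K-E$-multiplicity, so that the induction terminates. For this I would argue that the facets of $\Gamma(\bar f)$ are in natural bijection with the facets of $\Gamma(f)$ other than $F$: indeed $\Gamma(\bar f) = \overline{\Gamma(f)\setminus F}$, and the only faces whose combinatorics change are the two segments $[p_1,p_2]$, $[p_2,p_3]$ adjacent to $F$ (which get merged into the single segment $[p_1,p_3]$ lying in $\partial\Gamma(\bar f)$) — no $2$-dimensional facet other than $F$ is created or destroyed. It remains to check that for a facet $F_{n'}$ surviving into $\Gamma(\bar f)$, the multiplicity $m_{n'}(Z_K-E)$ does not change sign, equivalently does not change at all: this follows because, by \cref{prop:removable}\cref{it:removable_link}, the resolution graph $\bar G$ is obtained from $G$ by an operation local to the vertex $v_F$ and its two bamboos (as analyzed in \cref{block:remove} and \cref{fig:m1}), leaving the subgraph on the other side of $v_F$, including all other nodes, untouched; and the canonical cycle coefficient $m_{n'}(Z_K-E) = -2\Vol_2(F_{n'})/\!\det$-type quantity, more precisely $(Z_K-E,E_{n'}) = -2\Vol_2(F_{n'})$ via \cref{lem:bv} together with \cref{eq:can_cycle}, depends only on the unchanged local data at $n'$. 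Hence each surviving facet keeps its multiplicity, so exactly one facet with negative multiplicity (namely $F$) has been removed, and the induction hypothesis applies to $(\bar X,0)$.

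The main obstacle I anticipate is the bookkeeping in this last step — verifying rigorously that $m_{n'}(Z_K-E)$ is genuinely unchanged for all nodes $n'\neq n$, rather than merely unchanged in sign. This requires being precise about how the formula \cref{eq:can_cycle} for $Z_K - E$ reads off coefficients at nodes: at a node $n'$, the relevant local contributions are $\wt(f)$ (i.e. $m_{n'}$), the valency, and the genus $g_{n'}$, all of which are determined by $F_{n'}$ and its position in $\Gamma(f)$, none of which is affected by deleting the far-away facet $F$. One must also handle the bamboo vertices carefully — but since the proposition only asserts the conclusion for \emph{facets} $F_n$ (two-dimensional faces, i.e. nodes in $\Nd$), and since the bamboo between $\bar v_{F'}$ and $v_+$ in $\bar G$ is by construction a refinement of the corresponding cone, the node-level statement is what we need and it is stable. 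Once this stability is in hand, the induction closes and the final bulleted characterization of $\Gamma(\bar f)$ follows by tracking which facets survive each removal step.
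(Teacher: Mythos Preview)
Your approach is essentially the paper's: reduce to $\Sigma$ generated by $T(f)$ via \cref{lem:trop_normal}, then induct using \cref{prop:has_removable} and \cref{prop:removable}; the paper inducts on the total number of facets in a three-line proof, while you induct on the number of facets with negative multiplicity and therefore spell out the stability of $m_{n'}(Z_K-E)$ at surviving nodes---a point the paper leaves implicit but which is genuinely needed for the final description of $\Gamma(\bar f)$. One small imprecision in your stability argument: $Z_K$ is a global cycle, not determined by local data at $n'$, so the justification via \cref{eq:can_cycle} and ``unchanged local data'' is not quite right; the cleaner argument is that by \cref{prop:removable}\cref{it:removable_link} the resolution graphs (after blowing down the $(-1)$-curves over $[p_1,p_2]$) are isomorphic, hence $Z_K-E$ is literally the same cycle on the same graph and its coefficients at corresponding nodes agree.
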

\begin{proof}
By \cref{lem:trop_normal}, we can assume that $\Sigma$ is generated by $T(f)$,
since $(X,0)$ is normal (see \cref{def:trop}). 
The result therefore follows, using induction on the number of
facets of $\Gamma(f)$, and
\cref{prop:removable,prop:has_removable} below.
\end{proof}

\section{Examples} \label{s:ex}

\begin{example} \label{ex:Brieskorn}
Let $N = M = \Z^3$ and let $a,b,c\in\N$ be natural numbers with no common
factor, and let $0\leq r<s \in\N$ be coprime with $s \leq rc$.
Take
\[
  \Sigma^\vee = \R_{\geq 0}
      \left\langle
        \begin{array}{@{(}rrr@{)}}
	   ra,&  0,&-s  \\
	    0,& rb,&-s  \\
	    0,&  0,& 1
	\end{array}
      \right\rangle, \quad
  f = x_1^a + x_2^b + x_3^c.
\]
The cone $\Sigma$ is then generated by
\[
  \ell_1 = (1,0,0),\quad
  \ell_2 = (0,1,0),\quad
  \ell_3 = \frac{1}{\gcd(ab,s)}(bs, as, abr).
\]
Corresponding to these, we
have irreducible invariant divisors $D_1, D_2, D_3 \subset Y$
and multiplicities
\[
  m_1 = 0,\quad m_2 = 0,\quad m_3 = \frac{abs}{\gcd(ab,s)}.
\]
The Newton diagram $\Gamma(f)$ consists of a single face with normal
vector $\ell_0 = (bc,ac,ab)$ and $m_0 = abc$.
Fulton shows in 3.4 of \cite{Fulton_toric}
that the group of Weil divisors modulo linear
equivalence on $Y$ is generated by $D_1, D_2, D_3$, and that
$\sum_{j=1}^3 a_i D_i$ is Cartier if and only if there is a
$p = (p_1, p_2, p_3) \in M = \Z^3$ so that $a_j = \ell_j(p)$ for
$j=1,2,3$.

In our case, $X$ is equivalent to $-\sum_{i=1}^3 m_i D_i = -m_3 D_3$.
Therefore, if $X$ is Cartier, then there is a $p = (p_1, p_2, p_3) \in M$
so that $\ell_i(p) = m_i$. Therefore, we find $p_1 = p_2 = 0$, and
\[
  \frac{abr}{\gcd(ab,s)} p_3 = \frac{abs}{\gcd(ab,s)}.
\]
Therefore, $X$ is Cartier if and only if $r|s$, i.e. $r = 1$.

\begin{figure}[ht]
\begin{center}
\input{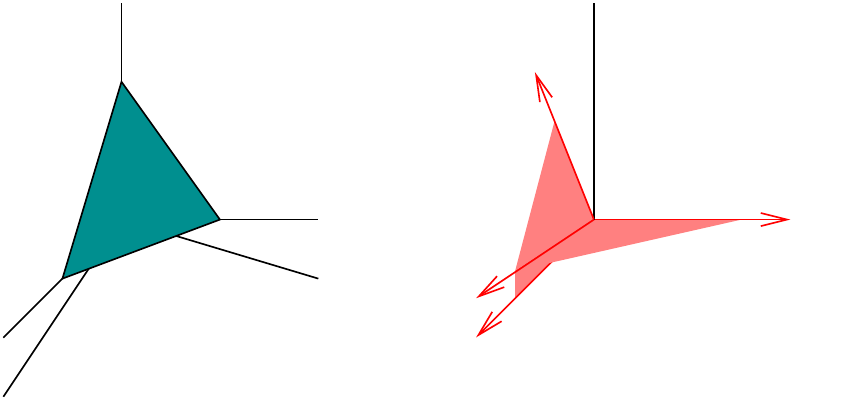_t}
\caption{In the above examples, we have
$a=3$, $b=5$, $c=7$, $r=2$ and $s=3$. The cone $\Sigma$ is generated by the
vectors $(1,0,0)$, $(0,1,0)$ and $(5,3,10)$. Furthermore,
$(35,21,15)$ is the normal vector to the unique facet of $\Gamma(f)$.}
\label{fig:13}
\end{center}
\end{figure}
\end{example}

\begin{example}
In \cite{Nem_Oku_pg}, N\'emethi and Okuma analyse upper and lower bounds
for the geometric genus of
singularities with a specific topological type, namely,
whose link is given by the plumbing graph in
\cref{fig:13}.

\begin{figure}[ht]
\begin{center}
\input{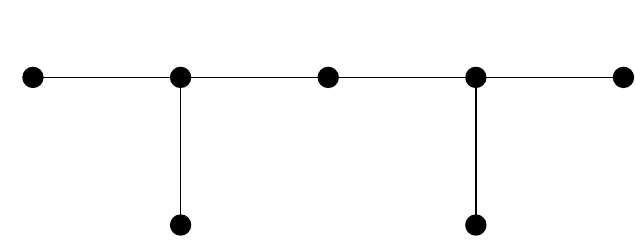_t}
\caption{A resolution graph}
\label{fig:13}
\end{center}
\end{figure}

They show that for this graph, the path lattice cohomology is $4$, but that
the maximal geometric genus among analytic structures with this topological
type is $3$. As a result, this graph is not the topological type of a
Newton nondegenerate Weil divisor in a toric affine space.

On the other hand, this topological type is realized by
the complete intersection given by the splice equations
\[
  X = \set{z\in\C^4}
          {z_1^2 z_2 + z_3^2 + z_4^3 =
           z_1^3 + z_2^2 + z_4^2 z_3 = 0}.
\]
This singularity is in fact a Newton nondegenerate isolated
complete intersection \cite{Oka_princ_zeta}.
As a result, the methods of \cref{s:comp_seq}
do not generalize in the most straightforward way to Newton nondegenerate
complete intersections.
\end{example}


\end{document}